\pgfplotsset{compat=1.18}
\newcommand\restr[2]{{
  \left.\kern-\nulldelimiterspace 
  #1 
  \vphantom{\big|} 
  \right|_{#2} 
  }}
\newtheorem{theorem}{Theorem}
\newtheorem{lemma}[theorem]{Lemma}
\newtheorem{corollary}[theorem]{Corollary}
\newtheorem{proposition}[theorem]{Proposition}
\theoremstyle{definition}
\newtheorem{remark}[theorem]{Remark}
\newtheorem{definition}[theorem]{Definition}
\newcommand{\eref}[1]{(\ref{e.#1})}
\newcommand{\tref}[1]{Theorem \ref{t.#1}}
\newcommand{\lref}[1]{Lemma \ref{l.#1}}
\newcommand{\cref}[1]{Corollary \ref{c.#1}}
\newcommand{\fref}[1]{Figure \ref{f.#1}}
\newcommand{\sref}[1]{Section \ref{s.#1}}
\newcommand{\dref}[1]{Definition \ref{d.#1}}
\numberwithin{theorem}{section}
\numberwithin{equation}{section}
\newcommand{\loc}{\textup{loc}}
\newcommand{\Na}{\mathcal{N}}
\newcommand{\Z}{\mathbb{Z}}
\newcommand{\R}{\mathbb{R}}
\newcommand{\Oa}{\mathcal{O}}
\newcommand{\grad}{\nabla}
\newcommand{\pt}{\partial}
\newcommand{\pd}{{\partial}^{\scriptstyle +}}
\newcommand{\pp}{\partial_{\textup{p}}}
\newcommand{\ppp}{\partial_{\textup{p}}^{\scriptstyle +}}
\newcommand{\bca}{\begin{cases}}
\newcommand{\eca}{\end{cases}}
\newcommand{\lb}{\left(}
\newcommand{\rb}{\right)}
\newcommand{\lmb}{\left[}
\newcommand{\rmb}{\right]}
\newcommand{\lma}{\left\{}
\newcommand{\rma}{\right\}}
\newcommand{\Ha}{{\mathcal{H}}}
\newcommand{\Ra}{{\mathcal{R}}}
\newcommand{\Ca}{{\mathcal{C}}}
\newcommand{\Ta}{{\mathcal{T}}}
\newcommand{\csubset}{\subset\joinrel\subset}
\newcommand{\x}{\mathbf{x}}
\newcommand{\y}{\mathbf{y}}
\newcommand{\gd}{\nabla}
\newcommand{\rta}{\rightarrow}
\newcommand{\be}{\begin{equation}}
\newcommand{\ee}{\end{equation}}
\newcommand{\bt}{\begin{thm}}
\newcommand{\et}{\end{thm}}
\newcommand{\bc}{\begin{cor}}
\newcommand{\ec}{\end{cor}}
\newcommand{\bl}{\begin{lem}}
\newcommand{\el}{\end{lem}}
\newcommand{\norm}[1]{\left\lVert#1\right\rVert}
\newcommand{\avg}[1]{\langle#1\rangle}
\newcommand{\normm}[1]{{\left\vert\kern-0.25ex\left\vert\kern-0.25ex\left\vert #1 
    \right\vert\kern-0.25ex\right\vert\kern-0.25ex\right\vert}}
\newcommand{\dist}{\operatorname{dist}}
\newcommand{\weakcv}{\rightharpoonup}
\def\XXint#1#2#3{{\setbox0=\hbox{$#1{#2#3}{\int}$ }
\vcenter{\hbox{$#2#3$ }}\kern-.6\wd0}}
\newcommand{\argmin}{\mathop{\textup{argmin}}}
\def\XXint#1#2#3{{\setbox0=\hbox{$#1{#2#3}{\int}$ }
\vcenter{\hbox{$#2#3$ }}\kern-.6\wd0}}
\newcommand{\ep}{\varepsilon}
\newcommand{\osc}{\mathop{\textup{osc}}}
\title{Homogenization of a vertical oscillating Neumann condition}
\author{William M Feldman}
\address{Department of Mathematics, University of Utah, Salt Lake City, UT 84112}
\email{feldman@math.utah.edu}
\author{Zhonggan Huang}
\address{Department of Mathematics, University of Utah, Salt Lake City, UT 84112}
\email{zhonggan@math.utah.edu}
\date{\today}
\keywords{Homogenization, rate independent system, thin obstacle problem, comparison principle}
\subjclass{35B27, 35B51, 35K60}
\begin{document}

\begin{abstract}
We homogenize the Laplace and heat equations with the Neumann data oscillating in the``vertical" $u$-variable. These are simplified models for interface motion in heterogeneous media, particularly capillary contact lines. The homogenization limit reveals a pinning effect at zero tangential slope, leading to a novel singularly anisotropic pinned Neumann condition. The singular pinning creates an unconstrained contact set generalizing the contact set in the classical thin obstacle problem. We establish a comparison principle for the heat equation with this new type of boundary condition. The comparison principle enables a proof of homogenization via the method of half-relaxed limits from viscosity solution theory. Our work also demonstrates — for the first time in a PDE problem in multiple dimensions — the emergence of rate-independent pinning from gradient flows with wiggly energies. Prior limit theorems of this type, in rate independent contexts, were limited to ODEs and PDEs in one dimension.
\end{abstract}

\maketitle

\tableofcontents

\section{Introduction}\label{section.introduction}

In this paper we study the homogenization limit $\ep \to 0$ of the following heat equation with the Neumann data oscillating in the \say{vertical} $u$-variable
\be
\label{eq.pNNep}
\bca
\pt_t u^\ep = \Delta u^\ep &\textup{ in } B_1\cap \{x_1>0\}\times(0,\infty)\\
\pt_{1} u^\ep = f\lb \frac{u^\ep}{\ep}\rb & \textup{ on } B_1\cap \{x_1=0\}\times(0,\infty).
\eca
\ee
Here $f: \R \to \R$ is a sufficiently regular $1$-periodic function, $B_1$ is the unit ball in $\R^d$ with $d\ge 2$, and $\partial_1 = \partial_{x_1}$. We work in the upper-half unit ball as a model domain and denote
\[
B_1^+:=B_1\cap \{x_1>0\} \ \quad \textup{and} \quad  \ B_1':=B_1\cap \{x_1=0\}.
\]
We also study the homogenization of the steady state problem
\be
\label{eq.homogenizationNeumann}
\bca
\Delta u^\ep = 0 & \textup{ in }B_1^+\\
\pt_1 u^\ep = f\lb \frac{u^\ep}{\ep}\rb & \textup{ on }B_1'.
\eca
\ee
Such problems arise when the graph of $u$ is considered as an interface in contact with a heterogeneous boundary, see \fref{1}. The appeal of this model is that it is simply posed, yet also captures the interplay between multi-dimensionality, homogenization, rate independent pinning, and rate dependent gradient flow. We discuss the importance of these themes further in Section \ref{subsec.literature} below.

\begin{figure}[hbtp]
    \centering
   \begin{tikzpicture}[scale=1.8] 
        \draw[thick] (0,0) --++(90:3cm);
        \fill[pattern=north west lines] (0,0) rectangle ++(-0.1,3); 
        
        \draw[gray, thick, scale=2/6.2829, domain=0:6.2829] plot (\x,{6+0.3*sin(0.8*\x r)-0.5*\x });
        
        \draw[gray, thick] 
            plot[variable=\y, domain=0:3, samples=100] ({0.1*cos(15*\y r)}, \y);
        
        \node[above right, black] at (2,.5) {$\pt_t u^\ep = \Delta u^\ep$};
        \node[black, left] at (-0.2,1.7) {$\partial_1 u^\varepsilon = f\left(\frac{u^\varepsilon}{\varepsilon}\right)$};
        
        \draw[->, >=stealth, thick] (0.5, -0.2) -- (1.5, -0.2) node[midway, below] {$x_1$};
    \end{tikzpicture}
     \captionsetup{width=0.7\textwidth}
    \caption{The graph of $u^\ep$ over $B_1^+$ is a moving interface interacting with an inhomogeneous medium via a Neumann condition at the boundary $B_1'$. }
    \label{fig:1}
    \label{f.1}
\end{figure}
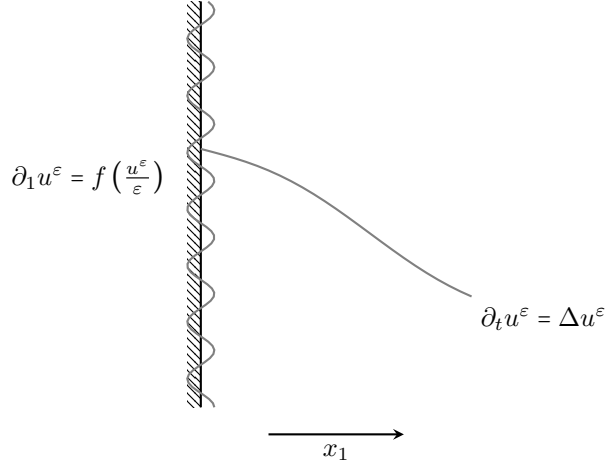

The full description of the homogenization limits of the above two problems requires detailed language from the theory of viscosity solutions. For the purposes of exposition we begin by describing our result at a formal level.  We will present the PDE following the formalism common in the study of rate independent energetic systems (a la \cite{MielkeRoubicek} and see Section \ref{subsec.literature} below). The homogenization limit of \eqref{eq.pNNep} is a heat equation with a singular pinned Neumann condition
\be\label{eq.pGDwithmotionlaw}
\bca
\pt_t u= \Delta u  & \textup{ in }B_1^+\times(0,\infty)\\
\pt_1 u \in \pt \Ra(\pt_t u;  \gd' u ) & \textup{ on } B_1'\times(0,\infty).
\eca
\ee
Here $\displaystyle\gd' u$ is the tangential gradient of $u$ on $B_1'$, and $\pt \Ra(\tau; p)$ is the subdifferential in $\tau$ of the following function 
\be\label{eq.thedissipationcoefficient}
\Ra(\tau; p):=
\bca
L^*(p) \tau & \textup{ if }\tau \ge0\\
L_*(p) \tau & \textup{ if }\tau < 0.
\eca
\ee
Here $L_*$ and $L^*$ are certain homogenized coefficients depending on the tangential slope $p\in  \R^{d-1}$. 

We will show the following simple formula for the homogenized coefficients
\be\label{eq.explicithomogenizedcoefficientinintro}
L_\ast(p)=\lb\min f \rb 1_{\{p=0\}} + \avg{f} 1_{\{p \neq 0\}} \ \hbox{ and } \ L^\ast(p)=\lb\max f \rb1_{\{p=0\}} + \avg{f} 1_{\{p \neq 0\}}.
\ee
Here $\displaystyle \avg{f}:=\int_0^1 f(u)du $ is the average value of $f$.

The rate independent evolution law at the Neumann boundary in \eqref{eq.pGDwithmotionlaw} can be described heuristically as:
\begin{itemize}
\item  When the tangential gradient $\gd'u\ne0$ the inner normal derivative $\pt_1 u = \avg{f}$ is {pinned} exactly at the average value. When the tangential gradient $\gd'u=0$ then the inner normal derivative
\[
\pt_1 u\in [\min f,\max f]
\]
is pinned in a nontrivial interval.
\item  If $\pt_t u>0$, then $\pt_1 u \ge \avg{f}$ and if further $\gd' u=0$ then $\pt_1 u = \max f$ (in a certain weak sense).
\item  If $\pt_t u<0$, then $\pt_1 u \le \avg{f}$ and if further $\gd' u=0$ then $\pt_1 u = \min f$ (in a certain weak sense).
\end{itemize}
 Due to the discontinuity of the homogenized coefficients $L_*$ and $L^*$, our description here is not completely accurate, especially the final two bullet points which we are only able to interpret in the viscosity solutions / comparison principle sense and have no classical sense. See Definition \ref{d.definevissoltopGD}, especially conditions \ref{condition(b)} and \ref{condition(c)}, for a precise description in the language of viscosity solution theory.

Our central main result is the homogenization limit from \eqref{eq.pNNep} to \eqref{eq.pGDwithmotionlaw}. We will study solutions with a fixed Dirichlet data on the upper part of the parabolic boundary of $B_1^+ \times (0,\infty)$:
\begin{equation}\label{e.ppp-1}
    \ppp (B_1\times(0,\infty)) := \big[(\partial B_1 \cap \{x_1 \geq 0\}) \times (0,\infty)\big] \cup \big[\overline{B_1^+} \times \{t=0\}\big].
\end{equation}
 For the introduction we write the result briefly, see Theorem \ref{t.homogenizationofparabolic} below for a more detailed statement including precise assumptions on the Dirichlet boundary data and heterogeneity $f$.

\begin{theorem}\label{t.homogenizationofparabolicformal}
Suppose $f$ is 1-periodic and regular on $\R$. Then a sequence $u^\ep$ of solutions to \eqref{eq.pNNep}, with a fixed Dirichlet condition on the parabolic boundary $\ppp (B_1^+\times[0,\infty))$, converge locally uniformly on $(x,t) \in \overline{B_1^+}\times[0,\infty)$ to the unique continuous viscosity solution $u$ to \eqref{eq.pGDwithmotionlaw} with the same data on the parabolic boundary.

\end{theorem}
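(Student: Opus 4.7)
The plan is to apply the method of half-relaxed limits of Barles--Perthame, combined with the comparison principle for \eqref{eq.pGDwithmotionlaw} established earlier in the paper. Comparison of $u^\ep$ with constant sub- and supersolutions built from $\|f\|_\infty$ and the prescribed parabolic boundary data gives a uniform $L^\infty$ bound, so the upper and lower half-relaxed limits
\[
\overline{u}(x,t) := \limsup_{\ep \to 0,\, (y,s)\to (x,t)} u^\ep(y,s), \qquad \underline{u}(x,t) := \liminf_{\ep \to 0,\, (y,s)\to (x,t)} u^\ep(y,s)
\]
are well-defined, bounded, and satisfy $\underline{u}\le \overline{u}$ on $\overline{B_1^+}\times[0,\infty)$ by construction. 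The goal is to show that $\overline{u}$ is a viscosity subsolution and $\underline{u}$ a viscosity supersolution of \eqref{eq.pGDwithmotionlaw} in the sense of \dref{definevissoltopGD}; the comparison principle then forces $\overline{u}\le \underline{u}$, giving equality and locally uniform convergence to the unique continuous solution.

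At interior points of $B_1^+\times(0,\infty)$ the equation is just the heat equation, so the half-relaxed limits satisfy it in the viscosity sense by classical stability. Compatibility with the Dirichlet data on the parabolic boundary $\ppp(B_1^+\times[0,\infty))$ is obtained from explicit $\ep$-independent barriers near $\pt B_1\cap\{x_1\ge 0\}$ and $\{t=0\}$, relying only on the continuity of that data and the $L^\infty$ bound on $f$. The substantive work is the verification of the singular pinned Neumann inclusion $\pt_1 u\in \pt\Ra(\pt_t u;\gd' u)$ at a touching point on $B_1'\times(0,\infty)$. The argument naturally splits according to whether the tangential slope $\gd'\varphi(x_0,t_0)$ of the touching test function vanishes.

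In the generic regime $\gd'\varphi(x_0,t_0)\ne 0$, the trace $\varphi/\ep$ varies at rate $O(\ep^{-1})$ along $B_1'$, so $f(u^\ep/\ep)$ oscillates through a full period on scale $\ep$. A perturbed test function $\varphi^\ep = \varphi + \ep\, w(\varphi/\ep)$ with a $1$-periodic corrector $w$ chosen to solve the appropriate one-dimensional cell problem at the boundary replaces the oscillating datum by its average and produces the homogenized condition $\pt_1 u = \avg{f}$ in the limit. This is the quantitative, Lions--Souganidis-style side of the analysis.

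The principal obstacle is the pinned regime $\gd'\varphi(x_0,t_0)=0$, where the tangential variation of $\varphi/\ep$ is too slow to generate averaging and the one-parameter corrector degenerates. Here I would argue by contradiction using barriers built from the extremal values of $f$, exploiting the crucial symmetry that $u^\ep\mapsto u^\ep + k\ep$ for $k\in \Z$ preserves \eqref{eq.pNNep}: at a subsolution touching from above with $\pt_t\varphi(x_0,t_0)>0$ and $\pt_1\varphi(x_0,t_0)>\max f+\delta$, one selects an integer translate of $u^\ep$ that aligns its trace with a phase of $f$ near $\max f$, then uses the strict inequality $\pt_1\varphi>\max f$ together with a local parabolic barrier to push this translate strictly above $\varphi$ near $(x_0,t_0)$, contradicting the touching property via comparison for $u^\ep$. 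The symmetric construction handles $\pt_t\varphi(x_0,t_0)<0$ with $\pt_1\varphi<\min f-\delta$, and the stationary case $\pt_t\varphi=0$ admits the full pinning interval $[\min f,\max f]$ by the same barriers evaluated at either extremum. These cases together yield $\pt_1\overline{u}\in \pt\Ra(\pt_t\overline{u};\gd'\overline u)$ in the viscosity sense of conditions \ref{condition(b)} and \ref{condition(c)} of \dref{definevissoltopGD}; the parallel argument gives the supersolution property for $\underline u$, and the theorem follows by comparison.
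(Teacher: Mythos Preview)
Your overall architecture is right and matches the paper: half-relaxed limits plus the comparison principle, with the substantive work localized at the Neumann boundary. The barrier argument with integer translation that you sketch in the pinned regime is exactly how the paper verifies condition~\ref{condition(c)} (Lemma~\ref{l.conditionc}). But there is a genuine gap.

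The definition of subsolution in \dref{definevissoltopGD} has three boundary clauses, and your proposal does not cover condition~\ref{condition(b)}: if $\pt_t\varphi(x_0,t_0)>0$ then $\pt_1\varphi(x_0,t_0)\ge 0=\avg{f}$, for a \emph{general} smooth test function (not necessarily spatially one-dimensional, no strict separation on the lateral boundary). Your barrier argument can only reach the extremal values of $f$: a linear one-variable function $\mu x_1+c$ is a supersolution of $\pt_1 u=f(u/\ep)$ only when $\mu\le f(c/\ep)$, so the best slope you can pin down by phase alignment is $\min f$ or $\max f$, never the average. The paper's proof of condition~\ref{condition(b)} (Lemma~\ref{l.conditionb}) is of a different nature: it solves a \emph{local energy minimization} problem with perturbed Neumann data $f(\cdot/\ep)-\delta$, invokes the $\Gamma$-convergence of Lemma~\ref{l.localuniformconvergenceofglobalminimizers} so that the minimizer converges to the solution of $\pt_1\xi=-\delta$, and then uses $\pt_t\varphi>0$ to force this minimizer to cross $u^\ep$ from above and produce a contradiction. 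That energy-minimizer step is what replaces the missing ``corrector at slope $\avg{f}$'' and is not visible in your sketch.

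Two smaller points. First, your contradiction hypothesis ``$\pt_1\varphi(x_0,t_0)>\max f+\delta$'' has the wrong sign: for the subsolution condition~\ref{condition(c)} you must assume $\pt_1\varphi<\max f$ and derive a contradiction. Second, the perturbed test function $\varphi^\ep=\varphi+\ep\,w(\varphi/\ep)$ is not quite the right object: the corrector in this problem lives on the half-space and depends separately on $x_1$ and a tangential coordinate (see Section~\ref{subsec.laminarcasehomogenization}), not just on the scalar $\varphi/\ep$. The paper sidesteps this by doing a parabolic blow-up at the touching point and extracting a subcorrector in the sense of Definition~\ref{def.subcorrector} (Lemmas~\ref{l.conditiona} and~\ref{l.intermediateofconditiona}); this handles condition~\ref{condition(a)} uniformly in $\nabla'\varphi$ without writing down an explicit $\varphi^\ep$.
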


A central element of the proof of this theorem is the comparison principle / uniqueness for \eqref{eq.pGDwithmotionlaw}.  The important role played by comparison principle is well known in the homogenization of nonlinear elliptic and parabolic problems. In particular, the proof of Theorem \ref{t.homogenizationofparabolicformal} uses the half relaxed limit approach of Barles and Perthame \cites{BarlesPerthame1,BarlesPerthame2} which relies on  comparison principle for semicontinuous sub and supersolutions.

\begin{theorem}\label{t.comparisonprincipleformal}
    Suppose $u$ is a subsolution and $v$ is a supersolution to \eqref{eq.pGDwithmotionlaw}, as in Definition \ref{d.definevissoltopGD}. If $u\le v$ on the parabolic boundary $\ppp (B_1^+\times[0,\infty))$ (as in \eref{ppp-1}) then $u\le v$ on the whole space-time domain $\overline{B_1^+}\times[0,\infty)$.

\end{theorem}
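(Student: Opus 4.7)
The plan is to proceed by contradiction via the doubling-variable machinery from viscosity solution theory, adapted to the flat Neumann boundary and the discontinuity of the homogenized coefficients $L_*, L^*$ at zero tangential gradient. First I would assume $M := \sup_{\overline{B_1^+} \times [0,T]}(u-v) > 0$ for some $T > 0$ and introduce the standard penalization
\[
\Phi_\alpha(x,y,t,s) = u(x,t) - v(y,s) - \frac{|x-y|^2 + (t-s)^2}{2\alpha} - \frac{\eta}{T-t},
\]
with $0 < \eta \ll M$. Upper semicontinuity of $u$ and lower semicontinuity of $v$ produce a maximizer $(\hat x, \hat y, \hat t, \hat s)$ for which the usual estimates give $|\hat x-\hat y|^2/\alpha, (\hat t-\hat s)^2/\alpha \to 0$; the assumed ordering on $\ppp(B_1^+\times[0,\infty))$ keeps the maximizer inside a compact subset of $\overline{B_1^+}\times (0,T)$ for all small $\alpha$.

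If both $\hat x, \hat y \in \{x_1 > 0\}$ (the interior case), the Crandall--Ishii lemma provides matching parabolic jets, the Hessian contributions of the penalty cancel between the $x$- and $y$-variables, and the time penalty $\eta/(T-\hat t)^2$ produces the standard heat-equation contradiction. The entire difficulty therefore lies in the boundary case $\hat x_1 = 0$ or $\hat y_1 = 0$.

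The key structural fact in the boundary case is that both test functions (the one touching $u$ from above at $\hat x$ and the one touching $v$ from below at $\hat y$) share the common tangential gradient $p := (\hat x - \hat y)'/\alpha$ and, up to $o(1)$ error, the common time slope $\tau := (\hat t - \hat s)/\alpha$. When $p \neq 0$, the discontinuous pinning interval collapses to the single value $\avg{f}$, so the effective boundary condition becomes the continuous Neumann condition $\partial_1 = \avg{f}$ on both sides. The boundary contradiction then follows from a standard Lions-type Neumann comparison, combining the subsolution inequality $\partial_1 \phi_u \le \avg{f}$ with the supersolution inequality $\partial_1 \phi_v \ge \avg{f}$ and the explicit normal derivatives $-\hat y_1/\alpha$ and $\hat x_1/\alpha$, together with the time penalty.

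The main obstacle is the regime $p = 0$, in which the full pinning interval $[\min f, \max f]$ is active on both sides and the naive comparison fails. Here I would exploit the rate-dependent selection encoded in conditions (b) and (c) of Definition \ref{d.definevissoltopGD}: heuristically, a nonzero value of $\tau$ forces the subsolution and supersolution test functions to select compatible endpoints of the pinning interval (the upper endpoint $L^*(0) = \max f$ when $\tau > 0$ and the lower endpoint $L_*(0) = \min f$ when $\tau < 0$), which suffices to carry the comparison through. The genuinely ambiguous subcase $\tau = 0$ would be eliminated by a preliminary perturbation, either the temporal one $u \mapsto u + \delta t$, which renders $u$ a strict subsolution and forces $\tau > 0$ at any maximizer, or a tangential drift $u \mapsto u + \delta\, \xi \cdot x'$, which pushes the contact point into the $p \neq 0$ regime; either perturbation is removed by sending $\delta \to 0$ after deriving the contradiction. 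Verifying that the rate-independent inclusion is preserved under these perturbations and that the endpoint selections on the sub- and supersolution sides at $p=0$ are consistent is the most delicate technical point, and is where I expect the proof to require the most care.
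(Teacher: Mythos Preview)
Your proposal follows the doubling-variable route, but the paper explicitly abandons this approach and instead uses tangential sup/inf-convolutions plus caloric lifts (see Appendix~\ref{appendix.tangentialregularization} and the discussion in Section~\ref{subsec.literature}). The authors remark directly that ``the lack of uniform obliqueness also fails the construction of doubling test functions by Barles'' and leave open whether a doubling-variable proof exists. So this is a genuine divergence, and your outline has real gaps.

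First, the $p\neq 0$ case does not close as you describe. You have the inequalities backwards: for a subsolution, condition~\ref{condition(a)} of Definition~\ref{d.definevissoltopGD} gives $\partial_1\phi_u \ge L_*(p)$ when $\phi_u$ touches from above, not $\le$. With the correct signs, at a joint boundary maximizer the doubling penalty produces the \emph{same} normal derivative $(\hat x_1-\hat y_1)/\alpha$ on both sides, so the two conditions $\partial_1\phi_u\ge 0$ and $\partial_1\phi_v\le 0$ are simultaneously compatible (both equal zero if $\hat x_1=\hat y_1=0$) and yield no contradiction. The standard fixes for Neumann problems (asymmetric penalties \`a la Barles) rely on uniform obliqueness, which is exactly what fails here.

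Second, and more seriously, your treatment of $p=0$ cannot invoke condition~\ref{condition(c)} as written. That condition is only testable by \emph{spatially one-variable} functions $\phi=\phi(x_1,t)$ satisfying a strict side-boundary inequality $\phi>u$ on $\partial^+U\times\{t_0\}$. The quadratic doubling penalty $|x-y|^2/(2\alpha)$ is never laminar in this sense, so you have no access to the endpoint selection $\partial_1\phi\ge M$. The paper's actual argument builds such a laminar test function by hand (see Step~4, Case~3 in Section~\ref{section.comparisonprinciple}), and this requires knowing that the touching point lies in an open contact set on which $\nabla'u\equiv 0$---a fact established via the regularization (Lemma~\ref{l.opennessofcontactsetwhentimelipschitzx}) and not available from the raw doubling setup. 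Your proposed perturbations do not resolve this: $u\mapsto u+\delta t$ does not produce a laminar test function, and $u\mapsto u+\delta\,\xi\cdot x'$ destroys the subsolution property precisely because $L_*$ is discontinuous at $p=0$.
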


There are two major difficulties in proving the comparison principle Theorem \ref{t.comparisonprincipleformal}. The first is the failure of uniform obliqueness, and as a result the existence of free degenerate regions in \eqref{eq.pGDwithmotionlaw}
\[
\{\pt_1 u \ne \avg{f}\}\subset\{\gd' u = 0\}.
\]
As we shall see later that these sets are not empty and in the proof we need to discuss whether the location of the touching points are in such degenerate regions. Outside the degenerate set the solutions satisfy standard Neumann boundary condition, but on the degenerate set the solutions satisfy a Dirichlet boundary condition with the boundary data depending only on the time variable. This makes the proof quite delicate as we don't know either the shape of the degenerate region or the values on such regions. The lack of uniform obliqueness also fails the construction of doubling test functions by Barles in \cite{BARLES1999191} and therefore we use the inf / sup-convolution type of arguments (See Appendix \ref{appendix.tangentialregularization} and also our previous work \cite{feldman2024regularitytheorygradientdegenerate}). It is still interesting to explore if there is a doubling variable argument for the proof of comparison principle of \eqref{eq.pGDwithmotionlaw}. 

Another difficulty in proving the comparison principle is that the boundary condition in \eqref{eq.pGDwithmotionlaw} is essentially a differential inclusion. As we shall see in the following discussions, there is no uniqueness for the steady state equation. The crucial ingredients that ensure the uniqueness in the parabolic flow are the dynamic slope conditions \ref{condition(b)} and \ref{condition(c)} in Definition \ref{d.definevissoltopGD}. As we mentioned before, these conditions, especially condition \ref{condition(c)}, can only be made precise by using viscosity solution notions.

The steady state equation for \eqref{eq.pGDwithmotionlaw}, which is also the homogenization limit of the elliptic problems \eqref{eq.homogenizationNeumann}, takes the form
\be\label{eq.generalhomogenizedequidef}
\bca
\Delta u = 0 & \textup{ in }B_1^+\\
\pt_1 u \in [L_*(\gd' u),L^*(\gd' u)] & \textup{ on }B_1'.
\eca
\ee
Thanks to the explicit formula \eqref{eq.explicithomogenizedcoefficientinintro} for $L_*$ and $L^*$ the above equation has the following equivalent form
\be\label{eq.generalhomogenizedequidefequiform}
\bca
\Delta u = 0 & \textup{ in }B_1^+\\
(\pt_1 u-\avg{f}) |\gd' u|=0 \textup{ and }\pt_1 u\in [\min f,\max f] & \textup{ on }B_1'.
\eca
\ee
There is, in general, non-uniqueness of solutions to Dirichlet boundary value problems for \eqref{eq.generalhomogenizedequidef} and \eqref{eq.generalhomogenizedequidefequiform}. This nonuniqueness also occurs before homogenization in \eqref{eq.homogenizationNeumann}, which is not so obvious, but it is in fact one consequence of the homogenization result that we describe next.

Despite the general non-uniqueness we can still determine the {unique} homogenization limits of special solutions to \eqref{eq.homogenizationNeumann}. On one hand by Perron's method, we can construct {maximal subsolutions} 
\[
u_{\textup{max}}^\ep(x)=\max\{u(x) \ ; \ u\textup{ is a subsolution to \eqref{eq.homogenizationNeumann}}\}
\]
and symmetrically {minimal supersolutions}
\[
u_{\textup{min}}^\ep(x)=\min\{u(x) \ ; \ u\textup{ is a supersolution to \eqref{eq.homogenizationNeumann}}\}.
\]
We call both $u_{\textup{max}}^\ep$ and $u_{\textup{min}}^\ep$ the {extremal} solutions.  

On the other hand, we can consider the corresponding energy of \eqref{eq.pNNep} and \eqref{eq.homogenizationNeumann}
\be\label{eq.theenergyepsilonforlaminarcase}
E_\ep(u,B_1^+):= \int_{B_{1}^+} \frac{1}{2}|\gd u|^2 + \int_{B_1'} \int_0^{u(x')} f (r/\ep) dr dx'.
\ee
We define {global energy minimizers} $u_{\textup{glb}}^\ep$ satisfying
\[
E_\ep(u_{\textup{glb}}^\ep,B_1^+) \le E_{\ep}(v, B_1^+),
\] 
{for any appropriate test function }\(v\){ such that }\(v=u_{\textup{glb}}^\ep=g\textup{ on }\pt B_1 \cap \{x_1\ge0\}\).

Again, it is not obvious whether these notions provide actually distinct solutions. In the case of standard Neumann data $\partial_1u = f(x)$, they are all the same. We will see, via our homogenization result below, that these three solutions of \eqref{eq.homogenizationNeumann} are often distinct, see also Proposition \ref{prop.generalexistenceoffacets} later.

Typical arguments from $\Gamma$-convergence theory \cite{braides2006handbook} show that the energies $E_\ep$ in \eqref{eq.theenergyepsilonforlaminarcase} $\Gamma$-converge to the following energy
\be\label{eq.homogenizedenergy}
E_*(u,B_1^+):=\int_{B_{1}^+} \frac{1}{2}|\gd u|^2 + \int_{B_1'} u(x') \avg{f}  dx'.
\ee
In particular, the homogenized PDE associated with energy minimizing solutions of \eqref{eq.homogenizationNeumann} is the standard Neumann problem
\be\label{eq.standarNeumann}
\bca
 \Delta u = 0 & \textup{ in }B_1^+\\
 \pt_1 u = \avg{f} & \textup{ on }B_1'.
\eca
\ee
See Lemma \ref{l.localuniformconvergenceofglobalminimizers} below for a precise statement and proof.

The homogenization for the extremal solutions $u_{\textup{max}}^\ep$ and $u_{\textup{min}}^\ep$ is trickier as there is no $\Gamma$-convergence type theory for them. In our second main theorem, we show that the homogenization limits of the extremal solutions are exactly extremal solutions to the general homogenized equation \eqref{eq.generalhomogenizedequidef}. We write the results in a non-technical way, see Section \ref{section.extremalsteadystates} for a more detailed statement.

\begin{theorem}\label{t.homogenizationofspecialsolutions}
The extremal solutions of \eqref{eq.homogenizationNeumann} converge as $\ep \to 0$ to the extremal solutions of \eqref{eq.generalhomogenizedequidef}.

\end{theorem}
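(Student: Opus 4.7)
The plan is to combine the method of half-relaxed limits with a recovery-sequence argument in the spirit of $\Gamma$-convergence. Focusing on the maximal sequence $u^\ep_{\textup{max}}$ (the treatment of $u^\ep_{\textup{min}}$ is symmetric), introduce the standard upper and lower half-relaxed limits
\[
\overline{u}(x) := \limsup_{\ep\to 0,\, y\to x}u^\ep_{\textup{max}}(y),\qquad \underline{u}(x) := \liminf_{\ep\to 0,\, y\to x}u^\ep_{\textup{max}}(y).
\]
The objective is to establish the sandwich $u_{\textup{max}}\le\underline{u}\le\overline{u}\le u_{\textup{max}}$, which forces equality throughout; combined with continuity of the extremal limit, this yields locally uniform convergence.

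The upper inequality $\overline{u}\le u_{\textup{max}}$ reduces, via the variational characterization of $u_{\textup{max}}$ as the maximal subsolution, to showing that $\overline{u}$ is a viscosity subsolution of the homogenized problem \eqref{eq.generalhomogenizedequidef}. The interior inequality $\Delta\overline{u}\ge 0$ is inherited by standard stability. For the boundary inequality at a touching point $x_0\in B_1'$, a perturbed test function argument applies: when $\gd'\phi(x_0)\ne 0$, a periodic corrector $\chi$ determined by $\chi'(y)=f(y)-\avg{f}$ absorbs the vertical oscillation and yields the bound $\pt_1\phi(x_0)\le\avg{f}=L^*(\gd'\phi(x_0))$; when $\gd'\phi(x_0)=0$, the test function is locally flat tangentially, and by shifting the phase of $u^\ep_{\textup{max}}/\ep$ to a near-maximizer of $f$ one extracts $\pt_1\phi(x_0)\le\max f=L^*(0)$.

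The main obstacle is the lower inequality $u_{\textup{max}}\le\underline{u}$. By the maximality definition it suffices, for each subsolution $w$ of \eqref{eq.generalhomogenizedequidef} with the prescribed Dirichlet data, to construct a recovery sequence $w^\ep$ of subsolutions of \eqref{eq.homogenizationNeumann} with $\liminf_*w^\ep\ge w$ and the correct boundary trace. The $\ep$-level maximality $w^\ep\le u^\ep_{\textup{max}}$ then gives $w\le\underline{u}$ on passage to the liminf, and taking the supremum over $w$ yields the desired bound. The construction splits according to the pinning structure of $w$: on the non-pinned region $\{\gd'w\ne 0\}$ the classical Evans-style corrector $w^\ep(x)=w(x)+\ep\,\chi(w(x)/\ep)\,\eta(x_1)$ with cutoff $\eta$ lifts the homogenized inequality $\pt_1 w\le\avg{f}$ to the oscillatory subsolution condition; on the pinned set $\{\gd'w=0\}$, the function $w$ is locally independent of $x'$, and one can shift the boundary trace by an $O(\ep)$ amount so that $w^\ep/\ep$ aligns with a near-maximizer of $f$, rendering the inequality $\pt_1 w\le\max f$ compatible with $\pt_1 w^\ep\le f(w^\ep/\ep)$ automatically.

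The delicate points — which I expect to be the crux of the argument — are threefold: (i) a preliminary sup-convolution or tangential regularization (cf.\ the machinery developed for Theorem~\ref{t.comparisonprincipleformal} and its appendix) to handle merely semicontinuous subsolutions $w$ whose pinning set may be irregular; (ii) patching the two local constructions across the free boundary $\pt\{\gd'w=0\}$ without losing the global viscosity subsolution property, which requires taking infimal envelopes of the two candidate correctors and checking that the envelope retains the Neumann inequality at every point; and (iii) aligning the Dirichlet data of $w^\ep$ with that of $u^\ep_{\textup{max}}$ up to a perturbation that vanishes with $\ep$, so that the maximality comparison is legitimate. The symmetric argument, with $\min f$ replacing $\max f$ and the roles of sub- and supersolution exchanged, handles the convergence $u^\ep_{\textup{min}}\to u_{\textup{min}}$.
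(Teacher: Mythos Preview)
Your sandwich $u_{\max}\le\underline u\le\overline u\le u_{\max}$ is structurally sound, and the easy half $\overline u\le u_{\max}$ follows from subsolution stability (this is Theorem~\ref{t.homogenizationofgeneralsemilinear}, via the corrector theory of Section~\ref{subsec.planelikecorrector}; note your one-dimensional $\chi$ with $\chi'=f-\avg f$ is not the relevant corrector for $p\ne 0$, and your boundary inequalities carry the wrong sign throughout---a subsolution touched from above by $\phi$ gives $\pt_1\phi\ge L_*(\gd'\phi)$, not $\le L^*$). The genuine gap is the recovery-sequence step for $u_{\max}\le\underline u$. You want, for every subsolution $w$ of \eqref{eq.generalhomogenizedequidef}, an $\ep$-subsolution $w^\ep$ with $\liminf_* w^\ep\ge w$, built from an Evans-style corrector off the pinned set, an $O(\ep)$ phase shift on it, and an infimal-envelope patching across $\pt\{\gd'w=0\}$. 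But the corrector you write down does not produce $\pt_1 w^\ep\ge f(w^\ep/\ep)$ (carry out the computation); the minimum of two viscosity subsolutions is \emph{not} a subsolution, so the patching fails as stated; and nothing is known about the regularity of $\pt\{\gd'w=0\}$, so even a more careful gluing has no domain to work on. You flag all three as delicate but give no mechanism, and I do not see how to complete this route.

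The paper avoids recovery sequences entirely. The key is Theorem~\ref{t.characterizeminimalsuperslutiontohomoextremalsolutions}: the extremal homogenized solution is characterized by three \emph{local} viscosity conditions, of which condition~\ref{condition(3)} (a test with one-variable functions $\eta(x_1)$ touching from above with strict inequality on the lateral boundary) encodes minimality directly. One then checks that any uniform limit of $\ep$-level extremal solutions satisfies these conditions. The nontrivial inequality $\pt_1 u\ge\avg f$ in condition~\ref{condition(1)}---not implied by mere subsolution stability---is obtained by comparing with local energy minimizers, whose homogenization is already known by $\Gamma$-convergence (Lemma~\ref{l.localuniformconvergenceofglobalminimizers}, applied in Lemma~\ref{l.extremesteadystateslimitcondition1}); condition~\ref{condition(3)} is obtained by an $\ep$-level phase-alignment argument (Lemma~\ref{l.extremesteadystateslimitcondition2}). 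This local characterization of extremality is what replaces your global recovery construction.
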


 We summarize the homogenization results for the steady states in Figure \ref{fig:illustratecriticalpoints}.

\begin{figure}[hbtp]
     \centering
\begin{tikzpicture}[scale=1]
    \begin{axis}[
        axis lines = none, 
        xlabel = {$x$},
        ylabel = {$y$},
        domain=-1.6:2,
        ymin=-9, ymax=9, 
        samples=100,
        enlargelimits,
        width=16cm,
        xtick=\empty, 
        ytick=\empty, 
        ]
        \addplot[
            gray!80,
            thick
        ] {sin(deg(x/0.16))+1*x^2};

        \coordinate (p1) at (-1.19391, {sin(deg((-1.19391)/0.16))+1*(-1.19391)^2});
        \coordinate (p2) at (-0.239075, {sin(deg((-0.239075)/0.16))+1*(-0.239075)^2});
        \coordinate (p3) at (0.716945, {sin(deg((0.716945)/0.16))+1*(0.716945)^2});
        \coordinate (p4) at (1.66914, {sin(deg((1.66914)/0.16))+1*(1.66914)^2});

        \addplot[only marks, mark=square*, black] coordinates {(-1.19391, {sin(deg((-1.19391)/0.16))+1*(-1.19391)^2})};
        \addplot[only marks, mark=square*, black] coordinates {(-0.239075, {sin(deg((-0.239075)/0.16))+1*(-0.239075)^2})};
        \addplot[only marks, mark=square*, black] coordinates {(0.716945, {sin(deg((0.716945)/0.16))+1*(0.716945)^2})};
        \addplot[only marks, mark=square*, black] coordinates {(1.66914, {sin(deg((1.66914)/0.16))+1*(1.66914)^2})};

        \coordinate (q1) at (-0.795146, {sin(deg((-0.795146)/0.16))+1*(-0.795146)^2});
        \coordinate (q2) at (0.264907, {sin(deg((0.264907)/0.16))+1*(0.264907)^2});
        \coordinate (q3) at (1.3268, {sin(deg((1.3268)/0.16))+1*(1.3268)^2});

        \addplot[only marks, mark=*, black] coordinates {(-0.795146, {sin(deg((-0.795146)/0.16))+1*(-0.795146)^2})};
        \addplot[only marks, mark=*, black] coordinates {(0.264907, {sin(deg((0.264907)/0.16))+1*(0.264907)^2})};
        \addplot[only marks, mark=*, black] coordinates {(1.3268, {sin(deg((1.3268)/0.16))+1*(1.3268)^2})};
      
        \node at (axis cs:-1.6, 4) {\( E_\ep \)};

        \draw[->, black, thick, dotted] (p1) -- (axis cs:-1.19391, -3);  
        \draw[->, black, thick, dotted] (p2) -- (axis cs:-0.239075, -3);  
         \draw[->, black, thick, dotted] (p4) -- (axis cs:1.66914, -3);      

        \node at (axis cs:-0.239075, -3.5) {\( \pt_1 u =\avg{f}\)};
        \node at (axis cs:-1.19391-0.1, -3.5) {Minimal supersolution to};
        \node at (axis cs:-1.19391-0.1, -3.5-0.7) {\( \pt_1 u \le L^*(\gd' u)\)};
        \node at (axis cs:1.5+0.2, -3.5) {Maximal subsolution to};
         \node at (axis cs:1.5+0.2, -3.5-0.7) {\( \pt_1 u \ge L_*(\gd' u)\)};
        \node at (axis cs:.3, -6-0.1) {\( L_\ast(\gd' u)\le \pt_1 u \le L^\ast(\gd' u)\)};
        \node at (axis cs:-1.19391, 1.5) {\(u_\textup{min}^\ep\)};
        \node at (axis cs:1.66914, 3) {\( u_\textup{max}^\ep\)};
        \node at (axis cs:-0.239075, 0) {\(u_\textup{glb}^\ep\)};

        \draw[decorate,decoration={brace,mirror,amplitude=15pt}] 
            (axis cs:-1.5,-4.5-0.2) -- (axis cs:2,-4.5-0.2);
        
    \end{axis}
\end{tikzpicture}
 \captionsetup{width=0.7\textwidth}
    \caption{This figure formally illustrates the critical points of the energy $E_\ep$ as defined in \eqref{eq.theenergyepsilonforlaminarcase} and their homogenization. On the left and right are the extremal solutions, and they homogenize exactly to the extremal solutions of $L_\ast(\gd' u)\le \pt_1 u \le L^\ast(\gd' u)$. The global energy minimizers $u_\textup{glb}^\ep$ homogenizes exactly to the standard Neumann problem $\pt_1 u = \avg{f}$.}
    \label{fig:illustratecriticalpoints}
\end{figure}
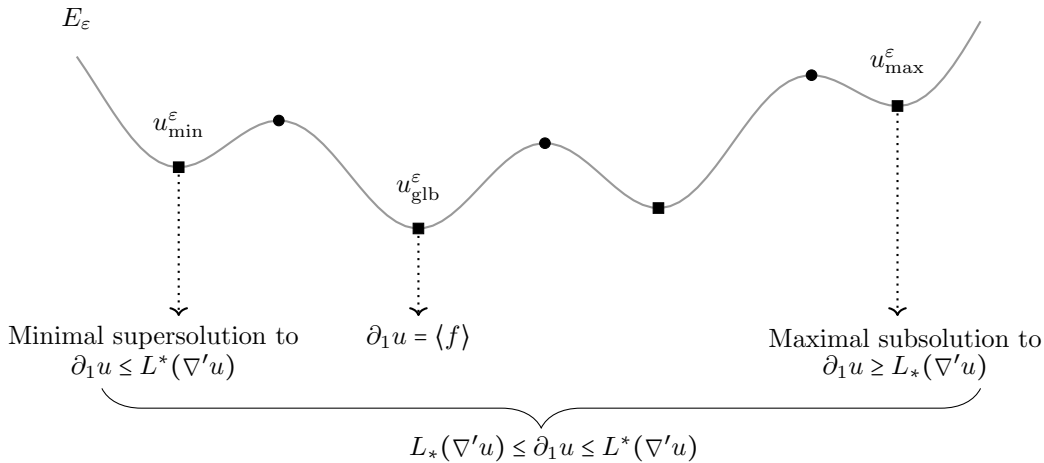

One essential question, which we alluded to already, is: are the extremal solutions of \eqref{eq.generalhomogenizedequidefequiform} actually distinct from the energy minimizing solution satisfying the standard Neumann boundary condition $\pt_1 u = \avg{f}$? We will establish that, indeed, these solutions are distinct for many choices of Dirichlet boundary datum. In fact, this phenomenon also demonstrates the existence of the degenerate regions, which we call free {facets/contact sets}
\[
\Ca(u):=\{\pt_1 u \ne \avg{f}\}\subset \{\gd' u = 0\}\subset B_1'.
\]
See \eqref{eq.definitionforCa} for a precise definition of $\Ca(u)$ in terms of viscosity solution theory.  We prove the following result in Section \ref{section.specialcases}.
\begin{proposition}\label{prop.generalexistenceoffacets}
 There is a non-empty set of boundary data $ \mathcal{F}\subset C(\pt B_1\cap \{x_1\ge0\})$, open in the uniform topology, such that, for all $g\in \mathcal{F}$, the unique minimal supersolution $u_g$ to \eqref{eq.generalhomogenizedequidef} with $u_g=g$ on $\pt B_1\cap \{x_1\ge0\}$ has nontrivial relatively open contact set $\Ca(u_g) \neq \emptyset$.
 
\end{proposition}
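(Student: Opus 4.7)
The plan has two parts: exhibit an explicit $g^\ast \in \mathcal{F}$ via an affine solution, then show $\mathcal{F}$ is open via a Lipschitz stability estimate for the map $g \mapsto u_g$. For the existence step, I would take $g^\ast(x) := (\max f)\,x_1$ restricted to $\partial B_1 \cap \{x_1 \geq 0\}$ and claim $u_{g^\ast}(x) = (\max f)\,x_1$ throughout $B_1^+$. The affine function $\Phi(x) := (\max f)\,x_1$ is harmonic, and on $B_1'$ satisfies $\nabla'\Phi \equiv 0$ with $\partial_1\Phi \equiv \max f \in [\min f, \max f] = [L_*(0), L^*(0)]$ by the explicit formula \eqref{eq.explicithomogenizedcoefficientinintro}; hence $\Phi$ is a classical viscosity solution of \eqref{eq.generalhomogenizedequidef}, in particular a supersolution with the prescribed Dirichlet trace. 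To prove $\Phi$ is the \emph{minimal} supersolution, take any LSC viscosity supersolution $v$ with $v \geq g^\ast$ on the curved boundary and set $w := v - \Phi$. Smooth subtraction shows that $w$ is a viscosity supersolution of $\Delta w \leq 0$ in $B_1^+$ with $w \geq 0$ on the curved boundary, and at any $x_0 \in B_1'$ the relaxed boundary condition for $v$ translates (via the test-function substitution $\phi \mapsto \phi - \Phi$) into the statement that every $C^2$ test function $\psi$ touching $w$ from below at $x_0$ satisfies either $\Delta\psi(x_0) \leq 0$ or $\partial_1\psi(x_0) \leq L^*(\nabla'\psi(x_0)) - \max f \leq 0$, with equality on the right achieved precisely when $\nabla'\psi(x_0) = 0$.

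If $\inf w < 0$, the strong minimum principle places the infimum $m < 0$ at some $x_0 \in B_1'$ (the curved bound $w \geq 0$ rules out the curved boundary and an interior infimum would force $w$ constant). I would then invoke the standard tangent-ball Hopf barrier $\psi(x) := m + \delta(e^{-\alpha|x - \bar{x}|^2} - e^{-\alpha r^2})$ centered at $\bar{x} := x_0 + r\,e_1$, with $\alpha$ chosen large and $\delta$ small. A direct computation gives $\Delta\psi(x_0) = 2\alpha\delta(2\alpha r^2 - d)\,e^{-\alpha r^2} > 0$, $\nabla'\psi(x_0) = 0$, and $\partial_1\psi(x_0) = 2\alpha r \delta\, e^{-\alpha r^2} > 0$. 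Validity of $\psi \leq w$ on the tangent half-ball $B_r(\bar{x}) \cap \overline{B_1^+}$ follows by applying the viscosity minimum principle to $w - \psi$, which is superharmonic, nonnegative on $\partial B_r(\bar{x}) \cap B_1^+$, and zero at the single boundary-contact point $x_0$. Both alternatives in the relaxed boundary condition then fail simultaneously: the first because $\Delta\psi(x_0) > 0$, the second because $\partial_1\psi(x_0) > 0$ while $L^*(0) - \max f = 0$. The resulting contradiction forces $w \geq 0$, so $v \geq \Phi$ and $u_{g^\ast} = \Phi$. Since $f$ is nonconstant we have $\partial_1 u_{g^\ast} = \max f \neq \avg{f}$ on all of $B_1'$, so $\Ca(u_{g^\ast}) = B_1'$ is nonempty and (trivially) relatively open.

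For the openness step, I would show the complement $\mathcal{N} := \{g : \Ca(u_g) = \emptyset\}$ is closed in the uniform topology. The key ingredient is shift equivariance: since the equation depends only on derivatives of $u$, adding a constant $c$ preserves sub- and supersolution structure, so Perron's construction yields $u_{g+c} = u_g + c$, and combined with monotonicity in $g$ this gives the Lipschitz bound $\|u_g - u_{g'}\|_\infty \leq \|g - g'\|_\infty$. For a sequence $g_n \to g$ uniformly with $g_n \in \mathcal{N}$, the assumption $\Ca(u_{g_n}) = \emptyset$ identifies $u_{g_n}$ with the mixed Dirichlet-Neumann harmonic extension $u^{\mathrm{eng}}_{g_n}$ solving the standard Neumann problem $\partial_1 u = \avg{f}$. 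Classical linear theory gives $u^{\mathrm{eng}}_{g_n} \to u^{\mathrm{eng}}_g$ uniformly, and the Lipschitz stability gives $u_{g_n} \to u_g$ uniformly, so $u_g = u^{\mathrm{eng}}_g$ and $g \in \mathcal{N}$. Therefore $\mathcal{F} = \mathcal{N}^c$ is an open set containing $g^\ast$.

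The main technical obstacle is the Hopf-type argument in the existence step, because the two alternatives in the relaxed viscosity boundary condition must be ruled out \emph{simultaneously} by a single barrier. The radial symmetry of the exponential barrier about the inward normal line through $x_0$ is what produces $\nabla'\psi(x_0) = 0$, placing us exactly in the degenerate pinning regime $L^*(0) = \max f$; the otherwise-stronger bound $\langle f\rangle - \max f$ for nonzero tangential gradient relaxes all the way down to zero, which is just weak enough to be violated by the strictly positive inward normal derivative that Hopf produces. Coupling this with the large-$\alpha$ choice that enforces $\Delta\psi(x_0) > 0$ is the delicate part of the computation.
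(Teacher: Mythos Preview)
Your proof is correct and takes a genuinely different route from the paper's.

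\textbf{Existence.} The paper never explicitly identifies a minimal supersolution. Instead it develops a ``boundary maximum principle'' characterization in Section~\ref{s.strong-subs-bdry-max}: a $K$-strong subsolution that is $S$-Lipschitz with $S<K$ must satisfy the boundary maximum principle on $B_1'$ (Lemma~\ref{l.lstrongandthelipschitzconstant}), and after rescaling so that $\max f$ dominates the interior Lipschitz constant (borrowed from \cite{feldman2024regularitytheorygradientdegenerate}*{Lemma~3.1}), Corollary~\ref{cor.boudnarymaximumprincipleasmaxfgrows} forces every minimal supersolution with small oscillation to satisfy the boundary maximum principle on $B_{r_0}'$. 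The paper then takes $\mathcal{F}$ to be the (open, nonempty) set of $g$ for which the standard Neumann solution $v^g$ \emph{fails} that principle, and derives a contradiction from $\Ca(u_g)=\emptyset\Rightarrow u_g=v^g$. Your approach bypasses all of Section~\ref{s.strong-subs-bdry-max}: you exhibit $g^\ast=(\max f)\,x_1$, and the Hopf barrier directly shows $\Phi=(\max f)\,x_1$ is the minimal supersolution, so $\Ca(u_{g^\ast})=B_1'$ by inspection. This is more elementary and self-contained, since it avoids the external Lipschitz estimate.

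\textbf{Openness.} Both proofs rest on the same key equivalence $\Ca(u_g)=\emptyset \Leftrightarrow u_g=v^g$ (the standard Neumann solution). The paper obtains openness of $\mathcal{F}$ because ``$v^g$ fails the boundary maximum principle'' is an open condition in the uniform topology. Your argument instead shows directly that $\mathcal{N}=\{g:\Ca(u_g)=\emptyset\}$ is closed, via the $1$-Lipschitz stability of $g\mapsto u_g$ (from translation equivariance plus monotonicity) together with linear stability of $g\mapsto v^g$. Your route is again cleaner, and in fact yields the largest possible $\mathcal{F}=\mathcal{N}^c$, whereas the paper's $\mathcal{F}$ is a priori a proper subset constrained by an oscillation bound and a fixed radius $r_0$.
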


{Note that, in combination with the homogenization result for extremal solutions in Theorem \ref{t.homogenizationofspecialsolutions}, this shows that $u_{\max}^\ep$, $u_{\min}^\ep$, and $u^\ep_{\textup{glb}}$ are all distinct for a fixed boundary data $g$ as in Proposition \ref{prop.generalexistenceoffacets}.  In other words our result allows to understand the non-uniqueness of the $\ep$ problem \eqref{eq.homogenizationNeumann} in terms of the non-uniqueness of the homogenized problem \eqref{eq.generalhomogenizedequidef}, which is often more accessible.}

We also show in the same section that the homogenized boundary condition in the parabolic evolution \eqref{eq.pGDwithmotionlaw} is, also, non-trivially distinct from the standard Neumann boundary condition. Specifically we show that when the Dirichlet boundary data satisfies a certain strong monotonicity property in time, the solution to the homogenized problem \eqref{eq.pGDwithmotionlaw} will converge to the extremal steady states as time goes to infinity. As just discussed, these extremal steady states are not, in general, solutions to the standard Neumann problem. See Theorem \ref{t.relationbetweenextremsteadystatesandparabolicflow} for the precise description and proof.

\subsection{Pinning, contact set and rate-independent system}

The parabolic homogenization in Theorem \ref{t.homogenizationofparabolicformal} shows the emergence of some novel phenomena from a macroscopic viewpoint:
\begin{enumerate}
    \item \emph{Pinning and rate-independent motion}: Formally there is no motion when the slope 
    $$\pt_1 u \in (L_*(\gd' u),L^*(\gd' u)),$$ 
    that is, when 
    \[
    \gd'u=0 \textup{ and } \pt_1u \in (\min f, \max f).
    \]
    When the interface moves, it follows a rate-independent principle: the state of the contact slope $\pt_1 u$ does not depend on the magnitude of $\pt_t u$, i.e. the rate of motion. Note that the initial problem \eqref{eq.pNNep} has no intrinsic frictional hysteresis, this phenomenon arises in the homogenizaiton limit.
    \item \emph{Singular anisotropy and free contact set}: The free contact set
    \[
    \mathcal{C}(u) = \{\pt_1 u \ne \avg{f}\} \subset \{\gd' u =0\},
    \]
    plays a central role in the homogenized problem. As described in \cite{feldman2024regularitytheorygradientdegenerate} this free region is related to and generalizes the role of the contact set in the thin obstacle problem. Thus a thin free boundary arises in the limiting homogenized problems \eqref{eq.pGDwithmotionlaw} and \eqref{eq.generalhomogenizedequidef}.
 
\end{enumerate}

\subsection{Literature}\label{subsec.literature}

Although our techniques in this paper are primarily based on comparison principle, the problem \eqref{eq.pNNep} has a natural gradient flow structure. In fact the phenomenon of rate independent hysteresis is most commonly studied in the energetic context, see the book \cites{MielkeRoubicek} for more background and references. In order to explain further this connection,  we introduce some of the underlying notations and ideas from the theory of gradient systems. We will be somewhat imprecise, in particular we will not detail the functional spaces or the forcing by boundary data. General gradient systems take the form
\[0 \in \partial_{\dot{u}}R(u(t),\dot{u}(t)) + \partial_u E(t,u(t))+\partial_tE(t,u(t))\]
where $E$ is the energy functional and $R$ is the dissipation rate functional. In the energetic formulation of \eqref{eq.pNNep} the energy is $E_\ep$, as defined in \eqref{eq.theenergyepsilonforlaminarcase}, and the dissipation takes the following $L^2$-form
\[
R_\ep(\pt_t u) := \frac{1}{2}\int_{B_1^+} |\pt_t{u}|^2 \ dx.
\]
The homogenized system \eqref{eq.pGDwithmotionlaw} also has a formal energetic structure.  The homogenized energy $E_*$ takes the form \eqref{eq.homogenizedenergy}, simply the usual $\Gamma$-limit of the $E_\ep$. However something much more subtle happens in the dissipation rate functional, formally speaking the dissipation rate functional is given by
\[
R(\pt_t u;\gd' u) = \frac{1}{2}\int_{B_1^+} |\pt_t{u}|^2 \ dx + \int_{B_1'}  \Ra(\pt_t u(x') ; \gd' u(x')) dx',
\]
where $\Ra(\tau;p)$ is the homogenized quantity defined in \eqref{eq.thedissipationcoefficient}.  The appearance of the $L^1$-type term in the dissipation rate is indicative of rate independent pinning. The origin of such rate independent pinning terms from homogenization of wiggly energies has been expected via analysis of simple ODE models since \cites{james1996hysteresis,james1996wiggly,mielke1999mathematical} and even earlier works on dry friction. In fact this microscopic origin of macroscopic hysteresis is one of the motivations for the theory of gradient systems with rate independent dissipations. What is very unusual in this problem is the singularly anisotropic dependence on the tangential gradient in the rate functional. 

Various energetic convergence theories for gradient systems, including with rate independent / $L^1$ type dissipations, have been established in the literature \cites{SandierSerfaty,Mielke2012,LieroMielkePeletierRenger2017}. However the rigorous passage to the limit from wiggly energy microscopic model to macroscopic model with $L^1$-type dissipation rate has only been fully addressed in one-dimensional ODE models \cites{Mielke2012,Bonaschi2016,dondl2019gradient}.  We establish such a limit theorem in a multi-dimensional PDE setting for the first time. Our techniques, however, do not strongly use the formal energetic structure of the limit problem. In fact, due to the discontinuity of $\Ra(\tau ; p)$ in $p$, we have been unable to rigorously establish an energetic formulation of the limit problem \eqref{eq.pGDwithmotionlaw}.  This is one of the main reasons we don't use energetic techniques to analyze the homogenization limit.  We are very interested whether it is possible to give a rigorous energetic interpretation of the homogenized system \eqref{eq.pGDwithmotionlaw} and study the homogenization limit using evolutionary $\Gamma$-convergence techniques.

As previously mentioned, a central element in our proof of Theorem~\ref{t.homogenizationofparabolicformal} is the comparison principle (Theorem~\ref{t.comparisonprincipleformal}) for solutions of~\eqref{eq.pGDwithmotionlaw}. Similar homogenization challenges arise in other models studied in the literature. For instance, in the context of anisotropic Bernoulli-type problems, elliptic comparison principles were established in~\cites{feldman2019free,feldman2021limit,feldman2024convex}, enabling homogenization results for inhomogeneous one-phase Bernoulli problems~\cites{feldman2021limit,caffarelli2007capillary,caffarelli2007homogenization,caffarellimellet}. These results are in elliptic / stationary settings.  In~\cites{CaffarelliLeeMellet,caffarelli2007flame}, Caffarelli, Lee and Mellet analyzed a reaction-diffusion equation modeling flame propagation, which, under a certain singular limit, converges to the parabolic version of the Bernoulli problem also known as the flame propagation free boundary problem. They consider several scalings, but in the case where the interface width is thinner than the heterogeneities and pinning occurs their work applies only to the one-dimensional case. The challenges in these studies are closely analogous to those in our parabolic model, and more direct connections can be drawn through limiting or linearization procedures (see~\cite{feldman2024regularitytheorygradientdegenerate}).  In comparison to these previous results, this paper is the first to handle the parabolic setting in multiple dimensions. The comparison principle Theorem \ref{t.comparisonprincipleformal} is really the key new tool that allows for the homogenization result in higher dimensions.

As a PDE problem, the homogenized equation \eqref{eq.generalhomogenizedequidefequiform} can be viewed as an  elliptic PDE with gradient degeneracy. Such problems have attracted interest, especially spurred by a result of Silvestre and Imbert \cites{imbert2013c1alpha}, with further developments for nonlocal PDE in \cites{araújo2023fractional,prazeres2021interior} and others. 
There is also a connection with the Signorini or thin obstacle problem, which was described in more detail in the previous work of the authors \cite{feldman2024regularitytheorygradientdegenerate}. The contact set in the elliptic problem \eqref{eq.generalhomogenizedequidefequiform} is closely related to the contact set in the thin obstacle problem. It is a kind of ``unconstrained" analogue of the thin obstacle problem in the language of \cite{figalli2018overviewunconstrainedfreeboundary}. This connection was exploited to study the regularity of solutions to \eqref{eq.generalhomogenizedequidefequiform} in \cites{feldman2024regularitytheorygradientdegenerate}.  
In this work we have derived \eqref{eq.generalhomogenizedequidefequiform} and its parabolic analogue \eqref{eq.pGDwithmotionlaw} via a natural homogenization procedure, and this invites further possible investigation into connections with thin obstacle problems and gradient degenerate elliptic problems.

 Pinning and the hysteresis caused by pinning are central important phenomenon in the study of propagation of interfaces in heterogeneous media. Among others this includes models of capillary contact lines, domain boundaries in random magnetic materials, adhesion of thin films, and others. There have been many works studying pinning and de-pinning of interface motions in heterogeneous media, to name a few examples in slightly related PDE models \cites{barles2011homogenization,dirr2006pinning,dirr2011pinning,courte2021proof,dondl2017ballistic,feldman2021limit,kardar1998nonequilibrium,MR3479884,MR3511342,Xia2015,kim2008,CaffarelliLeeMellet}. Generally speaking the study of stationary (pinned) interfaces in heterogeneous media can be more challenging than that of moving (de-pinned) interfaces. When the front is de-pinned the interface moves through the medium ``seeing" the entire medium, and the large-scale averaging behavior is more accessible.  The additional challenges in our work compared to much of the literature discussed above are (1) the large scale forcing $\partial_1u$ depends nonlocally on the shape, especially the tangential gradient $\gd'u$ of the interface causing the singular anisotropy of the limiting PDE in contrast to works like \cites{dirr2006pinning,dirr2011pinning,courte2021proof,dondl2017ballistic} where the forcing is an external constant. This nonlocal geometric dependence is also one of the reasons for the existence of facets/contact sets that specify the free regions having distinct pinning phenomena; (2) the boundary condition is both quasistatic and rate independent so the comparison principle and uniqueness present a new challenge as compared to finite velocity models as in \cite{kim2008}.

\subsection{Organization of the paper}
In Section \ref{section.preliminaries} we introduce some notations, viscosity touching and crossing and the notions of half relaxed limits. In Section \ref{section.generalsteadystates} we start with a $\Gamma$-convergence result for the homogenization of energy minimizers. Then we study the corrector problem.  We introduce the homogenized pinning interval and prove the existence of plane-like correctors satisfying the strong Birkhoff property. In Section \ref{section.extremalsteadystates} we show the homogenization of extremal solutions of the elliptic problem \eqref{eq.homogenizationNeumann}. In Section \ref{section.homogenizedparabolicflow} we make precise the notion of viscosity solution to the homogenized evolution \eqref{eq.pGDwithmotionlaw}. In Section \ref{section.homoparabolic} we prove the homogenization of \eqref{eq.pNNep} using the half-relaxed limit method. In Section \ref{section.comparisonprinciple} we prove the comparison principle of \eqref{eq.pGDwithmotionlaw}. Finally in Section \ref{section.specialcases} we show the existence of facets / contact sets for certain boundary conditions.

\subsection{Acknowledgments} This material is based upon work supported by the National Science Foundation under Award No. DMS-2407235, both authors were partially supported by this grant. The authors also acknowledge the research environment provided by the support of the NSF RTG grant Award No. 2136198.

\section{Preliminaries}\label{section.preliminaries}

\subsection{Notations}\label{subsection.notations}
\begin{itemize}
    \item If not particularly defined, $\alpha\in (0,1)$ will be a constant that represents the H\"{o}lder exponents that may change from line to line.
    \item For a $\Z^d$-periodic function $f=f(x',u)$ on $\R^{d-1}\times\R$, we denote the average as $\avg{f}:=\int_{(0,1]^{d-1}}\int_{0}^1 f(y',r) dr dy'$. In particular, if $f=f(u)$ we have $\avg{f}=\int_{0}^1 f(r) dr$.
    \item Define 
    \[
    \R_+^d:=\R^d \cap \{x_1>0\} \quad \textup{and} \quad \pt \R_+^d = \R^d \cap\{x_1=0\}.
    \]
    \item Suppose $\Omega\subset \R^d$ is open (or $\Omega\subset \overline{\R_+^d}$ is relatively open), we write 
    $$
    \Omega':= \Omega\cap \{x_1=0\}\quad \textup{and} \quad \Omega^+:=\{x_1>0\}.
    $$ 
    In particular, we write $B_1^+:=B_1\cap \{x_1>0\}$ and $B_1':=B_1\cap \{x_1=0\}$. We write the \say{exterior boundary} as 
    $$
    \pd \Omega=\pd \Omega^+:=\lb\pt \Omega^+\rb\setminus \Omega'.
    $$
    \item For a space-time cylindrical domain $$U=\Omega\times (t_1,t_2) \ (\textup{or }\Omega\times (t_1,t_2]) \subset \R^d\times \R$$ with $t_1<t_2$, we write 
    \[
    U':=\Omega'\times(t_1,t_2] \quad \textup{and} \quad  U^+:=\Omega^+\times(t_1,t_2],
    \]
    and we define the following two types of parabolic boundaries
    \[
    \pp U:= \overline{U}\setminus \lb \Omega\times(t_1,t_2]\rb \quad \textup{and} \quad  \ppp U=\ppp U^+:=\pp U^+ \setminus U'.
    \]
    \item We write $D_{T}:= B_1 \times (0,T]$ for $T\in(0,\infty)$ and $D_\infty:=B_1\times(0,\infty)$. In particular, we have
    \[
    D_{T}'=B_1' \times (0,T] \quad \textup{and} \quad  \ D_{T}^+= B_1^+\times (0,T].
    \]
    We also write
    \[
     D_{\infty}'=B_1' \times (0,\infty) \quad \textup{and} \quad  \ D_{\infty}^+= B_1^+\times (0,\infty).
    \]

\item For two sets $A$, $B$ we denote the symmetric difference
\[
A\Delta B:= (A\setminus B) \cup (B\setminus A).
\]
\item A function $u:\R^d\rta [-\infty,\infty)$ is upper semicontinuous if for every $x\in U$
\[
\limsup_{y\rta x}u(y)\le u(x).
\]
A lower semicontinuous function is defined symmetrically. We emphasize here that upper semicontinuous functions are allowed to take negative infinity value.
\end{itemize}

\subsection{Touching, crossing and half relaxed limits}\label{appendix.halfrelaxedlimits}

We introduce here the notion of {touching} and {crossing}, and their behavior under {half relaxed limits}. Since our problem involves a boundary condition, we mainly consider in a relatively open domain $U\subset \R_+^d\cup \pt\R_+^d$, and denote $U^+=U\cap \R_+^d$, $U'=U\cap \pt\R_+^d$.

\begin{definition}\label{def.touching}
    We say that a smooth function $\psi$ \emph{touches} an upper semicontinuous function $u: \overline{U}\rta[-\infty,\infty)$ (strictly) from above at $x_0\in U$ if there is an open domain $V\subset \R^d$ that contains $x_0$ so that $u-\psi$ attains its (strict) maximal value 0 at $x_0$ in $V\cap U$. We say $\psi$ \emph{touches} a lower semicontinuous function $v$ (strictly) from below at $x_0$ if $-\psi$ touches $-v$ (strictly) from above at $x$.
\end{definition}

It is often more appropriate to consider the notion of {crossing}, or {parabolic touching}, especially in parabolic equations with non-proper zeroth order terms \cite{UsersGuide}. 
\begin{definition}\label{def.crossing}
    In a cylindrical domain $U\times(t_1,t_2)$, we say that a smooth function $\psi$ \emph{crosses} an upper semicontinuous function $u:\overline{U}\times [t_1,t_2]\rta[-\infty,\infty)$ (strictly) from above at $(x_0,t_0)\in U\times(t_1,t_2)$ if there is an open domain $V\subset \R^d$ that contains $x_0$ and a small $r>0$ so that $u-\psi$ attains its (strict) maximal value 0 at $(x_0,t_0)$ in the space-time domain $(V\cap U)\times(t_0-r,t_0]$. The \emph{crossing} from below is defined symmetrically for lower semicontinuous functions.
\end{definition}

For a sequence of upper semicontinuous functions $u_n$ that are bounded from above in a closed set $K\subset\R^d$, we define the {upper half relaxed limit}
\be
{\limsup_{n\rta\infty}}^* u_n(x) = \inf_{n\ge 1} \max_{|y-x|\le 1/n, y\in K} u_n(y), 
\ee
and symmetrically the {lower half relaxed limit} for a sequence of lower semicontinuous functions $v_n$ that are bounded from below in $K$
\be
\underset{n\to \infty}{{\liminf}_\ast} v_n(x) = \sup_{n\ge 1} \min_{|y-x|\le 1/n, y\in K} v_n(y).
\ee
The application of half relaxed limits in viscosity solution limit problems was first introduced by Barles and Perthame \cites{BarlesPerthame1,BarlesPerthame2}.

In the following we prove two technical lemmas about the perturbation properties of touching and crossing under the notion of half relaxed limits. 

\begin{lemma}\label{l.stabilityoftouching}
   Let $u^*$ be the upper half relaxed limit of $u_n$ in $\overline{U}$, then $u^*$ is upper semicontinuous. If $u^*$ reaches a strict maximum at $x_0$ in $B_r(x_0)\cap U$ for some radius $r>0$, then there is a subsequence $u_{n_j}$ of $u_n$ and a sequence of maximum points $x_{n_j}$ of $u_{n_j}$ in $B_r(x_0)\cap U$ that converge to $x_0$ as $j\rta\infty$, and 
   \[
   \lim_{j\rta \infty} u_{n_j}(x_{n_j}) = u^*(x_0).
   \]
   Similar results also hold for the lower half relaxed limits by symmetry.
\end{lemma}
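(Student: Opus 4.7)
The plan is to handle the two assertions separately, treating upper semicontinuity first and then the extraction of converging near-maximizers by a compactness and diagonalization argument.

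For upper semicontinuity, I would first verify that for each fixed $n$ the sliced maximum
\[
f_n(x) := \max_{|y-x|\le 1/n,\,y\in K} u_n(y)
\]
is an upper semicontinuous function of $x$. This uses the compactness of the closed ball together with the upper semicontinuity of $u_n$: if $x_k\to x$ and $y_k$ attains the defining maximum for $f_n(x_k)$, then after passing to a subsequence $y_k\to y_\infty$ with $|y_\infty-x|\le 1/n$ and $y_\infty\in K$, so that $\limsup_k f_n(x_k)=\limsup_k u_n(y_k)\le u_n(y_\infty)\le f_n(x)$. Since $u^\ast=\inf_n f_n$, upper semicontinuity of $u^\ast$ is the standard fact that a pointwise infimum of upper semicontinuous functions is upper semicontinuous: for each $n$, $\limsup_{x\to x_0} u^\ast(x)\le\limsup_{x\to x_0} f_n(x)\le f_n(x_0)$, and taking $\inf_n$ on the right gives the bound by $u^\ast(x_0)$.

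For the second assertion, fix $r'\in(0,r)$ and set $K_0:=\overline{B_{r'}(x_0)}\cap\overline{U}$; this is compact, so upper semicontinuity of each $u_n$ guarantees that $u_n$ attains its maximum on $K_0$ at some point $x_n$. Two ingredients are then combined. First, from the definition of $u^\ast$ I would extract a subsequence (still denoted $n$) along which $f_n(x_0)\to u^\ast(x_0)$, together with a matching sequence $z_n\in\overline{B_{1/n}(x_0)}\cap K$ attaining $u_n(z_n)=f_n(x_0)$. Since $z_n\to x_0$, eventually $z_n\in K_0$ and the maximum property of $x_n$ gives $u_n(x_n)\ge u_n(z_n)=f_n(x_0)\to u^\ast(x_0)$, whence $\liminf u_n(x_n)\ge u^\ast(x_0)$. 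Second, by compactness pass to a further subsequence $x_{n_j}\to x_\infty\in K_0$; directly from the definition of the upper half-relaxed limit, for any fixed $m$, eventually $|x_{n_j}-x_\infty|\le 1/m$ and $n_j\ge m$, so $u_{n_j}(x_{n_j})\le f_{n_j}(x_\infty)$, and taking $\limsup$ in $j$ then $\inf$ in $m$ yields $\limsup_j u_{n_j}(x_{n_j})\le u^\ast(x_\infty)$. Combining, $u^\ast(x_0)\le u^\ast(x_\infty)$; strict maximality of $x_0$ forces $x_\infty=x_0$, so for large $j$ the point $x_{n_j}$ lies in the interior of $K_0$ and is therefore a maximum of $u_{n_j}$ on $B_r(x_0)\cap U$, and the sandwich gives $u_{n_j}(x_{n_j})\to u^\ast(x_0)$.

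The main delicate point to navigate is the coupling of the function index with the ball radius in the definition $u^\ast(x)=\inf_n\max_{|y-x|\le 1/n} u_n(y)$: the infimum need not be realized as a genuine limit, so producing a recovery sequence with $u_n(z_n)\to u^\ast(x_0)$ requires an initial subsequence extraction in $n$ so that $f_n(x_0)\to u^\ast(x_0)$. Once that diagonalization is in place, the rest is standard viscosity-theoretic compactness. The lower semicontinuous version for $\liminf_\ast$ follows by applying the statement just proved to $-v_n$.
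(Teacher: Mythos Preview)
The paper does not give its own proof; it just cites Silvestre's lecture notes. Your outline is the standard argument and is correct in spirit, so there is nothing to compare at the level of strategy.

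There is, however, a genuine gap in the second step. You assert that for fixed $m$, once $|x_{n_j}-x_\infty|\le 1/m$ and $n_j\ge m$, one has $u_{n_j}(x_{n_j})\le f_{n_j}(x_\infty)$. With $f_n(x)=\max_{|y-x|\le 1/n}u_n(y)$ this would require $|x_{n_j}-x_\infty|\le 1/n_j$, which is strictly stronger than what you have (since $1/n_j\le 1/m$). And indeed, with the paper's displayed formula taken literally, the lemma is false: take $u_n(y)=-y^2$ for $y\ne 1/\sqrt n$ and $u_n(1/\sqrt n)=1$; then $u^\ast(x)=-x^2$ has a strict maximum at $0$, but the maxima of $u_n$ near $0$ sit at $1/\sqrt n$ with value $1\ne u^\ast(0)$.

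The resolution is that the paper's displayed definition is a slip; the half-relaxed limit in the Barles--Perthame sense (and in the reference the paper cites) is
\[
u^\ast(x)=\inf_{m\ge1}\ \sup_{n\ge m}\ \max_{|y-x|\le 1/m,\,y\in K}u_n(y)=:\inf_{m}g_m(x).
\]
With this $g_m$ in place of your $f_n$, your sentence becomes correct: from $|x_{n_j}-x_\infty|\le 1/m$ and $n_j\ge m$ one gets $u_{n_j}(x_{n_j})\le g_m(x_\infty)$, hence $\limsup_j u_{n_j}(x_{n_j})\le g_m(x_\infty)$ for every $m$, and taking $\inf_m$ gives $\le u^\ast(x_\infty)$. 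You flag exactly this coupling as the delicate point in your last paragraph, but the argument as written does not actually navigate it; replace $f_{n_j}$ by $g_m$ and the proof is complete. The upper semicontinuity paragraph and the recovery-sequence/liminf half are fine as written.
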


\begin{proof}

See \cite{Silvestre2015}*{Lemma 3.5}.

\end{proof}

\begin{lemma}\label{l.stabilityofcrossing}
   Let $u^*$ be the upper half relaxed limit of $u_n$ in $\overline{U}\times[t_1,t_2]$. If $u^*$ reaches a strict maximum at $(x_0,t_0)$ in $\lb B_r(x_0)\cap U\rb\times(t_0-r,t_0]$ for $(x_0,t_0)\in U \times (t_1,t_2)$ and some radius $r>0$, then there is a subsequence $u_{n_j}$ of $u_n$ and a sequence of maximum points $(x_{n_j},t_{n_j})$ of $u_{n_j}$ in $\lb B_{r}(x_0)\cap U\rb\times(t_0-r,t_{n_j}]$ that converge to $(x_0,t_0)$ as $j\rta\infty$, and 
   \[
   \lim_{j\rta \infty} u_{n_j}(x_{n_j},t_{n_j}) = u^*(x_0,t_0).
   \]
   Similar results also hold for the lower half relaxed limits by symmetry.

\end{lemma}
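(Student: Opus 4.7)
The plan is to adapt the proof of Lemma \ref{l.stabilityoftouching} (Silvestre \cite{Silvestre2015}, Lemma 3.5) to the parabolic/crossing setting, where the time cylinder is one-sided $(t_0 - r, t_0]$.

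First, since $U$ is relatively open in $\overline{\R_+^d}$ and $(x_0, t_0) \in U \times (t_1, t_2)$, I pick $\rho \in (0, r)$ small enough that the compact set
\[
K := \big(\overline{B_\rho(x_0)} \cap \overline{\R_+^d}\big) \times [t_0 - \rho, t_0]
\]
is contained in $(B_r(x_0) \cap U) \times (t_0 - r, t_0]$. By hypothesis, $u^*$ has a strict maximum at $(x_0, t_0)$ on $K$. Since each $u_n$ is upper semicontinuous and $K$ is compact, $u_n$ attains a maximum at some $(x_n, t_n) \in K$, with value $M_n := u_n(x_n,t_n)$.

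Next I claim $(x_n, t_n) \to (x_0, t_0)$ and $M_n \to u^*(x_0, t_0)$ along a subsequence. From the definition of the upper half-relaxed limit one extracts a subsequence $n_j$ and points $(y_j, s_j) \to (x_0, t_0)$, eventually in $K$, with $u_{n_j}(y_j, s_j) \to u^*(x_0, t_0)$, so that $\liminf_j M_{n_j} \geq u^*(x_0, t_0)$. Passing to a further subsequence so that $(x_{n_j}, t_{n_j}) \to (\bar x, \bar t) \in K$, the standard upper semicontinuity property of half-relaxed limits yields $u^*(\bar x, \bar t) \geq \limsup_j M_{n_j} \geq u^*(x_0, t_0)$; strict maximality of $u^*$ on $K$ forces $(\bar x, \bar t) = (x_0, t_0)$, and upper semicontinuity of $u^*$ at $(x_0, t_0)$ then pinches $M_{n_j} \to u^*(x_0, t_0)$.

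For the final assertion, since $(x_{n_j}, t_{n_j}) \to (x_0, t_0)$ and $t_0 > t_0 - \rho$, the maximizers eventually lie in $(B_r(x_0) \cap U) \times (t_0 - r, t_{n_j}]$, and being maximizers on the larger set $K$ they remain maximizers on this subset as required. The main (minor) obstacle relative to the stationary case is the asymmetric treatment of the time variable: one must use the closed interval $[t_0 - \rho, t_0]$ for compactness of $K$, while the strict maximum hypothesis is only assumed on the half-open $(t_0 - r, t_0]$, which is precisely why we insist on $\rho < r$ so that $[t_0 - \rho, t_0] \subset (t_0 - r, t_0]$. The lower half-relaxed limit version follows by applying the upper case to $-v_n$.
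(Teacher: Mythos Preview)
Your argument is essentially the same as the paper's through the convergence $(x_{n_j},t_{n_j})\to(x_0,t_0)$ and $M_{n_j}\to u^*(x_0,t_0)$. However, the final sentence contains an error: you claim that $K$ is ``the larger set'' containing $(B_r(x_0)\cap U)\times(t_0-r,t_{n_j}]$, but this is false. Your $K$ uses the smaller ball $B_\rho$ with $\rho<r$ and the shorter lower time interval $[t_0-\rho,t_0]$, so spatially and in the lower time direction $K$ is \emph{strictly smaller} than the set in the lemma. Your maximizers $(x_{n_j},t_{n_j})$ are therefore only maximizers of $u_{n_j}$ on $K$ (equivalently on $K\cap\{t\le t_{n_j}\}$), not on $(B_r(x_0)\cap U)\times(t_0-r,t_{n_j}]$ as the lemma requires.

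The paper avoids this by maximizing $u_n$ over $\overline{(B_r(x_0)\cap U)}\times[t_0-r,s_n]$, where $(y_n,s_n)\to(x_0,t_0)$ are the approximating points with $u_n(y_n,s_n)\to u^*(x_0,t_0)$. Using the full radius $r$ and the \emph{variable} upper time $s_n$ ensures two things at once: (i) $(y_n,s_n)$ lies in the maximizing set, so $u_n(x_n,t_n)\ge u_n(y_n,s_n)$, and (ii) since $t_n\le s_n$, the set $(B_r(x_0)\cap U)\times(t_0-r,t_n]$ is genuinely a subset of the maximizing set, yielding exactly the stated conclusion once one knows $(x_n,t_n)\to(x_0,t_0)$. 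Alternatively, your proof does establish the lemma with $r$ replaced by some smaller $\rho$, which is all that is actually needed for the applications, since Definition~\ref{def.crossing} allows an arbitrary neighborhood; but as written it does not prove the lemma as stated.
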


\begin{proof}

We assume that $u^*(x_0,t_0)=0$. By definition of upper half relaxed limit, we can find a subsequence $u_n$, not relabeled, and a sequence of points $(y_n,s_n)\in U\times(t_1,t_2)$ such that
\[
\lim_{n\rta\infty}(y_n,s_n)=(x_0,t_0) \textup{ and } \lim_{n\rta\infty} u_n(y_n,s_n) = u^*(x_0,t_0).
\]
We define for each $n$, $(x_n,t_n)$ to be a maximum point of $u_n$ in
\[
\overline{\lb B_{r}(x_0)\cap {U}\rb}\times[t_0-r,s_n].
\]
By compactness, there is a limit point $(x_0',t_0')$ of the sequence $(x_n,t_n)$, and therefore
\[
u^*(x_0',t_0') \ge \limsup_{n\rta\infty} u_n(x_n,t_n) \ge \limsup_{n\rta\infty} u_n(y_n,s_n) =u^*(x_0,t_0)=0.
\]
This shows that $t_0'\ge t_0$ and because $t_n\le s_n\rta t_0$, we know that $t_0'=t_0$. Because $u^*$ is strictly negative in $\lb B_r(x_0)\cap U\rb\times(t_0-r,t_0]$ except for $(x_0,t_0)$, we know that $x_0'=x_0$. This shows that $(x_n,t_n)$ has to converge to $(x_0,t_0)$ and the proof is complete.

\end{proof}

\section{General steady states}\label{section.generalsteadystates}

 In this section we study the homogenization of the following more general steady state problem
\be\label{eq.homogenizationNeumanngeneral1}
\bca
\Delta u^\ep = 0 & \textup{ in }B_1^+\\
\pt_1 u^\ep = f(x'/\ep,u^\ep/\ep) & \textup{ on }B_1'.
\eca
\ee
Here $f(y,z)$ belongs to $C^\alpha(\R^{d-1}\times\R)$ for some $\alpha>0$ and is $\Z^{d-1} \times \Z$-periodic. We remark here that although we include more general inhomogeneity in the steady state problem, the parabolic homogenization remains open due to the lack of comparison principles similar to Theorem \ref{t.comparisonprincipleformal}.  

\begin{remark}
    There are two main types of solutions to \eqref{eq.homogenizationNeumanngeneral1}: distributional weak solutions and viscosity solutions. Under the assumption $f\in C^\alpha(\R^{d-1}\times\R)$ the two notions are equivalent and are both classical solutions by applying the regularity results in \cites{Fern2022,milakis2006regularity,nittka2011regularity} (See also Appendix \ref{appendix.someestimates}).
\end{remark}

First, in Section~\ref{subsec.globalenergyminimizer}, we show the homogenization of energy minimizers by a $\Gamma$-convergence type argument. Then we consider the homogenization of general solutions of the PDE. This requires the introduction and classification of the plane-like correctors and the pinning interval, which is covered in Section \ref{subsec.planelikecorrector} and Section~\ref{subsec.homogenizationofgeneralvissol}. In the special case of laminar media the pinning interval takes a particularly simple form, and can be exactly computed, we carry this computation out in Section~\ref{subsec.laminarcasehomogenization}.

\subsection{The global energy minimizers}\label{subsec.globalenergyminimizer}

In this subsection we consider the energy minimizing solutions of \eqref{eq.homogenizationNeumanngeneral1}. Specifically, consider the energy functional
\be\label{eq.theenergyepsilon}
E_\ep(u,B_1^+):= \int_{B_{1}^+} \frac{1}{2}|\gd u|^2 + \int_{B_1'} \int_0^{u(x')} f_\ep (x',r) dr dx',
\ee
where $f_\ep(x',r)=f(x'/\ep,r/\ep)$ is a $\Z^d$-periodic and H\"older continuous function.

\begin{lemma}\label{l.localuniformconvergenceofglobalminimizers}
As $\ep\rta0$ the energy $E_\ep$ subject to the boundary constraint $u=g$ on $\pd B_1$, $\Gamma$-converges (See definitions in \cite{braides2006handbook}) to
\[
E_0(u,B_1^+):=\int_{B_{1}^+} \frac{1}{2}|\gd u|^2 + \int_{B_1'} \avg{f} u(x') dx',
\]
where $\avg{f}:=\int_{[0,1)^{d-1}}\int_0^1 f(y',r) dr dy'$. Moreover, if $g$ is continuous then the corresponding {global energy minimizers} $u^\ep$ converge uniformly on $\overline{B_1^+}$ to the problem
\be
\bca
\Delta u = 0 & \textup{ in }B_1^+\\
u=g& \textup{ on }\pd B_1\\
\pt_1 u = \langle f \rangle&\textup{ on }B_1'.
\eca
\ee

\end{lemma}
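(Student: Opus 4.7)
The plan is to establish $\Gamma$-convergence $E_\ep \to E_0$ in the strong $L^2(B_1^+)$ topology (equivalent to weak $H^1$ convergence on bounded sequences), and then upgrade convergence of minimizers to uniform convergence via elliptic regularity and Arzel\`a--Ascoli. The Dirichlet bulk term $\tfrac12\int|\nabla u|^2$ is identical in $E_\ep$ and $E_0$ and is handled by weak lower semicontinuity (liminf) and the constant recovery sequence $u^\ep\equiv u$ (limsup). The averaging takes place in the boundary term. Set $F_\ep(x',s):=\int_0^s f(x'/\ep,r/\ep)\,dr$ and $\bar f(y'):=\int_0^1 f(y',\rho)\,d\rho$. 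Substituting $\rho = r/\ep$, writing $s/\ep = N+\theta$ with $N\in\Z$ and $\theta\in[0,1)$, and using $\Z$-periodicity of $f$ in its second argument gives the uniform pointwise bound
\[
\big|F_\ep(x',s) - s\,\bar f(x'/\ep)\big| \;\le\; 2\ep\,\|f\|_\infty
\]
for $s$ in a bounded set. Since $\bar f$ is $\Z^{d-1}$-periodic with mean $\langle f\rangle$, Riemann--Lebesgue gives $\bar f(\cdot/\ep)\weakcv \langle f\rangle$ weakly-$\ast$ in $L^\infty(B_1')$, so for each fixed $u\in H^1(B_1^+)$ with bounded trace one has $\int_{B_1'} F_\ep(x',u)\,dx' \to \langle f\rangle\int_{B_1'} u\,dx'$. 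For the liminf inequality along any $u^\ep \weakcv u$, compactness of the trace $H^1(B_1^+)\hookrightarrow L^2(B_1')$ together with $|F_\ep(x',s)-F_\ep(x',t)|\le\|f\|_\infty|s-t|$ reduces the boundary term to the fixed-$u$ statement; taking $u^\ep\equiv u$ provides the matching recovery sequence.

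For convergence of minimizers I first secure a uniform $L^\infty$ bound via barriers: the classical Neumann problems $\Delta w^\pm=0$ in $B_1^+$, $w^\pm=g$ on $\pd B_1$, $\pt_1 w^\pm = \pm\|f\|_\infty$ on $B_1'$ admit bounded smooth solutions, and since $\pt_1 u^\ep \in [-\|f\|_\infty,\|f\|_\infty]$ comparison gives $w^- \le u^\ep \le w^+$. A uniform $H^1$ bound follows from the minimizing property tested against the harmonic extension of $g$. The elliptic regularity results cited earlier in the paper then yield uniform $C^\alpha$ estimates up to $B_1'$ on compact subsets of $\overline{B_1^+}\setminus\pd B_1$, and a standard barrier argument exploiting continuity of $g$ provides an $\ep$-independent modulus of continuity up to $\pd B_1$. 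By Arzel\`a--Ascoli, any subsequence contains a further uniformly convergent subsequence $u^{\ep_k}\to u_*$ on $\overline{B_1^+}$. By the $\Gamma$-convergence together with equicoercivity from the $H^1$ bound, $u_*$ minimizes $E_0$ subject to $u_*=g$ on $\pd B_1$; its Euler--Lagrange equation is precisely the displayed standard Neumann problem, whose solution is unique. Uniqueness of the cluster point forces the full sequence $u^\ep$ to converge uniformly on $\overline{B_1^+}$.

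The main delicate point is the periodic averaging in the boundary integral when the argument of $F_\ep$ varies with $\ep$. In the liminf direction $u^\ep|_{B_1'}$ is only a priori weakly convergent, and it is the combination of trace compactness with the Lipschitz continuity of $F_\ep$ in its second argument that converts the problem into the fixed-$u$ Riemann--Lebesgue averaging statement. Everything else is standard $\Gamma$-convergence together with the a priori $L^\infty$, $H^1$, and $C^\alpha$ estimates that make Arzel\`a--Ascoli applicable.
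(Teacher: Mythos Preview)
Your proposal is correct and takes a genuinely different route to the $\Gamma$-convergence than the paper. The paper does not use your direct estimate $|F_\ep(x',s)-s\bar f(x'/\ep)|\le 2\ep\|f\|_\infty$ together with trace compactness and Riemann--Lebesgue; instead it introduces an auxiliary harmonic function $v_\ep$ with $\pt_1 v_\ep=\int_0^1 f(x'/\ep,r)\,dr$ on $B_1'$ and $\pt_{\vec n}v_\ep=0$ on $\pd B_1^+$, integrates by parts to rewrite $E_\ep(u_\ep)=\tfrac12\int|\nabla(u_\ep-v_\ep)|^2+\tfrac12\int_{B_1'}\bar f(x'/\ep)v_\ep\,dx'+O(\ep)$, and then combines weak lower semicontinuity of the first term with uniform (not just weak) convergence of $v_\ep$, obtained from a global $C^\alpha$ estimate. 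Your argument is more elementary and avoids the auxiliary PDE entirely; the paper's detour makes the quadratic structure explicit, which could be useful for rates, but for the qualitative statement your use of trace compactness to pass from varying $u^\ep$ to fixed $u$ is cleaner. For the uniform convergence of minimizers the two approaches are close: the paper subtracts the harmonic function $h$ with $\pt_1 h=0$ on $B_1'$ and $h=g$ on $\pd B_1^+$, then applies a global $C^\alpha(\overline{B_1^+})$ estimate to $u^\ep-h$ (which has zero data on $\pd B_1^+$), whereas you assemble the same modulus of continuity from interior $C^\alpha$ estimates plus barriers at $\pd B_1^+$.
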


\begin{proof}
    Let us first show that $E_\ep$ $\Gamma$-converges to $E_0$ as $\ep \rta 0$ in the weak topology of $H_g^1(B_1^+)$, which is the subspace of $H^1(B_1^+)$ that have trace $g$ on $$\pt^+B_1=\pt B_1\cap \{x_1\ge0\}.$$
    To see this we consider the lower floor function modulo $\ep>0$, $\lfloor {a}\rfloor_\ep:= \sup\{z\in \ep\Z \ ; \ z\le a\}$. Let $u_\ep \weakcv u$ in $H^1(B_1^+)$ then
    \be\label{eq.toshowthegamaconvergencewenneedsomting}
    \begin{split}
         E_\ep(u_\ep) &= \int_{B_1^+} \frac{1}{2}|\gd u_\ep|^2 + \int_{B_1'} \int_0^{u_\ep(x')} f_\ep(x',r)dr dx'\\
    &=\int_{B_1^+}  \frac{1}{2}|\gd u_\ep|^2 + \int_{B_1'} \int_0^{u_\ep(x')-\lfloor u_\ep(x')\rfloor_\ep} f_\ep(x',r)dr + \int_0^1f(x'/\ep,v) dv \lfloor u_\ep(x')\rfloor_\ep dx'\\
    &=\int_{B_1^+}  \frac{1}{2}|\gd u_\ep|^2 + \int_{B_1'} \int_0^1f(x'/\ep,v) dv\, u_\ep(x') dx' + O(\ep).
    \end{split}
     \ee
On the other hand, let $v_\ep$ minimize the following energy on $H^1(B_1^+)$ with $\int_{B_1^+} v_\ep (x) dx =0$
\[
\Tilde{E}_\ep(v,B_1^+):= \int_{B_{1}^+} \frac{1}{2}|\gd v|^2 + \int_{B_1'}v(x') \int_0^{1} f (x'/\ep,r) dr  dx'.
\]
Then $v_\ep$ solves
\[
\bca
\Delta v_\ep = 0&\textup{ in }B_1^+\\
\pt_{\Vec{n}} v_\ep = 0& \textup{ on }\pd B_1^+\\
\pt_1 v_\ep = \int_0^1f (x'/\ep,r)dr& \textup{ on }B_1',
\eca
\]
where $\pt_{\Vec{n}}$ is the inner normal derivative $\pd B_1^+$. Because $f$ is a bounded function, we have by standard elliptic theory
\[
\sup_{\ep>0} \norm{v_\ep}_{H^1(B_1^+)}<\infty.
\]
This shows that, by Banach-Alaoglu theorem, $v_\ep$ weakly converge to the {unique} $v$ that satisfies $\int_{B_1^+} v(x)dx=0$ and solves
\be\label{eq.v0}
\bca
\Delta v = 0&\textup{ in }B_1^+\\
\pt_{\Vec{n}} v = 0& \textup{ on }\pd B_1^+\\
\pt_1 v = \avg{f}& \textup{ on }B_1'.
\eca
\ee
Moreover, by applying the H\"{o}lder estimate in \cite{nittka2011regularity}*{Proposition 3.6}, we also have
\be\label{eq.uniformholderofvep}
\sup_{\ep>0} \norm{v_\ep}_{C^\alpha(B_1^+)}<\infty,
\ee
which, by applying Arzela-Ascoli theorem, shows that $v_\ep$ converge uniformly on $\overline{B_1^+}$.

Now, substituting $v_\ep$ into \eqref{eq.toshowthegamaconvergencewenneedsomting} we obtain
\begin{align}
         E_\ep(u_\ep)&=\int_{B_1^+}  \frac{1}{2}|\gd u_\ep|^2 + \int_{B_1'} \int_0^1f(x'/\ep,r) dr\, u_\ep(x') dx' + O(\ep)\notag\\
    &=\int_{B_1^+}  \frac{1}{2}|\gd u_\ep|^2 + \int_{B_1'} \pt_1 v_\ep(x')\, u_\ep(x') dx' + O(\ep)\notag\\
    &=\int_{B_1^+}  \frac{1}{2}|\gd u_\ep|^2 - \int_{B_1^+} \gd v_\ep(x)\cdot \gd u_\ep(x) dx + O(\ep)\notag\\
    &=\int_{B_1^+}  \frac{1}{2}|\gd (u_\ep-v_\ep)|^2 - \frac{1}{2}\int_{B_1^+}|\gd v_\ep|^2 + O(\ep)\notag\\
    &=\int_{B_1^+}  \frac{1}{2}|\gd (u_\ep-v_\ep)|^2 + \frac{1}{2}\int_{B_1'} \int_0^1f(x'/\ep,r) dr\, v_\ep(x') dx' + O(\ep)\label{e.energyhom-align}\\
    \end{align}
After sending $\ep\rta 0$, we obtain, recalling $u$, $v$ are the weak limits of $u_\ep$, $v_\ep$,
   \[
   \int_{B_1^+}  \frac{1}{2}|\gd (u-v)|^2 \le \liminf_{\ep\rta0} \int_{B_1^+}  \frac{1}{2}|\gd (u_\ep-v_\ep)|^2 .
   \]
The third term on the right of \eref{energyhom-align} converges to zero. Lastly consider the second term on the right of \eref{energyhom-align}.  Note that $\int_0^1f(x'/\ep,r) dr$ weakly converges to $\avg{f}$ in $L^2(B_1')$. Combining  \eqref{eq.uniformholderofvep} we have
   \[
  \frac{1}{2}\int_{B_1'} \pt_1 v_\ep(x')\, v_\ep(x') dx' \to  \frac{1}{2}\int_{B_1'} \avg{f}\, v(x') dx' \ \hbox{ as } \ \ep \to 0.
   \]
 We mention that quantitative homogenization results of $v_\ep \weakcv v$ is known in \cite{shen2018boundary}. The above arguments show that
     \[
     \begin{split}
         \liminf_{\ep\rta0} E_\ep(u_\ep) & \ge  \int_{B_1^+}  \frac{1}{2}|\gd (u-v)|^2+\frac{1}{2}\int_{B_1'} \avg{f}\, v(x') dx' \\
         &=E_0(u).
     \end{split}
     \]
     For every $u\in H_g^1(B_1^+)$ we always have the recovery sequence $\hat{u}_\ep=u$ and 
     \[
     E_0(u) = \lim_{\ep\rta0} E_\ep(u).
     \]
By the standard theory of $\Gamma$-convergence, the global minimizers of $E_\ep$ weakly converge to the global minimizers of $E_0$. 

To show the uniform convergence of $u^\ep$ on $\overline{B_1^+}$ we consider the classical solution $h$ satisfying
\[
\bca
\Delta h = 0 & \textup{ in } B_1^+\\
h=g & \textup{ on }\pd B_1^+\\
\pt_1 h=0 & \textup{ on }B_1'.
\eca
\]
The existence of $h\in C(\overline{B_1^+})\cap C^{2,\alpha}(B_1^+\cup B_1')$ is obtained by using Perron's method and standard elliptic regularity theory. Now $u^\ep-h$ satisfies the conditions in Lemma \ref{l.usefulholderestimate} and $u^\ep-h=0$ on $\pd B_1^+$, therefore $u^\ep-h$ has uniform bounded $C^\alpha(B_1^+)$-norm, which implies that $u^\ep$ converges uniformly on $\overline{B_1^+}$ by Arzela-Ascoli theorem.

\end{proof}

\begin{remark}
   The above proof of $\Gamma$-convergence does not seem to work in the case that $f$ is periodic with respect to a general lattice on $\R^{d-1} \times \R$. However, by using the estimate in Lemma \ref{l.usefulholderestimate}, one can still show a similar homogenization result for the global minimizers.
\end{remark}

\subsection{Plane-like correctors}\label{subsec.planelikecorrector}
The global minimizer theory does not capture the homogenization of all the local minimizers or steady states nor does it capture the macroscopic evolution in parabolic flow. Instead of a single large-scale slope specified by the average value of $f$, there is an interval of pinned slopes. The pinning interval is described via a (global) corrector problem which we describe next.

\begin{definition}\label{def.corrector}
Call $v \in C(\R^d_+\cup \pt \R_+^d)$ a \emph{corrector} if it solves
\be\label{eq.correctorforgeneralsemilinearproblem}
\bca
\Delta v = 0 &\textup{ in }\R_+^{d}\\
\pt_1 v = f(x',v) &\textup{ on }\R_+^d,
\eca
\ee
and 
\[
\sup_{\R_+^d} |v(x)-(\mu,p)\cdot x|<\infty,
\]
for some $(\mu,p)\in \R\times \R^{d-1}$. We call $(\mu,p)$ the \emph{effective slope} of $v$.

\end{definition}

Correctors are one of the fundamental concepts in the theory of homogenization. In classical homogenization theory the corrector equation is invariant with respect to ``vertical" translations -- i.e. adding a constant. In interface problems, such as we consider here, the vertical translation invariance is lost and many new challenges arise. Such challenges have been encountered and addressed before in several related models on homogenization of moving interfaces \cites{caffarelli_delallave_2001,moser_1986,caffarelli_2013,junginger_2009,rabinowitz_stredulinsky_2011,dirr2006pinning,DIRR_KARALI_YIP_2008}. The techniques and ideas trace back to the the fundamental contributions of Aubry and LeDaeron \cite{AUBRY1983381} and Mather \cite{MATHER1982457}, which is now often called Aubry-Mather theory.

It actually fits better with the philosophy of viscosity solutions to split the notion of corrector into two, a subsolution and a supersolution notion.

\begin{definition}\label{def.supercorrector}
    Call $v \in \textup{LSC}(\R^d_+ \cup \partial \R^d_+;(-\infty,+\infty])$ a \emph{supercorrector} with effective slope $(\mu,p) \in \R \times \R^{d-1}$ if
    \begin{enumerate}[label = (\roman*)]
        \item $v$ is a viscosity supersolution to \eqref{eq.correctorforgeneralsemilinearproblem}
        \item There exists $C>0$ so that
        \[v\ge (\mu,p)\cdot x-C \ \hbox{ in } \ \R^d_+ \cup \partial \R^d_+.\]
        \item $v(0) < + \infty$.
    \end{enumerate}
    We also call the triple $(v,\mu,p)$ a supercorrector.
   
\end{definition}
\begin{definition}\label{def.subcorrector}
    Call $v \in \textup{USC}(\R^d_+ \cup \partial \R^d_+;[-\infty,+\infty))$ a \emph{subcorrector} with effective slope $(\mu,p) \in \R \times \R^{d-1}$ if
    \begin{enumerate}[label = (\roman*)]
        \item $v$ is a viscosity subsolution to \eqref{eq.correctorforgeneralsemilinearproblem}
        \item There exists $C>0$ so that
        \[v\le (\mu,p)\cdot x+C \ \hbox{ in } \ \R^d_+ \cup \partial \R^d_+.\]
        \item $v(0) > - \infty$.
    \end{enumerate}
    We also call the triple $(v,\mu,p)$ a subcorrector.
\end{definition}

\begin{remark}
    These definitions are exactly suited to the perturbed test function type argument for homogenization, found below in Theorem~\ref{t.homogenizationofgeneralsemilinear}. The finiteness at the origin and the upper / lower linear bounds will come naturally from the touching test function.
\end{remark}

We define the homogenized coefficients
\be\label{eq.subcoefficient}
Q_\ast(p;f):=\inf \{\mu_+ \ ; (u,\mu_+, p) \textup{ is a subcorrector to \eqref{eq.correctorforgeneralsemilinearproblem}}\},
\ee
and
\be\label{eq.supercoefficient}
Q^\ast(p;f):=\sup \{\mu_- \ ; (u,\mu_-, p) \textup{ is a supercorrector to \eqref{eq.correctorforgeneralsemilinearproblem}}\}.
\ee
We will usually drop the dependence on $f$ and write $Q^*(p)$ or $Q_*(p)$. The interval $[Q_\ast(p),Q^\ast(p)]$ is called the {pinning interval}.

Let us make a couple of remarks about invariance properties of these definitions.

\begin{remark}\label{r.translationinvarianceofhomogenizedcoefficients}
    We remark here that the homogenized coefficients $Q_\ast(p;f)$ and $Q^\ast(p;f)$ are invariant under the translation $f(x',v)\mapsto f(x'+n',v+n_1)$ for some vector $(n',n_1)\in \R^{d-1}\times\R$. This is because given a subcorrector $(u,\mu,p)$ that corresponds to $f$, we can define $v(x_1,x')=u(x_1,x'+n')-n_1$ and obtain $(v,\mu,p)$ a subcorrector that corresponds to $f(x'+n',v+n_1)$. The supercorrector case is symmetrical.
\end{remark}

\begin{remark}\label{r.normalizeaveragezero}
    The homogenized coefficients satisfy $Q^\ast(p;f - a) = Q^\ast(p;f)-a$ and $Q_\ast(p;f - a) = Q_\ast(p;f)-a$ for any constant $a$ because there is a 1-1 correspondence between the semi-correctors corresponding to $f$ and those corresponding to $f-a$ by subtracting $ax_1$. This fact can be used to normalize $\avg{f}=0$, as we will do for convenience later.
\end{remark}

Our main result of this section establishes some important properties of the pinning interval and shows the existence of correctors. The correctors, by construction, will satisfy a certain periodicity property.
\begin{definition}\label{def.birkhoffproperty}
  We call a function $v$ defined on the strip $\{0\le x_1 \le T\}$ for some $T\in (0,\infty]$ to satisfy the \emph{Birkhoff property} if for any $(k,s_1),(k,s_2)\in \Z^{d-1}\times \Z$ such that $k\cdot p - s_1 \le 0\le k\cdot p-s_2$, we always have 
\[
v(x+k)-s_1 \le v(x) \le v(x+k)-s_2.
\]
In particular, we have for any $k\in \R^{d-1}$
\be\label{eq.birkhoffproperty}
v(x+k)-\lceil k\cdot p \rceil \le v(x) \le v(x+k)-\lfloor k\cdot p\rfloor.
\ee
\end{definition}

\begin{theorem}\label{t.characterizecorrectors}
The functions $Q_\ast$ and $Q^\ast$ as defined in \eqref{eq.subcoefficient} and \eqref{eq.supercoefficient} are lower and upper semicontinuous respectively on $\R^{d-1}$, and for all $p\in \R^{d-1}$
\[
\min f \leq Q_\ast(p)\le \avg{f} \le Q^\ast(p)\leq \max f.
\]
Moreover, for any $\mu\in [Q_\ast(p),Q^\ast(p)]$ , there exists a corrector $(u,\mu,p)$ such that $u$ also satisfies the {Birkhoff property} \eqref{eq.birkhoffproperty} and there is a constant $C>0$ so that
\be
\sup_{x_1\ge0} |u(x)- \mu x_1 - p\cdot x'| \le C,
\ee
where $C$ depends only on dimension $d$ and $f$.
\end{theorem}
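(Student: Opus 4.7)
The plan combines a reflection / maximum-principle argument for the pinning-interval bounds with a Perron construction in the Birkhoff class for the existence of correctors.

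I would first establish $\min f\le Q_*(p)$ and $Q^*(p)\le\max f$ by contradiction. Suppose $(u,\mu,p)$ is a subcorrector with $\mu<\min f$. Then $w(x):=u(x)-\min f\cdot x_1-p\cdot x'$ is harmonic in $\R^d_+$, bounded above by $(\mu-\min f)x_1+C$ (which tends to $-\infty$ as $x_1\to\infty$), and its viscosity Neumann trace satisfies $\partial_1 w\ge f-\min f\ge 0$ on $\partial\R^d_+$ by the subsolution condition. An even reflection of $w$ across $\{x_1=0\}$ produces $\tilde w$ subharmonic on all of $\R^d$ (the non-negative Neumann jump contributes a non-negative distributional Laplacian supported on the boundary hyperplane), bounded above by a function tending to $-\infty$ at infinity yet finite at the origin by the hypothesis $u(0)>-\infty$. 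The classical Liouville theorem for subharmonic functions gives a contradiction; the bound $Q^*(p)\le\max f$ follows by symmetry.

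For the middle bounds $Q_*(p)\le\avg{f}\le Q^*(p)$ I would construct a genuine corrector with effective slope $(\avg{f},p)$, which automatically serves as both a sub- and supercorrector. I would leverage the $\Gamma$-convergence result of Lemma~\ref{l.localuniformconvergenceofglobalminimizers}: for each $R$ solve the $\ep$-problem on the half-ball $B_R^+$ with Dirichlet data $g(x)=p\cdot x'$ on the upper boundary; the global minimizers converge as $\ep\to 0$ to the solution of the standard Neumann problem with boundary flux $\avg{f}$, whose affine part is $\avg{f}x_1+p\cdot x'$. After subtracting this plane and invoking the uniform H\"older estimate of Lemma~\ref{l.usefulholderestimate} for compactness, I would pass $R\to\infty$ and extract a bounded limit that globally solves \eqref{eq.correctorforgeneralsemilinearproblem}, producing the desired corrector.

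For each $\mu\in[Q_*(p),Q^*(p)]$ I would construct a Birkhoff corrector via a Perron envelope. Set
\[
\bar u_{\mu,p}(x):=\sup\{u(x):u\text{ USC subsolution of \eqref{eq.correctorforgeneralsemilinearproblem} satisfying \eqref{eq.birkhoffproperty}, }u\le \mu x_1+p\cdot x'+C_0\}
\]
for $C_0$ sufficiently large. Both the pointwise Birkhoff inequalities and the subsolution property are preserved under pointwise suprema, so $\bar u_{\mu,p}$ is itself a Birkhoff subcorrector; dually define the minimal Birkhoff supercorrector $\underline v_{\mu,p}$. Applying the reflection argument of the first paragraph to $\bar u_{\mu,p}-\underline v_{\mu,p}$ (which is subharmonic on $\R^d$ after reflection and bounded above by $2C_0$) forces this difference to be constant; combined with the $\Z$-periodicity of $f$ in its $u$-argument, this constant must be an integer, and hence $\bar u_{\mu,p}$ is itself a true corrector (possibly after an integer vertical shift) with the Birkhoff property.

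The uniform bound $\sup_{x_1\ge 0}|u(x)-\mu x_1-p\cdot x'|\le C(d,f)$ then follows by combining the Birkhoff property (which forces $u(x)-p\cdot x'$ to have oscillation at most one over each integer $x'$-translation) with the $\Z$-periodicity of $f$ in $u$ (allowing the normalization $u(0)\in[0,1)$), Lemma~\ref{l.usefulholderestimate} on a unit cube, and harmonic propagation of the resulting bound to the interior. The semicontinuity of $Q_*$ and $Q^*$ is a standard stability argument: for a sequence of Birkhoff semicorrectors at slopes $(\mu_n,p_n)\to(\mu,p)$, the uniform bound plus Arzel\`a--Ascoli yields a locally uniformly convergent subsequence whose limit is a semicorrector at the limiting slope. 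The main obstacle I anticipate is the second paragraph --- producing a corrector with the precise slope $\avg{f}$ for general (especially irrational) $p$, where the underlying cell problem is genuinely quasi-periodic and the $\Gamma$-convergence bypass requires delicate handling of the rescaling and of the passage $R\to\infty$.
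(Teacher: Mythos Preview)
Your Step~1 (the reflection/Liouville bound $\min f\le Q_*(p)$) can be made to work, but not quite as stated. The upper bound $(\mu-\min f)|x_1|+C$ does \emph{not} tend to $-\infty$ at infinity---it stays bounded in tangential directions---so ``Liouville for subharmonic functions bounded above'' is not what you want. What does work is the mean value inequality: averaging the reflected subharmonic function over $B_R(0)$ and using $\fint_{B_R}|x_1|\sim c_d R$ drives the average to $-\infty$, contradicting $\tilde w(0)>-\infty$. This is a legitimate alternative to the paper's sliding barrier argument (Lemma~\ref{l.boundforthecoefficientshowingnontrivialness}), which touches a supercorrector from below by a bent plane.

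Step~3 has a genuine gap. You claim that $\bar u_{\mu,p}-\underline v_{\mu,p}$ is subharmonic after even reflection, but this requires $\partial_1(\bar u-\underline v)\ge 0$ on the boundary, i.e.\ $f(x',\bar u)-f(x',\underline v)\ge 0$. Since $f$ is \emph{not} monotone in its second argument (it is periodic), this fails. Even granting subharmonicity, ``bounded subharmonic on $\R^d$ implies constant'' is false for $d\ge 3$: take $\max(-|x|^{2-d},-1)$. The Birkhoff property of the difference only gives $|(\bar u-\underline v)(x+k)-(\bar u-\underline v)(x)|\le 1$ for $k\in\Z^{d-1}$, which is not periodicity and does not rescue Liouville. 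The paper avoids all of this by working on truncated slabs $\{0\le x_1\le T\}$: the maximal subsolution $v_+^T$ there automatically inherits Birkhoff from the invariance of the problem (Remark~\ref{r.thetruncatedsatisfybirkhoff}), and a Harnack argument near a point where $v_+^T$ nearly touches its bounding plane yields a uniform oscillation bound that propagates tangentially via Birkhoff and normally via harmonicity. This is the substantive construction you are missing.

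Your Step~2 and the paper's Lemma~\ref{l.averageisinsidethepinninginterval} share the same input (energy minimizers and $\Gamma$-convergence), but the paper is more economical: rather than build a full corrector at slope $\avg{f}$, it only needs to touch the rescaled minimizer from above by a plane of slope $\delta>0$ and blow up to extract a \emph{subcorrector}, giving $Q_*(p)\le\delta$ for every $\delta>0$. This sidesteps exactly the ``delicate passage $R\to\infty$'' you flag. Your semicontinuity argument matches the paper's.
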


We decompose the proof of Theorem \ref{t.characterizecorrectors} into four technical lemmas. We first establish the following bound for the homogenized coefficients $Q_\ast$ and $Q^\ast$.
\begin{lemma}\label{l.boundforthecoefficientshowingnontrivialness} For every tangent vector $p\in \R^{d-1}$ both $Q_*(p)$ and $  Q^*(p)$ are contained in the interval $[\min f , \max f].$
\end{lemma}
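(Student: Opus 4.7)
My plan is to prove the four bounds $\min f \leq Q_\ast(p) \leq \max f$ and $\min f \leq Q^\ast(p) \leq \max f$ separately. Two follow from explicit affine constructions; the other two from a Hopf-type maximum principle argument.

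\textbf{Affine constructions.} For $Q_\ast(p) \leq \max f$, I take the affine function $u(x) := (\max f)\,x_1 + p\cdot x'$. It is harmonic, its inner normal derivative $\pt_1 u = \max f$ dominates $f(x',u)$ pointwise so $u$ is a viscosity subsolution of the corrector problem, it satisfies $u \leq (\max f)\,x_1 + p \cdot x' + 0$, and $u(0) = 0 > -\infty$. Hence $(u, \max f, p)$ fits Definition~\ref{def.subcorrector}, so $Q_\ast(p) \leq \max f$. Symmetrically, $v(x) := (\min f)\,x_1 + p\cdot x'$ is a supercorrector per Definition~\ref{def.supercorrector}, yielding $Q^\ast(p) \geq \min f$.

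\textbf{Hopf-type argument.} I prove $Q_\ast(p) \geq \min f$; the bound $Q^\ast(p) \leq \max f$ is symmetric. Suppose for contradiction $(u, \mu, p)$ is a subcorrector with $\mu < \min f$, and set $\delta := \min f - \mu > 0$ and $w(x) := u(x) - \mu x_1 - p \cdot x'$. Then $w$ is upper semicontinuous, bounded above by the constant $C$ from Definition~\ref{def.subcorrector}(ii), finite at the origin, subharmonic in $\R_+^d$, and satisfies in the viscosity sense
\[
\pt_1 w \;\geq\; f(x',u) - \mu \;\geq\; \delta \;>\; 0 \quad \text{on } \pt \R_+^d.
\]
The core claim is that such a $w$ cannot exist. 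If the supremum $M := \sup w$ is attained at a boundary point $x_0 \in \pt \R_+^d$, the constant test $\phi \equiv M$ touches $w$ from above at $x_0$, and the viscosity Neumann condition forces $0 = \pt_1 \phi(x_0) \geq \delta > 0$, a contradiction. If attained at an interior point, the strong maximum principle propagates the equality to the boundary and reduces to the previous case. When $M$ is not attained, I localize on half-balls $B_R^+$ and apply the divergence theorem: integrating $\Delta w \geq 0$ over $B_R^+$ and using the positive boundary flux yields
\[
\int_{\pd B_R} \pt_r w \, dS \;\geq\; \int_{B_R'} \pt_1 w \, dx' \;\geq\; c_{d-1}\,\delta\, R^{d-1}.
\]
With $I(R) := \int_{\pd B_R} w \, dS$, a polar-coordinate calculation gives $\frac{d}{dR}\lb R^{1-d}I(R) \rb \geq c_{d-1}\delta$, whence $I(R) \gtrsim \delta R^d$ for large $R$. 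This contradicts the upper bound $I(R) \leq C \lw \pd B_R \rw \lesssim R^{d-1}$ coming from $w \leq C$.

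\textbf{Main obstacle.} The divergence-theorem step needs the positive boundary flux in a strong enough sense to integrate. The standard remedy is to regularize $w$ by sup-convolution $w^\eta(x) := \sup_y \lb w(y) - \tfrac{1}{2\eta}|x-y|^2 \rb$, which is semiconcave and classical, inherits subharmonicity, and yields $\pt_1 w^\eta \geq \delta - o_\eta(1)$ classically on a slightly shifted boundary; passing $\eta \to 0^+$ and using stability of viscosity solutions recovers the contradiction. Justifying this regularization and the strong maximum principle in the viscosity framework is the main technical subtlety.
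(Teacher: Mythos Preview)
Your affine constructions for $Q_\ast(p) \leq \max f$ and $Q^\ast(p) \geq \min f$ match the paper's exactly. For the remaining two bounds, your approach differs substantially from the paper, and there is one genuine (though fixable) gap and one significant technical burden.

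\textbf{The gap in Case 1.} When the maximum $M$ of $w$ is attained at a boundary point $x_0 \in \partial\R_+^d$, testing with the constant $\phi \equiv M$ does \emph{not} give a contradiction. In the standard viscosity convention for Neumann problems, the subsolution condition at a boundary touching point reads ``either $\Delta\phi(x_0) \geq 0$ or $\partial_1\phi(x_0) \geq \delta$''; since $\Delta(M) = 0$, the first alternative is already satisfied and you learn nothing about $\partial_1\phi$. The fix is routine: test instead with $\phi(x) = M + \tfrac{\delta}{2}x_1 - Kx_1^2$ for $K$ large, which touches $w$ from above in the slab $\{0 \leq x_1 \leq \tfrac{\delta}{2K}\}$, has $\Delta\phi = -2K < 0$, and $\partial_1\phi(x_0) = \tfrac{\delta}{2} < \delta$.

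\textbf{Comparison with the paper.} The paper avoids your case analysis entirely by a single sliding-barrier argument. For the symmetric bound $Q^\ast(p) \leq \max f$, it assumes a supercorrector $(v,\mu,p)$ with $\mu > \max f$ and constructs the harmonic family
\[
\phi_s(x_1,x') = (\max f + \eta)x_1 + p\cdot x' - \eta|x'|^2 + \eta(d-1)x_1^2 + s + v(0)
\]
on a truncated slab $\{0 \leq x_1 \leq T,\ |x'| \leq R\}$. The quadratic term $-\eta|x'|^2$ localizes in the tangential directions so that the supremum of $\phi_s - v$ is attained on a compact set, and the choice of $T$ forces strict ordering on $\{x_1 = T\}$. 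Sliding $s$ then produces a touching point on $\{x_1 = 0\}$ where $\partial_1\phi_{s_\ast} = \max f + \eta > \max f$ contradicts the supercorrector condition. This stays entirely within the viscosity framework on a compact region, so your ``max not attained'' difficulty (Case 3) simply does not arise. Your divergence-theorem route is correct in spirit and can be made rigorous---most cleanly by even-reflecting $w - \delta x_1$ across $\{x_1 = 0\}$ and applying the mean-value growth estimate for the resulting globally subharmonic function carrying a singular measure of density $2\delta$ on the hyperplane---but either that or the sup-convolution you propose demands nontrivial justification that the paper's barrier construction sidesteps entirely.
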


\begin{proof}
Note that,  $p\cdot x +(\max f) x_1$ is a subcorrector and $p\cdot x +(\min f) x_1$ is a supercorrector. Thus the sets in the definitions \eqref{eq.subcoefficient} and \eqref{eq.supercoefficient} are non-trivial, and the upper bound $ Q_*(p) \le \max f$ and the lower bound $\min f\le Q^*(p)$ are established.
    
For the other side we only show that $Q^\ast(p)\le \max f$ as the case for $Q_\ast$ is symmetrical. Suppose there is a supercorrector $(v,\mu,p)$ such that $\mu>\max f$. We pick $T>0$ large and $\mu-\max f\gg\eta>0$ small, to be specified below, and consider for some real number $s$ the following auxiliary harmonic function
\[
\phi_s(x_1,x'):= (\max f+\eta)x_1+p\cdot x' -\eta|x'|^2 + \eta(d-1)x_1^2 + s+v(0).
\]
Notice that $v$ is a supercorrector so for some constant $C$ we have
\[
v(x_1,x') \ge \mu x_1 + p\cdot x'+C.
\]
Choosing $T>0$ large enough, we have
\[
(\max f)T + p\cdot x' + v(0)< \mu T + p\cdot x'+C \le v(T,x').
\]
This implies that for small $\eta>0$ 
\[
\phi_0(T,x')=(\max f+\eta)T+ \eta(d-1)T^2 +p\cdot x'-\eta|x'|^2  +v(0)<v(T,x').
\]
Also because $v$ is lower bounded by $\mu x_1 + p\cdot x'+C$, there is a large radius $R>0$ independent of $s$ such that
$$
\phi_s(x_1,x')\le \phi_0(x_1,x')<v(x_1,x')
$$ 
for all $0\le x_1 \le T$, $s\le 0$ and $|x'|\ge R$. Let $s_\ast$ be the supremum of $s$ such that $\phi_s<v$ on $\{0\le x_1\le T,|x'|\le R\}$, then $s_\ast\le 0$ because $\phi_0(0)=v(0)$ and $s_\ast>-\infty$ because $v$ is lower bounded in $\{0\le x_1\le T,|x'|\le R\}$. On the other hand, by the harmonicity of $\phi_{s_\ast}$ and superharmonicity of $v$, the function $\phi_{s_\ast}$ will touch $v$ from below at a point $x_0'\in \{x_1=0, |x'|\le R\}$. This establishes a contradiction to the supercorrector condition of $v$ at $x=x_0'$ as $\pt_1\phi_{s_\ast}(x_0')>\max f$.
    
\end{proof}

\begin{lemma}\label{l.semicontinuityofcoefficients}
    The functions $Q_\ast$ and $Q^\ast$ as defined in \eqref{eq.subcoefficient} and \eqref{eq.supercoefficient} are lower and upper semicontinuous respectively on $\R^{d-1}$.
\end{lemma}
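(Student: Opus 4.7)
By symmetry I will focus on the upper semicontinuity of $Q^\ast$; the lower semicontinuity of $Q_\ast$ follows by exchanging the roles of sub- and super-correctors and reversing inequalities. Fix $p\in\R^{d-1}$ and a sequence $p_n\to p$; passing to a subsequence assume $Q^\ast(p_n)\to L$, and my aim is $L\le Q^\ast(p)$. For each $\epsilon>0$, the definition of $Q^\ast$ as a supremum furnishes supercorrectors $(v_n,\mu_n,p_n)$ with $\mu_n\ge Q^\ast(p_n)-\epsilon$, and \lref{boundforthecoefficientshowingnontrivialness} guarantees that along a further subsequence $\mu_n\to\mu_\infty\ge L-\epsilon$. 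It suffices to construct a supercorrector at $p$ with effective slope $\mu_\infty$, which gives $Q^\ast(p)\ge L-\epsilon$; sending $\epsilon\downarrow 0$ finishes the proof.

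The construction proceeds in three steps. First, I exploit the $\Z^{d-1}\times\Z$-periodicity of $f$ (see \rref{translationinvarianceofhomogenizedcoefficients}) to translate each $v_n$ in the combined $(x',u)$-variable by an integer vector and thereby normalize $v_n(0)\in[0,1)$; this preserves the supercorrector property and the effective slope. Second, I match the tangential slope by setting $\tilde v_n(x):=v_n(x)+(p-p_n)\cdot x'$. The new function is superharmonic with $\tilde v_n\ge(\mu_n,p)\cdot x-C_n$, and on any bounded tangential cylinder $\{|x'|\le R\}$ the H\"older continuity of $f$ in $u$ yields $\pt_1\tilde v_n\le f(x',\tilde v_n)+C_fR^\alpha|p-p_n|^\alpha$; I absorb this error by subtracting $C_fR^\alpha|p-p_n|^\alpha x_1$ to obtain an exact viscosity supersolution $w_n^R$ on that cylinder, whose effective slope converges to $(\mu_\infty,p)$ as $n\to\infty$. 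Third, I form the lower half-relaxed limit $w^R:={\liminf}_\ast w_n^R$; by a direct adaptation of \lref{stabilityoftouching} and the standard stability of viscosity supersolutions, $w^R$ is an LSC viscosity supersolution on $\{|x'|\le R\}$, and a diagonal argument in $R\to\infty$ then produces the desired global supercorrector $(w,\mu_\infty,p)$.

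The main obstacle is compactness: guaranteeing that the half-relaxed limit is a genuine supercorrector and not identically $+\infty$, with a uniform linear lower bound inherited from the sequence. Supercorrectors are only required to be LSC valued in $(-\infty,+\infty]$ and finite at the origin, so the normalization $v_n(0)\in[0,1)$ does not automatically control the constants $C_n$ in $v_n\ge(\mu_n,p_n)\cdot x-C_n$, and in principle the oscillation of $v_n$ around the linear profile could be unbounded. I expect to resolve this by combining the superharmonicity of the $w_n^R$ in $\R_+^d$ with the pinned value at the origin, comparing against explicit superharmonic barriers of the parabola-plus-plane type used in the proof of \lref{boundforthecoefficientshowingnontrivialness}; the upper bound $\pt_1 w_n^R \le \max f$ on the boundary prevents rapid blow up from the Neumann side, while the barriers propagate the control from the origin into the bulk to give the local bounds needed. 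Once this compactness is secured, the stability of viscosity supersolutions under lower half-relaxed limits, together with the invariance of the linear lower bound and of the normalization at the origin, carries the rest of the argument routinely.
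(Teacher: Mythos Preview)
Your outline is workable but substantially more involved than the paper's, and the extra work comes from an unnecessary step. The paper does \emph{not} tilt the (sub/super)correctors to match the limiting tangential slope. It simply takes subcorrectors $(V_n,\mu_n,p_n)$ with $\mu_n\to\tau=\liminf Q_\ast(p_n)$, translates each by an integer so that the constant in the linear upper bound is at most $2$, and passes directly to the upper half-relaxed limit $W$. Stability under half-relaxed limits (\lref{stabilityoftouching}) gives the subsolution property with no error term, and since $p_n\to p$ and $\mu_n\to\tau$ the uniform bound $\tilde V_n\le p_n\cdot x'+\mu_n x_1+2$ passes to $W\le p\cdot x'+\tau x_1+2$; hence $(W,\tau,p)$ is a subcorrector and $Q_\ast(p)\le\tau$. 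Your slope correction $\tilde v_n=v_n+(p-p_n)\cdot x'$ breaks the boundary condition and forces the H\"older error absorption, the cylinder localization, and the diagonal argument in $R$, none of which the paper needs. In fact, since $(p-p_n)\cdot x'\to 0$ locally uniformly, your $w^R$ coincides with $\liminf_\ast v_n$ regardless of $R$, so the tilting achieves nothing.

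The second difference is the normalization. You pin $v_n(0)\in[0,1)$, which secures condition (iii) of \dref{supercorrector} for the limit but leaves the constants $C_n$ in the linear lower bound uncontrolled---exactly the compactness obstacle you identify. The paper instead normalizes the constant in the linear bound; this immediately yields the local uniform one-sided bound required for the half-relaxed limit to be well-defined and to inherit condition (ii). With that choice, your proposed barrier argument to control $C_n$ becomes unnecessary, and the only residual point is the single-point finiteness in condition (iii), which is a much lighter burden.
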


\begin{proof}
We only focus on $Q_\ast$ as the case of $Q^\ast$ is symmetrical. Let $p_n\rta p\in \R^{d-1}$ be a converging sequence of tangential vectors and $(V_{n},Q_\ast(p_n)+\ep_n,p_n)$ a corresponding sequence of subcorrectors to \eqref{eq.truncateforgeneralsemilinearcorrector} with $0<\ep_n \to 0$. Notice that for any $C\in \Z$, we have $$(V_{n}+C,Q_\ast(p_n)+\ep_n,p_n)$$ is still a subcorrector, and therefore for each $n$, we can choose appropriate $C_n\in\Z$ such that $\Tilde{V}_n:=V_{n}+C_n$ are bounded from above by $p_n\cdot x + \lb Q_\ast(p_n)+\ep_n\rb x_1 +2$, which means that $\Tilde{V}_n$ are locally uniformly bounded from above. This shows that if we write
\[
W:={\limsup_{n\rta\infty}}^\ast \Tilde{V}_n  \quad \textup{and} \quad  \tau=\liminf_{n\rta\infty} Q_\ast(p_n),
\]
we obtain a triple $(W,\tau, p)$ that is a subcorrector By Lemma \ref{l.stabilityoftouching}. By definition of $Q_\ast$ this shows that
\[
\liminf_{n\rta\infty} Q_\ast(p_n)=\tau\ge Q_\ast(p).
\]
\end{proof}

{Next we show that the energy minimizing slope is always pinned. The proof uses the $\Gamma$-convergence established in Lemma~\ref{l.localuniformconvergenceofglobalminimizers}.}

\begin{lemma}\label{l.averageisinsidethepinninginterval}
For all $p\in \R^{d-1}$
\[
Q_\ast(p)\le \avg{f} \le Q^\ast(p).
\]
\end{lemma}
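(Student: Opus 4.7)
The plan is to prove both bounds by constructing sub-/super-correctors with effective slope equal to $(\avg{f},p)$; by the $f\mapsto -f$ symmetry of the corrector equation it suffices to establish $\avg{f}\le Q^\ast(p)$, i.e.\ to construct a supercorrector with that effective slope. The key tool is \lref{localuniformconvergenceofglobalminimizers}, which ties the global energy minimizers at scale $\ep$ to classical solutions of the standard Neumann problem with coefficient $\avg{f}$.

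Concretely, for each $R\ge 1$ I would let $u_R$ be the global minimizer of $E_1$ on $B_R^+$ with Dirichlet data $h(x):=\avg{f}x_1+p\cdot x'$ on $\pd^+ B_R$. Under the natural rescaling $v_R(y):=u_R(Ry)/R$, the function $v_R$ is the global minimizer of $E_{1/R}$ on $B_1^+$ with the same (scale-invariant) data $h$. Since $h$ itself solves $\Delta h=0$ in $B_1^+$, $\pt_1 h=\avg{f}$ on $B_1'$, with boundary data $h$, uniqueness together with \lref{localuniformconvergenceofglobalminimizers} yields $\|v_R-h\|_{L^\infty(B_1^+)}=:\delta_R\to 0$ as $R\to\infty$. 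Each $u_R$ solves the corrector PDE \eqref{eq.correctorforgeneralsemilinearproblem} classically on $B_R^+$, hence is in particular a viscosity supersolution. Using the $\Z$-periodicity of $f$ in its last variable (\rref{translationinvarianceofhomogenizedcoefficients}), I would shift $u_R$ vertically by an integer $c_R\in\Z$ so that $\tilde u_R(0):=u_R(0)-c_R\in[0,1)$; the shifted function $\tilde u_R$ still solves the corrector PDE.

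Let $v$ denote the lower half-relaxed limit of the sequence $\{\tilde u_R\}$. By the stability of viscosity supersolutions under half-relaxed limits (the supersolution analogue of \lref{stabilityoftouching}), $v$ is a lower-semicontinuous viscosity supersolution of \eqref{eq.correctorforgeneralsemilinearproblem}, with $v(0)\le 1<\infty$. It remains to verify the effective slope bound $v(x)\ge \avg{f}\,x_1+p\cdot x'-C$ for some $C$ independent of $x$; this is the main obstacle. Unrescaling the $L^\infty$ convergence only gives $|u_R(x)-h(x)|\le R\delta_R$ on $B_R^+$, a bound which may a priori grow with $R$ in the absence of a quantitative rate in \lref{localuniformconvergenceofglobalminimizers}. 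To absorb this growth I would combine the interior H\"older estimate of \lref{usefulholderestimate}, applied to the harmonic function $u_R-h$ (which has zero Dirichlet data on $\pd^+ B_R$ and Neumann data bounded by $2\|f\|_\infty$ on $B_R'$), with the integer shift normalization $c_R$: the shifts absorb the potentially unbounded average level while the H\"older control keeps the oscillation of $\tilde u_R-h$ uniformly bounded on compact sets. Passing to the half-relaxed limit then yields $v(x)\ge h(x)-C$, producing the required supercorrector at effective slope $(\avg{f},p)$ and hence $\avg{f}\le Q^\ast(p)$.
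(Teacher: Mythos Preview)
Your proposal has a genuine gap at precisely the step you flag: the uniform lower bound $\tilde u_R(x)\ge h(x)-C$. The fix you suggest does not close it. The integer shift $c_R$ only normalizes the value at a single point; since adding a constant leaves oscillation unchanged, the question reduces to whether $\osc_{K}(u_R-h)$ is bounded uniformly in $R$ on each compact $K$. Applying \lref{usefulholderestimate} to $\hat w(y):=(u_R-h)(Ry)/R$ on $B_1^+$ (zero Dirichlet data on $\pd^+B_1$, Neumann data bounded by $2\|f\|_\infty$) gives $[\hat w]_{C^\alpha(B_1^+)}\le C\|\hat w\|_\infty=C\delta_R$; unscaling yields $[u_R-h]_{C^\alpha(B_R^+)}\le C\,R^{1-\alpha}\delta_R$, so the oscillation on a ball of fixed radius is controlled only by $R^{1-\alpha}\delta_R$. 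This is bounded only if $\delta_R=O(R^{\alpha-1})$, and \lref{localuniformconvergenceofglobalminimizers} provides no such rate. Without it, the lower half-relaxed limit need not satisfy condition (ii) of Definition~\ref{def.supercorrector} and may even be $-\infty$ away from the origin.

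The paper avoids this entirely by a perturbed test function argument rather than a direct corrector construction. After normalizing $\avg f=0$, one fixes $\delta>0$ and touches the limiting Neumann solution $u_q(x)=q\cdot x$ strictly from above at $0$ by a harmonic $\phi$ with $\pt_1\phi(0)=\delta$; stability of touching (\lref{stabilityoftouching}) transfers this to the minimizer $u_\ep$ at nearby points $y_\ep$. The rescaling $v_\ep(y)=\ep^{-1}\big(u_\ep(\ep y+y_\ep)-u_\ep(y_\ep)\big)$ is then \emph{automatically} bounded above by $\nabla\phi(0)\cdot y+o(1)$ because of the touching, which is exactly the one-sided control needed to take the upper half-relaxed limit and produce a subcorrector at slope $(\delta,q)$. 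No two-sided estimate, no convergence rate, and no oscillation control are required. Sending $\delta\to 0^+$ gives $Q_\ast(q)\le 0$; the bound $Q^\ast(q)\ge 0$ follows symmetrically. The missing idea in your approach is precisely this: let the touching test function supply the linear bound for free, rather than trying to extract it from the convergence of minimizers.
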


\begin{proof}
We without loss normalize $\avg{f}=0$ by applying Remark \ref{r.normalizeaveragezero}. It then suffices to show that $0\in [Q_\ast(p),Q^\ast(p)]$.  We first find the global minimizer $u_\ep$ to the energy \eqref{eq.theenergyepsilon} at scale $\ep>0$ with boundary data $g=q\cdot x$ on $\pd B_1^+=\pt B_1\cap \{x_1\ge 0\}$ for some tangent vector $q\in \R^{d-1}$. By Lemma \ref{l.localuniformconvergenceofglobalminimizers}, we know that $u_\ep$ converges locally uniformly to $u_q$ solving
\[
\bca
\Delta u_q = 0 & \textup{ in }B_1^+\\
u_q=q\cdot x &\textup{ on }\pd B_1^+\\
\pt_1 u_q = \avg{f}=0& \textup{ on }B_1'.
\eca
\]
By standard elliptic theory we know that $u_q(x)\equiv q\cdot x$.

We now define for any small $\delta>0$ the following harmonic function
$$
\phi(x)=q\cdot x + \delta x_1 +\frac{\delta^2}{d-1}|x'|^2 -\delta^2 x_1^2.
$$
This function touches $u_q$ strictly from above at $x=0$ with $\pt_1 \phi(0)=\delta>0$ and by local uniform convergence of $u_\ep$ and Lemma \ref{l.stabilityoftouching}, we can find small constants $C_\ep=o_\ep(1)$ such that $\phi_\ep=\phi+C_\ep$ touches $u_\ep$ strictly from above at some $y_\ep\in B_1'$ with $|y_\ep|=o_\ep(1)$. Consider
\[
v_\ep (y) = \frac{u_\ep(\ep y + y_\ep)-u_\ep(y_\ep)}{\ep},
\]
and
\[
\psi_\ep (y) = \frac{\phi_\ep(\ep y + y_\ep)-\phi_\ep(y_\ep)}{\ep}.
\]
By smoothness of $\phi$, we know that
\[
v_\ep(y) \leq \psi_\ep(y) \leq \grad \phi(0)\cdot y + o_\ep(1)
\]
is locally bounded from above. Therefore for a choice of subsequence $\ep_j$ so that
\[
s:=\lim_{j\rta\infty} \lb \frac{u_{\ep_j}(x_{\ep_j})}{\ep_j} - \left\lfloor\frac{u_{\ep_j}(x_{\ep_j})}{\ep_j} \right\rfloor\rb \in [0,1]
\]
exists, we obtain
\[
v_0:={\limsup_{j\rta\infty}}^\ast v_{\ep_j} - s
\]
is a well-defined subcorrector to \eqref{eq.correctorforgeneralsemilinearproblem}  with upper bound $q\cdot y+\delta y_1$, which also touches $v_0$ from above at $x=0$. Since $\pt_1 \phi(0)=\delta$, we know that $\delta\ge Q_\ast(q)$. As $\delta>0$ can be chosen arbitrarily small we have shown that 
$$ 
Q_\ast(q)\le0.
$$
Similar arguments show that $0\le Q^\ast(q)$.

\end{proof}

Now we proceed to the construction of the correctors. It is convenient to consider the following equation truncated in $\{0\le x_1 \le T\}$ for some $T>0$
\be\label{eq.truncateforgeneralsemilinearcorrector}
\bca
\Delta v = 0 &\textup{ in }\{0<x_1<T\}\\
v=p\cdot x &\textup{ on }\{x_1=T\}\\
\pt_1 v = f(x',v) &\textup{ on }\{x_1=0\}.
\eca
\ee
We take $v_+^{T}$ to be the maximal subsolution defined as
\be\label{eq.maximalsubsolfortruncated}
v_+^{T}(x)=\sup\{v(x) \ ; \ v \textup{ is a subsolution to }\eqref{eq.truncateforgeneralsemilinearcorrector}\}.
\ee
and similarly 
\be\label{eq.minsupsolfortruncated}
v_-^{T}(x)=\inf\{v(x) \ ; \ v \textup{ is a supersolution to }\eqref{eq.truncateforgeneralsemilinearcorrector}\}.
\ee

\begin{remark}\label{r.thetruncatedsatisfybirkhoff}
In general, the solutions $v_+^{T}$ and $v_-^{T}$ are not periodic functions but they do satisfy the {Birkhoff property} as in Definition \ref{def.birkhoffproperty}. This is because when $k\cdot p - s\le 0 $ with $(k,s)\in \Z^{d-1}\times \Z$, the function $v(x+k)-s$ is a subsolution to \eqref{eq.truncateforgeneralsemilinearcorrector} whenever $v$ is a subsolution, which shows that
\[
v_+^T(x+k)-s \le v_+^T(x).
\]
Symmetrically we have $v_+^T(x)\le v_+^T(x+k)-s$ whenever $k\cdot p -s\ge 0$. An anologous argument shows that $v_-^T$ also satisfies the Birkhoff property.

\end{remark}

For the convenience of the following discussions, we denote $\mu_+^T \ge -\norm{f}_{L^\infty}$ as follows
\be\label{eq.linearupperboundonvplusT}
\mu_+^T:=\sup\{\mu \ ; \ p\cdot x +\mu (x_1-T) \ge v_+^{T}\},
\ee
and $\mu_-^T \le \norm{f}_{L^\infty}$ to be
\be\label{eq.linearlowerboundonvminusT}
\mu_-^T:=\inf\{\mu \ ; \ p\cdot x +\mu(x_1-T) \le v_-^{T}\}.
\ee
By a similar argument in the proof of Lemma \ref{l.boundforthecoefficientshowingnontrivialness}, both $|\mu_\pm^T|$ are bounded uniformly by $\norm{f}_{L^\infty}$ for all $T>0$. 

Notice that by \eqref{eq.linearupperboundonvplusT}, the functions $v_+^T+\lfloor\mu_+^T T\rfloor$ are uniformly bounded from above by the linear function $p\cdot x +\mu_+^T  x_1$. We define
\be\label{eq.vplusmplus}
V_+:={\limsup_{T\rta\infty}}^\ast (v_+^T+\lfloor\mu_+^T T\rfloor) \quad \textup{and} \quad  \ m_+:=\limsup_{T\rta\infty} \mu_+^T,
\ee
and symmetrically
\be\label{eq.vminusmminus}
V_-:=\underset{T\rta\infty}{{\liminf}_\ast} (v_-^T+\lceil\mu_-^T T\rceil)\quad \textup{and} \quad  \ m_-:=\liminf_{T\rta\infty} \mu_-^T.
\ee
By Lemma \ref{l.stabilityoftouching}, $(V_+,m_+,p)$ is a subcorrector and $(V_-,m_-,p)$ is a supercorrector to \eqref{eq.correctorforgeneralsemilinearproblem}.

\begin{lemma}\label{l.regularityofcorrector}
For any $\mu\in [Q_\ast(p),Q^\ast(p)]$ , there exists a corrector $(u,\mu,p)$ such that $u$ also satisfies the Birkhoff property \eqref{eq.birkhoffproperty} and there is a constant $C>0$ so that
\be\label{eq.uniformdistancetoaffinefunctionofcorrectors}
\sup_{x_1\ge0} |u(x)- \mu x_1 - p\cdot x'| \le C,
\ee
where $C$ depends only on dimension $d$ and $f$.
\end{lemma}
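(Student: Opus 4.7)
The plan is to first prove a uniform a priori bound on the truncated solutions $v_\pm^T$ using the Birkhoff property, pass to the limit to obtain correctors at the extremal slopes $m_\pm$, and then extend to all slopes in the pinning interval via a modified truncation.

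\textbf{Step 1 (Uniform estimate for $v_+^T$).} The core technical estimate to establish is
$$
\sup_{x \in \{0 \le x_1 \le T\}} \bigl|v_+^T(x) + \lfloor \mu_+^T T \rfloor - \mu_+^T x_1 - p\cdot x'\bigr| \le C_1,
$$
with $C_1$ depending only on $d$, $f$, and $|p|$. Three ingredients combine. First, the Birkhoff property from Remark~\ref{r.thetruncatedsatisfybirkhoff} applied with integer tangential shifts $k\in\Z^{d-1}$ gives $|v_+^T(x_1,x'+k)-v_+^T(x_1,x')-p\cdot k|\le 1$, and together with the uniform interior H\"older estimate from Lemma~\ref{l.usefulholderestimate} this yields that $x'\mapsto v_+^T(x_1,x')-p\cdot x'$ has tangential oscillation bounded by some $C_0(d,f,|p|)$ on each $x_1$-slice. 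Second, the definition of $\mu_+^T$ in \eqref{eq.linearupperboundonvplusT} forces tightness at the bottom: $\sup_{x'}(v_+^T(0,x')-p\cdot x')=-\mu_+^T T$, so combined with the tangential oscillation control one obtains the two-sided bound $|v_+^T(0,x')+\mu_+^T T-p\cdot x'|\le C_0$. Third, the harmonic auxiliary function $W_T(x):=v_+^T(x)-p\cdot x'-\mu_+^T(x_1-T)$ vanishes at the top $\{x_1=T\}$, is nonpositive throughout the strip, and is bounded at the bottom. A maximum principle argument in the strip, handling the infinite tangential extent via the bounded tangential oscillation of $W_T$ inherited from $v_+^T$, propagates the boundary bounds to the whole strip and closes the estimate.

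\textbf{Step 2 (Correctors at $m_\pm$).} Granted Step 1 and the uniform H\"older estimate, the family $\{v_+^T+\lfloor\mu_+^T T\rfloor\}$ is locally equicontinuous and uniformly bounded on compact subsets of $\overline{\R^d_+}$. Extracting along a subsequence $T_j\to\infty$ with $\mu_+^{T_j}\to m_+$ gives locally uniform convergence to a function $u_+$ which is a classical corrector by standard viscosity stability and boundary regularity arguments, and which inherits both the Birkhoff property and the bound \eqref{eq.uniformdistancetoaffinefunctionofcorrectors}. The symmetric argument with $v_-^T$ produces a corrector $u_-$ at slope $m_-$.

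\textbf{Step 3 (General $\mu$ in the pinning interval).} For each $\mu \in [Q_\ast(p), Q^\ast(p)]$, I would rerun the truncated construction with modified top Dirichlet data $v=\mu T + p\cdot x'$ on $\{x_1=T\}$. The Birkhoff property is preserved by the maximal subsolution of this modified strip problem since the top datum is compatible with Birkhoff-admissible shifts $(k,s)\in\Z^{d-1}\times\Z$, and the argument from Step~1 adapts to give uniformly bounded deviation from $\mu x_1+p\cdot x'$ (after subtracting an appropriate integer normalization). Passing to the limit $T\to\infty$ yields a corrector of effective slope $(\mu,p)$ satisfying the Birkhoff property and the uniform bound. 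An alternative route is a Perron-type construction between $u_+$ and $u_-$ translated by integer multiples, selecting the extremal solution trapped between suitable sub/supersolutions of effective slope $\mu$.

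The main obstacle is Step~1: the infinite tangential extent of the strip rules out a direct application of the maximum principle to the auxiliary harmonic function $W_T$, and one must rely on the Birkhoff-driven bounded tangential oscillation both to derive the two-sided bound at the bottom from the one-sided tightness, and to propagate the bound globally through the strip.
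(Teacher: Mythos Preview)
Your overall architecture (truncated problem $\to$ uniform bound $\to$ pass to limit at the extremal slopes $\to$ fill in intermediate $\mu$) matches the paper, but Step~1 has a genuine circularity. You invoke the H\"older estimate of Lemma~\ref{l.usefulholderestimate} to upgrade the lattice-scale Birkhoff oscillation control of $U_T:=v_+^T+\lfloor\mu_+^T T\rfloor-\mu_+^T x_1-p\cdot x'$ to a pointwise tangential oscillation bound on $\{x_1=0\}$. But the constant in Lemma~\ref{l.usefulholderestimate} depends on $\|U_T\|_{L^\infty}$ on the relevant ball, and that is precisely the quantity you are trying to bound; at this stage you only know $U_T\le 0$. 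The paper breaks this loop with a Harnack argument: near the almost-touching point $x_T'$ (where $U_T(x_T')\ge -3$), solve an auxiliary $\eta$ on a fixed-size box with Neumann data $\partial_1\eta=\partial_1 U_T$ (which is bounded by $\|f\|_\infty+|\mu_+^T|$, hence by $C(f)$) and zero Dirichlet data on the remaining boundary. Then $\eta-U_T$ is a \emph{nonnegative} harmonic function with zero Neumann data on the flat piece, so Harnack (after even reflection) gives $\max(\eta-U_T)\le C\min(\eta-U_T)$, and evaluating the minimum at $x_T'$ yields a uniform lower bound on $U_T$ in that box without any a priori $L^\infty$ input. Birkhoff then propagates this to all of $\{x_1=0\}$, and only after that does the strip maximum principle (with the Birkhoff almost-periodicity handling the tangential infinity, as you say) close the estimate.

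Two further remarks. First, in passing to the limit you should identify $m_+=\limsup_T\mu_+^T$ with $Q_\ast(p)$; this is a separate short argument (compare any subcorrector $(u,\mu,p)$, shifted by an integer, against the maximal $v_+^T$ to get $\mu_+^T\le\mu+O(1/T)$) and is not automatic from your Step~2. Second, your primary Step~3 proposal (re-running the truncation with top data $\mu T+p\cdot x'$) does not obviously produce slope $\mu$ in the limit: since $f$ is only $\Z$-periodic in $v$, shifting the top datum by a non-integer constant changes the problem, and there is no reason the associated $\tilde\mu_+^T$ converges to $\mu$. The paper takes your stated alternative: set $W_\mu=V_++(\mu-Q_\ast(p))x_1$ (a subcorrector of slope $\mu$, since at $x_1=0$ one has $W_\mu=V_+$ and $\partial_1 W_\mu\ge f(x',W_\mu)$) and $W^\mu=V_--(Q^\ast(p)-\mu)x_1+c$ (a supercorrector of slope $\mu$), choose $c\in\Z$ so that $W_\mu\le W^\mu$, and run Perron between them; the resulting $V_\mu$ inherits Birkhoff by the same translation-invariance argument as in Remark~\ref{r.thetruncatedsatisfybirkhoff}.
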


The proof of this lemma also shows the following result.

\begin{corollary}\label{l.corollaryofregularityofcorrector}
    The truncated solutions $v_+^T+\lfloor\mu_+^T T\rfloor$ and $v_-^T+\lceil\mu_-^T T\rceil$ in \eqref{eq.vplusmplus} and \eqref{eq.vminusmminus} converges locally uniformly on $\R_+^d\cup \pt \R_+^d$ to $V_+,V_-$ respectively. 
\end{corollary}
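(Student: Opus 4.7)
The goal is to upgrade the half-relaxed convergence in the definitions \eqref{eq.vplusmplus} and \eqref{eq.vminusmminus} to genuine locally uniform convergence on $\overline{\R^d_+}$. The plan is to establish a local equicontinuity estimate for the family $\{v_+^T + \lfloor\mu_+^T T\rfloor\}_{T>0}$ (and symmetrically for $\{v_-^T + \lceil\mu_-^T T\rceil\}_{T>0}$), and then to exploit the basic fact that a locally equicontinuous family of semicontinuous functions converges locally uniformly if and only if its upper and lower half-relaxed limits coincide; I will then identify the common limit with $V_+$ (respectively $V_-$) by appealing to the definition of the latter.

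\textbf{Step 1 (uniform $L^\infty$ bound).} Because $v_+^T$ is maximal among subsolutions on the slab $\{0<x_1<T\}$, the Birkhoff property of Remark \ref{r.thetruncatedsatisfybirkhoff} implies that $v_+^T(x)-p\cdot x$ has unit-scale tangential oscillation bounded by $|p|+1$. Combined with the one-sided linear bound $v_+^T(x)\le p\cdot x+\mu_+^T(x_1-T)$ from \eqref{eq.linearupperboundonvplusT}, and the matching one-sided lower barrier $v_+^T(x)\ge p\cdot x+\mu_+^T(x_1-T)-C_0$ obtained from the same shifted affine subsolution used in the proof of Lemma \ref{l.regularityofcorrector}, we conclude that
\[
\bigl| v_+^T(x) + \lfloor \mu_+^T T\rfloor - \mu_+^T x_1 - p\cdot x' \bigr| \le C
\]
on any compact set $K\subset \R^d_+\cup\partial\R^d_+$ and all sufficiently large $T$. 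Since $|\mu_+^T|\le \|f\|_{L^\infty}$ by Lemma \ref{l.boundforthecoefficientshowingnontrivialness} (applied to $v_+^T$), the translated family is locally uniformly bounded in $L^\infty$.

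\textbf{Step 2 (uniform H\"older regularity).} The function $v_+^T + \lfloor\mu_+^T T\rfloor$ is harmonic in $\{0<x_1<T\}$ and satisfies the bounded-data semilinear Neumann condition $\partial_1 v=f(x',v)$ on $\{x_1=0\}$. By the boundary H\"older estimate for such Neumann problems (the content of Lemma \ref{l.usefulholderestimate} in Appendix \ref{appendix.someestimates}), the $L^\infty$ bound of Step 1 upgrades to a uniform estimate $\|v_+^T+\lfloor\mu_+^T T\rfloor\|_{C^\alpha(K)}\le C(K)$ on every compact $K\subset \R^d_+\cup\partial\R^d_+$, with $C(K)$ independent of $T$ (for $T$ large enough that $K\subset\{x_1<T/2\}$).

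\textbf{Step 3 (extraction and identification).} By Arzel\`a--Ascoli, every sequence $T_j\to\infty$ admits a subsequence along which $v_+^{T_j}+\lfloor\mu_+^{T_j} T_j\rfloor$ converges locally uniformly to some continuous function $W$. For such a subsequence the upper and lower half-relaxed limits both coincide with $W$, so in particular $W={\limsup_j}^\ast(v_+^{T_j}+\lfloor\mu_+^{T_j} T_j\rfloor)\le V_+$; conversely, by Lemma \ref{l.stabilityoftouching} applied to the defining sequence of $V_+$ and the equicontinuity from Step 2, $V_+\le W$. Thus $W\equiv V_+$ along every subsequential limit, which together with the uniform H\"older bound gives locally uniform convergence of the entire family to $V_+$. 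The argument for $v_-^T+\lceil\mu_-^T T\rceil\to V_-$ is entirely symmetric.

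The main obstacle is Step 1: although the extremal definition immediately yields the \emph{upper} affine bound for $v_+^T$, producing a matching \emph{lower} affine bound requires the Birkhoff property to convert one-sided extremality into a two-sided oscillation control independent of $T$. Once this is in place, Steps 2 and 3 are essentially standard.
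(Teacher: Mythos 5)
Your skeleton (a $T$-independent bound on the distance of $v_+^T+\lfloor\mu_+^T T\rfloor$ from the affine function, the boundary H\"older estimate of Lemma \ref{l.usefulholderestimate}, Arzel\`a--Ascoli, identification of the limit with $V_\pm$) is the same route as the paper, which presents the corollary as a byproduct of the proof of Lemma \ref{l.regularityofcorrector}; however, two of your steps have genuine gaps. In Step 1 the claimed mechanism for the lower barrier fails: the affine lower barriers available through the maximality of $v_+^T$ are those with inner normal slope at least $\max f$ (this is exactly the barrier used at the start of Lemma \ref{l.boundforthecoefficientshowingnontrivialness}), so one only gets $v_+^T \ge p\cdot x + (\max f)(x_1-T)$, whose vertical gap to the upper bound $p\cdot x+\mu_+^T(x_1-T)$ is $(\max f-\mu_+^T)T$, growing linearly in $T$ whenever $Q_\ast(p)<\max f$; shifting down a (non-affine) subcorrector of slope $\mu\ge Q_\ast(p)$ instead leaves a gap $(\mu-\mu_+^T)T$, again uncontrolled since only $\limsup_T\mu_+^T=Q_\ast(p)$ is known, with no rate. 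Moreover the Birkhoff property only compares integer translates and does not by itself bound the oscillation inside a unit cell. The actual $T$-independent two-sided bound in the paper comes from the near-touching point $x_T'$ where $U_T(x_T')\ge -3$ (optimality of $\mu_+^T$ in \eqref{eq.linearupperboundonvplusT}), a Harnack inequality giving a unit-scale oscillation bound there, the Birkhoff property propagating it across $\Z^{d-1}$, and the maximum principle in the slab $\{0\le x_1\le T\}$; if you want this bound you should quote that argument (or the bound it produces), not a barrier comparison.

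Second, Step 3 does not prove $V_+\le W$. For a locally bounded, locally equicontinuous family, convergence along a subsequence only gives $W\le V_+$; the reverse inequality for \emph{every} subsequential limit is precisely the assertion that the whole family converges, which equicontinuity cannot supply (the constants $u_T\equiv\sin T$ are bounded and equicontinuous but divergent), and Lemma \ref{l.stabilityoftouching} concerns persistence of maximum points under half-relaxed limits, not uniqueness of subsequential limits. As written, Step 3 is circular at the key point. To close it one needs an ingredient linking different values of $T$ (for instance a comparison between $v_+^T$ and $v_+^{T'}$ using the maximality structure and integer shifts, or a uniqueness/rigidity statement for Birkhoff correctors at the extremal slope), or else one must settle for locally uniform convergence along subsequences, which is what the uniform bound plus Lemma \ref{l.usefulholderestimate} and Lemma \ref{l.usefulC1alphaestimate} genuinely provide and what the computations in Section \ref{subsec.laminarcasehomogenization} essentially rely on.
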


\begin{proof}[Proof of Lemma \ref{l.regularityofcorrector}]
Let us first show the special case when $\mu=Q_\ast(p)$. The case for $\mu=Q^\ast(p)$ is symmetrical. We make the following two claims on $(V_+,m_+,p)$ as defined in \eqref{eq.vplusmplus}
\begin{enumerate}
    \item[(A)] $m_+=Q_\ast(p)$;
    \item[(B)] $(V_+,m_+,p)$ is a corrector to \eqref{eq.correctorforgeneralsemilinearproblem} that satisfies the bound \eqref{eq.uniformdistancetoaffinefunctionofcorrectors}.
\end{enumerate}

To show claim (A), we observe that for a subcorrector $(u,\mu,p)$ to \eqref{eq.correctorforgeneralsemilinearproblem} and all $C\in \Z$ such that $$C>\sup_{\R_+^d} u(x)-(\mu,p)\cdot x,$$ we have, by maximality of $v_+^T$, for some $s\in[0,1)$
\[
v_+^{T}\ge u-C-\mu T -s,
\]
where $s$ is taken so that $s+\mu T$ is an integer. This implies, by \eqref{eq.linearupperboundonvplusT}, that for every $T>0$ and $0\le x_1\le T$
\[
p\cdot x +\mu_+^T  x_1 \ge u-C-\mu T +\mu_+^T T-s.
\]
At $x_1=0$ we have
\[
 \mu_+^T  \le  \mu +\frac{C-u(0)+1}{T},
\]
which implies that 
$$
m_+=\limsup_{T\rta\infty} \mu_+^T \le \mu.
$$
for every subcorrector $(u,\mu,p)$. Combining the fact that $(V_+,m_+,p)$ is also a subcorrector, we have finished the proof of Claim (A).

To show claim (B), we first observe that $V_+\le p\cdot x +\mu_+^T  x_1$ is defined by the upper half relaxed limit of the truncated subcorrectors $v_+^T+\lfloor\mu_+^T T\rfloor$. For notational convenience, we denote 
\[
U_T(x):=v_+^T+\lfloor\mu_+^T T\rfloor-p\cdot x -\mu_+^T  x_1 \quad \textup{and} \quad  \ c_T=\lfloor\mu_+^T T\rfloor-\mu_+^T T.
\]
By Lemma \ref{l.usefulholderestimate} and Lemma \ref{l.usefulC1alphaestimate}, $U_T$ is a classical solution of
\be\label{eq.tiltedeqiationforUT}
\bca
\Delta U_T =0 &\textup{ in }\{0<x_1<T\}\\
U_T = c_T & \textup{ on }\{x_1=T\}\\
\pt_1 U_T = f(x',U_T+p\cdot x)-\mu_+^T & \textup{ on }\{x_1=0\}.
\eca
\ee
Notice that $U_T\le 0$ according to the definition of $\mu_+^T$ in \eqref{eq.linearupperboundonvplusT}. 

To show that $V_+$ is also a supercorrector, we just need to show a uniform lower bound on $U_T$ independent of $T>0$. 

By Remark \ref{r.thetruncatedsatisfybirkhoff} and the Birkhoff property \eqref{eq.birkhoffproperty}, we know that for $k\in \Z^{d-1}$
\be\label{eq.birkhoffofUT}
U_T(x+k)+k\cdot p- \lceil k\cdot p\rceil\le U_T(x)\le U_T(x+k)+k\cdot p- \lfloor k\cdot p\rfloor,
\ee
and so $|U_T(x+k)-U_T(x)|\le 1$ for all $x\in \{0\le x_1 \le T\}, \ k\in \Z^{d-1}$. We claim that the uniform boundedness follows if there is a constant $C>0$ depending only on dimension $d$ and $f$ so that for all $T>0$ there is a point $x_T'\in \{x_1=0\}$ such that
\be\label{eq.localuniformboundforUT}
\osc_{x_1=0,|x'-x_T'|\le 2} U_T \le C.
\ee
This is because $U_T$ is harmonic in $\{0<x_1< T\}$, bounded on $\{x_1=T\}$, and if \eqref{eq.localuniformboundforUT} holds then combining the Birkhoff property \eqref{eq.birkhoffofUT} we know that $U_T$ is uniformly bounded on $\{0\le x_1\le T\}$ independent of $T>0$.

{Now we return to prove \eqref{eq.localuniformboundforUT}.} By the definition $\mu_+^T$, for all $T>0$ there exists a point $x_T'\in \{x_1=0\}$ such that
\[
U_T(x_T') \ge -3.
\]
Now we solve for an auxiliary function $\eta$ satisfying
\[
\bca
\Delta \eta = 0 &\textup{ in }\{0<x_1 <2, |x'-x_T'|<5\}\\
\eta =0  &\textup{ on }\pd \{0<x_1 <2, |x'-x_T'|< 5\}\\
\pt_1 \eta = \pt_1 U_T  & \textup{ on } \{x_1=0,|x'-x_T'|< 5\}.
\eca
\]
Notice that $|\pt_1 U_T|\le 5 \norm{f}_{L^\infty}$, which means that $\eta$ is uniformly bounded independent of $T$. On the other hand we have
by Harnack inequality there is a constant $C>0$ depending on dimension such that
\[
\max_{\{0\le x_1 \le1, |x'-x_T'|\le 2\}} \lb \eta-U_T \rb \le C \min_{\{0\le x_1 \le1, |x'-x_T'|\le 2\}} \lb \eta-U_T  \rb,
\]
as $\eta-U_T $ is a positive harmonic function on $\{0< x_1 <2, |x'-x_T'|< 5\}$ satisfying zero Neumann boundary condition on $\{x_1=0\}$. This implies that
\[
\osc_{\{0\le x_1 \le1, |x'-x_T'|\le 2\}} U_T \le C\lmb \max_{\{0\le x_1 \le1, |x'-x_T'|\le 2\}}  U_T  + \max_{\{0\le x_1 \le1, |x'-x_T'|\le 2\}} |\eta| \rmb,
\]
on left hand side of which we have $$\osc_{\{x_1=0, |x'-x_T'|\le 2\}} U_T \le \osc_{\{0\le x_1 \le1, |x'-x_T'|\le 2\}} U_T,$$ which implies that
\[
\osc_{\{x_1=0, |x'-x_T'|\le 2\}} U_T \le C\lmb 3  + \max_{\{0\le x_1 \le1, |x'-x_T'|\le 2\}} |\eta| \rmb \le C'
\]
for some positive constant $C'>0$ independent of $T$. Combining Lemma \ref{l.usefulholderestimate} and \ref{l.usefulC1alphaestimate}, this shows that the limit $V_+$ is a classical solution to \eqref{eq.correctorforgeneralsemilinearproblem} with the distance from $Q_\ast(p) x_1 + p\cdot x$ uniformly bounded by a constant $C>0$ depending only on dimension $d$ and $f$.

For $\mu\in [Q_\ast(p),Q^\ast(p)]$, we consider for some constant $c\in \Z$ the supercorrector 
\[
W^\mu(x) = V_-(x) - (Q^\ast(p)-\mu) x_1 + c
\]
and the subcorrector $W_\mu \le W^\mu$ defined as
\[
W_\mu(x) = V_+(x) + (\mu - Q_\ast(p)) x_1.
\]
The existence of a corrector $(V_\mu,\mu,p)$ {with $W_\mu \leq V_\mu \leq W^\mu$} follows from the Perron's method. As $V_\mu$ is defined by taking maximal values of subsolutions (or minimal values of supersolutions) in between $W_\mu,W^\mu$, we know that $V_\mu$ also satisfies the Birkhoff property as in Definition \ref{def.birkhoffproperty} by a similar argument in Remark \ref{r.thetruncatedsatisfybirkhoff}.

\end{proof}

\begin{proof}[Proof of Theorem \ref{t.characterizecorrectors}]
    The proof is complete by combining Lemma \ref{l.boundforthecoefficientshowingnontrivialness}, Lemma \ref{l.semicontinuityofcoefficients}, Lemma \ref{l.averageisinsidethepinninginterval} and Lemma \ref{l.regularityofcorrector}.
\end{proof}

\subsection{Homogenization of general viscosity solutions}\label{subsec.homogenizationofgeneralvissol}
Using the correctors and the pinning interval defined in the previous subsection, we can now present a homogenization result for general viscosity solutions.
\begin{theorem}\label{t.homogenizationofgeneralsemilinear}
 Let $\ep_k \to 0$ and $u_k$ be a sequence of solutions to \be\label{eq.homogenizationNeumanngeneral2}
\bca
\Delta u_k = 0 & \textup{ in }B_1^+\\
\pt_1 u_k = f(x'/\ep_k,u_k/\ep_k) & \textup{ on }B_1'
\eca 
\ee
such that $u_k \to u$ locally uniformly in $B_1^+\cup B_1'$. Then $u$ solves
\be\label{eq.generalhomogenized2}
\bca
\Delta u = 0 & \textup{ in }B_1^+\\
Q_\ast(\gd' u) \le \pt_1 u \le Q^\ast(\gd' u) & \textup{ on }B_1',
\eca
\ee
where the pinning interval $[Q_\ast(p;f),Q^\ast(p;f)]$ is defined in \eqref{eq.subcoefficient} and \eqref{eq.supercoefficient}. 
\end{theorem}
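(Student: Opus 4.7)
The proof is by the perturbed test function method of Evans, with the correctors from Theorem~\ref{t.characterizecorrectors} playing the role of the boundary correctors. The interior equation passes to the limit immediately: each $u_k$ is harmonic on $B_1^+$, and local uniform convergence preserves harmonicity, so $\Delta u = 0$ on $B_1^+$. It remains to verify the boundary inclusion in the viscosity sense, and by symmetry I focus on the subsolution inequality $\partial_1 u \leq Q^*(\nabla' u)$ on $B_1'$.

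Let $\phi\in C^2$ touch $u$ strictly from above at $x_0\in B_1'$; after replacing $\phi$ by $\phi+\gamma|x-x_0|^2$ for small $\gamma>0$, I may assume $\phi-u \geq \gamma|x-x_0|^2$ in $\overline{B_r(x_0)\cap B_1^+}$. Set $p:=\nabla'\phi(x_0)$, $\mu:=\partial_1\phi(x_0)$, $q:=Q^*(p)$, and suppose for contradiction that $\mu>q$. Apply Theorem~\ref{t.characterizecorrectors} to fix a corrector $(v,q,p)$ satisfying the Birkhoff property and $\sup_{\overline{\R^d_+}}|v(y)-qy_1-p\cdot y'|\leq C$. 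Define the perturbed test function
\[ \phi^{\ep_k}(x) := \phi(x) + \ep_k\bigl[v\bigl((x-x_0)/\ep_k+\zeta_k\bigr) - q\,(x-x_0)_1/\ep_k - p\cdot(x-x_0)'/\ep_k\bigr] + \ep_k\nu_k, \]
where $\zeta_k\in\{0\}\times\Z^{d-1}$ is chosen with $|\zeta_k'-x_0'/\ep_k|\leq 1$ and $\nu_k\in\Z$ is chosen so that $|\ep_k\nu_k - [\phi(x_0) - p\cdot x_0' - \ep_k v(\zeta_k)]|\leq \ep_k$. By Remark~\ref{r.translationinvarianceofhomogenizedcoefficients} and the $\Z^{d-1}\times\Z$-periodicity of $f$, together with the uniform corrector bound, $\phi^{\ep_k}$ satisfies $\|\phi^{\ep_k}-\phi\|_{L^\infty}\leq C\ep_k$, is harmonic in $\{x_1>0\}$, and satisfies the $\ep_k$-boundary condition up to error $C\ep_k^\alpha$ coming from the Hölder continuity of $f$ and the at-most $O(\ep_k)$ residual in the alignment of the second $f$-argument modulo $\Z$.

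Set $\delta:=(\mu-q)/3>0$ and $\psi_k:=\phi^{\ep_k}+\delta x_1$; then $\psi_k$ is a strict supersolution of the $\ep_k$-problem on $B_r(x_0)\cap B_1^+$ for $r,\ep_k$ small. Consider the max of $u_k-\psi_k$ on $\overline{B_r(x_0)\cap B_1^+}$. The quadratic strict-touching bound and uniform convergence force the maximizers $y_k$ to satisfy $y_k\to x_0$, in particular $y_{k,1}\to 0$ and $|y_k-x_0|\to 0$, which rules out the curved portion $\partial B_r(x_0)\cap\overline{B_1^+}$ for $k$ large (the quadratic decay $-\gamma r^2$ dominates the admissible $o(1)+C\ep_k$ slack). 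The interior $\{x_1>0\}$ is ruled out by the strong maximum principle applied to the harmonic function $u_k-\psi_k$, which would otherwise force the max to also be attained on the boundary. Hence $y_k\in B_r(x_0)\cap B_1'$, and applying the viscosity subsolution condition of $u_k$ with test function $\psi_k$ gives $\partial_1\psi_k(y_k)\leq f(y_k'/\ep_k,\psi_k(y_k)/\ep_k)$. Expanding using the corrector equation and the alignment yields $\partial_1\phi(y_k)-q+\delta \leq C\ep_k^\alpha$, and sending $k\to\infty$ produces $\mu\leq q-\delta<q$, contradicting $\mu>q$. The main obstacle is this last alignment: since $\phi(x_0)/\ep_k$ is generically not an integer, the second argument of $f$ in $\phi^{\ep_k}$ can only be matched to within $O(1)$ before passing through the $\Z$-periodicity of $f$, and one must carefully track that the resulting $C\ep_k^\alpha$ error (from $f\in C^\alpha$) is absorbed by the strict slack $\delta+(\mu-q)$.
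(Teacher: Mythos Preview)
Your argument has the viscosity touching direction reversed. In this paper's convention (see the definitions preceding Remark~\ref{r.equivalenceofthreeviscosityinequalities}, and note that $p\cdot x+(\max f)x_1$ is a \emph{sub}solution in Lemma~\ref{l.boundforthecoefficientshowingnontrivialness}), a subsolution to $\partial_1 u=f$ tested from above satisfies $\partial_1\phi\ge f$, not $\le f$. So your line ``applying the viscosity subsolution condition of $u_k$ with test function $\psi_k$ gives $\partial_1\psi_k(y_k)\le f(\dots)$'' has the wrong sign; with the correct inequality your chain yields $\partial_1\phi(y_k)-q+\delta\ge -C\ep_k^\alpha$, which is perfectly consistent with $\mu>q$ and gives no contradiction. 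Equivalently: touching from above tests $\partial_1 u\ge Q_*$, while the inequality $\partial_1 u\le Q^*$ you claim to prove is the supersolution condition and must be tested from below.

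Even after flipping the direction, the alignment step is not as written. The lattice shift $\zeta_k'-x_0'/\ep_k$ has magnitude $O(1)$, not $O(\ep_k)$, so H\"older continuity of $f$ only gives an $O(1)$ error in the first argument unless you replace $v$ by a $k$-dependent corrector for the translated medium (this is where Remark~\ref{r.translationinvarianceofhomogenizedcoefficients} would actually enter, and you would need the uniform bound in Theorem~\ref{t.characterizecorrectors} to be stable under such shifts). In the second argument the quadratic part of $\phi$ contributes $|y_k-x_0|^2/\ep_k$, which with only $|y_k-x_0|=O(\ep_k^{1/2})$ from the penalization is $O(1)$, not $o(1)$; likewise the max value $M_k/\ep_k$ is uncontrolled modulo $\Z$. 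Finally, your $\nu_k\in\Z$ is chosen so that $\ep_k\nu_k\approx \phi(x_0)-p\cdot x_0'-\ep_k v(\zeta_k)=O(1)$, so $\|\phi^{\ep_k}-\phi\|_\infty\le C\ep_k$ is false as stated.

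The paper sidesteps all of this by a different mechanism: it does \emph{not} use a corrector as a test function. Instead it passes the touching to $u_k$ (Lemma~\ref{l.stabilityoftouching}), blows up $v_k(y)=\ep_k^{-1}\big(u_k(\ep_k y+x_k)-u_k(x_k)\big)$, and takes a half-relaxed limit along a subsequence for which the fractional parts $u_k(x_k)/\ep_k\bmod 1$ converge. The limit $v_*$ is automatically a supercorrector with effective slope $(\partial_1\phi(x_0),\nabla'\phi(x_0))$, and then $\partial_1\phi(x_0)\le Q^*(\nabla'\phi(x_0))$ follows directly from the \emph{definition} of $Q^*$ as a supremum over supercorrector slopes. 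The point is that Definitions~\ref{def.supercorrector}--\ref{def.subcorrector} are tailored so that the blow-up itself furnishes the semi-corrector; no alignment of the $f$-arguments is needed.
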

\begin{proof}[Proof of Theorem \ref{t.homogenizationofgeneralsemilinear}] 
We only show the supersolution condition, the subsolution condition is proved by a symmetrical argument. 
    
Let $\phi$ be any smooth function that touches the limit function $u$ from below at some $x_0\in B_1'$, $\Delta \phi(x_0)>0$ and $u\ge \phi$ in $\overline{B_\delta^+(x_0)}$ for some $\delta< 1-|x_0|$. Then by Lemma \ref{l.stabilityoftouching} there exists a sequence of points $x_k\rta x_0$ and constants $c_k\rta 0$ as $k\rta\infty$ such that $\phi_k:=\phi+c_k$ touches $u_k$ from below at $x_k\in B_1'$ for each $k$, and $u_k\ge \phi_k$ in $\overline{B_{\delta/2}^+(x_k)}$.

By choosing subsequences, we can further assume the existence of
$$
s=\lim_{k\rta\infty}\lb u_k(x_k)/\ep_k-\lfloor u_k(x_k)/\ep_k\rfloor\rb\in [0,1].
$$
Let     
\[
v_k(x):= \frac{u_k(\ep_k x + x_k)-u_k(x_k)}{\ep_k}\quad \textup{and} \quad 
\psi_k(x):=\frac{\phi_k(\ep_k x + x_k)-\phi_k(x_k)}{\ep_k}.
\]
By smoothness of $\phi_k$ near $x_0$, we know that for any fixed $R>0$, the sequence of functions $v_k$ is bounded from below in $\overline{B_R^+}$ because
\[
v_k(x) \geq \psi_k(x) \geq \grad \phi_k(0)\cdot x + o_k(1).
\]
Therefore the lower half relaxed limit of $v_k$ 
\[
v_\ast(x) := \underset{k\rta\infty}{{\liminf}_\ast} v_k(x)
\]
exists as a lower-semicontinuous function $\R^d_+ \cup \partial \R^d_+ \to (-\infty,+\infty]$. Notice that for sufficiently large $k$, $v_k$ is by assumption a supersolution to $$\pt_1 v_k \le f (v_k + u_k(x_k)/\ep_k) \textup{ on }B_R'$$ which by Lemma \ref{l.stabilityoftouching} implies that $$\pt_1 v_\ast \le f(v_\ast+s)$$ in the viscosity sense on $\{x_1=0\}$. On the other hand, we know that the linear function $l(x):=\gd \phi(x_0)\cdot x$ touches $v_\ast$ from below at exactly $x=0$. This shows that $(v_\ast,\pt_1\phi(x_0),\gd' \phi(x_0))$ is a supercorrector to \eqref{eq.correctorforgeneralsemilinearproblem}, and the proof is complete by applying Theorem \ref{t.characterizecorrectors}.
 
\end{proof}

\subsection{The homogenized coefficients in the laminar case}\label{subsec.laminarcasehomogenization}

Let us now compute the precise coefficients $Q_\ast $ and $Q^\ast$ in the special case that $f$ is laminar, i.e. $f(x',u)= f(u)$ is a $1$-periodic $C^\alpha$ function only of $u\in \R$. The corrector equation becomes
\be\label{eq.correctorlaminar}
\bca
\Delta v =0 & \text{in }\R_+^d,\\
\pt_1 v = f(v) & \text{on }\R^{d-1},
\eca
\ee
satisfying 
\[
\sup_{\R_+^d} |v(x) - (\mu,p)\cdot x | <\infty,
\]
for some \((\mu,p)\in\R\times\R^{d-1}\). By Remark \ref{r.normalizeaveragezero}, we assume
\[
\avg{f}=\int_0^1 f(v) dv = 0.
\]

\begin{lemma}\label{l.computethelaminarcoefficient}
    Suppose $f(x',v)\equiv f(v)$ is 1-periodic in $v$ and $f\in C^\alpha(\R)$ for some $\alpha>0$, then
    \be
    Q_\ast(p)=L_\ast(p):=\bca
\min f &\textup{ if }p = 0,\\
\avg{f}=0 &\textup{ if }p\ne0,
    \eca
    \ee
    and symmetrically
     \be
    Q^\ast(p)=L^\ast(p):=\bca
\max f &\textup{ if }p = 0,\\
\avg{f}=0 &\textup{ if }p\ne0,
    \eca
    \ee
    where $\avg{f}=\int_0^1 f(r)dr $.
\end{lemma}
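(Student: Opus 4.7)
The plan is to normalize $\avg{f}=0$ via Remark \ref{r.normalizeaveragezero} and to reduce the lemma to two one-sided inequalities in each case, using the bounds $\min f \le Q_\ast(p) \le \avg{f} \le Q^\ast(p) \le \max f$ already furnished by Theorem \ref{t.characterizecorrectors}. After this reduction, what remains is to show $Q_\ast(0)\le\min f$ and $Q^\ast(0)\ge\max f$ in the case $p=0$, and $Q_\ast(p)\ge 0$ and $Q^\ast(p)\le 0$ in the case $p\ne 0$.

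The case $p=0$ is handled by planar correctors. By continuity and $1$-periodicity of $f$, pick $c_\ast\in\R$ with $f(c_\ast)=\min f$. The affine function $v(x):=(\min f)\,x_1+c_\ast$ is harmonic and satisfies $\partial_1 v=\min f = f(c_\ast)=f(v)$ on $\{x_1=0\}$, hence is a classical corrector with effective slope $(\min f,0)$. This immediately gives $Q_\ast(0)\le\min f$, and the symmetric choice of $c^\ast$ with $f(c^\ast)=\max f$ yields $Q^\ast(0)\ge\max f$.

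For $p\ne 0$ I focus on $Q_\ast(p)\ge 0$; the bound $Q^\ast(p)\le 0$ is symmetric. Suppose toward contradiction $\mu:=Q_\ast(p)<0$. Theorem \ref{t.characterizecorrectors} produces correctors $V_0, V_\mu$ with slopes $(0,p)$ and $(\mu,p)$, both satisfying the Birkhoff property and $|V(x)-\text{affine}|\le C$. For $\tau\in\R^{d-1}$, set $V_\mu^\tau(x):=V_\mu(x'+\tau,x_1)$; tangential translation preserves both $\Delta$ and the Neumann condition, so $V_\mu^\tau$ is again a corrector of slope $(\mu,p)$, now with an offset of $p\cdot\tau+O(1)$. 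Define
\[
m(\tau):=\inf_{\overline{\R^d_+}}\lb V_0-V_\mu^\tau\rb .
\]
The expansion $V_0-V_\mu^\tau=-\mu x_1 -p\cdot\tau+O(1)$ with $-\mu>0$ shows $m(\tau)$ is finite, realized near $\{x_1=0\}$, depends continuously on $\tau$, and satisfies $m(\tau)\to\mp\infty$ as $p\cdot\tau\to\pm\infty$. By the intermediate value theorem there exists $\tau_\ast$ with $m(\tau_\ast)=0$.

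Finally I extract a contact point and apply Hopf. After possibly composing $V_0$ and $V_\mu^{\tau_\ast}$ with integer lattice translations in $x'$ and subtracting integer constants from their values (both operations preserving the corrector property by the Birkhoff property and $1$-periodicity of $f$), and passing to a locally uniform limit via the H\"older estimates of Lemmas \ref{l.usefulholderestimate} and \ref{l.usefulC1alphaestimate}, I obtain correctors $\tilde V_0, \tilde V_\mu$ and an integer $N$ such that $R:=\tilde V_0-\tilde V_\mu-N$ is harmonic and nonnegative on $\R^d_+$, vanishes at some $(y_0',0)\in\partial\R^d_+$, and is not identically zero (the two limits still carry the distinct $x_1$-slopes $0$ and $\mu$, so $R=-\mu x_1+O(1)\to\infty$). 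The strong maximum principle and the Hopf boundary point lemma then force $\partial_1 R(y_0',0)>0$. On the other hand, using the Neumann condition at the contact point together with $1$-periodicity of $f$,
\[
\partial_1\tilde V_0(y_0',0)=f\lb\tilde V_0(y_0',0)\rb=f\lb\tilde V_\mu(y_0',0)+N\rb=f\lb\tilde V_\mu(y_0',0)\rb=\partial_1\tilde V_\mu(y_0',0),
\]
so $\partial_1 R(y_0',0)=0$, a contradiction.

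The main obstacle is this last contact-point extraction: the infimum $m(\tau_\ast)$ need not be attained on the tangentially unbounded strip $\{0\le x_1 \le M\}$, and the integer shifts produced by Birkhoff-compactification must line up so that $1$-periodicity of $f$ cancels the Neumann derivatives of $\tilde V_0$ and $\tilde V_\mu$ at the limiting contact point. The continuous parameter $\tau$ supplies exactly the degree of freedom needed to make $m(\tau_\ast)=0$ on the nose, and the $1$-periodicity converts the integer mismatch from Birkhoff translations into equality of Neumann data, which is what ultimately conflicts with Hopf.
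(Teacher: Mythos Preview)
Your proof is correct and differs from the paper in both cases. For $p=0$ you exhibit explicit planar correctors $(\min f)\,x_1+c_\ast$, which is simpler than the paper's route through the truncated one-variable equation $-w/T=f(w)$ and its $T\to\infty$ asymptotics (Lemma \ref{l.exactoscillationwhentangentialgradientiszero}, Corollary \ref{l.corollaryofregularityofcorrector}). For $p\ne0$ the approaches diverge more sharply: the paper reduces to a two-variable periodic problem \eqref{eq.the2variableproblem} for $\eta=v_\pm^T-p\cdot x$ and computes the average normal slope exactly via an integration-by-parts identity (Lemma \ref{l.exactoscillationwhentangentialgradientisnonzero}), testing $\Delta\eta=0$ against $\partial_2\eta$ over a period and then substituting $z=\eta(0,x_2)+|p|x_2$; you instead run a sliding/Hopf contradiction between two Birkhoff correctors of distinct normal slopes, with $1$-periodicity of $f$ forcing the Neumann data to agree at a boundary contact point. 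Your compactification step is sound --- using the \emph{same} integer shift $s_n=\lfloor k_n\cdot p\rfloor$ for both $V_0$ and $V_\mu^{\tau_\ast}$ gives $N=0$ directly and keeps the infimum equal to zero after translation --- and the continuity of $m(\tau)$ follows from the uniform $C^{1,\alpha}$ bound on $V_\mu-\mu x_1-p\cdot x'$ supplied by Lemma \ref{l.usefulC1alphaestimate}. The paper's energy identity is more direct and delivers the value $\avg{f}$ in one stroke; your comparison argument is more flexible, using only the maximum principle and the $1$-periodicity of $f$, and would adapt to settings where a clean integral identity is unavailable.
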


By Corollary \ref{l.corollaryofregularityofcorrector}, it suffices to analyze the truncated versions \eqref{eq.truncateforgeneralsemilinearcorrector}, which in the current case takes the form
\be\label{eq.correctorlaminartruncated}
\bca
\Delta v = 0 &\textup{ in }\{0<x_1<T\}\\
v=p\cdot x &\textup{ on }\{x_1=T\}\\
\pt_1 v = f(v) &\textup{ on }\{x_1=0\}.
\eca
\ee
Let $v_+^T$ and $v_-^T$ be the maximal subsolutions and minimal supersolutions respectively. We notice that if \(p=0\), the extremal solutions $v_+^T$ and $v_-^T$ are tangentially invariant linear functions. It suffices to solve for 1-variable solution $v=v(x_1)$ that satisfies
\[
\bca
v''(x_1) = 0 &\textup{ for }x_1\in(0,T)\\
v(T)=0 & v'(0)=f(v(0)).
\eca
\]
Notice that a solution will take the form $v(x_1)=-\frac{v(0)}{T} (x_1-T)$, where $v(0)$ satisfies
\be\label{eq.horizontalpinningequation}
-\frac{v(0)}{T}= f(v(0)).
\ee
Let $v(0)=w_+^T$ be the maximal solution and $v(0)=w_-^T$ be the minimal solution to the above equation, then we know that
\[
v_\pm^T(x_1,x')=-\frac{w_\pm^T}{T}(x_1-T).
\]
On the other hand, we have the following characterization of $w_\pm^T$.
\begin{lemma}\label{l.exactoscillationwhentangentialgradientiszero}
Suppose $f$ is continuous and 1-periodic in \eqref{eq.horizontalpinningequation}, then as $T\rta \infty$, we have 
\[
\lim_{T\rta \infty} f(w_+^T) =\lim_{T\rta \infty}-\frac{w_+^T}{T} = \min f,
\]
and 
\[
\lim_{T\rta \infty} f(w_-^T)=\lim_{T\rta \infty}-\frac{w_-^T}{T} = \max f.
\]    
\end{lemma}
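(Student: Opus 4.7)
The plan is to exploit the geometric picture: solutions of \eqref{eq.horizontalpinningequation} correspond to intersections of the line $y=-w/T$ (of small slope $-1/T$) with the bounded periodic graph $y=f(w)$. As $T\to\infty$ the line flattens, and the largest (resp.\ smallest) intersection must occur at the first level $w$ where $-w/T$ drops below $\min f$ (resp.\ rises above $\max f$); then periodicity forces the crossing to happen within a window of length $1$ of that location. I will carry out the argument for $w_+^T$ in detail, and note that the case for $w_-^T$ follows by the symmetric replacement $f\leftrightarrow -f$.

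First I would set $m:=\min f$ and show the a priori upper bound $w_+^T\le -mT$. Indeed, for $w>-mT$ one has $-w/T<m\le f(w)$, so no such $w$ can satisfy $-w/T=f(w)$, and in particular every solution (hence the maximal one) lies at most at $-mT$. Next I would produce a matching lower bound $w_+^T\ge -mT-1$ as follows. By continuity and $1$-periodicity of $f$ there exists $w_0\in[-mT-1,\,-mT]$ with $f(w_0)=m$. At this $w_0$ the quantity $g(w):=-w/T-f(w)$ satisfies
\[
g(w_0)=-\tfrac{w_0}{T}-m\ \in\ \bigl[0,\tfrac{1}{T}\bigr],
\]
while at $w=-mT$ one has $g(-mT)=m-f(-mT)\le 0$. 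Since $g$ is continuous, the intermediate value theorem produces a zero of $g$ inside $[w_0,-mT]\subset[-mT-1,-mT]$, so there exists at least one solution of \eqref{eq.horizontalpinningequation} in this interval. Combined with the previous step, the maximal solution $w_+^T$ itself lies in $[-mT-1,-mT]$.

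From this two-sided containment I get
\[
-\frac{w_+^T}{T}\in\Bigl[m,\,m+\tfrac{1}{T}\Bigr],
\]
so $-w_+^T/T\to m=\min f$ as $T\to\infty$, and since $f(w_+^T)=-w_+^T/T$ automatically holds, also $f(w_+^T)\to\min f$. The analogous statement for $w_-^T$ is obtained by the symmetric argument: with $M:=\max f$, one proves $w_-^T\in[-MT,\,-MT+1]$ by noting that no solution exists below $-MT$ (there $-w/T>M\ge f(w)$) while the periodicity of $f$ produces a point where $f=M$ in the unit interval immediately to the right of $-MT$, and the IVT supplies a zero there.

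I do not expect a serious obstacle: the argument is essentially the IVT plus periodicity, and the only care needed is to track that the IVT gives us the \emph{largest} (resp.\ smallest) root, which follows from the a priori no-solution region above $-mT$ (resp.\ below $-MT$). The one place where one must be careful is that the statement as written does not assume $\langle f\rangle=0$, but the argument above makes no use of this normalization.
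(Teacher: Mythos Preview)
Your argument is correct and follows essentially the same approach as the paper: both use the a priori bound $w_+^T\le -mT$ from $-w/T\ge \min f$, then locate via periodicity a point where $f=\min f$ just below $-mT$ and apply the intermediate value theorem to trap $w_+^T$ in a unit-scale window near $-mT$. Your version is slightly tighter (you get $w_+^T\in[-mT-1,-mT]$ and hence an error $1/T$, whereas the paper works in $[-mT-3,-mT-1]$ and obtains $3/T$), but the idea is identical.
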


\begin{proof}
    We only show the $w_+^T$ case as the $w_-^T$ case is symmetrical. On one hand, we know that 
    \[
    -\frac{w_+^T}{T} = f(w_+^T) \ge \min f,
    \]
and hence 
\[
w_+^T \le - (\min f )T.
\]
Because $-\frac{v}{T}$ decays linearly and $f$ remains bounded, this shows that when $v>- (\min f )T$, we always have $-\frac{v}{T}< f(v)$. On the other hand, when $- (\min f )T- 3\le v \le - (\min f )T-1$, we have 
\[
\min f + \frac{1}{T} \le -\frac{v}{T} \le \min f + \frac{3}{T}.
\]
By the 1-periodicity of $f$ there is a $v_\ast\in [- (\min f )T- 3, - (\min f )T-1]$ such that $f(v_\ast)=\min f$. Now, we obtain the inequality
\[
-\frac{v_\ast}{T} \ge \min f + \frac{1}{T}>f(v_\ast).
\]
By using the intermediate value theorem, we know that there must be a number $\Tilde{v}\ge v_\ast \ge - (\min f )T- 3$ such that \eqref{eq.horizontalpinningequation} holds for $v(0)=\Tilde{v}$. By maximality of $w_+^T$ we have
\[
w_+^T\ge \Tilde{v} \ge - (\min f )T- 3.
\]
This shows that 
\[
f(w_+^T) = -\frac{w_+^T}{T} \le \min f + \frac{3}{T} \rta \min f
\]
as $T\rta\infty$.
\end{proof}

For \(p\neq 0\), because \(f\) is 1-periodic in \(u\), \(v_{\pm}^T(x-p/|p|^2) + 1 = v_{\pm}^T(x)\), with tangential translation invariance:
\[
v_{\pm}^T(x - q) = v_{\pm}^T(x) \ \hbox{ for } \  \ q \perp p \ \textup{and} \ e_1.
\]
This implies \(\eta:=v_{\pm}^T - p\cdot x\) is a \(\frac{1}{|p|}\)-periodic 2-variable function, with continuity derived from \(v_{\pm}^T(x) - v_{\pm}^T(x-sp/|p|^2) = s\). The problem \eqref{eq.correctorlaminartruncated} is thus reduced to:
\be\label{eq.the2variableproblem}
\bca
\pt_1^2\eta + \pt_2^2\eta = 0 & \text{in }\{0<x_1<T\},\\
\eta = 0 & \text{on }\{x_1=T\},\\
\pt_1\eta = f(\eta + |p|x_2) & \text{on }\{x_1=0\},
\eca
\ee
with \(\eta\in C(\{0\leq x_1\leq T\})\), \(1/|p|\)-periodic in $x_2$. By the $C^\alpha$-regularity of \(f\), we know that $\eta$ is a classical solution to \eqref{eq.the2variableproblem} according to Lemma \ref{l.usefulholderestimate} and \ref{l.usefulC1alphaestimate}.

Now we show that \(\eta\) has bounded distance (independent of \(T\)) from \(\mu_+^T(x_1-T)\) with 
\[
\lim_{T\rta \infty} \mu_+^T = \langle f \rangle=0,
\]
where $\mu_+^T$ is defined in \eqref{eq.linearupperboundonvplusT}.

By Corollary \ref{l.corollaryofregularityofcorrector} $\eta$ has uniform distance independent of $T$ from an affine function $\mu_+^T(x_1-T)$ with $\mu_+^T$ having a limit $\mu_+^\infty=\lim_{T\rta\infty} \mu_+^T$. Let 
\[
\langle \eta \rangle (x_1 ):=|p|\int_0^{1/|p|} \eta(x_1,x_2) dx_2,
\]
we have, by \eqref{eq.the2variableproblem},
\[
\langle \eta \rangle(x_1) = |p|\int_0^{1/|p|} f(\eta(0,x_2)+|p|x_2) dx_2 (x_1 -T).
\]
By Lemma \ref{l.regularityofcorrector} and Corollary \ref{l.corollaryofregularityofcorrector}, we have
\[
\mu_+^\infty=\lim_{T\rta \infty}\mu_+^T = \lim_{T\rta \infty} |p|\int_0^{1/|p|} f(\eta(0,x_2)+|p|x_2) dx_2 .
\]
\begin{lemma}\label{l.exactoscillationwhentangentialgradientisnonzero}
    For all \(T>0\) and any viscosity solution \(\eta\) to \eqref{eq.the2variableproblem}, we have
    \[
    |p|\int_0^{1/|p|} f(\eta(0,x_2)+|p|x_2) dx_2 = \langle f \rangle=0,
    \]
    where \(\langle f \rangle:=\int_0^1 f(z) dz=0\).
\end{lemma}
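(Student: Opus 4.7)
The plan is to reduce the claimed identity to the vanishing of a certain boundary flux, which is obtained from a conservation law for the harmonic function $\eta$. Let $F$ be an antiderivative of $f$; since $f$ is $1$-periodic and $\avg{f}=0$, the function $F$ is also $1$-periodic. Setting $u = \eta + |p|x_2$ and using the quasi-periodicity $u(0,x_2+1/|p|) = u(0,x_2) + 1$, the fundamental theorem of calculus yields
\[
\int_0^{1/|p|} f(u(0,x_2)) \, \pt_2 u(0,x_2)\,dx_2 = F(u(0,1/|p|)) - F(u(0,0)) = 0.
\]

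Next, splitting $\pt_2 u = |p| + \pt_2 \eta$ and using $\pt_1 \eta(0,x_2) = f(u(0,x_2))$ on $\{x_1=0\}$, this rearranges to
\[
|p|\int_0^{1/|p|} f(\eta(0,x_2) + |p|x_2)\,dx_2 = -\int_0^{1/|p|} \pt_1 \eta(0,x_2) \, \pt_2 \eta(0,x_2)\,dx_2.
\]
So it suffices to prove the boundary integral on the right vanishes.

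The main step is to recognize the right-hand side as a constant of motion. Using $\pt_{11}\eta = -\pt_{22}\eta$, a direct computation gives
\[
\pt_1\lb \pt_1 \eta \cdot \pt_2 \eta \rb \; = \; \pt_2 \lmb \tfrac{1}{2}(\pt_1 \eta)^2 - \tfrac{1}{2}(\pt_2 \eta)^2 \rmb.
\]
Integrating in $x_2$ over one period $[0,1/|p|]$ and using the $1/|p|$-periodicity of $\pt_1\eta$ and $\pt_2\eta$ in $x_2$, the right side integrates to zero. Hence
\[
x_1 \mapsto \int_0^{1/|p|} \pt_1 \eta(x_1,x_2) \, \pt_2 \eta(x_1,x_2)\,dx_2
\]
is constant on $[0,T]$. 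Since $\eta(T,\cdot) \equiv 0$ forces $\pt_2\eta(T,\cdot)\equiv 0$, the integral equals $0$ at $x_1 = T$, and thus also at $x_1 = 0$.

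The main potential obstacle is justifying that all of these pointwise computations and integrations by parts are valid up to the boundary $\{x_1 = 0\}$. This is not a serious difficulty here: $\eta$ is a classical $C^{1,\alpha}$ solution up to the boundary by the regularity results recalled before the statement (Lemmas \ref{l.usefulholderestimate} and \ref{l.usefulC1alphaestimate}), and $\eta$ is real analytic in the interior by harmonicity, so the conservation law calculation may be performed on any strip $\{\delta<x_1<T\}$ and the limit $\delta \to 0^+$ taken using the continuity of $\pt_1 \eta \cdot \pt_2\eta$ up to $\{x_1=0\}$.
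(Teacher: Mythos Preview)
Your proof is correct and essentially the same as the paper's. The paper obtains the vanishing of $\int_0^{1/|p|}\pt_1\eta(0,x_2)\pt_2\eta(0,x_2)\,dx_2$ by multiplying $\Delta\eta=0$ by $\pt_2\eta$ and integrating over the strip, which is exactly your conservation-law identity $\pt_1(\pt_1\eta\,\pt_2\eta)=\pt_2\bigl[\tfrac12(\pt_1\eta)^2-\tfrac12(\pt_2\eta)^2\bigr]$ in integrated form; the paper then relates this to $\avg{f}$ via the substitution $z=\eta(0,x_2)+|p|x_2$, which is the change-of-variables version of your antiderivative argument.
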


\begin{proof}
By applying \(\pt_2 \eta\) to \eqref{eq.the2variableproblem} we have 
\[
\begin{split}
   0&=\int_{0}^T\int_0^{1/|p|} \pt_2 \eta(x_1,x_2) \Delta \eta(x_1,x_2) dx_2 dx_1\\
   &= -\int_{0}^T\int_0^{1/|p|} \gd \pt_2\eta(x_1,x_2) \cdot \gd \eta(x_1,x_2) dx_2 dx_1 - \int_0^{1/|p|} \pt_1 \eta(0,x_2) \pt_2 \eta(0,x_2) dx_2\\
   &=-\int_{0}^T\int_0^{1/|p|} \frac{1}{2}\pt_2|\gd \eta |^2(x_1,x_2) dx_2dx_1- \int_0^{1/|p|} \pt_1 \eta(0,x_2) \pt_2 \eta(0,x_2) dx_2\\
   &=- \int_0^{1/|p|} \pt_1 \eta(0,x_2) \pt_2 \eta(0,x_2) dx_2.
\end{split}
\]
On the other hand, by making the substitution 
\[z = \eta(0,x_2)+|p|x_2 \ \hbox{ and } \ dz = (\pt_2 \eta(0,x_2)+|p|) dx_2\]
we get
\[
\begin{split}
    \langle f\rangle&=\int_0^1 f(z) dz\\ 
   &= \int_0^{1/|p|} f(\eta(0,x_2)+|p|x_2) (\pt_2\eta(0,x_2) + |p|) dx_2\\
&=\int_0^{1/|p|}\pt_1 \eta(0,x_2) \pt_2\eta(0,x_2) dx_2  + |p| \int_0^{1/|p|} f(\eta(0,x_2)+|p|x_2) dx_2\\
&=|p| \int_0^{1/|p|} f(\eta(0,x_2)+|p|x_2) dx_2.
\end{split}
\]
    
\end{proof}

\begin{proof}[Proof of Lemma \ref{l.computethelaminarcoefficient}]
    The result follows from Lemma \ref{l.exactoscillationwhentangentialgradientiszero}, Lemma \ref{l.exactoscillationwhentangentialgradientisnonzero}.
\end{proof}

\section{Homogenization of extremal solutions in laminar media}\label{section.extremalsteadystates}

In this section, we study the homogenization of the extremal solutions, that is, the minimal supersolutions and maximal subsolutions of
\begin{equation}\label{e.laminar-hom-section}
    \bca
\Delta u = 0 & \textup{ in }B_1^+\\
\pt_1 u = f(u/\ep) & \textup{ on }B_1'.
\eca
\end{equation}

\begin{definition}\label{def.extremalsolutiontolaminarhomogenization}
Given a boundary data $g\in C(\partial B_1 \cap \{x_1 \geq 0\})$ we define the \emph{extremal solutions} of \eqref{e.laminar-hom-section}: the \emph{minimal supersolution}
\[
u_{g,\min{}}^\ep(x) := \inf\{u(x) \ : \ u \textup{ is a supersolution to \eqref{e.laminar-hom-section} and }u\ge g \textup{ on }\pt B_1\cap\{x_1\ge0\}\},
\]
and the \emph{maximal subsolution}
\[
u_{g,\max{}}^\ep(x) := \sup\{u(x) \ : \ u \textup{ is a subsolution to \eqref{e.laminar-hom-section} and }u\le g \textup{ on }\pt B_1\cap\{x_1\ge0\}\}.
\]
\end{definition}

We remark here that for each fixed $\ep$ the extremal solutions $u_{g,\min / \max}^\ep$ are classical solutions to \eqref{e.laminar-hom-section} according to Perron's method, Lemma \ref{l.usefulholderestimate} and Lemma \ref{l.usefulC1alphaestimate}.

 We show that extremal solutions to \eqref{e.laminar-hom-section} converge, respectively, to extremal solutions of
\begin{equation}\label{e.laminar-limit-hom-section}
     \bca
\Delta u = 0 & \textup{ in }B_1^+\\
\pt_1 u \in [L_\ast(\grad'u),L^\ast(\grad'u)] & \textup{ on }B_1'.
\eca
\end{equation}
Here the homogenized coefficients, as derived in Section \ref{section.generalsteadystates}, are
\be\label{eq.normalizedlowercoefficientforlaminar}
L_\ast(p)=\lb\min f\rb 1_{\{p=0\}} + \avg{f} 1_{\{p \neq 0\}} \ \hbox{ and } \ L^\ast(p)=\lb\max f\rb 1_{\{p=0\}} + \avg{f} 1_{\{p \neq 0\}}.
\ee
  To reduce notation, for the rest of the section we will keep the normalization
\[
\avg{f}=0.
\]
This normalization can be achieved by adding an affine function as explained in Remark \ref{r.normalizeaveragezero}.

\begin{definition}
    An upper semicontinuous function $u$ is called a \emph{subsolution} to \eqref{e.laminar-limit-hom-section} if for any smooth function $\phi$ touching $u$ from above at some $x\in B_1^+\cup B_1'$, either $\Delta \phi(x) \ge 0$ or $x\in B_1'$ and 
    \[
    \pt_1 \phi(x) \ge L_\ast(\gd' \phi(x)).
    \]
\end{definition}

\begin{definition}
    A lower semicontinuous function $u$ is called a \emph{supersolution} to \eqref{e.laminar-limit-hom-section} if for any smooth function $\phi$ touching $u$ from below at some $x\in B_1^+\cup B_1'$, either $\Delta \phi(x) \le 0$ or $x\in B_1'$ and 
    \[
    \pt_1 \phi(x) \le L^\ast(\gd' \phi(x)).
    \]
\end{definition}

\begin{remark}\label{r.equivalenceofthreeviscosityinequalities}
    The following three viscosity inequality constraints are equivalent for a lower semicontinuous function $u$ (the arguments are symmetric for upper semicontinuous functions):
    \begin{enumerate}[label=(\roman*)]
            \item $\pt_1 u \le L^\ast(\gd' u)$.
        \item $\pt_1 u \le \max f, \ \min \{\pt_1 u, |\gd' u|\} \le 0$.
        \item $\pt_1 u \le \max f, \   |\gd' u|\pt_1 u\le0$. 
    \end{enumerate}
       
\end{remark}

\begin{definition}\label{def.extremalsolutiontogradientdegenerate}
Given a boundary data $g\in C(\partial B_1 \cap \{x_1 \geq 0\})$ we define the \emph{extremal solutions} of \eqref{e.laminar-limit-hom-section}: the \emph{minimal supersolution}
\[
u_{g,\min{}}(x) := \inf\{u(x) \ : \ u \textup{ is a supersolution to \eqref{e.laminar-limit-hom-section} and }u\ge g \textup{ on }\pt B_1\cap\{x_1\ge0\}\},
\]
and the \emph{maximal subsolution}
\[
u_{g,\max{}}(x) := \sup\{u(x) \ : \ u \textup{ is a subsolution to \eqref{e.laminar-limit-hom-section} and }u\le g \textup{ on }\pt B_1\cap\{x_1\ge0\}\}.
\]
\end{definition}

Our main result of this section is the homogenization: extremal solutions of \eqref{e.laminar-hom-section} converge to extremal solutions of \eqref{e.laminar-limit-hom-section}.
\begin{theorem}\label{t.homogenizationofextremalsteadystates}
    Let $g\in C(\partial B_1 \cap \{x_1 \geq 0\})$ and $u^\ep=u_{g,\min}^\ep$ are minimal supersolutions to \eqref{e.laminar-hom-section} with $u^\ep = g$ on $\partial^+B_1=\partial B_1 \cap \{x_1 \geq 0\}$. Then 
    \[u^\ep \to u_{g,\min{}} \ \hbox{ uniformly on } \  \overline{B_1^+},\]
    where $u_{g,\min{}}$ is the minimal supersolution of \eqref{e.laminar-limit-hom-section} with boundary data $g$ on $\partial^+B_1$. The analogous statement for maximal subsolutions holds by symmetry.
\end{theorem}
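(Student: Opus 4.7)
The argument naturally splits into three phases: compactness, a one-sided bound via the general homogenization theorem, and a matching bound via a corrector-based barrier exploiting minimality at the $\ep$-scale.

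First I would establish compactness. The extremal solutions $u^\ep := u^\ep_{g,\min}$ are classical solutions of \eqref{e.laminar-hom-section} (by Perron's method and the regularity in Lemmas~\ref{l.usefulholderestimate}--\ref{l.usefulC1alphaestimate}), and Lemma~\ref{l.usefulholderestimate} gives a uniform $C^\alpha(\overline{B_1^+})$ bound independent of $\ep$. By Arzel\`a–Ascoli, any subsequence has a further subsequence $u^{\ep_k}\to u$ uniformly on $\overline{B_1^+}$ with $u=g$ on $\partial^+B_1$, so it suffices to show every such limit equals $u_{g,\min}$.

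For the lower bound $u\geq u_{g,\min}$, I would combine Theorem~\ref{t.homogenizationofgeneralsemilinear} with the laminar identification $Q_\ast=L_\ast$, $Q^\ast=L^\ast$ from Lemma~\ref{l.computethelaminarcoefficient}. This shows the limit $u$ is a viscosity solution of \eqref{e.laminar-limit-hom-section}; in particular it is a supersolution with $u=g$ on $\partial^+B_1$, so by Definition~\ref{def.extremalsolutiontogradientdegenerate}, $u\geq u_{g,\min}$.

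For the upper bound $u\leq u_{g,\min}$ I would exploit the minimality of $u^\ep$ at positive $\ep$. The key auxiliary step is: for every smooth \emph{strict} supersolution $v$ of \eqref{e.laminar-limit-hom-section}, meaning $\partial_1 v(x)<L^\ast(\grad' v(x))-\delta$ on $B_1'$ for some $\delta>0$, with $v\geq g+\delta$ on $\partial^+B_1$, construct an $\ep$-supersolution $v^\ep$ of \eqref{e.laminar-hom-section} with $v^\ep\geq g$ on $\partial^+B_1$ and $v^\ep\to v$ uniformly on $\overline{B_1^+}$. The construction is a perturbed test function of two-scale type
\[
v^\ep(x) \;=\; v(x) \,+\, \ep\, w\!\bigl(x,\,x/\ep\bigr),
\]
where, for each frozen macroscopic $x$, the profile $y\mapsto w(x,y)$ is a supercorrector in $\R_+^d$ at slope $\grad v(x)$; such a supercorrector exists by Theorem~\ref{t.characterizecorrectors} precisely because the strict supersolution condition places $\partial_1 v(x)$ inside the pinning interval $[L_\ast,L^\ast](\grad' v(x))$. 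Once $v^\ep$ is built, minimality of $u^\ep_{g,\min}$ gives $u^\ep\leq v^\ep$ pointwise; passing $\ep_k\to 0$ yields $u\leq v$. An approximation of $u_{g,\min}$ from above by smooth strict supersolutions $v_n\downarrow u_{g,\min}$ (standard sup-convolution plus an additive $1/n$ strictification, verifying the supersolution inequality carries through) then gives $u\leq u_{g,\min}$. Combining with Step~2 identifies every subsequential limit as $u_{g,\min}$, so the whole family converges uniformly.

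The main obstacle I anticipate is the construction of the barrier $v^\ep$ in the third step, because the homogenized coefficients $L_\ast,L^\ast$ are \emph{discontinuous} at $p=0$: on $\{\grad' v\neq 0\}$ the pinning interval collapses to the singleton $\{\avg{f}\}$, while on $\{\grad' v=0\}$ it jumps to $[\min f,\max f]$. Consequently the supercorrector family $y\mapsto w(x,y)$ cannot be chosen continuously in $x$, and one must patch the tangentially-invariant $1$D profiles $w(x,y)=\alpha y_1+\beta$ appropriate over the candidate facet $\{\grad' v=0\}$ with the $2$-variable correctors from Section~\ref{subsec.laminarcasehomogenization} off of it (using the explicit periodic structure in $p/|p|$ and $x_1$). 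Verifying that the glued ansatz is a genuine viscosity \emph{supersolution} across the transition, rather than merely piecewise a supersolution, is the delicate point; the strict margin $\delta$ in the strict supersolution hypothesis is what absorbs the gluing error and the $O(\ep)$ corrector mismatch between adjacent slopes, and handling this carefully — together with ensuring the vertical shifts of the correctors are chosen so that $v^\ep\geq g$ holds exactly on $\partial^+B_1$ — is where the technical work concentrates.
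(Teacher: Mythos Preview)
Your compactness argument and the lower bound $u\geq u_{g,\min}$ via Theorem~\ref{t.homogenizationofgeneralsemilinear} are fine and match the paper. The upper bound, however, has a gap that is more serious than the corrector-gluing issue you flag.

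The step ``approximate $u_{g,\min}$ from above by smooth strict supersolutions $v_n\downarrow u_{g,\min}$'' is not standard here and in general fails. The coefficient $L^\ast(p)=(\max f)\,1_{\{p=0\}}$ is upper semicontinuous but discontinuous, and $u_{g,\min}$ typically has a nontrivial facet $\Omega'\subset B_1'$ on which $\nabla' u_{g,\min}=0$ and $\partial_1 u_{g,\min}>0$ (Proposition~\ref{prop.generalexistenceoffacets}). Any smooth $v_n$ close to $u_{g,\min}$ will have $\nabla' v_n\neq 0$ on a set accumulating on $\Omega'$ (certainly near $\partial'\Omega'$), and at those points the strict supersolution condition reads $\partial_1 v_n<-\delta_n$, which is incompatible with $\partial_1 v_n\approx \partial_1 u_{g,\min}>0$. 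Sup-convolution does not help: it produces semiconvex, not smooth, functions, and your corrector ansatz $v+\ep\,w(x,x/\ep)$ needs a pointwise slope $\nabla v(x)$ to select the corrector, so you are forced back into the discontinuity. In short, the family of smooth strict supersolutions is too small to recover $u_{g,\min}$ as its infimum when facets are present, and the barrier argument cannot close.

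The paper circumvents this entirely. Instead of comparing with approximating supersolutions, it gives an \emph{intrinsic} characterization of the minimal supersolution by local viscosity tests (Theorem~\ref{t.characterizeminimalsuperslutiontohomoextremalsolutions}): harmonicity plus conditions \ref{condition(1)}, \ref{condition(2)}, and the nonstandard condition \ref{condition(3)} on one-variable test functions touching from above. It then checks that every subsequential limit of $u^\ep_{g,\min}$ satisfies these three conditions directly (Lemmas~\ref{l.extremesteadystateslimitcondition1} and~\ref{l.extremesteadystateslimitcondition2}); condition \ref{condition(3)} is exactly what encodes minimality across the facet and replaces your failed approximation step. This is the idea you are missing.
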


\begin{proof}[Proof of Theorem \ref{t.homogenizationofspecialsolutions}]
  The proof is done by combining Remark \ref{r.equivalenceofthreeviscosityinequalities}, Theorem \ref{t.homogenizationofextremalsteadystates}.
\end{proof}

Notice that it is difficult to directly check the minimality of the limit of $u_{g,\min}^\ep$. To overcome this difficulty we propose an alternative characterization of minimal supersolutions to \eqref{e.laminar-limit-hom-section} by local viscosity solution testing properties. Such characterization appears also in \cites{feldman2021limit,feldman2024regularitytheorygradientdegenerate} and is a suitable criterion for using the test function type argument in nonlinear homogenization as discussed in \cite{Evans1992}.

\subsection{A characterization of the extremal homogenized steady states in the laminar case}
In this subsection, we introduce an equivalent characterization for the minimal supersolutions as defined in Definition \ref{def.extremalsolutiontogradientdegenerate}. This characterization is for preparation of the nonlinear homogenization argument that will appear in Section \ref{subsection.homogenizationofminimalsupersolutionstoNNep}

\begin{theorem}\label{t.characterizeminimalsuperslutiontohomoextremalsolutions}
Let $\xi\in C(\overline{B_1^+})$ be a harmonic function on $B_1^+$ such that the following conditions hold on $B_1'$:
   \begin{enumerate}[label = (\arabic*)]
    \item \label{condition(1)}\(\min\{\pt_1 \xi, |\gd' \xi|\} = 0\) 
    \item \label{condition(2)}$\pt_1 \xi \le \max f$ 
    \item  \label{condition(3)}If a smooth function of the form $\eta(x)\equiv \eta(x_1)$ touches $\xi$ from above at $z\in B_1'$ and for some open domain $\Omega\csubset \R^d$ containing $z$ such that $\Omega\cap \overline{B_1^+} \csubset B_1^+\cup B_1'$ we have $\eta \ge  \xi$ in $\Omega\cap \overline{B_1^+}$ and $\eta>\xi$ on $\pt \Omega \cap \overline{B_1^+}$, then $\pt_1 \eta(z)=\pt_1\eta(0) \ge  \max f $.
   \end{enumerate}
Then $\xi $ is equal to the minimal supersolution $u_{\xi,\min}$ to \eqref{e.laminar-limit-hom-section} that satisfies $u_{\xi,\min}=\xi$ on $\pt B_1\cap \{x_1\ge0\}$.
\end{theorem}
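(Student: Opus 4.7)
The proof must establish the two inequalities $u_{\xi,\min} \leq \xi$ and $u_{\xi,\min} \geq \xi$. For the first, the plan is to verify that $\xi$ itself is a viscosity supersolution of \eqref{e.laminar-limit-hom-section}. Interior harmonicity handles the interior test trivially. On $B_1'$, given any smooth $\phi$ touching $\xi$ from below at $z$, the supersolution side of condition (1) reads $\min\{\pt_1\phi(z), |\gd'\phi(z)|\}\le 0$ and the supersolution side of (2) reads $\pt_1\phi(z)\le \max f$; by the equivalence $\text{(i)}\Leftrightarrow\text{(ii)}$ in Remark \ref{r.equivalenceofthreeviscosityinequalities}, these together are precisely $\pt_1\phi(z)\le L^\ast(\gd'\phi(z))$. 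Since $\xi$ matches its own trace on $\pt B_1\cap\{x_1\ge 0\}$, it is admissible in the infimum defining $u_{\xi,\min}$, giving $u_{\xi,\min}\le \xi$.

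For the reverse inequality, I would show $\xi\le v$ for an arbitrary supersolution $v$ with $v\ge \xi$ on $\pt^+ B_1$. Argue by contradiction: suppose $M:=\sup_{\overline{B_1^+}}(\xi - v)>0$. Since $\xi$ is harmonic and $v$ is superharmonic, $\xi-v$ is subharmonic in $B_1^+$; coupled with $\xi-v\le 0$ on $\pt^+ B_1$, the strong maximum principle forces $M$ to be attained at some $z_0\in B_1'$ (any interior maximum would force $\xi-v\equiv M$, contradicting the boundary data). To promote this to a strict local max with viscosity-useful slack, I would replace $v$ by $v_\lambda = v + \lambda\psi$ where $\psi$ is a smooth strictly superharmonic function with $\psi>0$ and $\pt_1\psi>0$ on $\overline{B_1^+}$; for small $\lambda>0$, the max $M_\lambda$ of $\xi-v_\lambda$ remains positive and is attained at some $z_\lambda\in B_1'$.

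At $z_\lambda$ the comparison splits into two cases. In the non-degenerate case (some smooth $\phi$ touches $\xi$ from below at $z_\lambda$ with $\gd'\phi(z_\lambda)\ne 0$), observe that $\phi - M_\lambda$ touches $v$ from below there and apply the supersolution inequality for $v$: $\pt_1\phi(z_\lambda)-\lambda\pt_1\psi(z_\lambda)\le L^\ast(\gd'\phi(z_\lambda)-\lambda\gd'\psi(z_\lambda))=\langle f\rangle$ since the tangential gradient stays non-zero for small $\lambda$; together with condition (1) for $\xi$ at $z_\lambda$ (which forces the $\pt_1$ component to be $\le 0$) and a viscosity-level Hopf-type strict inequality at the boundary maximum, the $\lambda \pt_1\psi > 0$ surplus produces a contradiction. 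In the degenerate case where every smooth test function touching $\xi$ from below at $z_\lambda$ has $\gd'\phi(z_\lambda)=0$, I would build a smooth 1D barrier $\eta(x_1)$ touching $\xi$ from above at $z_\lambda$ on a small cylinder $\Omega$ satisfying the strict boundary condition of (3): $\eta>\xi$ on $\pt \Omega\cap \overline{B_1^+}$. Condition (3) then yields $\eta'(0)\ge \max f$, while the supersolution property of $v_\lambda$ (using that $\eta - M_\lambda$ approximately touches $v_\lambda$ from below and that $\nabla' \eta \equiv 0$) forces $\eta'(0)\le \max f - c\lambda$ for some $c>0$ coming from the $\lambda\pt_1\psi$ surplus. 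This yields the needed contradiction.

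The main obstacle is the construction of the 1D test function $\eta$ at the degenerate max point, since $\xi$ and $v$ are only viscosity-semicontinuous and there is no a priori classical differentiability at $z_\lambda$. The plan is to use the tangential sup/inf-convolution machinery of Appendix \ref{appendix.tangentialregularization}: regularize $\xi$ from above by $\xi^\delta(x_1,x')=\sup_{|y'|\le\delta}\xi(x_1,x'+y')$ and $v$ from below by the analogous inf-convolution, both of which preserve the relevant viscosity inequalities while becoming tangentially flat near the max point. Passing from the multi-dimensional touching of the regularized problem to an honest 1D profile $\eta$ dominating $\xi$ on the boundary of a suitably chosen cylinder, while maintaining the $O(\lambda)$ slack from the superharmonic perturbation, is the technical crux. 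This parallels the treatment in \cite{feldman2024regularitytheorygradientdegenerate} but adapted to the explicit laminar coefficients $L_\ast,L^\ast$.
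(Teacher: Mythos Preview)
Your first inequality ($u_{\xi,\min}\le\xi$) is correctly argued via conditions \ref{condition(1)}, \ref{condition(2)} and Remark~\ref{r.equivalenceofthreeviscosityinequalities}. The second inequality has a real gap, and it stems from the perturbation. You take $v_\lambda=v+\lambda\psi$ with $\pt_1\psi>0$; this \emph{weakens} the Neumann supersolution inequality, since $\pt_1 v_\lambda=\pt_1 v+\lambda\pt_1\psi$. The paper instead perturbs by $-hx_1$ (i.e.\ $\pt_1\psi<0$), which makes $v_h$ a \emph{strict} Neumann supersolution: $\pt_1 v_h\le L^\ast(\gd'v_h)-h$. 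After tangential regularization both functions are differentiable at a touching point $x_0\in B_1'$, and if $x_0\notin\Ca_v$ one has $\pt_1 v\le 0$, hence $\pt_1 v_h\le -h<0\le\pt_1\xi\le\pt_1 v_h$, an immediate contradiction. With your sign of $\pt_1\psi$, this step fails: $\pt_1 v_\lambda$ may be nonnegative and the ``Hopf-type strict inequality'' you invoke is not available at the viscosity level. In your degenerate case the sign error resurfaces: testing $v$ from below by $\eta-M_\lambda-\lambda\psi$ with $\gd'\psi(z_\lambda)=0$ gives $\pt_1\eta\le\max f+\lambda\pt_1\psi$, not $\max f-c\lambda$.

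More structurally, your case split is organized around $\xi$ (whether a test function from below has nonzero tangential gradient), whereas the paper's split is organized around the contact set $\Ca_v$ of $v$ (Lemma~\ref{l.partitionofB1prime}). This is what makes condition~\ref{condition(3)} usable: on a component $\Omega\subset\Ca_v$, the supersolution $v$ is tangentially constant, so $v_h$ restricted near $\Omega\times\{0\le x_1\le\tau\}$ is genuinely a function of $x_1$ only. If no second touching point lies on $\pt'\Omega\cap B_1'$, strict ordering there plus $\pt_1 v_h<\max f$ lets you build a one-variable $\eta$ with slope $<\max f$ that touches $\xi$ from above with $\eta>\xi$ on the lateral boundary, violating \ref{condition(3)}. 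Hence a second touching point must exist on $\pt'\Omega\subset\Gamma_v$, which was already excluded. In your setup there is no natural candidate for $\Omega$ and no mechanism guaranteeing the strict ordering on $\pt\Omega$; the regularization alone does not produce it.
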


Before showing the theorem, let us introduce the following auxiliary lemma.
\begin{lemma}\label{l.partitionofB1prime}
    Given any supersolution $v$ to \eqref{e.laminar-limit-hom-section}, there is a partition 
    \[
    B_1'= \Ca_v \sqcup \Na_v \sqcup \Gamma_v,
    \]
where $$\Ca_v:=\{x\in B_1' \ ; \ \exists \textup{ smooth }\phi \textup{ touching }v \textup{ from below at }x \textup{ such that }\pt_1\phi(x)>0\}$$
is relatively open in $B_1'$, $\Na_v:= B_1'\setminus \overline{\Ca_v}$ and $\Gamma_v$ the common boundary of $\Ca_v$ and $\Na_v$.
    
\end{lemma}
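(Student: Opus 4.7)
The partition claim reduces to showing that $\Ca_v$ is relatively open in $B_1'$; once this is established, $\Na_v := B_1' \setminus \overline{\Ca_v}$ is automatically open, $\Gamma_v := \overline{\Ca_v} \setminus \Ca_v$ is closed, and the three sets are pairwise disjoint and cover $B_1'$.

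To show $\Ca_v$ is open, my plan is to run a sliding test function argument. Fix $x_0 \in \Ca_v$ with a smooth $\phi$ touching $v$ from below at $x_0$ and $\pt_1 \phi(x_0) = \alpha > 0$. After replacing $\phi$ by $\phi(x) - C|x - x_0|^2$ with $C$ large (which preserves $\pt_1\phi(x_0)$) I may assume the touching is strict. For each $y' \in B_1'$ close to $x_0'$ I introduce the tangentially translated test function $\phi_{y'}(x) := \phi(x - (0, y' - x_0'))$. By strict touching and Lemma \ref{l.stabilityoftouching}, for $y'$ sufficiently close to $x_0'$ there is a constant $c(y') \to 0$ so that $\phi_{y'} + c(y')$ touches $v$ from below at some $x^\ast(y')$ near $x_0$, and
\[
\pt_1 (\phi_{y'} + c(y'))(x^\ast(y')) = \pt_1 \phi(x^\ast(y') - (0, y' - x_0'))
\]
is close to $\alpha > 0$. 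In particular if $x^\ast(y') \in B_1'$ then $x^\ast(y') \in \Ca_v$.

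The main obstacle is therefore to ensure that $x^\ast(y')$ lies on $B_1'$ rather than migrating into the interior $B_1^+$, and then to verify that as $y'$ ranges over a neighborhood of $x_0'$ the resulting touching points sweep out a relatively open neighborhood of $x_0$ in $B_1'$. For the first issue, I would further modify $\phi$ so that $\Delta \phi > 0$ near $x_0$, e.g.\ by adding $\eta x_1^2$ with $\eta > 0$ small compared with the strict-touching gap of $v - \phi$; any interior touching $x^\ast(y') \in B_1^+$ would then contradict the interior viscosity supersolution condition $\Delta \phi(x^\ast(y')) \leq 0$. Alternatively, subtracting $\beta x_1$ with $0 < \beta < \alpha$ makes $\phi$ decreasing in $x_1$ while retaining $\pt_1 \phi(x_0) - \beta > 0$, which likewise pushes the touching onto $B_1'$. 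For the covering step I would replace the pure shift by linear perturbations of the form $\psi_\lambda(x) := \phi(x) + \lambda\cdot(x' - x_0')$ for small $\lambda \in \R^{d-1}$; the minimum point of $v - \psi_\lambda$ near $x_0$ depends on $\lambda$ with $\pt_1 \psi_\lambda = \pt_1 \phi$ unchanged, and a continuity/degree argument over $\lambda$ together with the boundary-confinement above would exhibit an open neighborhood of $x_0$ in $B_1'$ lying in $\Ca_v$. The delicate part is balancing the curvature perturbation of $\phi$, the strict-touching gap, and the parameter perturbation so that these competing requirements are simultaneously satisfied.
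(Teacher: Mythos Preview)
Your reduction to showing $\Ca_v$ is open is correct, and the idea of perturbing the test function is in the right spirit. However, the heart of the argument --- the ``covering step'' --- is where the content lies, and your proposal does not supply the key idea.

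The gap is this: you never invoke the degenerate boundary condition of $v$. A supersolution to \eqref{e.laminar-limit-hom-section} satisfies $\pt_1 v \le L^*(\gd' v)$ on $B_1'$, and since $L^*(p)=0$ for $p\neq 0$, any smooth $\phi$ touching $v$ from below at a boundary point with $\pt_1\phi>0$ \emph{must} satisfy $\gd'\phi=0$ there. This constraint is what pins down the location of the touching point; without it your ``continuity/degree'' argument has no traction, since the map from perturbation parameter to touching point need not be well-defined, single-valued, or surjective onto a neighborhood. The paper (via the cited \cite{feldman2024regularitytheorygradientdegenerate}*{Section 2}, whose parabolic analogue is spelled out in full in Lemma~\ref{l.opennessofcontactsetwhentimelipschitzx}) exploits exactly this: after rescaling so that $v$ is touched from below by an affine function $\beta y_1$ with $\beta>0$, one constructs for each nearby $z\in B_1'$ a test function of the form $\tfrac{\beta}{2}y_1 - \delta|y'-z|^2 + d\delta y_1^2 + (\text{constant})$, which is strictly subharmonic (forcing boundary touching) and whose tangential gradient vanishes only at $y'=z$. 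The supersolution condition then forces the touching point to be \emph{exactly} $z$, so $z\in\Ca_v$.

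There is also a secondary issue with your boundary-confinement step: having subtracted $C|x-x_0|^2$ to make the touching strict, adding a \emph{small} $\eta x_1^2$ cannot restore $\Delta\phi>0$, and subtracting $\beta x_1$ does not affect $\Delta\phi$ at all. The correct balance is visible in the test function above: concavity of order $\delta$ in $x'$ is matched by convexity of order $d\delta$ in $x_1$, and the rescaling to an approximately-linear situation is what allows these competing curvatures to be introduced without destroying the touching from below.
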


\begin{proof}
    Any such supersolution $v$ is also a supersolution to $$\min\{\pt_1 v,|\gd' v|\} \le 0,$$
    and hence we can apply the results in Section 2 in  \cite{feldman2024regularitytheorygradientdegenerate}.
\end{proof}

\begin{proof}[Proof of Theorem \ref{t.characterizeminimalsuperslutiontohomoextremalsolutions}]

We only discuss the minimal supersolution case, as the maximal subsolution case is symmetrical. We also denote by $g$ the restriction of $\xi$ on $\pt B_1\cap \{x_1\ge0\}$.

 \emph{Step 1: Minimal supersolution satisfies the three conditions.}  
By Remark \ref{r.equivalenceofthreeviscosityinequalities} and the standard Perron's method, the minimal supersolution \( u \) satisfies conditions \ref{condition(1)} and \ref{condition(2)}. The condition \ref{condition(3)} is carried out by a similar argument, but due to the unusual form, we still perform the argument here. To verify condition \ref{condition(3)}, assume there exists a smooth function \( \eta_0(x) = \eta_0(x_1) \) such that:
\[
\partial_1 \eta_0(0) < \max f,
\]
and \( \eta_0 \) touches \( u = u_{g,\min}\) from above at a point \( z \in \Omega \) in \( \Omega \cap \overline{B_1^+} \csubset B_1^+ \cup B_1' \), with \( \eta_0 > u \) on \( \partial \Omega \cap \overline{B_1^+} \). Consider the function:
\[
(\max f) x_1 + \eta_0(0) - \tau,
\]
which is a supersolution to \eqref{e.laminar-limit-hom-section} for each constant \( \tau \). When \( \tau = 0 \), since \( \partial_1 \eta_0(0) < \max f \), there exists a small \( \delta > 0 \) such that:
\[
(\max f) x_1 + \eta_0(0) > \eta_0(x_1) \quad \text{on} \quad 0 < x_1 \leq \delta.
\]
In particular, on \( \partial \Omega \cap \{ 0 \leq x_1 \leq \delta \} \), we have:
\[
(\max f) x_1 + \eta_0(0) \geq \eta_0(x_1) > u(x).
\]
This implies:
\[
(\max f) x_1 + \eta_0(0) > u \quad \text{on} \quad \partial \big( \Omega \cap \{ x_1 < \delta \} \big) \cap \{ x_1 \geq 0 \}.
\]
By continuity, we can choose \( \tau_0 > 0 \) small enough so that:
\[
(\max f) x_1 + \eta_0(0) - \tau_0 > u \quad \text{on} \quad \partial \big( \Omega \cap \{ x_1 < \delta \} \big) \cap \{ x_1 \geq 0 \}.
\]
Taking the minimum of \( u \) and \( (\max f) x_1 + \eta_0(0) - \tau_0 \) inside \( \Omega \cap \overline{B_1^+} \) yields a strictly smaller supersolution to \eqref{e.laminar-limit-hom-section} with boundary data \( g \), contradicting the minimality of \( u \). Thus, condition \ref{condition(3)} holds.

\emph{Step 2: the three conditions imply minimality.}  
To prove the reverse, we show the comparison principle between a supersolution \( v \) to \eqref{e.laminar-limit-hom-section} such that $v\ge g$ on $\pt B_1\cap \{x_1\ge0\}$ and a subsolution \( \xi \) satisfying conditions (1) and (3). Following the argument in \cite{feldman2024regularitytheorygradientdegenerate}*{Section 8.2}, we use tangential convolutions and harmonic lifts (Also see Appendix \ref{appendix.tangentialregularization}). These methods allow us to assume, \( v \) to be a supersolution, semiconvex on \( B_1' \), and harmonic in \( B_1^+ \), while \( \xi \) to be a subsolution, semiconcave on \( B_1' \), and harmonic in \( B_1^+ \). Specifically we know that $v$ satisfies
\be\label{eq.equationforasupersolutioninpage22}
\bca
\Delta v = 0 & \textup{ in }B_1^+\\
v \ge g  & \textup{ on }\pt B_1\cap \{x_1\ge0\}\\
\pt_1 v \le L^\ast(\gd' v)=(\max f)1_{\{\gd ' v =0\}} & \textup{ on }B_1',
\eca
\ee
and $\xi$ satisfies condition \ref{condition(3)} and
\be
\bca
\Delta \xi = 0 & \textup{ in }B_1^+\\
\xi \le g  & \textup{ on }\pt B_1\cap \{x_1\ge0\}\\
\pt_1 \xi \ge 0 & \textup{ on }B_1'.
\eca
\ee
We denote for $h>0$ the strict perturbation 
\[
v_h:=v -h x_1 + 2h.
\]

Suppose $v_h$ touches $\xi$ from above at some point \( x_0 \in B_1' \). By semi-convexity of $v_h$ and semi-concavity of $\xi$, both $v_h$ and $\xi$ are differentiable at $x_0$. By condition \ref{condition(1)}, we have:
\[
\partial_1 \xi(x_0) \geq 0,
\]
which implies that 
\be\label{eq.x0notinGammavNv}
x_0 \notin \mathcal{N}_v \sqcup \Gamma_v 
\ee
because in this case \( \partial_1 v_h(x_0) < 0 \) by \eqref{eq.equationforasupersolutioninpage22} and Lemma \ref{l.partitionofB1prime}. Let \( \Omega \subset \Ca_v \) be the component containing \( x_0 \).
We claim by condition \ref{condition(3)}, there must exist another touching point \( y \in \partial' \Omega \cap B_1' \subset \Gamma_v \cap B_1' \).  For \( \delta, \ep > 0 \), define:
\[
\Omega_{\delta, \ep} := \big\{ x \in \Omega \, ; \, \text{dist}(x, \partial' \Omega) > \ep, \, \text{dist}(x, \partial' B_1') > \delta \big\}.
\]
Because $v_h>\xi $ on $\pt B_1\cap \{x_1\ge0\}$, we know that when \( \delta, \ep > 0 \) are small we always have
\[
\max_{\partial' \Omega_{\delta, \ep}} (\xi - v_h) = \max_{(\partial' \Omega_{0, \ep}) \cap B_{1-\delta}'} (\xi - v_h).
\]
If \(\displaystyle \max_{\partial' \Omega_{\delta, \ep}} (\xi - v_h) < 0 \) (i.e., no touching point \( y \) exists on \( (\partial' \Omega_{0, \ep}) \cap B_{1-\delta}' \)), then because
\[
\pt_1 v_h = \pt_1 v -h < \max f
\]
and $v$ is constant on \( \Omega \), we have for sufficiently small \( \tau > 0 \):
\[
(\max f - \tau) x_1 + v(x_0) \geq v \quad \text{on} \quad \Omega_{\delta, \ep} \times [0, \tau].
\]
This produces a 1-variable function \( (\max f - \tau/2) x_1 + v(x_0) \) that violates condition \ref{condition(3)} on \( \Omega_{\delta, \ep} \times [0, \tau] \). Thus, for fixed small \( \delta > 0 \) and each small \( \ep > 0 \), there exists 
\[
y = y_\ep \in (\partial' \Omega_{0, \ep}) \cap B_{1-\delta}' 
\]
such that \( \xi \) touches \( v \) from below at \( y_\ep \). Taking a subsequence as \( \ep \to 0 \), we obtain a point \( y_0 \in (\partial' \Omega) \cap B_{1-\delta}' \) where \( \xi \) touches \( v \) from below, which is a contradiction since we already showed above in \eqref{eq.x0notinGammavNv} that there are no touching points in $\Gamma_v$.
\end{proof}

\subsection{Homogenization of the minimal supersolutions}\label{subsection.homogenizationofminimalsupersolutionstoNNep}

According to Theorem \ref{t.homogenizationofgeneralsemilinear} and Theorem \ref{t.characterizeminimalsuperslutiontohomoextremalsolutions}, to prove Theorem \ref{t.homogenizationofextremalsteadystates}, we need to show that any locally uniform limit $u$ of a subsequence $u_k:= u^{\ep_k}$ on $B_1^+\cup B_1'$ is harmonic in $B_1^+$ and satisfies the three viscosity solution conditions in Theorem \ref{t.characterizeminimalsuperslutiontohomoextremalsolutions}. We split the proof into two lemmas.

We first show that a minimal supersolution $u$ satisfies the condition \ref{condition(1)} in Theorem \ref{t.characterizeminimalsuperslutiontohomoextremalsolutions}. This is essentially proving that 
\[
\pt_1 u \ge \avg{f}=0,
\]
which incorporates the effects of global minimizers to minimal supersolutions.

\begin{lemma}\label{l.extremesteadystateslimitcondition1}
    Let $u_k:=u^{\ep_k}$ be a sequence of minimal supersolutions to \eqref{e.laminar-hom-section} with $\ep_k \to 0$ and $u_k \to u$ locally uniformly in $B_1^+\cup B_1'$. Then $u$ satisfies condition \ref{condition(1)} in Theorem \ref{t.characterizeminimalsuperslutiontohomoextremalsolutions}.
\end{lemma}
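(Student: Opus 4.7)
Condition~\ref{condition(1)} states $\min\{\pt_1\xi,|\gd'\xi|\} = 0$, which for the lower-semicontinuous limit $u$ interpreted as a viscosity supersolution means: at every smooth $\phi$ touching $u$ from below at $x_0 \in B_1'$, one must have $\pt_1\phi(x_0) \ge 0$ (since $|\gd'\phi(x_0)|\ge 0$ is automatic). This is exactly the ``$\pt_1 u \ge \avg{f} = 0$'' statement foreshadowed by the paragraph preceding the lemma, and it is \emph{not} implied by the inequality $\pt_1 u \le L^\ast(\gd'u)$ coming from \tref{homogenizationofgeneralsemilinear}, so the proof must genuinely use the minimality of each $u^{\ep_k}$.

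I would argue by contradiction. Suppose some smooth $\phi$ strictly touches $u$ from below at $x_0\in B_1'$ with $\pt_1\phi(x_0)=-a<0$; after a standard regularization one may assume $\phi$ is harmonic on a ball $B_r(x_0)$ and the touching is strict. By \lref{stabilityoftouching} applied to $u_k := u^{\ep_k}$, there exist $c_k \to 0$ and points $x_k \to x_0$ such that $\phi + c_k$ touches $u_k$ from below at $x_k$; the strong maximum principle forces $x_k \in B_1'$ (otherwise the two harmonic functions would coincide on a component and the touching reduces to a boundary one). Since $u_k$ is a classical solution of the Neumann condition at $x_k$, we obtain $f(u_k(x_k)/\ep_k) = \pt_1 u_k(x_k) \ge \pt_1\phi(x_k) \to -a$.

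To contradict minimality I want to produce, for large $k$, a supersolution $v^{\ep_k}$ to \eqref{e.laminar-hom-section} with $v^{\ep_k} = g$ on $\pd B_1$ that strictly undercuts $u_k$ somewhere by an amount $\eta > 0$ uniform in $k$. The natural candidate is $v^{\ep_k} := \min\{u_k,\Psi_k\}$ on a thin cylinder $D_k := B_r'(x_k) \times [0,\ep_k T]$ and $v^{\ep_k}:=u_k$ outside, where
\[
\Psi_k(x_1) := \ep_k\bigl(v_+^T(x_1/\ep_k) + N_k\bigr)
\]
is the rescaled truncated subsolution of \lref{exactoscillationwhentangentialgradientiszero}, with $N_k \in \Z$ adjusted so that $\Psi_k(0) = u_k(x_k)+O(\ep_k)$. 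Then $\Psi_k$ is a one-dimensional classical solution of the $\ep_k$-Neumann problem with slope $f(w_+^T)\to \min f$ as $T\to\infty$, hence in particular a supersolution.

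The hard part will be the two patching conditions: (a) $\Psi_k \ge u_k$ on $\pt D_k \setminus B_1'$, so that the glued function is a bona fide continuous supersolution matching the correct boundary data on $\pd B_1$, and (b) $\Psi_k < u_k$ at some interior point of $D_k$, furnishing the strict gap and hence, via Definition~\ref{def.extremalsolutiontolaminarhomogenization}, the contradiction with minimality. Condition (b) follows easily from the slope comparison $\pt_1\Psi_k = f(w_+^T) < -a \le \pt_1 u_k(x_k)+o(1)$ once $T$ is large, provided $\min f < -a$. Condition (a) is more delicate because $\Psi_k$ is purely one-dimensional while $u_k$ has tangential variation; I would handle it either by first sup-convolving $u_k$ tangentially (as in the arguments of Appendix~\ref{appendix.tangentialregularization}) so that the comparison becomes genuinely one-dimensional, or by adding a small harmonic correction $\delta(|x'-x_k'|^2-(d-1)x_1^2)$ to $\Psi_k$ to dominate the tangential oscillation on the lateral boundary. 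When $-a \le \min f$, so that the 1D barrier alone cannot drop beneath $u_k$, the barrier must be replaced by a local perturbation of the global energy minimizer $w^{\ep_k}$ from \lref{localuniformconvergenceofglobalminimizers}, which homogenizes to the solution of $\pt_1 w = \avg{f} = 0 > -a$ and thereby supplies the required slope gap; this is precisely the step in which the ``effects of global minimizers on minimal supersolutions'' cited in the text enter decisively.
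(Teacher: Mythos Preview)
Your setup has the touching direction reversed, and this is fatal. Condition~\ref{condition(1)}, $\min\{\pt_1\xi,|\gd'\xi|\}=0$, splits into a supersolution half (test from below, conclusion $\min\{\pt_1\phi,|\gd'\phi|\}\le 0$) and a subsolution half (test from above, conclusion $\pt_1\phi\ge 0$). The supersolution half is exactly what \tref{homogenizationofgeneralsemilinear} already gives via $\pt_1 u\le L^\ast(\gd'u)$ and Remark~\ref{r.equivalenceofthreeviscosityinequalities}. The new content is the \emph{subsolution} half, so the contradiction must start from a smooth $\phi$ touching $u$ from \emph{above} with $\Delta\phi(x_0)<0$ and $\pt_1\phi(x_0)<0$. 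Your statement ``touching $u$ from below \dots one must have $\pt_1\phi(x_0)\ge 0$'' is neither the sub- nor the supersolution condition for this operator. With touching from below you only obtain $\phi_k\le u_k$, i.e.\ a lower bound on $u_k$; your patching condition~(a) asks for $\Psi_k\ge u_k$ on the lateral and top boundaries of the cylinder, which requires an \emph{upper} bound on $u_k$ that you simply do not have. Neither tangential sup-convolution (which only enlarges $u_k$) nor a harmonic quadratic correction can manufacture such a bound.

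The paper's argument runs with the correct orientation. One takes $\phi$ touching $u$ from above with $\Delta\phi<0$, $\pt_1\phi<0$; then $\phi_k:=\phi+c_k$ touches $u_k$ from above at $x_k\in B_1'$, giving the crucial upper bound $u_k\le\phi_k$. The competitor is the local \emph{energy minimizer} $\psi_k$ of $E_{\ep_k}$ on $B_{\delta/2}^+(x_k)$ with Dirichlet data $\phi_k$ on $\pd B_{\delta/2}(x_k)$, so in particular $\psi_k=\phi_k\ge u_k$ on that boundary and the gluing is automatic. By \lref{localuniformconvergenceofglobalminimizers}, $\psi_k\to\psi_0$ solving $\pt_1\psi_0=0$; since $\pt_1\phi<0$ makes $\phi$ a strict supersolution of that problem, $\psi_0<\phi$ strictly inside, hence $\psi_k(x_k)<\phi_k(x_k)=u_k(x_k)$ for large $k$, and $\min\{u_k,\psi_k\}$ is a globally admissible supersolution strictly below $u_k$. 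This is your ``case~2'' idea, used directly and uniformly in $a>0$; the 1D-corrector detour (``case~1'') is unnecessary and, even after correcting the touching direction, fails: when $\min f<-a$ the barrier $\Psi_k$ with matched value at $x_k$ sits \emph{below} $\phi_k$ on the top face $\{x_1=\ep_k T\}$, so the sufficient condition $\Psi_k\ge\phi_k$ for~(a) breaks exactly in the regime you invoke for~(b).
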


\begin{remark}
    The proof of this lemma also shows that in the general $f(x',v)$ case (instead of merely this laminar case $f(v)$), the minimal supersolution $u$ to the homogenized equation \eqref{eq.homogenizationNeumanngeneral1} satisfies $$\avg{f}\le \pt_1 u \le Q^\ast(\gd' u ; f).$$
\end{remark}

\begin{proof}
  To show condition \ref{condition(1)}, by Theorem \ref{t.homogenizationofgeneralsemilinear}, Theorem \ref{t.characterizecorrectors}, Lemma \ref{l.computethelaminarcoefficient} and Remark \ref{r.equivalenceofthreeviscosityinequalities}, it suffices to show the subsolution condition
\[
\pt_1 u \ge 0.
\]
Suppose, for contradiction, that this is not the case. Then there exists a smooth function \(\phi\) that touches \(u\) from above at some point \(x_0 \in B_1'\), satisfying 
\[
\Delta \phi(x_0) < 0 \quad \textup{and} \quad   \pt_1 \phi(x_0) < 0,
\]
and \(u \le \phi\) in \(\overline{B_\delta^+(x_0)}\) for some \(0 < \delta < 1 - |x_0|\). By the smoothness of \(\phi\), we may choose \(\delta\) small enough so that 
\[
\Delta \phi(x)  < 0 \quad \textup{and} \quad  \pt_1 \phi(x) < 0
\]
for all \(x \in \overline{B_\delta^+(x_0)}\).

Without loss of generality, we can make the touching to be strict by applying a small perturbation (e.g., \(\phi + \tau|x - x_0|^2\) for a small \(\tau > 0\)) so that 
\be\label{eq.perturbationtomakestricttouching}
\phi(x_0)=u(x_0)\quad \textup{and} \quad \phi > u \text{ in } \overline{B_\delta^+(x_0)} \setminus \{x_0\}.
\ee
By Lemma \ref{l.stabilityoftouching}, there exist sequences of points \(x_k \to x_0\) and constants \(C_k \to 0\) as \(k \to \infty\) such that 
\[
\phi_k := \phi + C_k
\]
touches \(u_k\) from above at \(x_k \in B_1'\) for each \(k\), and \(u_k \ge \phi_k\) in \(\overline{B_{\delta/2}^+(x_k)}\).

Now, we solve the following global energy minimization problem:
\[
\psi_k := \argmin_\psi \lma E_{\ep_k}\left(\psi, B_{\delta/2}^+(x_k)\right) \ ; \  \psi = \phi_k \text{ on } \pt B_{\delta/2}(x_k) \cap \{x_1 \ge 0\} \rma,
\]
where \(E_{\ep_k}\) is defined in \eqref{eq.theenergyepsilon}.
By \lref{localuniformconvergenceofglobalminimizers}, global minimizers $$\psi_k \in C(\overline{B_{\delta/2}^+(x_k)}) \cap C_{\loc}^{1,\alpha}(B_{\delta/2}^+(x_k) \cup B_{\delta/2}'(x_k))$$ converge uniformly to \(\psi_0\), solving
\begin{equation}\label{e.psi0-eqn}
    \begin{cases}
\Delta \psi_0 = 0 & \text{in } B_{\delta/2}^+(x_0), \\
\psi_0 = \phi & \text{on } \pt B_{\delta/2}^+(x_0) \cap \{x_1 \ge 0\}, \\
\pt_1 \psi_0 = 0 & \text{on } B_{\delta/2}'(x_0).
\end{cases}
\end{equation}
Since $\phi$ is a strict supersolution of \eref{psi0-eqn}, we obtain 
\[
\psi_0 < \phi \  \text{ in } \ \overline{B_{\delta/4}'(x_k)}
\]
for all sufficiently large \(k > 0\). By the uniform convergence of \(\psi_k\), this implies 
\[
\psi_k < \phi_k \text{ in } \overline{B_{\delta/4}'(x_k)},
\]
for all sufficiently large \(k > 0\). This leads to a contradiction to the minimality of \(u_k\) if we consider the minimum of \(u_k\) and \(\psi_k\), which produces a strictly smaller supersolution to \eqref{eq.homogenizationNeumann}.
\end{proof}

\begin{lemma}\label{l.extremesteadystateslimitcondition2}
  Let $u_k:=u^{\ep_k}$ be a sequence of minimal supersolutions to \eqref{e.laminar-hom-section} (the $\ep$-problem $\pt_1 u = f(u/\ep)$) with $\ep_k \to 0$ and $u_k \to u$ locally uniformly in $B_1^+\cup B_1'$. Then $u$ satisfies condition \ref{condition(3)} in Theorem \ref{t.characterizeminimalsuperslutiontohomoextremalsolutions}.
\end{lemma}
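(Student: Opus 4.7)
The plan is a contradiction argument in the spirit of the proof of \lref{extremesteadystateslimitcondition1}: if condition \ref{condition(3)} fails for the limit $u$, I will construct, for each small $\ep_k$, an explicit $1$-variable classical solution $\psi_k$ of the $\ep_k$-problem that strictly undercuts $u_k$ at the touching point $z$ without altering the boundary data on $\pt B_1 \cap \{x_1 \ge 0\}$, contradicting the minimality of $u_k$.

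Suppose, for contradiction, there is a $1$-variable smooth $\eta(x) = \eta(x_1)$ and a domain $\Omega$ as in condition \ref{condition(3)}, touching $u$ from above at $z \in B_1'$, but with $a := \eta'(0) < \max f$. Set $\sigma := \max f - a > 0$. By continuity and $1$-periodicity of $f$, fix $r^\ast \in [0,1)$ with $f(r^\ast) = \max f$. The auxiliary supersolution will be
\[
\psi_k(x) := (\max f)\, x_1 + c_k,
\]
with $c_k \in \ep_k(r^\ast + \Z)$ chosen so that $c_k \in [\eta(0)-\delta - \ep_k,\, \eta(0)-\delta)$, where $\delta > 0$ is a small shift to be fixed below. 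Such a $c_k$ exists uniquely because $\ep_k(r^\ast+\Z)$ is an arithmetic progression of spacing $\ep_k$, and by $1$-periodicity $\psi_k$ is a classical (hence viscosity super-)solution of the $\ep_k$-problem since $\pt_1 \psi_k = \max f = f(r^\ast) = f(\psi_k/\ep_k)$ on $B_1'$.

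To make the min-comparison legitimate I will cut the comparison down to a thin slab: define $\tilde\Omega := \Omega \cap \{x_1 < x_0\}$, with $x_0 > 0$ chosen small relative to the $C^2$ bound of $\eta$ so that on $[0,x_0]$ one has $\eta(x_1) - \eta(0) - a x_1 \le \tfrac{\sigma}{2} x_1$, and hence $\psi_k - \eta \ge (c_k - \eta(0)) + \tfrac{\sigma}{2} x_1$. Using compactness of $\pt\Omega \cap \overline{B_1^+}$ (guaranteed by $\Omega \cap \overline{B_1^+} \csubset B_1^+ \cup B_1'$), a uniform gap $\eta - u \ge 2\mu$ there, and the local uniform convergence $u_k \to u$, I will choose $\delta \ll \min\{\mu, \sigma x_0\}$ and $k$ sufficiently large to secure, with a definite positive gap, the strict inequalities $\psi_k > u_k$ on the ``side'' $\pt\Omega \cap \{0 \le x_1 \le x_0\}$ and on the ``top'' $\overline\Omega \cap \{x_1 = x_0\}$, while maintaining $\psi_k(z) = c_k \le \eta(0) - \delta < u_k(z)$ strictly at $z$.

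Finally, I will glue: set $\tilde u_k := \min(u_k, \psi_k)$ on $\tilde \Omega \cap \overline{B_1^+}$ and $\tilde u_k := u_k$ elsewhere. The strict gap $\psi_k > u_k$ on the side and top of $\pt \tilde\Omega$ guarantees that $\tilde u_k \equiv u_k$ in a neighborhood of those faces, so $\tilde u_k$ patches into a global viscosity supersolution (minimum of two supersolutions locally, equal to $u_k$ near the gluing). Since $\overline{\tilde\Omega \cap \overline{B_1^+}} \csubset B_1^+ \cup B_1'$, $\tilde u_k$ agrees with $u_k = g$ on $\pt B_1 \cap \{x_1 \ge 0\}$; yet $\tilde u_k(z) = \psi_k(z) < u_k(z)$, contradicting the minimality of $u_k = u_{g,\min}^{\ep_k}$. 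The main obstacle is the mismatch between the slopes of $\psi_k$ and $\eta$: the naive linear competitor has slope $\max f$ while $\eta$ has slope only $a < \max f$ at the boundary, so $\psi_k$ could drop below $\eta$, and hence below $u_k$, on the outer face of $\Omega$. The slab cutoff $\{x_1 < x_0\}$ together with the quantitative choices of $\delta$ and $x_0$ is precisely what makes both the lateral control and the strict undercutting at $z$ survive the $O(\ep_k)$ rounding error inherent to landing $c_k/\ep_k$ on a maximizer of $f$.
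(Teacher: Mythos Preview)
Your proof is correct and follows essentially the same approach as the paper: both construct the linear competitor $(\max f)x_1 + c$ with the intercept $c$ landed on $\ep_k(r^\ast + \Z)$, restrict to a thin slab $\{x_1 < x_0\}$ to control the slope mismatch with $\eta$, and contradict minimality of $u_k$ via the $\min$-gluing. The only cosmetic difference is that the paper first invokes the touching-stability Lemma~\ref{l.stabilityoftouching} to locate perturbed touching points $z_k \to z$ and then shifts down by $\nu_k$ to hit a maximizer of $f$, whereas you bypass that lemma by fixing $z$ and directly verifying the three required inequalities (side, top, undercut at $z$) via the quantitative choice $\delta \ll \min\{\mu,\sigma x_0\}$; both routes are equivalent here.
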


\begin{proof}
 To show condition \ref{condition(3)}, we assume there exists a smooth one-variable function \(\eta = \eta(x_1)\) that touches \(u\) from above at a point \(z \in B_1'\), satisfying 
\[
\pt_1 \eta(0) < \max f,
\]
and such that for some small open domain \(\Omega \csubset \R^d\) containing \(z\) with \(\Omega \cap \overline{B_1^+} \csubset B_1^+ \cup B_1'\), we have 
\[
\eta \ge u \text{ in } \Omega \cap \overline{B_1^+}, \ \eta(0)=u(z) \ \textup{ and } \  \eta > u \text{ on } \pt \Omega \cap \overline{B_1^+}.
\]
Notice that, since \(\pt_1 \eta(0) < \max f\), for small \(\tau > 0\), the function 
\[
(\max f) x_1 + \eta(0) - \tau
\]
satisfies 
\[
(\max f) x_1 + \eta(0) - \tau > u \text{ on } \pt (\Omega \cap \{x_1 < \tau\}) \cap \overline{B_1^+} =: \pt \Omega^\tau \cap \overline{B_1^+}.
\]
Let \(\tau \gg h > 0\) be small, and define the fattened domain 
\[
\Omega_h^\tau := \bigcup_{x \in \Omega^\tau} B_h(x).
\]
By the continuity of \(u\) and \((\max f) x_1 + \eta(0) - \tau\), we also have 
\[
(\max f) x_1 + \eta(0) - \tau > u \text{ in } \overline{\Omega_h^\tau \setminus \Omega^\tau} \cap \overline{B_1^+}.
\]
By the local uniform convergence of \(u_k\) to \(u\) and Lemma \ref{l.stabilityoftouching}, for large \(k\), there exist points \(z_k \in \Omega \cap B_1'\) with \(z_k \to z\) and constants \(c_k \to 0\) such that 
\[
(\max f) x_1 + \eta(0) + c_k
\]
touches \(u_k\) from above at \(z_k\) in \(\overline{\Omega_h^\tau} \cap \overline{B_1^+}\), and 
\[
(\max f) x_1 + \eta(0) + c_k > u_k \text{ in } \overline{\Omega_h^\tau \setminus \Omega^\tau} \cap \overline{B_1^+}.
\]
Moreover, for all \(0 < \nu \le \tau/2\), the inequality 
\be\label{eq.strictorderinginlemma4.11}
(\max f) x_1 + \eta(0) + c_k - \nu > u_k
\ee
still holds in \(\overline{\Omega_h^\tau \setminus \Omega^\tau} \cap \overline{B_1^+}\). Since \(\tau > 0\) is chosen independently of \(\ep_k\), for large \(k\), we have \(\tau \gg \ep_k\), where \(\ep_k\) is the period of \(f_{\ep_k}:=f(\cdot/\ep_k)\). This implies that we can choose \(0 < \nu_k \le \tau/2\) such that 
\[
f_{\ep_k}(\eta(0) + c_k - \nu_k) = \max f,
\]
making 
\[
(\max f) x_1 + \eta(0) + c_k - \nu_k
\]
a supersolution to \eqref{eq.homogenizationNeumann} for \(\ep = \ep_k\). This contradicts the fact that \(u_k\)'s are minimal supersolutions because \eqref{eq.strictorderinginlemma4.11} and
\[
\begin{split}
u_k(z_k)  &=\eta(0)+c_k\\
   &>\eta(0) + c_k - \nu_k
\end{split}
\]
shows that the minimum of $u_k$ and $(\max f) x_1 + \eta(0) + c_k - \nu_k$ is a supersolution strictly smaller than $u_k$.

\end{proof}

Let us now prove Theorem \ref{t.homogenizationofextremalsteadystates}. 

\begin{proof}[Proof of Theorem \ref{t.homogenizationofextremalsteadystates}]

We first show that any sequence $u^{\ep_l}=u_{g,\min}^{\ep_l}$ of minimal supersolutions contains a subsequence $u_k$ that converges uniformly on $\overline{B_1^+}$ to $u\in C(\overline{B_1^+})$. To see this, we let $h$ to be the solution to
\[
\bca
\Delta h  = 0 & \textup{ in }B_1^+\\
h = g & \textup{ on }\pt B_1 \cap \{x_1\ge 0\}\\
\pt_1 h = 0 & \textup{ on }B_1'.
\eca
\]
By Perron's method and standard elliptic regularity theory
$$h\in C(\overline{B_1^+})\cap C^{2,\alpha}(B_1^+\cup B_1').$$
Therefore, we know that, by Perron's method again, the functions $v_k:=u_k -h $ are harmonic in $B_1^+$, $v_k=0$ on $\pd B_1^+ = \pt B_1\cap \{x_1\ge0\}$ and
\[
\pt_1 v_* \le \max f \quad \textup{and} \quad  \pt_1 v^* \ge \min f.
\]
By Lemma \ref{l.usefulholderestimate}, we know that $v_k$'s are uniformly bounded in $C^\alpha(B_1^+)$-norm. By using Arzela-Ascoli theorem, we know that $v_k$, after passage to a subsequence, uniformly converges to some $\Tilde{v}\in C(\overline{B_1^+})$. This shows that $u_k$, also after passage to a subsequence, converges to $u=\Tilde{v}+h\in C(\overline{B_1^+})$.

Now, observe that condition \ref{condition(2)} follows from the fact that $\pt_1 u_k \le \max f $ for all $k$ and Lemma \ref{l.stabilityoftouching}. Condition \ref{condition(1)} and \ref{condition(3)} are proved in Lemma \ref{l.extremesteadystateslimitcondition1} and Lemma \ref{l.extremesteadystateslimitcondition2} respectively. The proof is then complete by applying Theorem \ref{t.homogenizationofgeneralsemilinear} and Theorem \ref{t.characterizeminimalsuperslutiontohomoextremalsolutions}.

\end{proof}

\section{The heat equation with singularly anisotropic pinned Neumann condition}\label{section.homogenizedparabolicflow}

In this section we make precise definition of the homogenized parabolic flow \eqref{eq.pGDwithmotionlaw}. For notational convenience, we will denote the homogenized coefficients as
\[
L_*(\gd'u) = m 1_{\{\gd'u=0\}} \quad \textup{and} \quad L^*(\gd'u) = M 1_{\{\gd'u=0\}},
\]
for some real numbers $m<0<M$. In the case of the homogenized equation \eqref{eq.pGDwithmotionlaw} we correspondingly take
\[
m=\min f, \ M = \max f \ \textup{ and } \   \avg{f}=0.
\]
Recall the space-time domain are defined as follows
\[
D_\infty^+:=B_1^+\times(0,\infty) \quad \textup{and} \quad D_\infty':=B_1'\times(0,\infty).
\]
We also write $$U\csubset B_1^+\cup B_1'$$ to be a relatively open domain, and we also denote 
$$
U'=U\cap B_1', \ U^+=U\cap B_1^+\ \textup{ and } \   \pd U= \pt U\setminus U'.
$$

\begin{definition}\label{d.definevissoltopGD}
An upper semicontinuous function \(u:\overline{D_{\infty}^+}\rta[-\infty,\infty)\) is called a viscosity subsolution to \eqref{eq.pGDwithmotionlaw} if for any smooth function $\phi$ crossing $u$ from above (See Definition \ref{def.crossing}) at $(x_0,t_0)\in D_\infty^+\cup D_\infty'$ in the cylindrical neighborhood $U\times (t_0-r,t_0+r)$, we have either (i)  $$\pt_t \phi(x_0,t_0)\le \Delta\phi(x_0,t_0) $$
    or (ii) $\pt_t \phi(x_0,t_0) > \Delta\phi(x_0,t_0)$, $(x_0,t_0) \in D_\infty'$, and the following conditions hold: 
    \begin{enumerate}[label=(\alph*)]
    
    \item \label{condition(a)} (\emph{Local stability}) The inner normal derivative satisfies
    \[
    \pt_1 \phi(x_0, t_0) \ge L_\ast(\gd' \phi(x_0, t_0)).
    \]
    \item \label{condition(b)} (\emph{Dynamic slope condition: transversal case}) If \(\pt_t \phi(x_0, t_0) > 0\), then 
    \[
    \pt_1 \phi(x_0, t_0) \ge 0.
    \]
    \item \label{condition(c)} (\emph{Dynamic slope condition: laminar case}) If \(\phi > u\) on \(\pt^+ U  \times \{t_0\}\) and satisfies $$\grad'\phi \equiv 0 \textup{ and } 
 \pt_t \phi(x_0, t_0)  > 0$$  then 
    \[
    \pt_1 \phi(x_0, t_0) \ge M.
    \]
    \end{enumerate}
A viscosity supersolution is defined symmetrically by reversing the crossing and inequalities, and replacing \(M\) and \(L_\ast\) with \(m\) and \(L^\ast\), respectively. A viscosity solution is defined as a continuous function on \(\overline{D_{\infty}^+}\) that is both a viscosity subsolution and a viscosity supersolution.
\end{definition}

\begin{remark}
    The dynamic slope conditions \ref{condition(b)} and \ref{condition(c)} form a rate-independent motion law (see, for example, \cite{MielkeRoubicek}), which is crucial in proving the comparison principle. Heuristically the motion law for a subsolution means that if $\pt_t u(x_0,t_0) >0$ for some $(x_0,t_0)\in B_1'\times(0,\infty)$ then with some strictness condition the maximal inner normal slope is saturated \emph{somewhere} on the component of the contact set containing $(x_0,t_0)$.
\end{remark}

The proof of our main theorem on parabolic homogenization, \tref{homogenizationofparabolic}, follows the idea of half-relaxed limit. An essential piece of the half-relaxed limit strategy is a semicontinuous comparison principle. Thus, one of the central results of this work, is a comparison principle / uniqueness property for \eqref{eq.pGDwithmotionlaw}.

Let us reformulate Theorem \ref{t.comparisonprincipleformal} here.

\begin{theorem}\label{t.comparisonprinciple}
    Let $u$ be a subsolution and $v$ a supersolution of \eqref{eq.pGDwithmotionlaw} in the sense of \dref{definevissoltopGD}. If $u\le v$ on the parabolic boundary $\ppp {D_{\infty}^+}$ then $u\le v$ on the whole space-time domain $ \overline{D_{\infty}^+}$.
\end{theorem}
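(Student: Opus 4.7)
The plan is to argue by contradiction, following the viscosity comparison template adapted to the singular/degenerate Neumann framework developed in \cite{feldman2024regularitytheorygradientdegenerate}. First I would regularize via the tangential sup/inf convolutions of Appendix \ref{appendix.tangentialregularization}: replace $u$ by a function that is semi-concave in the tangential-time variables $(x',t)$, replace $v$ by one that is semi-convex, and restore harmonicity in $B_1^+$ by a harmonic lift in the normal direction. This reduction preserves the sub/supersolution property of \eqref{eq.pGDwithmotionlaw} while making the regularized pair differentiable in $(x',t)$ at every touching point, so that tangential gradients and time derivatives are classical there. Next, assuming the conclusion fails, add a strict perturbation such as $v \mapsto v_\eta := v + \eta(1+t)$ for small $\eta > 0$, so that the positive maximum of $u - v_\eta$ over $\overline{D_T^+}$ is attained at a first touching time $t_0 > 0$ and point $x_0$, with strict ordering on the parabolic boundary $\ppp D_T^+$.

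By the strong maximum principle for the heat equation applied to the harmonic-in-$B_1^+$ difference, the touching point must lie on $B_1'$, and semi-convexity/concavity supply a common tangential gradient $p = \gd' u(x_0,t_0) = \gd' v(x_0,t_0)$ and common time derivative at the touching point, with $\pt_t u - \pt_t v \ge \eta > 0$ coming from the perturbation. The argument then splits on whether $p = 0$ or $p \ne 0$. In the non-contact case $p \ne 0$, condition \ref{condition(a)} pins the normal derivatives by $\pt_1 u \ge L_\ast(p) = 0$ and $\pt_1 v \le L^\ast(p) = 0$, while a Hopf-type inequality at the interior tangential maximum of $v_\eta - u$ on $B_1'$ yields the strict inequality $\pt_1 v > \pt_1 u$, producing the contradiction $0 \ge \pt_1 v > \pt_1 u \ge 0$. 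In the contact case $p = 0$ the static pinning interval $[m,M]$ admits a whole range of slopes and the above obliqueness argument fails, so one must invoke the dynamic slope condition \ref{condition(c)}. The goal would be to construct a one-variable test function $\phi(x_1,t)$ on a cylindrical neighborhood $U$ of $(x_0,t_0)$ satisfying $\phi > u$ on $\pt^+ U \times \{t_0\}$, $\gd'\phi \equiv 0$, $\pt_t \phi > 0$, and $\pt_1 \phi < M$, which crosses $u$ from above at $(x_0,t_0)$; condition \ref{condition(c)} then forces $\pt_1 \phi \ge M$, a contradiction. The supersolution data provided by $v_\eta$ (whose normal slope is constrained by $\le M$) and the monotone-in-time perturbation are what should allow the construction of such a $\phi$.

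The principal obstacle, and the technical heart of the argument, is precisely the execution of this construction in the contact case. The triggers of condition \ref{condition(c)} are asymmetric --- the subsolution case requires $\pt_t \phi > 0$ whereas the supersolution version requires $\pt_t \phi < 0$ --- so one cannot symmetrize the argument at a single first-touching point where $\pt_t u$ and $\pt_t v$ are essentially equal modulo the perturbation. Moreover the free contact set $\Ca(u)$ has a priori unknown geometry, and condition \ref{condition(c)} is effectively nonlocal along a connected component of the facet: it asserts slope saturation somewhere, not necessarily at $(x_0,t_0)$. Resolving this requires sliding a one-variable barrier through the component of the facet containing $(x_0,t_0)$, using the supersolution structure of $v_\eta$ to manufacture the strict inequality on $\pt^+ U$ at the time slice $t_0$, and exploiting the drift $\eta t$ to fix the sign of $\pt_t \phi$. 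This facet-propagation step is exactly why the direct doubling-of-variables technique of Barles \cite{BARLES1999191} fails here and why the inf/sup-convolution plus harmonic-lift regularization combined with Perron-style sliding is the method of choice.
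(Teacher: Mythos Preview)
Your regularization plan (tangential sup/inf-convolutions plus caloric lift) and the overall contradiction strategy are correct and match the paper. The real gap is in the contact case $p=0$: your plan omits condition \ref{condition(b)} entirely, and applies condition \ref{condition(c)} to the wrong function. At the regularized touching point one has $\pt_t u=\pt_t v+\eta$, but the sign of $\pt_t u$ itself is uncontrolled. The paper sub-splits on the sign of $b:=\pt_t U(x_0,t_0)=\pt_t V(x_0,t_0)$. When $b\ge 0$, the penalization $-\mu/(T-t)$ on the subsolution side forces $\pt_t\tilde u>0$, and the \emph{transversal} dynamic slope condition \ref{condition(b)} for $\tilde u$ gives $\pt_1\tilde u\ge 0$, immediately contradicting $(x_0,t_0)\in\Ca_-(\tilde u)$; no facet-sliding is needed here. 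When $b<0$ and $(x_0,t_0)\in\Ca_-(\tilde u)\setminus\Ca_+(\tilde v)$, condition \ref{condition(c)} is applied to the \emph{supersolution} $\tilde v$, not to $u$: one builds a one-variable barrier $H(x_1,t)$ with $\pt_t H\approx b<0$ and $\pt_1 H=m+\mu/2$ touching $\tilde v$ from \emph{below}, and the supersolution version of \ref{condition(c)} forces $\pt_1 H\le m$, a contradiction. The construction of $H$ uses the openness of $\Ca_-$ (\lref{opennessofcontactsetwhentimelipschitzx}, which in turn needs the time-Lipschitz bound from the regularization) and the constancy of $\tilde u$ on the $t=t_0$ slice of the contact component to produce the required strict ordering $H<\tilde v$ on the exterior boundary $\pd U$. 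Your proposal to always trigger \ref{condition(c)} on $u$ with $\pt_t\phi>0$ fails whenever $b<0$, and the symmetric attempt on $v$ fails when $b\ge 0$ since $\pt_t\tilde v=b\ge 0$ does not trigger the supersolution version of \ref{condition(c)}.

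Two lesser issues. The case split should be on membership in the contact sets $\Ca_-(\tilde u),\Ca_+(\tilde v)$ rather than on $p$: even when $p=0$ one can have $(x_0,t_0)\notin\Ca_-\cup\Ca_+$, in which case $\pt_1\tilde u\ge 0\ge\pt_1\tilde v$ and the argument is the same as your $p\ne0$ case. And your perturbation $v+\eta(1+t)$ lacks a normal-direction term; the paper uses $U=\tilde u-2\mu+\mu x_1-\mu/(T-t)$, with the $\mu x_1$ giving the strict normal-derivative separation in the non-contact case without relying on a parabolic Hopf lemma, and the $-\mu/(T-t)$ ensuring the maximum is attained before $t=T$.
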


\begin{remark}
    This comparison principle justifies that the viscosity solution conditions in \dref{definevissoltopGD} are sufficient to characterize the homogenized limit problem \eqref{eq.pGDwithmotionlaw}.
\end{remark}
 
The proof of \tref{comparisonprinciple} is postponed to Section \ref{section.comparisonprinciple}.  First we will apply \tref{comparisonprinciple} to establish the homogenization result \tref{homogenizationofparabolic} in the following section.

\section{Homogenization of the parabolic flow in the laminar case}\label{section.homoparabolic}

In this section, we consider the viscosity theoretic homogenization of the parabolic flow \eqref{eq.pNNep} as $\ep\rta0^+$
\be\tag{\ref{eq.pNNep}}
\bca
\pt_t u^\ep = \Delta u^\ep &\textup{ in } B_1^+\times(0,\infty)\\
\pt_1 u^\ep = f(u^\ep/\ep) & \textup{ on }B_1'\times (0,\infty),
\eca
\ee
where $f\in C^\alpha(\R)$ is a 1-periodic function.

We normalize $f$ by using Remark \ref{r.normalizeaveragezero} so that
\[
\avg{f}= \int_0^1 f(v) dv =0,
\]
and we take 
\[
m=\min f \quad \textup{and} \quad M=\max f
\]
in the \dref{definevissoltopGD}.

Recall in Section \ref{subsection.notations} we denote for all $T\in (0,\infty)$ the following space-time domains
\[
D_{T}^+:=B_1^+\times (0,T]\quad \textup{and} \quad  D_{T}':=B_1'\times(0,T]
\]
and we call
\[
\ppp D_{T}^+ := \overline{D_{T}^+}\setminus \lb D_{T}^+ \cup D_{T}'\rb
\]
the (positive) {parabolic boundary}. 

Let us restate Theorem \ref{t.homogenizationofparabolicformal} in a more precise way.

\begin{theorem}\label{t.homogenizationofparabolic}
Let $T\in(0,\infty)$, $f\in C^\alpha(\R)$ be a 1-periodic function for some $\alpha>0$ and $g\in C(\ppp D_\infty^+)$. If $u^\ep$ are solutions to \eqref{eq.pNNep} that satisfy $u^\ep=g$ on $\ppp D_\infty^+$, then $u^\ep$ converge uniformly on $\overline{D_T^+}$ to the unique solution $u$ to \eqref{eq.pGDwithmotionlaw} in the sense of Definition \ref{d.definevissoltopGD} that shares the same boundary data $g$ on $\ppp D_\infty^+$.

\end{theorem}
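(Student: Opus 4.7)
The plan is to apply the Barles--Perthame method of half-relaxed limits. Set $\bar u := {\limsup_{\ep\rta0}}^\ast u^\ep$ and $\underline u := {\liminf_{\ep\rta0}}_\ast u^\ep$ on $\overline{D_\infty^+}$. Uniform boundedness of $\{u^\ep\}$ follows from comparison with explicit linear-in-$x_1$ barriers built from $g$ and $\norm{f}_{L^\infty}$, and uniform H\"older equicontinuity on $\overline{D_T^+}$ follows from a parabolic analogue of \lref{usefulholderestimate} using only the pointwise bound $|\pt_1 u^\ep| \le \norm{f}_{L^\infty}$. Standard barrier arguments at each point of $\ppp D_\infty^+$ yield $\bar u = \underline u = g$ there. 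The strategy then reduces to showing that $\bar u$ is a subsolution and $\underline u$ a supersolution of \eqref{eq.pGDwithmotionlaw} in the sense of \dref{definevissoltopGD}, after which \tref{comparisonprinciple} forces $\bar u \le \underline u$ and yields the uniform convergence to $u$.

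To verify the subsolution property of $\bar u$ (the supersolution case being symmetric), let $\phi$ cross $\bar u$ strictly from above at $(x_0,t_0)$. \lref{stabilityofcrossing} furnishes a subsequence $\ep_k\rta 0$ and crossing points $(x_k,t_k)\rta(x_0,t_0)$ for $\phi+C_k$ against $u^{\ep_k}$ with $C_k\rta 0$. Interior points at $x_0 \in B_1^+$ are handled classically. For $(x_0,t_0)\in D_\infty'$ with $\pt_t\phi>\Delta\phi$ at $(x_0,t_0)$, condition \ref{condition(a)} is verified by a pure spatial blowup at time $t_k$: setting
\[
\tilde w_k(y) := \ep_k^{-1}\lmb u^{\ep_k}(x_k+\ep_k y,t_k) - \ep_k \lfloor u^{\ep_k}(x_k,t_k)/\ep_k\rfloor\rmb,
\]
one finds $\pt_1\tilde w_k = f(\tilde w_k)$ on $\pt\R_+^d$ and $\Delta \tilde w_k = \ep_k\pt_t u^{\ep_k}(x_k+\ep_k y, t_k)\rta 0$. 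After passing to a further subsequence so that the fractional part of $u^{\ep_k}(x_k,t_k)/\ep_k$ converges to some $s^\ast\in [0,1)$, the upper half-relaxed limit of $\tilde w_k - s^\ast$ is a stationary subcorrector (Definition \ref{def.subcorrector}) with effective slope $(\pt_1\phi,\grad'\phi)(x_0,t_0)$ for the translated nonlinearity $f(\cdot+s^\ast)$. By \rref{translationinvarianceofhomogenizedcoefficients}, the definition of $Q_\ast$ in \eqref{eq.subcoefficient}, and \lref{computethelaminarcoefficient}, this forces $\pt_1\phi(x_0,t_0) \ge L_\ast(\grad'\phi(x_0,t_0))$.

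The main obstacle is verifying the dynamic slope conditions \ref{condition(b)} and \ref{condition(c)}, which have no classical viscosity analog and encode the novel rate-independent pinning. For \ref{condition(b)}, assume for contradiction $\pt_t\phi(x_0,t_0)>0$ and $\pt_1\phi(x_0,t_0)<0=\avg{f}$; then $\phi$ is a strict parabolic supersolution of the limit heat flow with homogeneous Neumann condition. I would adapt the global-energy-minimization argument of \lref{extremesteadystateslimitcondition1} to the parabolic setting: on a small space-time cylinder around $(x_k,t_k)$, introduce the gradient flow $\psi_k$ of $E_{\ep_k}$ with lateral data inherited from $\phi+C_k$ and initial data from $u^{\ep_k}$; a parabolic analogue of \lref{localuniformconvergenceofglobalminimizers} gives uniform convergence of $\psi_k$ to the standard heat flow satisfying $\pt_1\psi_0 = \avg{f} = 0$, which by strictness lies strictly below $\phi$ inside; the assumption $\pt_t\phi>0$ then forces $u^{\ep_k}$ to pass above $\psi_k$ slightly after $t_k$, contradicting the construction of $\psi_k$ as an upper barrier for $u^{\ep_k}$. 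For \ref{condition(c)}, the hypotheses $\grad'\phi\equiv 0$ and the strict lateral ordering $\phi>\bar u$ on $\pd U\times\{t_0\}$ allow a direct parabolic extension of the one-variable barrier construction of \lref{extremesteadystateslimitcondition2}: if $\pt_1\phi(x_0,t_0)<M$, the shifted barrier $(M-\tau)x_1 + \phi(0,t) + c_k - \nu_k$ (with $\nu_k$ chosen via the $1$-periodicity of $f$ to saturate $f(\cdot/\ep_k)=M$ at the boundary trace) is a supersolution of the $\ep_k$-problem which dominates $u^{\ep_k}$ on $\pd U\times[t_0-r,t_0]$ but is exceeded at $(x_k,t_k)$ because of $\pt_t\phi>0$, contradicting the parabolic comparison principle.

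Once \ref{condition(a)}, \ref{condition(b)}, \ref{condition(c)} and their supersolution analogues for $\underline u$ are established, \tref{comparisonprinciple} yields $\bar u \le \underline u$, and combined with the trivial reverse inequality this completes the proof of uniform convergence on $\overline{D_T^+}$.
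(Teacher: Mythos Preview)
Your global strategy—half-relaxed limits plus \tref{comparisonprinciple}—is exactly the paper's, and the boundary identification $\bar u=\underline u=g$ on $\ppp D_\infty^+$ is handled the same way (\lref{boundaryorderingg}). The gaps are in the verification of \ref{condition(a)}--\ref{condition(c)}.

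For \ref{condition(a)}, the frozen-time spatial blowup does not work as written: the claim $\Delta_y\tilde w_k=\ep_k\,\pt_t u^{\ep_k}(x_k+\ep_k y,t_k)\to 0$ needs a uniform-in-$\ep$ bound on $\pt_t u^\ep$ near $B_1'$, which you do not have. \lref{usefulholderestimatepara} gives only $C^{\alpha,\alpha/2}$ control, and the constant in \lref{usefulC1alphaestimatepara} depends on the $C^\alpha$ norm of the Neumann data $f(u^\ep/\ep)$, which blows up as $\ep\to 0$. The paper instead performs a full \emph{parabolic} rescaling $v_\ep(y,s)=\ep^{-1}[u^\ep(\ep y+x_\ep,\ep^2 s+t_\ep)-u^\ep(x_\ep,t_\ep)]$, passes to a half-relaxed limit $v^*$ satisfying $\pt_s v^*\le\Delta v^*$, $\pt_1 v^*\ge f(v^*+c)$, $v^*\le p\cdot y$, and then runs a second ``time-squashing'' limit (\lref{intermediateofconditiona}): with $w_r(y,s)=v^*(y,s/r)$ one sends $r\to 0^+$ to extract a genuinely stationary subcorrector.

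For \ref{condition(b)} the paper does \emph{not} use a parabolic gradient flow. It solves, for each $\ep_k$, the \emph{elliptic} minimization $\Delta\xi^{\ep_k}=0$, $\pt_1\xi^{\ep_k}=f(\xi^{\ep_k}/\ep_k)-\delta$ on a small half-ball with outer data $\psi_{\ep_k}(\cdot,t_{\ep_k})$; the homogenization of these stationary minimizers is exactly \lref{localuniformconvergenceofglobalminimizers}, already in hand. A first-crossing-time argument then produces a boundary touching of $\xi^{\ep_k}$ and $u^{\ep_k}$ at which $\pt_1\xi^{\ep_k}\ge f(\xi^{\ep_k}/\ep_k)$ clashes with the equation for $\xi^{\ep_k}$. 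Your proposed ``parabolic analogue of \lref{localuniformconvergenceofglobalminimizers}'' is not established and would be essentially as hard as the theorem itself. For \ref{condition(c)}, the time-dependent barrier $(M-\tau)x_1+\phi(0,t)+c_k-\nu_k$ is not a supersolution of the $\ep_k$-problem on the Neumann boundary: since $\phi(0,t)$ varies, a single choice of $\nu_k$ cannot keep $f((\phi(0,t)+c_k-\nu_k)/\ep_k)=M$ for all $t$. The paper instead uses a \emph{time-independent} one-variable barrier $\phi_\ep(x_1)=(M-\ep)x_1-\ep x_1^2+r_\ep-\ep$, with $r_\ep$ chosen via periodicity so that $f((r_\ep-\ep)/\ep)=M$, and again obtains a contradiction by a first-crossing-time argument against $u^{\ep_k}$.
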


We show that the upper and lower half relaxed limits of $u^\ep$, $u^*$ and $u_*$, are, respectively, a sub and supersolution of \eqref{eq.pGDwithmotionlaw}. Then we apply the comparison principle \tref{comparisonprinciple} to show that $u^* \leq u_*$ and conclude that $u^* = u_*$.

\begin{remark}\label{r.regularityofparaatscaleep}
    For any $g\in C(\ppp D_{\infty}^+)$ and $\ep>0$, by Perron's method and standard parabolic regularity theory, there exists a classical solution of \eqref{eq.pNNep}
    $$u^\ep\in C(\overline{D_\infty^+})\cap C_{\textup{loc}}^{2+\alpha,1+\alpha/2}(D_{\infty}^+ )\cap C_{\textup{loc}}^{1+\alpha,1/2+\alpha/2}(D_{\infty}^+\cup D_\infty' )$$ satisfying the boundary condition $u^\ep=g$ on $\ppp D_{\infty}^+$. See Lemma \ref{l.usefulholderestimatepara} and Lemma \ref{l.usefulC1alphaestimatepara} for more details.
\end{remark}

Let us first consider an $L^\infty$ bound for $u^\ep$, which guarantees the existence of the half relaxed limits.
\begin{lemma}\label{l.uniformboundednessforuepparabolcproblem}
    Let $u^\ep$ be a viscosity solution to \eqref{eq.pNNep} and $u^\ep = g(x,t)$ on the parabolic boundary for some $g\in C\lb\ppp D_{T}^+\rb$, then there is a constant $C>0$ independent of $\ep$ such that
    \[
\norm{u^\ep}_{L^\infty\lb\overline{D_{T}^+}\rb} \le C \lb \norm{g}_{L^\infty\lb\ppp D_{T}^+\rb} + \norm{f}_{L^\infty\lb\R\rb}\rb.
    \]
\end{lemma}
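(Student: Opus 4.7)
The plan is a direct barrier argument combined with the standard comparison principle for the parabolic Neumann problem. I define the time-independent affine barriers
\[
w^{\pm}(x) := \pm\lb \norm{g}_{L^\infty(\ppp D_T^+)} + \norm{f}_{L^\infty(\R)}\, x_1 \rb.
\]
Each $w^{\pm}$ is harmonic, so $\pt_t w^\pm - \Delta w^\pm = 0$. The inner-normal derivatives on $B_1'$ are $\pt_1 w^+ = \norm{f}_{L^\infty} \ge f(w^+/\ep)$ and $\pt_1 w^- = -\norm{f}_{L^\infty} \le f(w^-/\ep)$, so $w^+$ is a classical supersolution and $w^-$ a classical subsolution of \eqref{eq.pNNep}. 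Moreover, because $x_1 \ge 0$ in $\overline{B_1^+}$, one has $w^- \le -\norm{g}_{L^\infty} \le g \le \norm{g}_{L^\infty} \le w^+$ on the parabolic boundary $\ppp D_T^+$.

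By Remark \ref{r.regularityofparaatscaleep}, $u^\ep$ is a classical solution of \eqref{eq.pNNep} with $C^{1+\alpha,1/2+\alpha/2}$ regularity up to $B_1'$, which is sufficient for the classical comparison principle for semilinear parabolic equations with continuous Neumann nonlinearity to apply; the obliqueness requirement is trivially satisfied since the boundary operator is a pure normal derivative (see, e.g., the doubling-variables treatment in \cite{UsersGuide}). Comparing $u^\ep$ against $w^{\pm}$ gives $w^- \le u^\ep \le w^+$ on $\overline{D_T^+}$, and since $x_1 \le 1$ in $\overline{B_1^+}$ this yields
\[
\norm{u^\ep}_{L^\infty(\overline{D_T^+})} \le \norm{g}_{L^\infty(\ppp D_T^+)} + \norm{f}_{L^\infty(\R)},
\]
which is the claimed estimate with $C=1$, independent of $\ep$. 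The only step that is not purely algebraic is the invocation of the comparison principle; there is no genuine obstacle, since the $\ep$-problem is a smooth semilinear Neumann problem to which classical comparison applies.
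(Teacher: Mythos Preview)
Your approach—affine barrier plus comparison—is exactly the paper's, but your barriers carry the wrong sign on the $x_1$-coefficient and therefore sit on the wrong side of the sub/supersolution dichotomy. In the paper's convention (visible for instance in the proof of Lemma \ref{l.boundforthecoefficientshowingnontrivialness}, where $p\cdot x + (\max f)\,x_1$ is declared a \emph{sub}corrector), a supersolution of $\pt_1 u = f(u/\ep)$ satisfies $\pt_1 u \le f(u/\ep)$; equivalently, after even reflection across $\{x_1=0\}$ the condition $\pt_1 w \ge f$ makes the reflected function \emph{sub}harmonic. Your $w^+ = \|g\|_{L^\infty} + \|f\|_{L^\infty}\,x_1$ has $\pt_1 w^+ = \|f\|_{L^\infty} \ge f(\cdot)$ and is therefore a \emph{sub}solution, so comparison cannot yield $u^\ep \le w^+$.

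Concretely, at a positive maximum of $u^\ep - w^+$ on $D_T'$ the first-order test gives only $\pt_1 u^\ep \le \pt_1 w^+ = \|f\|_{L^\infty}$, which is vacuous since $\pt_1 u^\ep = f(u^\ep/\ep) \le \|f\|_{L^\infty}$ always. The inequality $u^\ep \le w^+$ is in fact false: take $f\equiv -1$ and $g\equiv 0$, so $w^+ = x_1$; the parabolic solution tends as $t\to\infty$ to the harmonic function with $\pt_1 u=-1$ on $B_1'$ and $u=0$ on $\pd B_1$, which is strictly positive at the origin, whereas $w^+(0)=0$. The paper's barrier $\|g\|_{L^\infty}+\|f\|_{L^\infty}(1-x_1)$ has $\pt_1 = -\|f\|_{L^\infty}\le f(\cdot)$, and then the first-order condition at a boundary maximum gives $\pt_1 u^\ep \le -\|f\|_{L^\infty}$, which (after a routine strict perturbation) genuinely contradicts $\pt_1 u^\ep = f(u^\ep/\ep) \ge -\|f\|_{L^\infty}$. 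Swapping $x_1\mapsto 1-x_1$ in your $w^\pm$ fixes the argument and recovers the paper's proof verbatim.
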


\begin{proof}
This follows by comparison principle using supersolutions of \eqref{eq.pNNep} of the form
\[\|g\|_{L^\infty}+ \|f\|_{L^\infty} (1-x_1)\]
and symmetrically defined subsolutions.
\end{proof}

Given a fixed Dirichlet boundary data $g$ on the parabolic boundary $\ppp D_{T}^+$, we define the following two functions, the {upper half relaxed limit} of $u^\ep$
\be\label{eq.uupperstar}
u^\ast := {\limsup_{\ep\rta0}}^\ast u^\ep 
\ee
and the {lower half relaxed limit} of $u^\ep$
\be\label{eq.ulowerstar}
u_\ast:=\underset{\ep\rta0}{{\liminf}_\ast} u^\ep.
\ee
We now show that on the parabolic boundary $\ppp D_T^+$ we have $u^*=u_*=g$. See Section \ref{section.preliminaries} for the precise definitions of half relaxed limits.

\begin{lemma}\label{l.boundaryorderingg}
    Suppose $u^\ep$ are solutions as described in Lemma \ref{l.uniformboundednessforuepparabolcproblem} with boundary data $g\in C(\ppp D_T^+)$, then
    \[
    u^*= g = u_* \textup{ on }\ppp D_T^+.
    \]
\end{lemma}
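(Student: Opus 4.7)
The proof splits into an easy direction and a barrier argument. For any $(x_0, t_0) \in \ppp D_T^+$, since $u^\ep(x_0, t_0) = g(x_0, t_0)$ for every $\ep > 0$ by hypothesis, plugging the constant sampling sequence $(y_n, s_n) \equiv (x_0, t_0)$ into the half-relaxed limit definitions of Section \ref{appendix.halfrelaxedlimits} immediately yields
\[
u^*(x_0, t_0) \ge g(x_0, t_0) \quad \text{and} \quad u_*(x_0, t_0) \le g(x_0, t_0).
\]

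For the reverse inequalities I would exploit the uniform $\ep$-independent bound $|\pt_1 u^\ep| = |f(u^\ep/\ep)| \le \|f\|_{L^\infty}$ coming from the Neumann data. The plan is to sandwich $u^\ep$ between two $\ep$-independent comparison functions $W^\pm$ solving the linear mixed parabolic problems
\[
\pt_t W^\pm = \Delta W^\pm \text{ in } D_T^+, \quad \pt_1 W^\pm = \pm \|f\|_{L^\infty} \text{ on } D_T', \quad W^\pm = g \text{ on } \ppp D_T^+.
\]
Since $|f(\cdot/\ep)| \le \|f\|_{L^\infty}$, $W^+$ is a super and $W^-$ a sub solution of \eqref{eq.pNNep} that match $u^\ep$ on $\ppp D_T^+$, so the standard semilinear parabolic comparison principle underlying Lemma \ref{l.uniformboundednessforuepparabolcproblem} gives $W^- \le u^\ep \le W^+$ throughout $\overline{D_T^+}$. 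Passing to the half-relaxed limits, continuity of $W^\pm$ at $(x_0, t_0)$ with boundary value $g(x_0, t_0)$ then yields the opposite inequalities $u^*(x_0, t_0) \le g(x_0, t_0) \le u_*(x_0, t_0)$.

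The main obstacle is justifying continuity of $W^\pm$ up to $\ppp D_T^+$, particularly at the Dirichlet--Neumann edge $\pt B_1' \times [0, T]$ and the initial--Neumann edge $\overline{B_1'} \times \{0\}$. This is classical for linear parabolic problems with constant Neumann and continuous Dirichlet data, but to keep the argument self-contained I would bypass $W^\pm$ altogether and work with local polynomial barriers at each boundary point $(x_0, t_0)$, of the form
\[
\Phi^+(x, t) = g(x_0, t_0) + \eta + A \lb |x - x_0|^2 + 2d (t - t_0) \rb + K \lb x_1 - (x_0)_1 \rb,
\]
where $K \ge \|f\|_{L^\infty} + 2A$ forces $\pt_1 \Phi^+ \ge \|f\|_{L^\infty} \ge f(\Phi^+/\ep)$ on $D_T'$, while $A$ is chosen large relative to $\|g\|_{L^\infty}$ and the modulus of continuity of $g$ so that $\Phi^+ \ge g$ on $\ppp D_T^+$ and $\Phi^+(x_0, t_0) = g(x_0, t_0) + \eta$. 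Minor case-splitting on whether $(x_0, t_0)$ sits on the initial slice, the round lateral boundary, or at an edge handles the geometry. Comparison of $u^\ep$ with $\Phi^+$ and its symmetric subsolution counterpart then gives $u^*(x_0, t_0) \le g(x_0, t_0) + \eta$ and $u_*(x_0, t_0) \ge g(x_0, t_0) - \eta$, and sending $\eta \to 0$ finishes the proof.
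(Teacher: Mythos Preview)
Your approach is essentially identical to the paper's: the paper constructs exactly your $W^\pm$ (calling them $H_\pm$), invokes comparison to sandwich $u^\ep$, and concludes by continuity of $H_\pm$ up to $\ppp D_T^+$, which it attributes to ``standard parabolic theory''.

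Two small corrections. First, with the paper's sign convention (inner normal $\partial_1$, subsolution means $\partial_1\phi \ge f$ at upper test functions), the function with $\partial_1 W = +\|f\|_{L^\infty}$ is a \emph{sub}solution and the one with $-\|f\|_{L^\infty}$ is a \emph{super}solution, so your labels $W^+,W^-$ are swapped relative to the inequality $W^-\le u^\ep \le W^+$ you wrote; compare the supersolution $\|g\|_{L^\infty}+\|f\|_{L^\infty}(1-x_1)$ used in Lemma~\ref{l.uniformboundednessforuepparabolcproblem}. Second, your fallback polynomial barrier $\Phi^+$ fails as written at lateral boundary points with $t_0>0$: the term $+2dA(t-t_0)$ is large negative on the initial slice $\{t=0\}$ when $A$ is large, so $\Phi^+\ge g$ cannot be enforced there by increasing $A$. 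This is the usual parabolic obstruction to purely quadratic barriers and is fixed by localizing in time (comparing only on $t\in[t_0-\rho,t_0]$ for small $\rho$ and using the uniform $L^\infty$ bound from Lemma~\ref{l.uniformboundednessforuepparabolcproblem} on the artificial bottom $\{t=t_0-\rho\}$), but your ``minor case-splitting'' remark does not cover this.
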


\begin{proof}
    By using standard parabolic theory, we can find classical solutions $H_\pm$ to the following heat equation
    \be
    \bca
    \pt_t H_\pm = \Delta H_\pm & \textup{ in }D_T^+\\
    \pt_1 H_\pm = \pm \norm{f}_{L^\infty} & \textup{ on }D_T'\\
    H_\pm = g & \textup{ on }\ppp D_T^+.
    \eca
    \ee
    By comparison principle
    \[
    H_+\le u^\ep \le H_-,
    \]
    and therefore $u_*=u^* =g $ by the continuity of $H_\pm$.
\end{proof}

In the following three subsections we show that the half relaxed limits $u^*$,$u_*$ are respectively the viscosity sub/supersolutions to \eqref{eq.pGDwithmotionlaw} in the sense of \dref{definevissoltopGD}. In Section \ref{subsec.proofoftheorem1.1}, we finish this section by proving Theorem \ref{t.homogenizationofparabolic} assuming the comparison principle in Theorem \ref{t.comparisonprinciple}.

\subsection{Local stability condition}\label{subsec.localstability}

\begin{lemma}\label{l.conditiona}
    The function $u^\ast = \limsup^*u^\ep$ solves
   \[
   \partial_1 u^{\ast}(x,t) \geq L_*(\gd' u^*) \ \hbox{  on } \ B_1' \times (0,T)
   \]
   in the sense of condition \ref{condition(a)} from \dref{definevissoltopGD}. Similarly, $u_\ast = \liminf_*u^\ep$  satisfies the symmetrical supersolution condition.
\end{lemma}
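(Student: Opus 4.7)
The plan is to argue by contradiction using the crossing stability result Lemma~\ref{l.stabilityofcrossing} to pull the touching back to the $\ep$-level. Suppose $\phi$ crosses $u^*$ strictly from above at $(x_0,t_0)$ with $\partial_t\phi(x_0,t_0)>\Delta\phi(x_0,t_0)$; Lemma~\ref{l.stabilityofcrossing} produces $\ep_k\to 0$, $(x_k,t_k)\to(x_0,t_0)$, and $c_k\to 0$ such that $\phi+c_k$ crosses $u^{\ep_k}$ strictly from above at $(x_k,t_k)$. Continuity of $\partial_t\phi-\Delta\phi$ and its strict positivity at $(x_0,t_0)$ propagate to a space-time neighborhood, so any interior crossing $x_k\in B_1^+$ would force, via the classical heat equation satisfied by $u^{\ep_k}$, the opposite inequality $\partial_t\phi(x_k,t_k)\leq\Delta\phi(x_k,t_k)$ and yield a contradiction in the limit. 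Hence $(x_0,t_0)\in D_\infty'$, $x_k\in B_1'$ eventually, and the Neumann condition gives
\[
\partial_1\phi(x_k,t_k)\ \ge\ f\!\left(u^{\ep_k}(x_k,t_k)/\ep_k\right)\ \ge\ \min f.
\]

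In the case $p:=\nabla'\phi(x_0,t_0)=0$, we have $L_*(p)=\min f$, and passing to the limit directly yields $\partial_1\phi(x_0,t_0)\ge L_*(0)$, completing this case. The delicate case is $p\neq 0$, where $L_*(p)=\langle f\rangle=0$ by Lemma~\ref{l.computethelaminarcoefficient} and the bound above is too weak by $|\min f|$. Assume for contradiction $\mu:=\partial_1\phi(x_0,t_0)<0$. The plan is to perform the parabolic blow-up
\[
V_k(y,s) := \ep_k^{-1}\!\left(u^{\ep_k}(x_k + \ep_k y,\, t_k + \ep_k^2 s) - u^{\ep_k}(x_k,t_k)\right),
\]
which satisfies $\partial_s V_k=\Delta V_k$ in $\{y_1>0\}$ and $\partial_{y_1}V_k=f(V_k+r_k)$ on $\{y_1=0\}$, with $r_k:=u^{\ep_k}(x_k,t_k)/\ep_k\,(\textup{mod}\ 1)$, and which inherits from the crossing the upper bound $V_k(y,s)\le \mu y_1 + p\cdot y' + o(1)$ on compact sets, with equality at $(0,0)$.

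Passing to a subsequence so that $r_k\to r\in[0,1]$, the upper half-relaxed limit $V_\infty$ is an upper semicontinuous parabolic viscosity subsolution of the blown-up cell problem on $\R_+^d\times\R$, dominated by the stationary affine function $\mu y_1 + p\cdot y'$ with equality at the origin. The main technical obstacle is converting this parabolic object into a genuine stationary subcorrector in the sense of Definition~\ref{def.subcorrector}: the idea is to exploit the time-independent upper barrier together with Hopf-type behavior at the touching point to freeze a time slice (e.g.\ $W(y):=\limsup_{s\uparrow 0} V_\infty(y,s)$) satisfying $\Delta W=0$ in $\{y_1>0\}$ and $\partial_{y_1}W\le f(W+r)$ on $\{y_1=0\}$ in the viscosity sense, still with $W\leq \mu y_1 + p\cdot y'$ and equality at $y=0$. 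After absorbing the shift by $r$ via Remark~\ref{r.translationinvarianceofhomogenizedcoefficients}, the triple $(W,\mu,p)$ is a subcorrector in the sense of Definition~\ref{def.subcorrector}, and Theorem~\ref{t.characterizecorrectors} together with Lemma~\ref{l.computethelaminarcoefficient} gives $\mu\ge Q_*(p)=\langle f\rangle=0$, contradicting $\mu<0$. The corresponding statement for $u_*$ follows by the symmetric argument, with $\min f$, $L_*$, subcorrectors, and $Q_*$ replaced by $\max f$, $L^*$, supercorrectors, and $Q^*$, respectively.
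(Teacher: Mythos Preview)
Your overall architecture coincides with the paper's: pull the crossing down to the $\ep$-level with Lemma~\ref{l.stabilityofcrossing}, show the touching must land on $B_1'$, blow up parabolically, and reduce to producing a subcorrector with slope $(\mu,p)$. The case split on whether $p=0$ is harmless but unnecessary; the paper handles both at once through the subcorrector framework.

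The genuine gap is exactly where you flag it: converting the parabolic limit $V_\infty$ into a stationary subcorrector. Your candidate $W(y):=\limsup_{s\uparrow 0}V_\infty(y,s)$ is not justified. A one-sided temporal limsup of a heat-equation subsolution need not be subharmonic: if $\eta$ touches $W$ strictly from above at $y_0$, you only control $V_\infty(y,\cdot)$ in the limit $s\uparrow 0$, and for $y\ne y_0$ there is no reason that $V_\infty(y,s_n)\le\eta(y)$ along a sequence $s_n\uparrow 0$ realizing $W(y_0)$, so you cannot build a space--time test function touching $V_\infty$ from above near $(y_0,s_n)$. The appeal to ``Hopf-type behavior'' does not supply this missing control. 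The paper's mechanism (Lemma~\ref{l.intermediateofconditiona}) is a time dilation: set $w_r(y,t):=V_\infty(y,t/r)$, so that the rescaled equation reads $r\,\partial_t w_r\le\Delta w_r$, and pass to the upper half-relaxed limit as $r\to 0^+$. The time-derivative term vanishes in the limit, and the $t=0$ slice $V_0$ of the limit---effectively the USC envelope of $\sup_{s\le 0}V_\infty(\cdot,s)$, not your one-sided limsup---is a genuine viscosity subsolution of \eqref{eq.correctorforgeneralsemilinearproblem}, hence a subcorrector. (Minor: for a subcorrector the boundary inequality, when tested from above, reads $\partial_1 W\ge f(W+r)$, not $\le$.)
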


\begin{remark}
    The proof of this lemma also works for the general semilinear case $f(x',v)$ that is periodic with respect to $\Z^{d-1}\times \Z$. In this case similar arguments show
    \[
    \pt_1 u^* \ge Q_*(\gd' u^* ; f) \quad \textup{and} \quad  \pt_1 u_* \le Q^\ast(\gd' u_* ; f)
    \]
    in the viscosity sense, where $Q_*$ and $Q^*$ are homogenized coefficients as defined in \eqref{eq.subcoefficient} and \eqref{eq.supercoefficient}. It is an interesting open question to characterize similar conditions to \dref{definevissoltopGD} \ref{condition(b)} and \ref{condition(c)} that provide a comparison principle for the general semilinear case.
\end{remark}

In order to prove Lemma \ref{l.conditiona} we need to use the following technical lemma, which we prove after the proof of Lemma \ref{l.conditiona}.

\begin{lemma}\label{l.intermediateofconditiona}
    Suppose $v:\overline{\R_+^d}\times\R\rta[-\infty,\infty)$ is a viscosity subsolution to
\be\label{eq.parabolictouchingwithsmooth4}
\begin{cases}
\partial_tv \leq \Delta v & \text{ in } {\R_+^d}\times \R\\
\partial_1 v \ge f(v +c) & \text{ on } \pt{\R_+^d}\times\R \\
v(y, t) \le p \cdot y & \text{ in } \overline{\R_+^d}\times\R\\
v(0,0)=0\ \textup{ and } \    v(y,t)=-\infty \textup{ for }t>0, & 
\end{cases}
\ee
where $c$ is an arbitrary constant, then 
\[
p_1 \ge L_*(p')=(\min f) 1_{\{p'=0\}}.
\]
\end{lemma}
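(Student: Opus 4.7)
The plan is to reduce this parabolic statement to the stationary corrector theory of Section~\ref{subsec.planelikecorrector}. Replacing $v$ by $v+c$, one may normalize to $c=0$ so that the boundary inequality becomes $\pt_1 v \ge f(v)$, while preserving $v \le p \cdot y$, $v(0,0) = 0$, and $v \equiv -\infty$ for $t>0$.

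The central construction is the stationary upper envelope in time,
\[
V(y) := \sup_{s \le 0} v(y,s), \qquad y \in \overline{\R_+^d},
\]
which inherits $V \le p \cdot y$, $V(0) = 0$, and upper semicontinuity (after replacement by its USC envelope if needed). The heart of the argument is to show that $V$ is a viscosity subsolution of the stationary corrector equation $\Delta V \ge 0$ in $\R_+^d$ with $\pt_1 V \ge f(V)$ on $\pt\R_+^d$. Suppose a smooth $\phi$ touches $V$ strictly from above at some $y_0 \in \overline{\R_+^d}$, and let $s^* \le 0$ attain $V(y_0) = v(y_0, s^*)$. Viewed as a time-independent parabolic test function, $\phi(y) \ge V(y) \ge v(y,s)$ for every $s \le 0$, so $\phi$ parabolically crosses $v$ at $(y_0, s^*)$. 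Since $\pt_t \phi \equiv 0$, the parabolic subsolution condition for $v$ at $(y_0, s^*)$ collapses to the elliptic condition at $y_0$: either $\Delta \phi(y_0) \ge 0$, or $y_0 \in \pt \R_+^d$ and $\pt_1 \phi(y_0) \ge f(\phi(y_0))$.

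With $V$ thus identified as a subcorrector with effective slope $(p_1, p')$ in the sense of Definition~\ref{def.subcorrector}, the definition of $Q_\ast$ in \eqref{eq.subcoefficient} together with the laminar computation in Lemma~\ref{l.computethelaminarcoefficient} yields $p_1 \ge Q_\ast(p') = L_\ast(p')$, as desired. The main technical obstacle is that the supremum defining $V(y_0)$ need not be realized at a finite $s^*$; this is handled by working first with the truncated envelopes $V_T(y) := \max_{s \in [-T, 0]} v(y,s)$, whose sup is attained by upper semicontinuity on the compact interval $[-T, 0]$, verifying each is an elliptic subsolution (with the endpoint case $s^* = -T$ handled via the global linear bound $v \le p \cdot y$, which allows the lifted time-independent test function to remain above $v$ slightly past $s = -T$), and passing to the monotone limit $T \to \infty$ using the stability of viscosity subsolutions under monotone sup.
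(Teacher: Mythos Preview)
Your strategy of collapsing the time variable by taking $V(y)=\sup_{s\le 0}v(y,s)$ to produce a stationary subcorrector is natural and would work \emph{if} the supremum were attained. The gap is precisely in the non-attainment / endpoint case, and your proposed fix does not go through. When the maximum of $v(y_0,\cdot)$ over $[-T,0]$ occurs at $s^*=-T$, you claim the global bound $v\le p\cdot y$ lets the time-independent test function $\phi$ stay above $v$ for $s$ slightly past $-T$. But from $\phi\ge V_T$ and $V_T\le p\cdot y$ you cannot deduce $\phi\ge p\cdot y$, so there is no reason that $\phi(y)\ge v(y,s)$ for $s<-T$. Consequently $\phi$ need not touch (or cross) $v$ at $(y_0,-T)$ in a full space-time neighborhood, and the viscosity subsolution condition for $v$ cannot be invoked there. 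The subsequent monotone-limit step inherits the same problem, and in fact $V=\sup_T V_T$ may fail to be upper semicontinuous when the relevant time sequences run off to $-\infty$.

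The paper avoids this by a time-rescaling rather than a sup: set $w_r(y,t):=v(y,t/r)$, which is a subsolution of $r\,\partial_t w_r\le\Delta w_r$ with the same Neumann inequality, and take the upper half-relaxed limit $w:=\limsup^*_{r\to 0^+}w_r$. This is automatically upper semicontinuous, and $V_0:=w(\cdot,0)$ plays the role of your $V$. If $\eta(y)$ touches $V_0$ from above at $y_0$, one extends to $\eta(y)+t^2$ touching $w$ and uses the stability Lemma~\ref{l.stabilityoftouching} to find nearby points $(y_r,t_r)$ where $\eta+c_r$ touches $w_r$; the inequality $r\,\partial_t\le\Delta$ there becomes $0\le\Delta\eta(y_0)$ (or the boundary inequality) in the limit $r\to 0$. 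The rescaling compresses all of $(-\infty,0]$ toward $t=0^-$, so the half-relaxed limit already encodes the full supremum without ever needing it to be attained at a finite time.

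A minor slip: replacing $v$ by $v+c$ does not preserve $v\le p\cdot y$ and $v(0,0)=0$; both shift by $c$. This is harmless (Definition~\ref{def.subcorrector} allows an additive constant, and Remark~\ref{r.translationinvarianceofhomogenizedcoefficients} makes $c$ irrelevant to $L_*$), but it is simpler to keep $c$ throughout as the paper does.
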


\begin{proof}[Proof of Lemma \ref{l.conditiona}]

We prove that $u^\ast$ satisfies the subsolution condition \ref{condition(a)} for \( u^\ast \), the supersolution condition \ref{condition(a)} for \( u_\ast \) follows from a symmetric argument. Suppose that $\psi$ is a smooth test function that crosses \( u^\ast \) strictly from above (by using standard perturbations) at \( (x_0, t_0) \) in $U\times (t_0-r,t_0+r)$ with 
\be\label{eq.lemmacaseasuperheatoftestpsi}
\partial_t \psi(x_0, t_0)>\Delta \psi(x_0, t_0) .
\ee
By the smoothness of \( \psi \), we may constrain to smaller domain $U$ and radius $r>0$ so that this strict supersolution property \eqref{eq.lemmacaseasuperheatoftestpsi} holds also in $U\times(t_0-r,t_0+r) $ that contains \( (x_0, t_0) \), where $U\csubset B_1^+\cup B_1'$ is relatively open. 

By Lemma \ref{l.stabilityofcrossing}, we can find a subsequence (not relabelled) of $\ep\rta 0^+$, $c_\ep\rta0$ and a sequence of points $$(x_\ep,t_\ep)\in U\times (t_0-r,t_0+r)$$ that converge to $(x_0,t_0)$ so that for each $\ep$ the function $\psi+c_\ep$ crosses $u^{\ep}$ from above at $(x_\ep,t_\ep)$ in $U\times (t_0-r,t_0+r)$. Because $\psi$ is a strict supersolution and $u^{\ep}$ is a subsolution to the heat equation, the strong comparison principle implies that $(x_\ep,t_\ep) \in U'\times (t_0-r,t_0+r)$. This shows that $x_0\in U'=U\cap B_1' $.

Now we aim to show that
\[
\partial_1 \psi(x_0, t_0) \ge L_\ast(\nabla' \psi(x_0, t_0)).
\]
Consider the (parabolic) rescalings:
\[
v_\ep(y, t) := \frac{u^\ep(\ep y + x_\ep, \ep^2 t + t_\ep) - u^\ep(x_\ep, t_\ep)}{\ep}\quad \text{and} \quad
\psi_\ep(y, t) := \frac{\psi(\ep y + x_\ep, \ep^2 t + t_\ep) - \psi(x_\ep, t_\ep)}{\ep}.
\]
Note that
\[\psi_\ep(y,t) \leq p \cdot y  + O(\ep|y|^2+\ep |t|),\]
where \( p := \nabla \psi(x_0, t_0) \).  So, for any \( R > 0 \), there exists a small \( \ep_0 > 0 \) such that for all \( 0 < \ep < \ep_0 \), the $v_\ep$ solve in the classical sense
\be\label{eq.parabolictouchingwithsmooth}
\begin{cases}
  \pt_t v_\ep = \Delta v_\ep & \text{ in } {B_R^+} \times (-R^2, 0] \\
\partial_1 v_\ep = f\lb v_\ep + \frac{u^{\ep}(x_{\ep}, t_{\ep})}{\ep}\rb & \text{ on } B_R' \times (-R^2, 0] \\
v_\ep(y, t) \le p \cdot y +O(\ep R^2) & \text{ in } \overline{B_R^+} \times [-R^2, 0]\\
v_\ep(0,0)=\psi_\ep(0,0)=0, & 
\end{cases}
\ee
where the final two properties follow since $\psi_\ep$ crosses $v_\ep$ from above at $(0,0)$.

Choose a subsequence \( \ep_j \to 0 \) so that the limit exists
\[
\lim_{j \to \infty} \left[ \frac{u^{\ep_j}(x_{\ep_j}, t_{\ep_j})}{\ep_j} - \left\lfloor \frac{u^{\ep_j}(x_{\ep_j}, t_{\ep_j})}{\ep_j} \right\rfloor \right] = c\in[0,1].
\]
For each fixed radius $R>0$ the following upper half-relaxed limit, by the third condition in \eqref{eq.parabolictouchingwithsmooth}, is well-defined on $\overline{B_R^+} \times [-R^2, 0]$
\[
v^\ast := {\limsup_{j\rta \infty}}^\ast v_{\ep_j}.
\]
Because $R>0$ is arbitrarily chosen, the limit function $v^*$ is well-defined on all $\overline{\R_+^d}\times(-\infty,0]$.
Note that \( p \cdot y \) touches \( v^\ast \) from above at \( (0, 0) \) in \( \overline{\R_+^d}\times(-\infty,0] \) so \( v^\ast(y) \in [-\infty,\infty) \) on \( \overline{\R_+^d}\times(-\infty,0]  \) and $v^*(0,0)=0$. By extending $v^* = -\infty$ on $\overline{\R_+^d}\times (0,\infty)$, we can make $v^*$ an upper semicontinuous function on $\overline{\R_+^d}\times \R$. Moreover, if we also extend $v_\ep=-\infty$ on $\overline{B_R^+}\times (0,\infty)$, then
\[
v^*={\limsup_{j\rta \infty}}^\ast v_{\ep_j}
\]
on whole $\overline{\R_+^d}\times\R$.

By stability of touching in Lemma \ref{l.stabilityoftouching} and that $v_\ep$ are classical solutions to \eqref{eq.parabolictouchingwithsmooth}, \( v^\ast \) solves \eqref{eq.parabolictouchingwithsmooth4} with $p=\gd \psi(x_0,t_0)$. The proof is done by applying Lemma \ref{l.intermediateofconditiona}.

\end{proof}

\begin{proof}[Proof of Lemma \ref{l.intermediateofconditiona}]
We prove this lemma by constructing a subcorrector $V$ to Definition \ref{def.subcorrector} having slope $p$. To that end, we consider the following sequence of functions with $r>0$
$$
w_r(y,t) := v^*(y,t/r),
$$
and their upper half relaxed limit (which is well-defined because $v^\ast(y, t) \le p \cdot y $)
$$
w(y,t):={\limsup_{r\rta0^+}}^\ast w_r(y,t).
$$
The function $w$ is upper semicontinuous and takes the form
\[
w(y,t)=\bca
V_0(y) & t=0\\
V_1(y) & t<0\\
-\infty &t >0,
\eca
\]
with $V_0\ge V_1$. 

We claim that $V=V_0$ is a subcorrector to Definition \ref{def.subcorrector} with slope $p$. By the bound $V_0(0)\ge0$ and $V_0(y)\le p\cdot y$, it suffices to show that $V_0$ is a subsolution. Suppose $\eta(y)$ is a smooth function touching $V_0$ strictly from above at $y_0\in \overline{\R_+^d}$, then we can extend 
\[
\eta(y,t)=\eta(y)+t^2
\]
and know that $\eta(y,t)$ touches $w$ strictly from above at $(y_0,0)$. By Lemma \ref{l.stabilityoftouching} again, there are 
\[
\textup{constants }c_r\rta0\textup{ and points }(y_r,t_r)\rta (y_0,0)
\]
such that $\eta+c_r$ touches $w_r$ strictly from above at $(y_r,t_r)$. Note that the functions $w_r$ satifies
$$
\begin{cases}
r\partial_t w_r \leq \Delta w_r & \text{ in } {\R_+^d}\times \R \\
\partial_1 w_r\ge f(w_r + c) & \text{ on } \pt{\R_+^d}\times\R.
\end{cases}
$$
This implies that either 
$$
r\pt_t\eta(y_r,t_r)=2rt_r \le \Delta \eta(y_r,t_r)
$$
or $y_r\in \pt \R_+^d$ and
$$\pt_1 \eta(y_r,t_r) \ge f(\eta(y_r,t_r)+c_r +c).$$
Sending $r\rta0^+$ shows that $V_0$ is indeed a subcorrector to Definition \ref{def.subcorrector} with slope $p$. Note that the translating constant $c$ does not change the homogenized coefficients due to Remark \ref{r.translationinvarianceofhomogenizedcoefficients}.

\end{proof}

\subsection{Dynamic slope condition: transversal case}\label{subsec.dy1}
In this subsection we prove that the half relaxed limits $u^*$, $u_*$ satisfy the condition \ref{condition(b)} in \dref{definevissoltopGD}.

\begin{lemma}\label{l.conditionb}
   The function $u^\ast = \limsup^*u^\ep$ solves
   \[\hbox{if } \ \partial_tu^*(x,t) > 0 \ \hbox{ then } \ \partial_1 u^{\ast}(x,t) \geq 0 \ \hbox{  on } \ B_1' \times (0,T)\]
   in the sense of condition \ref{condition(b)} from \dref{definevissoltopGD}. Similarly, $u_\ast = \liminf_*u^\ep$  satisfies the symmetrical supersolution condition.
\end{lemma}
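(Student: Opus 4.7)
I will sketch the subsolution case; the supersolution case follows by symmetry. Suppose for contradiction that $\phi$ is a smooth test function crossing $u^\ast$ strictly from above at $(x_0, t_0) \in D_\infty'$ with $\partial_t\phi(x_0, t_0) > \Delta\phi(x_0, t_0)$, $\partial_t\phi(x_0, t_0) = \sigma > 0$, and $\partial_1\phi(x_0, t_0) = -\eta < 0$. After standard perturbations the strict inequalities $\partial_t\phi > \Delta\phi + \gamma$, $\partial_t\phi > \sigma/2$, and $\partial_1\phi < -\eta/2$ hold in a parabolic neighborhood $U \times (t_0 - r, t_0 + r)$. If $\gd'\phi(x_0, t_0) \neq 0$, then \lref{conditiona} yields $\partial_1\phi(x_0, t_0) \geq L_\ast(\gd'\phi) = \avg{f} = 0$, contradicting $\partial_1\phi < 0$. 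So I focus on the case $\gd'\phi(x_0, t_0) = 0$, where \lref{conditiona} only gives the insufficient bound $\partial_1\phi \geq \min f$.

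Apply \lref{stabilityofcrossing} to find sequences $\ep_k \to 0$, $c_k \to 0$, and $(x_k, t_k) \to (x_0, t_0)$ with $\phi + c_k$ crossing $u^{\ep_k}$ from above at $(x_k, t_k)$. The strict supersolution property of $\phi$ and the strong parabolic comparison principle force $(x_k, t_k) \in D_\infty'$. The crossing inequality evaluated at $x_k$ along the time axis gives, combined with $\partial_t\phi > \sigma/2$,
\[
u^{\ep_k}(x_k, t) \leq \phi(x_k, t) + c_k \leq u^{\ep_k}(x_k, t_k) - (\sigma/2)(t_k - t) + O((t_k-t)^2)
\]
for $t \in (t_0-r, t_k]$. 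Setting $t_k^\ast := t_k - 4\ep_k/\sigma$, the continuous function $t \mapsto u^{\ep_k}(x_k, t)/\ep_k$ has range on $[t_k^\ast, t_k]$ containing an interval of length at least $2 + O(\ep_k)$. By $1$-periodicity of $f$, this interval contains a value $v_k^\ast$ with $f(v_k^\ast) = \max f$, and I may take $v_k^\ast$ within distance $1$ of $u^{\ep_k}(x_k, t_k)/\ep_k$. The intermediate value theorem then gives a time $t_k^{\ast\ast} \in [t_k^\ast, t_k]$ with $u^{\ep_k}(x_k, t_k^{\ast\ast}) = v_k^\ast \ep_k$ and
\[
u^{\ep_k}(x_k, t_k) - u^{\ep_k}(x_k, t_k^{\ast\ast}) \leq \ep_k.
\]

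At this time the classical Neumann condition yields $\partial_1 u^{\ep_k}(x_k, t_k^{\ast\ast}) = \max f$. Since $\partial_t\phi > 0$ forces $\phi(x_k, t_k^{\ast\ast}) \leq \phi(x_k, t_k)$, combined with the previous bound one obtains
\[
[\phi + c_k - u^{\ep_k}](x_k, t_k^{\ast\ast}) \leq u^{\ep_k}(x_k, t_k) - u^{\ep_k}(x_k, t_k^{\ast\ast}) \leq \ep_k.
\]
A spatial Taylor expansion in the inner normal direction at $(x_k, t_k^{\ast\ast})$ now gives
\[
[\phi + c_k - u^{\ep_k}](x_k + \tau e_1, t_k^{\ast\ast}) \leq \ep_k + \tau(-\eta/2 - \max f) + O(\tau^2),
\]
and choosing $\tau := 4\ep_k/(\eta/2 + \max f)$ makes the right-hand side strictly negative for large $k$, contradicting $\phi + c_k \geq u^{\ep_k}$ in the parabolic neighborhood. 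The essential ingredient is the time-sweeping: the positive time derivative of $\phi$ forces $u^{\ep_k}(x_k, \cdot)/\ep_k$ to traverse a full period of $f$ in a backward window of length $O(\ep_k)$, so the Neumann slope $\partial_1 u^{\ep_k}$ must attain $\max f$ at some nearby time, and the large spatial slope mismatch with $\partial_1 \phi < 0$ produces the contradiction. The main subtlety lies in ensuring the test function value at $(x_k, t_k^{\ast\ast})$ stays within $O(\ep_k)$ of the touching value, which is precisely where the monotonicity $\partial_t \phi > 0$ enters essentially; this depinning-type argument encodes the rate-independent pinning structure behind condition \ref{condition(b)}.
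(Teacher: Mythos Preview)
Your time-sweeping idea is natural, but the final Taylor step does not close. You write
\[
[\phi + c_k - u^{\ep_k}](x_k + \tau e_1, t_k^{\ast\ast}) \leq \ep_k + \tau(-\eta/2 - \max f) + O(\tau^2),
\]
which requires a second-order expansion of $u^{\ep_k}$ in the $e_1$ direction with remainder uniform in $k$. The only available boundary regularity for $u^{\ep_k}$ is $C^{1,\alpha}$ (Remark~\ref{r.regularityofparaatscaleep}, Lemma~\ref{l.usefulC1alphaestimatepara}), and the seminorm $[u^{\ep_k}]_{C^{1,\alpha}}$ is \emph{not} uniform: the Neumann data $f(u^{\ep_k}/\ep_k)$ has H\"older seminorm of order $\ep_k^{-\gamma}$ (with $\gamma$ the H\"older exponent of $f$). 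The remainder is therefore of order $[u^{\ep_k}]_{C^{1,\alpha}}\,\tau^{1+\alpha}$, and with your choice $\tau\sim\ep_k$ this is at least comparable to $\ep_k$ --- the same order as both leading terms. Whether the sum can be made negative then depends on implicit regularity constants (fixed by $f$) over which you have no control. No alternative choice of $\tau$ rescues this: taking $\tau\ll\ep_k$ makes the initial gap $\ep_k$ swamp the linear term, while $\tau\gg\ep_k$ lets the remainder dominate.

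The paper's proof avoids fine-scale regularity of $u^{\ep_k}$ entirely by a different mechanism. After perturbing $\psi$ so that $0<-\partial_1\psi\ll\partial_t\psi$ and $\psi$ is strictly superharmonic, it constructs an auxiliary \emph{elliptic} function $\xi^\ep$ as the global energy minimizer on $U_r(x_0)$ with boundary data $\psi_\ep(\cdot,t_\ep)$, solving $\Delta\xi^\ep=0$ and $\partial_1\xi^\ep=f(\xi^\ep/\ep)-\delta$. By the $\Gamma$-convergence Lemma~\ref{l.localuniformconvergenceofglobalminimizers}, $\xi^\ep$ converges \emph{uniformly} to the solution of $\partial_1\xi=-\delta$, which lies strictly below $\psi(\cdot,t_0)$. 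A separation estimate coming from $\partial_t\psi\gg-\partial_1\psi$ then forces $\xi^\ep>u^\ep$ on the full parabolic boundary of $U_r(x_0)\times(t_0-r,t_\ep)$, while $\xi^\ep(x_\ep)<\psi_\ep(x_\ep,t_\ep)=u^\ep(x_\ep,t_\ep)$ at the touching point; hence $\xi^\ep$ must cross $u^\ep$ from above at some $(z_\ep,s_\ep)\in U_r'$, contradicting $\partial_1\xi^\ep=f(\xi^\ep/\ep)-\delta<f(u^\ep/\ep)$ there. Only uniform convergence and comparison are used --- never a Taylor expansion of $u^{\ep_k}$. Your approach is closer in spirit to the proof of Lemma~\ref{l.conditionc}, where a one-dimensional barrier anchored at a value $r_\ep$ with $f(r_\ep/\ep)=\max f$ is used; but there the hypothesis $\phi>u^*$ on $\partial^+U\times\{t_0\}$ supplies the lateral separation needed to run a barrier argument without regularity.
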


\begin{proof}
As before we only prove that $u^\ast$ satisfies the subsolution condition \ref{condition(b)} for \( u^\ast \), and the supersolution condition \ref{condition(b)} for \( u_\ast \) follows from a symmetric argument. By a similar argument to the proof of the previous lemma, we may start with $\psi$ that is a smooth function crossing $u^*$ strictly (by standard perturbations) from above at $(x_0,t_0)$ and satisfies the following conditions in $U_r(x_0)\times (t_0-r,t_0+r)$
\be\label{eq.lemmacaseasuperheatoftestpsi2}
 \partial_t \psi>\Delta \psi, \ \pt_1\psi< 0 \quad \textup{and} \quad \pt_t\psi>0.
\ee
Here $U_r(z):=B_r(z)\cap \overline{\R_+^d}$. By the local stability condition \ref{condition(a)}
\be\label{eq.gradientoftouchinginconditionb}
\gd\psi(x_0,t_0)=-\nu e_1 \ \textup{ for some }\nu>0.
\ee

In the following we do a sequence of modifications on $\psi$ and $r$ so that all the above conditions are preserved and in $U_r(x_0)\times (t_0-r,t_0+r)$
\begin{enumerate}
    \item[(i)] 
    \(0<-\pt_1\psi \ll \pt_t \psi\),
    \item[(ii)] and $\psi$ is strictly superharmonic.
\end{enumerate}
We obtain (i) by adding $\psi$ with $-cx_1$ for 
\[
c=r+\sup_{U_r(x_0)\times (t_0-r,t_0+r)} \pt_1 \psi.
\]
As $r\rta0^+$, by smoothness of $\psi$, the modified functions $\psi-cx_1$ satisfy all the previous properties and also condition (i) above.

We obtain (ii) by replacing $\psi$ by
\[
\psi^\mu:= \psi + \mu x_1 - \frac{1}{\mu} x_1^2 
\]
for a small $\mu>0$. When $\mu>0$ is sufficiently small, $\psi^\mu$ is strictly superharmonic and all previous properties are preserved whenever $r<\mu^2/2$. In the following we omit the symbol $\mu$.

By Lemma \ref{l.stabilityofcrossing}, there is a subsequence, not relabeled, of  $\ep\rta0^+$, $c_\ep\rta0$ and a sequence of points $(x_\ep,t_\ep)\in U_r'(x_0)\times(t_0-r,t_0+r)$ so that $\psi_\ep:=\psi+c_\ep$ crosses $u^\ep$ from above at $(x_\ep,t_\ep)$ in $U_r(x_0)\times(t_0-r,t_0+r)$. Moreover, $(x_\ep,t_\ep)\rta (x_0,t_0)$. By using \eqref{eq.gradientoftouchinginconditionb} and condition (i) above, we obtain for some $a>0$ and all $\ep,r>0$ that are sufficiently small
\be\label{eq.wellseparatednessoftransfer}
\max_{x\in \overline{U_r(x_0)}} \psi_\ep(x,t_0-r) + ar< \min_{x\in \pd {U}_r(x_0)} \psi_\ep(x,t_\ep).
\ee
In the following context, we without loss assume that $\psi_\ep $ crosses $u^\ep$ at $(x_\ep,t_\ep)$ strictly from above by using standard perturbations.

Now we use energy minimization to construct a supersolution of the $\ep$ problem. First, we define $u=\xi^\ep$ to be the global energy minimizer to the following energy for some $\delta>0$ to be chosen:
\[
 \int_{U_r^+(x_0)} \frac{1}{2}|\gd u|^2 + \int_{U_r'(x_0)} \int_0^{u(x')} \lb f(v/\ep)-\delta \rb dv dx',
\]
subject to the boundary condition $u(x) = \psi_\ep(x,t_\ep)$ on $\pd U_r(x_0)$. The minimizer $\xi^\ep$ then satisfy the following problem
\be
\begin{cases}\label{eq.anintermediatexiproblem21}
\Delta \xi^\ep = 0 & \text{in } U_r^+(x_0), \\
\partial_1 \xi^\ep = f\left(\frac{\xi^\ep}{\ep}\right) - \delta & \text{on } U_r'(x_0), \\
\xi^\ep(x) = \psi_\ep(x,t_\ep) & x \in \pd U_r(x_0).
\end{cases}
\ee

We claim that for small $\ep, \ \delta>0$ the function $\xi^\ep$ crosses $u^\ep$ from above at some $(z_\ep,s_\ep) \in U_r'(x_0) \times(t_0-r,t_\ep) $. This will finish the proof because by the viscosity solution condition of $u^\ep$ and the smoothness of $\xi^\ep$
\[
\pt_1 \xi^\ep (z_\ep,s_\ep) \ge f(u^\ep(z_\ep,s_\ep))= f(\xi^\ep(z_\ep,s_\ep)),
\]
contradicting the boundary condition of $\xi^\ep$ at $(z_\ep,s_\ep)$ according to \eqref{eq.anintermediatexiproblem21}.

To prove the crossing property, we first observe that according to Lemma \ref{l.localuniformconvergenceofglobalminimizers}, we see that \( \xi^\ep \) converges uniformly to \( \xi \) in \( \overline{U_r(x_0)}\) solving
\be\label{eq.anintermediatexiproblem}
\begin{cases}
\Delta \xi = 0 & \text{in } U_r^+(x_0), \\
\partial_1 \xi =- \delta & \text{on }U_r'(x_0), \\
\xi(x) = \psi(x,t_0)& x \in \pd U_r(x_0).
\end{cases}
\ee
Combining \eqref{eq.wellseparatednessoftransfer} and \eqref{eq.anintermediatexiproblem}, we obtain
\[
\begin{split}
   \max_{x\in \overline{U_r(x_0)}} \lb u^\ep(x,t_0-r)-\xi^\ep(x) \rb &\le  \max_{x\in \overline{U_r(x_0)}} \lb\psi_\ep(x,t_0-r)-\xi^\ep(x)\rb\\
    &< \min_{x\in \pd U_r(x_0)} \psi_\ep(x,t_\ep)- \min_{x\in \overline{U_r(x_0)}}\xi^\ep(x)-a r\\
    &< \min_{x\in \pd U_r(x_0)} \psi(x,t_0)- \min_{x\in \overline{U_r(x_0)}}\xi(x)-a r + o_\ep(1)\\
    &\le -a r + o_\ep(1)\\
    &<0
\end{split}
\]
when $\ep>0$ is small. This shows that 
\be\label{eq.boundaryorderingtechnicalconditionb}
\xi^\ep(x) > u^\ep(x,t) \textup{ for }(x,t)\in \ppp \lb U_r(x_0)\times(t_0-r,t_\ep) \rb.
\ee
On the other hand, when $\delta>0$ is small we have for $x\in U_r'(x_0)$
\[
\pt_1\xi(x)=-\delta> \pt_1 \psi(x,t_0),
\]
and because $\xi$ is harmonic and $\psi$ is superharmonic we obtain by strong comparison principle that $\xi<\psi$ on $U_r(x_0)$ and therefore for small $\ep>0$
\be\label{eq.reverorderingatendtimexeptepconditionb}
\xi^\ep(x_\ep,t_\ep)<\psi_\ep(x_\ep,t_\ep)=u^\ep(x_\ep,t_\ep).
\ee
Let $s_\ep$ be the infimum of $s\in (t_0-r,t_\ep)$ so that $u^\ep(x,s)-\xi^\ep(x,s)$ is positive for some $x\in \overline{U_r(x_0)}$. By \eqref{eq.boundaryorderingtechnicalconditionb} and \eqref{eq.reverorderingatendtimexeptepconditionb}, $s_\ep\in (t_0-r,t_\ep)$. By continuity, $\xi^\ep$ will cross $u^\ep$ from above at some $(z_\ep,s_\ep)\in U_r(x_0)\times(t_0-r,t_\ep)$, but by strong comparison principle, $z_\ep$ will not belong to $U_r^+(x_0)$ and hence $z_\ep\in U_r'(x_0)$ as we desired.

\end{proof}

\subsection{Dynamic slope condition: laminar case}\label{subsec.dy2}
In this subsection we prove that the half relaxed limits $u^*$, $u_*$ satisfy  \dref{definevissoltopGD} \ref{condition(c)}.
\begin{lemma}\label{l.conditionc}
     The function $u^\ast$ defined in \eqref{eq.uupperstar} satisfies \dref{definevissoltopGD} \ref{condition(c)}. Similarly, $u_\ast$ defined in \eqref{eq.ulowerstar} satisfies the supersolution version of \dref{definevissoltopGD} \ref{condition(c)}.
\end{lemma}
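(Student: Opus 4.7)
The plan is to argue by contradiction following the strategy of Lemma \ref{l.conditionb}, but replacing the auxiliary elliptic energy minimizer there with the extremal elliptic supersolution from Section \ref{section.extremalsteadystates} so as to target the endpoint slope $M = \max f$ rather than the average slope. I first suppose $\partial_1 \phi(x_0, t_0) < M$ and perturb $\phi$ by adding a term of the form $\mu x_1 - x_1^2/\mu$ to ensure strict superheat, strict superharmonicity, and $\partial_1 \phi < M - 2\eta$ throughout a small parabolic cylinder $U \times [t_0 - r, t_0 + r]$ with $U = B_r(x_0) \cap \overline{B_1^+}$ and $r \ll \mu \ll \eta$, preserving the lateral separation $\phi > u$ on $\partial^+ U \times \{t_0\}$ with some margin $a > 0$. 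By Lemma \ref{l.stabilityofcrossing} I extract a subsequence $\ep_k \to 0$, constants $c_k \to 0$, and crossing points $(x_k, t_k) \to (x_0, t_0)$ with $x_k \in U'$ such that $\phi_k := \phi + c_k$ crosses $u^{\ep_k}$ strictly from above at $(x_k, t_k)$.

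The key construction is the minimal elliptic supersolution $\tilde\xi^{\ep_k}$ of the $\ep_k$-problem on $U^+$ with Dirichlet data $\phi_k(\cdot, t_k)$ on $\partial^+ U$. By Theorem \ref{t.homogenizationofextremalsteadystates} it converges uniformly on $\overline{U^+}$ to $\tilde\xi^0$, the minimal elliptic supersolution of the homogenized problem with data $\phi(\cdot, t_0)$. Since $\phi(\cdot, t_0)$ is itself a supersolution of the homogenized problem (by $\Delta \phi < 0$ and $\partial_1 \phi < M = L^\ast(\grad' \phi)$), minimality gives $\tilde\xi^0 \leq \phi(\cdot, t_0)$ on $\overline{U}$. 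I then claim the strict separation $\tilde\xi^0(x_0) < \phi(0, t_0)$: if equality held, then the one-variable function $x_1 \mapsto \phi(x_1, t_0)$ would touch $\tilde\xi^0$ from above at $x_0$; using the strict superharmonicity of $\phi$ together with the Hopf lemma and the fact that $\phi(\cdot,t_0)=\tilde\xi^0$ cannot hold on all of $U'$, I produce a subdomain $\Omega$ with $\Omega \cap \overline{U^+} \csubset U^+ \cup U'$ on whose boundary $\phi(\cdot, t_0) > \tilde\xi^0$, which contradicts condition \ref{condition(3)} of Theorem \ref{t.characterizeminimalsuperslutiontohomoextremalsolutions} since $\partial_1 \phi(0, t_0) < M$.

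Having secured $\tilde\xi^0(x_0) < \phi(0, t_0) - \gamma$ for some $\gamma > 0$, I will derive the contradiction via parabolic comparison on $Q := U \times (t_0 - r, t_k]$. Uniform convergence gives $\tilde\xi^{\ep_k}(x_k) < u^{\ep_k}(x_k, t_k)$ for large $k$. The lateral comparison $\tilde\xi^{\ep_k} \geq u^{\ep_k}$ on $\partial^+ U \times (t_0 - r, t_k]$ is immediate from $\tilde\xi^{\ep_k}|_{\partial^+U} = \phi_k(\cdot, t_k) \geq \phi_k(\cdot, t) \geq u^{\ep_k}(\cdot, t)$, invoking $\partial_t \phi > 0$ and the crossing. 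The main obstacle I anticipate is the initial comparison $\tilde\xi^{\ep_k}(x) \geq u^{\ep_k}(x, t_0 - r)$ on $U$: since $\tilde\xi^0$ may be strictly smaller than $\phi(\cdot, t_0)$ in the interior (the slope $\partial_1 \tilde\xi^0$ need not be negative), the simple barrier argument from the proof of Lemma \ref{l.conditionb} does not transfer directly. My plan is to replace the data $\phi_k(\cdot, t_k)$ by the slightly lifted data $\phi_k(\cdot, t_k) + A$ with $A \in (0, \gamma)$, so that the limit becomes $\tilde\xi^0 + A$ (since adding a constant does not alter the homogenized boundary operator $L^\ast$), and combine this lift with a well-separateness estimate analogous to \eqref{eq.wellseparatednessoftransfer} driven by the strict lateral margin $a > 0$; this lift preserves $\tilde\xi^{\ep_k}(x_k) < u^{\ep_k}(x_k, t_k)$ so long as $A < \gamma$, while forcing the initial inequality for $r$ small enough relative to $A$.

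Once the initial condition is verified, parabolic comparison principle for the mixed $\ep_k$-problem yields $\tilde\xi^{\ep_k} \geq u^{\ep_k}$ throughout $\overline{Q}$, directly contradicting $\tilde\xi^{\ep_k}(x_k) < u^{\ep_k}(x_k, t_k)$; this contradiction completes the proof. The supersolution statement for $u_\ast$ follows by a symmetric argument using maximal elliptic subsolutions and condition \ref{condition(3)} applied with $m = \min f$ in place of $M$.
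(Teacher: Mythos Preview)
Your approach is genuinely different from the paper's, and unfortunately it contains a fatal gap in the strict separation step.

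You claim that the minimal supersolution $\tilde\xi^0$ of the homogenized problem on $U = B_r(x_0)$ with boundary data $\phi(\cdot, t_0)$ satisfies $\tilde\xi^0(x_0) < \phi(0, t_0) =: c$, arguing via condition \ref{condition(3)} of Theorem \ref{t.characterizeminimalsuperslutiontohomoextremalsolutions}. This fails in precisely the regime $r \ll \mu$ you work in. Consider the candidate $\zeta$ which is harmonic in $U^+$ with $\zeta = \phi(\cdot, t_0)$ on $\partial^+ U$ and $\zeta \equiv c$ on all of $U'$. By odd reflection of $\zeta - c$ across $\{x_1=0\}$ and a scaling argument, $\partial_1 \zeta = \alpha + O(r/\mu)$ on $U'$ where $\alpha := \partial_1 \phi(0, t_0) \in [0, M)$ (the lower bound $\alpha \geq 0$ you already have from Lemma \ref{l.conditionb}). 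Thus for $r \ll \mu$ one has $\partial_1 \zeta \in (0, M)$ on all of $U'$, so $\zeta$ satisfies conditions \ref{condition(1)} and \ref{condition(2)}. Crucially, condition \ref{condition(3)} is \emph{vacuous} for $\zeta$: any one-variable $\eta(x_1)$ touching $\zeta$ from above at $z \in U'$ has $\eta(0) = c = \zeta$ on \emph{all} of $U'$, so the strict inequality $\eta > \zeta$ on $\partial \Omega \cap \overline{U^+}$ can never hold (any admissible $\Omega$ must satisfy $\partial\Omega \cap U' \neq \emptyset$). By Theorem \ref{t.characterizeminimalsuperslutiontohomoextremalsolutions}, $\zeta$ \emph{is} the minimal supersolution, and $\zeta(x_0) = c$. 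Your $\gamma$ collapses to zero, leaving no room for the lift $A$ and no crossing at $(x_k,t_k)$.

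The paper's proof sidesteps this obstruction entirely: it never passes through extremal elliptic solutions. After localizing to a thin slab $U_\mu = U\cap\{x_1<\mu\}$ so that $\psi(x_1,t)\le \psi(0,t)+(\max f-\beta)x_1$, it directly constructs an explicit one-variable test function
\[
\phi_\ep(x_1) = (M-\ep)x_1 - \ep x_1^2 + r_\ep - \ep,
\]
where $r_\ep$ is the largest real number with $r_\ep \leq u^\ep(x_\ep, t_\ep)$ and $f(r_\ep/\ep) = \max f$; such $r_\ep$ exists with $|r_\ep - u^\ep(x_\ep,t_\ep)|\le \ep$ by the $1$-periodicity of $f$. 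This single choice guarantees simultaneously that $\phi_\ep(0) = r_\ep-\ep < u^\ep(x_\ep, t_\ep)$, that $\phi_\ep \geq \psi_\ep - 2\ep > u^\ep$ on $\ppp(U_\mu\times(t_0-r,t_\ep))$, and that $f(\phi_\ep(0)/\ep) = M$ exactly. The resulting crossing on $U_\mu'$ then contradicts the $\ep$-level subsolution condition $\partial_1\phi_\ep \ge f(\phi_\ep/\ep)$, since $\partial_1\phi_\ep(0)=M-\ep < M$. The key idea you are missing is this direct use of the periodicity of $f$ to pin $\phi_\ep(0)$ at a maximizer of $f$; this makes the elliptic homogenization machinery of Section \ref{section.extremalsteadystates} unnecessary for the parabolic argument.
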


\begin{proof}
As before we only prove that $u^\ast$ satisfies the subsolution condition  \dref{definevissoltopGD} \ref{condition(c)} for \( u^\ast \). The supersolution condition \dref{definevissoltopGD} \ref{condition(c)} for \( u_\ast \) follows from a symmetric argument. By a similar argument to the proof of the previous two lemmas, we may start with $\psi=\psi(x_1,t)$ that is a smooth spatially one-variable function, crossing $u^*$ from above and satisfies the following conditions in $U\times (t_0-r,t_0+r)$
\be\label{eq.lemmacaseasuperheatoftestpsi3}
 \partial_t \psi>\pt_1^2 \psi, \ \pt_1\psi< \max f \ \textup{ and } \   \pt_t\psi>0.
\ee
By using Lemma \ref{l.stabilityofcrossing} we also have $\ep\rta0^+, c_\ep\rta0$ and a sequence of points $(x_\ep,t_\ep)\in U'\times(t_0-r,t_0+r)$ so that $\psi_\ep:=\psi+c_\ep$ crosses $u^\ep$ from above at $(x_\ep,t_\ep)$ in $U\times(t_0-r,t_0+r)$. The domain $U\csubset B_1^+\cup B_1'$ is relatively open. 

Furthermore, by the assumption $\psi>u^*$ on $\pd U\times\{t=t_0\}$, we also have, combining $\pt_t\psi>0$ in \eqref{eq.lemmacaseasuperheatoftestpsi3}, that for some $c>0$ independent of $\ep$
\be\label{eq.psiepstrictorderconditionc}
\psi_\ep(x_1,t_\ep)\ge c+u^\ep(x,t) \textup{ for } (x,t)\in \ppp \lb U \times (t_0-r, t_\ep)\rb.
\ee
For a small $\mu>0$, we replace $U$ by
\[
U_\mu:= U \cap \{x_1 < \mu\}.
\]
This replacement preserves all the previous properties and when $\mu>0$ is chosen sufficiently small we also have
\be\label{eq.linearderivativecontrolinconiditoonc}
\psi (x_1,t) \le \psi(0,t) + (\max f - \beta) x_1,
\ee
for some small $\beta>0$ and all $x\in U_\mu$, $t\in [t_0-r,t_0+r]$. In the following we will omit the symbol $\mu$.

Let \( r_\ep \in \R \) be the maximal number such that
\[
r_\ep \le u^\ep(x_\ep, t_\ep) \quad \text{and} \quad f\left(\frac{r_\ep}{\ep}\right) = \max f =: M.
\]
By the periodicity of \( f \), we have
\[
|r_\ep - u^\ep(x_\ep, t_\ep)| \le \ep.
\]
We construct
\[
\phi_\ep(x_1, t) \equiv \phi_\ep(x_1) := (M - \ep) x_1 - \ep x_1^2 + r_\ep-\ep.
\]
Then we have by \eqref{eq.psiepstrictorderconditionc} and \eqref{eq.linearderivativecontrolinconiditoonc}
\[
\phi_\ep(x_1, t) \ge \psi_\ep(x_1,t_\ep) -2\ep > u^\ep(x,t) \textup{ for }(x,t)\in\ppp \lb U\times(t_0-r,t_\ep) \rb
\]
when $\ep >0$ is sufficiently small. On the other hand, we know that
\[
\phi_\ep(0,t_\ep)=r_\ep -\ep< u^\ep(x_\ep,t_\ep).
\]
By a similar argument to the last part of the previous lemma, we see that $\phi_\ep$ crosses $u^\ep$ from above at some $(z_\ep,s_\ep)\in U' \times(t_0-r,t_\ep)$, which is impossible because otherwise by the viscosity solution condition of $u^\ep$
\[
\pt_1\phi_\ep(0,s_\ep)\ge f (u^\ep(z_\ep,s_\ep)/\ep) = f(\phi_\ep(0,s_\ep)/\ep)=f(r_\ep/\ep+1) = \max f,
\]
which contradicts the definition of $\phi_\ep$. 

\end{proof}

\subsection{Proof of the parabolic homogenization} \label{subsec.proofoftheorem1.1}

Now we can combine the elements above to prove Theorem \ref{t.homogenizationofparabolic}.

\begin{proof}[Proof of Theorem \ref{t.homogenizationofparabolic}]
Let $u^*$ and $u_*$ be the upper and lower half-relaxed limits of $u_\ep$, as defined in \eqref{eq.uupperstar} and \eqref{eq.ulowerstar} respectively. By Lemma \ref{l.conditiona}, Lemma \ref{l.conditionb} and Lemma \ref{l.conditionc}, $u^*$ is a subsolution and $u_*$ is a supersolution of \eqref{eq.pNNep} in the sense of \dref{definevissoltopGD}. By Lemma \ref{l.boundaryorderingg} and the comparison principle \tref{comparisonprinciple}, $u_*\geq u^*$. Since $u_* \leq u^*$ by definition, then $\bar{u}:= u_* = u^*$ is continuous on $\overline{D_\infty^+}$, is a viscosity solution of \eqref{eq.pGDwithmotionlaw}, and $u^\ep$ converge locally uniformly on $\overline{D_\infty^+}$ to $\bar{u}$, .

\end{proof}

\section{The parabolic comparison principle}\label{section.comparisonprinciple}

In this section, we discuss the proof of comparison principle \tref{comparisonprinciple} for \eqref{eq.pGDwithmotionlaw}. In Section \ref{subsec.contactsets}, we prove the openness of the facets/contact sets, which is a crucial step because these sets consist of points where the Neumann condition becomes degenerate. In Section \ref{subsec.proofofcomparisonprinciple}, we first give a sketch of proof for Theorem \ref{t.comparisonprinciple} due to the length, and then provide a whole proof.

\subsection{Contact sets}\label{subsec.contactsets}
Because of the gradient degeneracy of the boundary condition
\[
\pt_1 u \in [m 1_{\{\gd' u = 0\}}, M 1_{\{\gd' u =0\}}],
\]
it turns out to be useful to consider the points $x\in D_\infty'$, at which the solution $u$ satisfies $\pt_1u(x)\ne0$. As is often the case in viscosity solution theory, we make this precise for weak solutions via sub and supersolution touching conditions.

\begin{definition}
    Given a subsolution $u$ to \eqref{eq.pGDwithmotionlaw} as defined in \dref{definevissoltopGD}, we partition $D_{\infty}'$ as 
    \be
    D_{\infty}' = \Ca_-(u) \sqcup \Na_-(u) \sqcup \Gamma_-(u),
    \ee
    where
    \be\label{eq.definitionforCa}
          \Ca_-(u):=\lma (x_0,t_0)\in D_\infty' \ ; \     {\begin{aligned}& \textup{ there is a smooth function } \phi \\
         & \textup{ crossing }u \textup{ from above at }(x_0,t_0),\\
         &\quad \textup{and} \quad \pt_1 \phi(x_0,t_0)<0.    \end{aligned}}
         \rma
    \ee
    and 
    \be\label{e.non-contact-fb-defn}
\Na_-(u):=D_\infty'\setminus\overline{\Ca_-(u)}\ 
  \quad \textup{and} \quad \Gamma_-(u):= D_\infty'\setminus\lb \Ca_-(u)\cup \Na_-(u)\rb.
    \ee
The sets $\Ca_+(v), \Na_+(v)$ and $\Gamma_+(v)$ corresponding to a supersolution $v$ are defined symmetrically. We will call $\Ca_-(u)$ and $\Ca_+(v)$ {contact sets} (or {facets}) for $u$ and $v$ respectively.

\end{definition}

\begin{remark}
    The terminology contact set is used due to (a somewhat distant) relationship with the contact set in the thin obstacle problem. See also \cite{feldman2024regularitytheorygradientdegenerate} for more details on this connection. 
\end{remark}

Notice that if the contact set $\Ca$ is open, then the interface $\Gamma$ will be the common boundary of the two disjoint open sets $\Ca$ and $\Na$. Unfortunately, in contrast to the elliptic case \cite{feldman2024regularitytheorygradientdegenerate}*{Lemma 2.8}, for general subsolutions (or supersolutions) we are unable to show that the contact sets defined above are open in $D_{\infty}'$. This is due to the lack of regularity in time. In the following lemma we show that $\Ca_-(u)$ is open when $u$ is Lipschitz in time.

In the following we write $U\csubset B_1^+\cup B_1'$ a relatively open domain and we also write
\[
U^+=U\cap B_1^+\quad \text{and} \quad U'=U\cap B_1'.
\]

\begin{lemma}
 \label{l.opennessofcontactsetwhentimelipschitzx}
Suppose that $u$ is a subsolution of \eqref{eq.pGDwithmotionlaw} on $ D_\infty^+\cup D_\infty' $ with $$\|\partial_tu\|_{L^\infty(U\times (t_1,t_2))} < + \infty.$$ Then $\Ca_-(u) \cap U'\times(t_1,t_2)$ is relatively open in $D_\infty'$ and $\nabla'u = 0$ on $\Ca_-(u) \cap U'\times(t_1,t_2)$. A symmetric result holds for supersolutions.

\end{lemma}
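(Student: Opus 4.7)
Fix $(x_0,t_0)\in \Ca_-(u)\cap(U'\times(t_1,t_2))$ together with a smooth test $\phi$ crossing $u$ from above at $(x_0,t_0)$ and satisfying $\pt_1\phi(x_0,t_0)<0$; write $L:=\|\pt_tu\|_{L^\infty(U\times(t_1,t_2))}$. After adding a standard quadratic strictifier, we may assume the crossing is strict, with gap at least $\eta(|x-x_0|^2+(t-t_0)^2)$ for a small $\eta>0$. The plan has two conceptual steps: establish tangential flatness at $(x_0,t_0)$, then propagate contact-set membership to nearby points.

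\emph{Step 1 (tangential flatness).} The goal is to upgrade $\phi$ to an auxiliary test $\Phi$ that simultaneously crosses $u$ from above at $(x_0,t_0)$, preserves $\pt_1\Phi(x_0,t_0)=\pt_1\phi(x_0,t_0)<0$, and is a strict supersolution of the heat equation at $(x_0,t_0)$. The natural candidate is the ``time-frozen'' modification
\[
\Phi(x,t):=\phi(x,t_0)+L(t_0-t)+\eta\bigl(|x-x_0|^2+(t-t_0)^2\bigr)-\tfrac{\mu}{2}|x-x_0|^2,
\]
for an appropriate $0<\mu<2\eta$. The Lipschitz-in-time bound $u(x,t)\le u(x,t_0)+L(t_0-t)\le\phi(x,t_0)+L(t_0-t)$ combined with the $\eta$-strictification gap absorbs the concave $\mu$-correction and keeps $\Phi\ge u$ in a parabolic neighborhood of $(x_0,t_0)$, while $\mu$ is calibrated so that $\Delta\Phi(x_0,t_0)<-L=\pt_t\Phi(x_0,t_0)$. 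Condition \ref{condition(a)} of \dref{definevissoltopGD} then applies to $\Phi$ and yields $\pt_1\Phi(x_0,t_0)\ge L_\ast(\gd'\Phi(x_0,t_0))$; since $L_\ast(p)=0$ for $p\ne 0$ (recalling the normalization $\avg{f}=0$) while $\pt_1\Phi(x_0,t_0)<0$, the only consistent possibility is $\gd'\Phi(x_0,t_0)=\gd'\phi(x_0,t_0)=0$. This is the viscosity meaning of $\gd'u=0$ at $(x_0,t_0)$.

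\emph{Step 2 (openness).} Tangential flatness gives the one-sided expansion $u(x,t_0)\le u(x_0,t_0)+\pt_1\phi(x_0,t_0)\,x_1+C|x-x_0|^2$ near $x_0$ for some $C>0$. For any $(y_0,s_0)$ in a sufficiently small parabolic neighborhood of $(x_0,t_0)$, I propose the explicit test function
\[
\Psi_{y_0,s_0}(x,t):=\pt_1\phi(x_0,t_0)(x_1-y_{0,1})+L(s_0-t)+C|x-y_0|^2+u(y_0,s_0),
\]
and verify that it crosses $u$ from above at $(y_0,s_0)$: the Lipschitz-in-time bound reduces the inequality $\Psi_{y_0,s_0}\ge u$ to its $t=s_0$ trace, which in turn follows by transporting the tangential bound from $t=t_0$ to $t=s_0$ using Lipschitz-in-time and absorbing the transport error into the $C|x-y_0|^2$ correction. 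Since $\pt_1\Psi_{y_0,s_0}(y_0,s_0)=\pt_1\phi(x_0,t_0)<0$, this places $(y_0,s_0)\in\Ca_-(u)$, proving openness of $\Ca_-(u)\cap(U'\times(t_1,t_2))$.

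The principal obstacle is the balancing in Step 1: arranging simultaneously that $\Phi$ (i) crosses $u$ from above, (ii) preserves $\pt_1\Phi<0$, and (iii) is a strict supersolution of the heat equation is feasible only because Lipschitz-in-time pins $\pt_t\Phi$ near $-L$ while leaving enough quadratic slack in the spatial strictification to dominate the concave correction that reduces $\Delta\Phi$ below $-L$. Without this regularity, the concave modification needed to trigger condition \ref{condition(a)} would break the upper crossing, which is precisely the structural reason that the elliptic analogue \cite{feldman2024regularitytheorygradientdegenerate}*{Lemma 2.8} holds without any temporal hypothesis.
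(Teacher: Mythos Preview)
Both steps contain genuine gaps. In Step~1, your calibration of $\mu\in(0,2\eta)$ cannot force $\partial_t\Phi>\Delta\Phi$ at $(x_0,t_0)$: you have $\partial_t\Phi(x_0,t_0)=-L$ while $\Delta\Phi(x_0,t_0)=\Delta_x\phi(x_0,t_0)+d(2\eta-\mu)$, and since $\mu<2\eta$ the correction $d(2\eta-\mu)$ is \emph{positive}. Even allowing $\mu$ up to $4\eta$ (which the strictification gap permits), the net quadratic contribution to the Laplacian is bounded below by $-2d\eta$, and the strictification itself has already added $+2d\eta$ to $\Delta_x\phi$; the two cancel, leaving you stuck with the original $\Delta_x\phi_0(x_0,t_0)$, which need not be below $-L$. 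Condition~\ref{condition(a)} therefore cannot be invoked. The missing idea is to spend part of the negative normal slope: replacing $\beta x_1$ by $\tfrac{\beta}{2}x_1$ creates a gap $\tfrac{|\beta|}{2}x_1$ that absorbs a large $-Nx_1^2$ term on a thin strip $\{0\le x_1\lesssim |\beta|/N\}$, driving $\Delta\Phi$ as negative as needed. The paper achieves the same effect via parabolic rescaling, which turns the operator into $\varepsilon\partial_s-\Delta$ so that the supersolution threshold becomes $O(\varepsilon)$ rather than $-L$.

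Step~2 fails more fundamentally. To verify $\Psi_{y_0,s_0}\ge u$ at $t=s_0$ you need $u(x,s_0)-u(y_0,s_0)\le \beta x_1+C|x-y_0|^2$. But the upper touching at $(x_0,t_0)$ and Lipschitz-in-time give only \emph{upper} bounds on $u$ everywhere, in particular only an upper bound on $u(y_0,s_0)$; two upper bounds cannot control the difference $u(x,s_0)-u(y_0,s_0)$ from above. Your ``transport'' sentence hides exactly this: writing out the inequality at $x=y_0$ demands $u(x_0,t_0)+C|y_0-x_0|^2+L|s_0-t_0|\le u(y_0,s_0)$, a lower bound on $u(y_0,s_0)$ that is simply unavailable for a USC subsolution. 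The paper avoids prescribing the touching level altogether: it introduces a sliding parameter $\tau$ so the touching constant is \emph{found} rather than set equal to $u(y_0,s_0)$, and then forces the touching to occur precisely at the desired space--time point using a corner term $3L|s-h|$ (which, by the $L$-Lipschitz hypothesis, rules out $s_0\neq h$) together with the tangential paraboloid $\delta|y'-z|^2$ and condition~\ref{condition(a)} (which, by the rescaled supersolution property, rules out $y_0'\neq z$). This sliding-plus-pinning mechanism is the essential new idea your proposal is missing.
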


{In the proof we will show that for any $(x_0+\ep z,t_0+\ep h)$ near enough to $(x_0,t_0) \in \mathcal{C}_-(u)$ we can create another test function with negative inward normal derivative touching $u$ from above \emph{exactly} at $(x_0+\ep z,t_0+\ep h)$.  Similar ideas have appeared before in \cite{feldman2024regularitytheorygradientdegenerate}*{Lemma 2.8}, \cite{changlara2017boundary}*{Lemma 3.1} and \cite{feldman2019free}*{Proof of Theorem 5.3, Step 3}. All of those examples were elliptic, the parabolic analogue is trickier since the time variable needs to be treated in a distinct way from the spatial variables. In order to force the touching test function to touch at a specific space-time point we need to use the Lipschitz hypothesis. The test function is created by bending upwards in the tangential and temporal directions the linearization of $u$ at $(x_0,t_0)$. By a Lipschitz bending in time we can force the touching time to be exactly $t_0+\ep h$. By the subsolution condition the new test function can only touch $u$ from above where its tangential derivative is zero, this will force the spatial location of the touching point to be $x_0+\ep z$. }
\begin{proof}

Let $(x_0,t_0)\in U'\times(t_1,t_2)$ be a point in $\Ca_-(u)$. By the definition there is a smooth function $\phi$ that crosses $u$ from above at $(x_0,t_0)$ with $\beta:=\pt_1 \phi(x_0,t_0)<0$. By the subsolution condition of $u$, specifically \dref{definevissoltopGD} \ref{condition(a)}, we have $\gd' \phi(x_0,t_0) =0$. Also call $\alpha:=\pt_t \phi(x_0,t_0)$. Call $L>0$ to be a uniform upper bound for the time derivative $\pt_t u$, so that, in particular, $|\alpha|\leq L$.

Consider
\[
\Tilde{\phi}(x,t):=\bca
\phi(x,t) & \textup{ if }t\le t_0\\
\phi(x,t_0) + L(t-t_0) & \textup{ if }t>t_0.
\eca
\]
Then $\Tilde{\phi}$ touches $u$ from above at $(x_0,t_0)$. In the following we replace $\phi$ by $\Tilde{\phi}$.

For convenience we consider the following rescalings for small $\ep>0$
\[
u^\ep(y,s) := \frac{u(x_0+\ep y  , t_0+\ep s)- u(x_0,t_0)}{\ep} \ \hbox{ and } \ \phi^\ep (y,s) := \frac{\phi( x_0+\ep y ,  t_0+ \ep s)- \phi(x_0,t_0)}{\ep}.
\]

When $\ep>0$ is chosen small, we have the following differential inequality (in the sense of viscosity solutions) and upper half flatness condition with $\alpha(s):=\alpha\min\{s,0\} + L\max\{s,0\}$
\[
\bca
\ep \pt_s u^\ep \leq \Delta_y u^\ep  & \textup{ in }B_1^+ \times (-1,1)\\
\textup{subsolution condition as in \dref{definevissoltopGD}} & \textup{ on }B_1'\times(-1,1)\\
u^\ep(y,s) \le \beta y_1 + \alpha(s) + \omega(\ep) & \textup{ on }\overline{B_1^+}\times [-1,1],
\eca
\]
where $\omega(\ep) \rta 0^+$ as $\ep \rta 0^+$.

In the following we construct a family of smooth test functions $v_{\tau,h,z}$ that, when $0<\ep\ll\delta$, touch $u^\ep$ from above at exactly $(z,h)\in B_1^+ \times (-1,1)$ for some appropriate parameter $\tau\in[0,1]$. We define
\[
\begin{split}
    v_{\tau,h,z}(y,s)&:=\alpha(s) + \frac{\beta}{2} y_1+\delta|y' -z|^2  - d\delta  y_1^2+3L|s-h|  \\
    &\quad-  \lb\delta |z|^2 +3L|h|\rb \tau + \omega(\ep) (1-\tau),
\end{split}
\]
where $\delta< \min\{1,|\beta|\}/(100d)$. Notice that for $|s|\le 1$, $|y|\le1$ and when $0<\ep\ll \delta$
\[
\Delta_y v_{\tau,h,z} =-2\delta< -4L\ep\le \ep \pt_s v_{\tau,h,z},
\]
in the sense of viscosity solutions. For $(y,s)\in \ppp \lb B_1^+\times(-1,1) \rb \cup \overline{ B_1^+} \times \{s=1\}$, if $|z|<1/4$ and $|h|<\frac{\delta}{32L}$ we have $v_{1,h,z}> u^\ep$ because
\[
\begin{split}
    v_{1,h,z}- \lb \beta y_1 +\alpha( s) + \omega(\ep) \rb &=\lb 3L|s-h| -3L|h|\rb-\frac{\beta}{2} y_1 + \delta|y'-z|^2 - d\delta y_1^2 - \delta|z|^2 - \omega(\ep)\\
&\ge 3L|s|-6L|h|+ \lb -\frac{\beta}{2} - (d+1)\delta\rb y_1 + \delta |y'|^2\\
&\quad  -2\delta y'\cdot z + \delta y_1^2 - \omega(\ep)\\
&\ge \min\{3L, \delta\}-6L|h| - 2\delta |z| -\omega(\ep)\\
&\ge \frac{\delta}{4} - 6L|h| \\
&>0.
\end{split}
\]
On the other hand we have $v_{1,h,z}(0,0)=0$, which means that the graph of $v_{\tau,h,z}$ intersects with $u^\ep$ as $\tau $ approaches 1 from the negative side. By the half flatness condition we know that 
\[
v_{0,h,z} \ge \alpha (s )+ \frac{3\beta}{4} y_1  + \omega(\ep)
\]
stays above $u^\ep$, which combining the strong comparison principle and the above two properties implies that there exists a maximal $\tau^\ast=\tau^*(z,h)\in [0,1]$ such that $v_{\tau^\ast,h,z}$ touches $u^\ep$ from above at some $$(y_0,s_0)\in   B_1'\times (-1,1).$$  We claim that the touching point can only be $$(y_0,s_0)=(z,h).$$ If $s_0\ne h$ then
\[
|\pt_s v_{\tau^\ast,h,z}(y_0,s_0)| \ge 2L >L,
\]
where this inequality is interpreted as a supersolution condition in the viscosity sense, in other words the lower bound inequality holds on the temporal component of elements of the subdifferential. This violates that $u^\ep$ is $L$-Lipschitz regular in time. On the other hand, by the strong comparison principle we have $y_0\in B_1'$ and if $y_0\ne z$, then
\[
\gd' v_{\tau^\ast,h,z}(y_0,h) = 2\delta( y_0-z) \ne 0,
\]
which contradicts the subsolution condition of $u^\ep$ at $(y_0,h)$ (See condition \ref{condition(a)} in \dref{definevissoltopGD}) since $\pt_1 v_{\tau^\ast,h,z}(y_0,h)=\frac{\beta}{2}<0$.  This implies that $\Ca_-(u)\cap U\times (t_1,t_2)$ is open.

To show that $\gd' u =0$ in $\Ca_-(u)\cap U\times (t_1,t_2)$, we observe that $v_{\tau^*,h,z}(\cdot,h)$ touches $u(\cdot, h)$ from above exactly at $z$, which is the spatial minimum of a parabola and by arbitrariness of $z\in \Ca_-(u)\cap U\times\{h\}$, we see that $u(\cdot,h)$ is $C^{1,1}$ from one side and has (tangential) gradient 0 everywhere on $\Ca_-(u)\cap U\times\{h\}$, which forces $\gd'u=0$ on $\Ca_-(u)\cap U\times\{h\}$.
\end{proof}

\subsection{The comparison principle for the homogenized parabolic problem}\label{subsec.proofofcomparisonprinciple}

In order to exhibit the core ideas we present the sketch of proof of Theorem \ref{t.comparisonprinciple}. In order to present a clear sketch we will need to allow some slightly incorrect statements, which will then be clarified in the detailed proof.

\begin{proof}[Sketch of proof of Theorem \ref{t.comparisonprinciple}]
We give a sketch of proof under the assumption that the subsolution $u$ and the supersolution $v$ are smooth. First we can perturb $u$ to 
\be\label{eq.strictifyinsketch}
u_\mu(x,t):= u(x,t)-2\mu+\mu x_1 - \frac{\mu}{T-t},
\ee
for an arbitrary end time $T>0$ and a small $\mu>0$. In particular this forces the maximum of $u_\mu-v$ on $\overline{D_T^+}$ to occur on the interior $D_T^+$ and not at the end time $t = T$. By the maximum principle for strict subsolutions of the heat equation, the maximum point can only occur on $B_1'\times(0,T)$.

Let $(x_0,t_0)\in B_1'\times(0,T)$ be a maximum point of $u_\mu - v$, then by a proper vertical translation of $u_\mu$ we may assume that $u_\mu$ touches $v$ from below at $(x_0,t_0) \in B_1'\times(0,T)$. Derivative tests imply
\[\partial_1 (u_\mu -v)(x_0,t_0) \leq 0\quad \textup{and} \quad  \partial_t(u_\mu - v)(x_0,t_0) = 0.\]
Using Lemma \ref{l.opennessofcontactsetwhentimelipschitzx}, there are only three cases:

\begin{center}
\begin{minipage}{0.5\textwidth} 
\begin{enumerate}
    \item[Case 1:] $(x_0,t_0)\not\in \Ca_-(u)\cup \Ca_+(v)$.
    \item[Case 2:] $(x_0,t_0) \in \Ca_-(u) \cap \Ca_+(v)$.
    \item[Case 3:] $(x_0,t_0)\in \Ca_-(u) \Delta \Ca_+(v)$.
\end{enumerate}
\end{minipage}
\end{center}

Case 1 can be excluded since at the touching point $(x_0,t_0)\in \ D_\infty' \setminus (\Ca_-(u)\cup \Ca_+(v))$ 
\[ 
\partial_1 u + \mu =\partial_1u_\mu\le\partial_1 v \leq 0   ,
\]
which shows that $\pt_1 u(x_0,t_0)<0$, contradicting the assumption that $(x_0,t_0)\not\in \Ca_-(u)$, which is defined in \eqref{eq.definitionforCa}.

Now we show that Case 2 is impossible by using the (transversal) dynamic slope condition \dref{definevissoltopGD} \ref{condition(b)}. Observe that by the definition of $\Ca_-(u)$ and $\Ca_+(v)$ respectively, see \eqref{eq.definitionforCa},
\be\label{eq.formalproofuvnormalderivativeatcaca}
\pt_1 u({x}_0,t_0)<0 \quad \textup{and} \quad \pt_1 v({x}_0,t_0) >0.
\ee
By the (transversal) dynamic slope condition \dref{definevissoltopGD} \ref{condition(b)}, we must have $\pt_t v(x_0,t_0) \ge 0$ because
\[
\hbox{if $\pt_t v ({x}_0,t_0)<0$ then $\pt_1 v({x}_0,t_0)\le 0$}
\]
which is a contradiction of \eqref{eq.formalproofuvnormalderivativeatcaca}. On the other hand, if
\[
0 \leq \pt_t v ({x}_0,t_0) = \pt_t  u_\mu({x}_0,t_0)
\]
then 
\[\pt_t  u({x}_0,t_0) = \pt_t  u_\mu({x}_0,t_0)+\frac{\mu}{(T-t_0)^2} \geq \frac{\mu}{(T-t_0)^2} > 0\]
and the (transversal) dynamic slope condition \dref{definevissoltopGD} \ref{condition(b)} implies
\[\partial_1u({x}_0,t_0)  \geq 0\]
again contradicting \eqref{eq.formalproofuvnormalderivativeatcaca}. This finishes Case 2.

We claim that Case 3 can be reduced to Case 1 and 2 by using the conditions in \dref{definevissoltopGD}. We only argue with the case $(x_0,t_0)\in \Ca_-(u)\setminus \Ca_+(v)$ as the other one is symmetrical. By Lemma \ref{l.opennessofcontactsetwhentimelipschitzx},  $\Ca_-(u)$ is open in $D_\infty'$. Let $\Omega$, open in $B_1'$, be the (relatively open) connected component of $\Ca_-(u)\cap\{t=t_0\}$ containing $(x_0,t_0)$. We claim that there is another point
$$
(\Tilde{x}_0,t_0)\in \pt'\Omega \cap B_1'\subset \Gamma_-(u)
$$ 
such that $u$ touches $v$ from below also at $(\Tilde{x}_0,t_0)$. Otherwise $u<v$ on $\pt'\Omega \cap B_1' \times\{t_0\}$. 

Now we argue {imprecisely} for the sketch and think of $u=u(x_1, t)$ as a spatially one-dimensional function near $\Omega$. Notice that under this assumption $u_\mu=u_\mu(x_1,t)$ is also spatially 1D. 

If 
$$
\pt_t u_\mu(x_0,t_0)=\pt_t v(x_0,t_0)<0,
$$
then by the condition \dref{definevissoltopGD} \ref{condition(c)} of $v$ as a supersolution we have
\[
\pt_1 u_\mu(x_0,t_0)=\pt_1 u(0,t_0) + \mu\le m
\]
contradicting the strict subsolution condition \dref{definevissoltopGD} \ref{condition(a)} of $u$. If otherwise $$\pt_t u_\mu(x_0,t_0)=\pt_t v(x_0,t_0)\ge0 $$ then
\[
\pt_t u(x_0,t_0) = \pt_t u_\mu (x_0,t_0) +\frac{\mu}{(T-t_0)^2}>0,
\]
which, by the condition \dref{definevissoltopGD} \ref{condition(b)} of the subsolution $u$ we know that 
\[
\pt_1 u(x_0,t_0)=\pt_1 u(0,t_0)\ge 0
\]
contradicting the assumption $\pt_1 u(x_0,t_0)<0$ because $(x_0,t_0)\in \Ca_-(u)$. This finishes the proof of the existence of $\Tilde{x}_0$.

Now to prove the claim that Case 3 can be reduced to Case 1 and 2, we observe that $(\Tilde{x}_0,t_0)\in \Gamma_-(u)$ is either contained in $\Ca_+(v)$ or outside $\Ca_+(v)$. If $(\Tilde{x}_0,t_0) \not\in \Ca_+(v)$ then we are in Case 1. If $(\Tilde{x}_0,t_0) \in \Ca_+(v)\cap \Gamma_-(u)$ then we can reduce to Case 2 because $\Ca_+(v)$ is open by Lemma \ref{l.opennessofcontactsetwhentimelipschitzx} and 
by definition of $\Gamma_-(u)$ we can choose points $(\hat{x}_0,t_0)$ in $\Ca_-(u)$ that has small distance to $(\Tilde{x}_0,t_0)$ and
\[
u_\mu(\hat{x}_0,t_0)=u_\mu(\Tilde{x}_0,t_0)=v(\Tilde{x}_0,t_0)=v(\hat{x}_0,t_0),
\]
where we have used the fact that $u$ and $v$ depend only on time on each component of $\Ca_-(u)$ and $\Ca_+(v)$ respectively, which implies that $u$ touches $v$ from below at $(\hat{x}_0,t_0)$ and 
\[
(\hat{x}_0,t_0)\in \Ca_-(u)\cap \Ca_+(v).
\]

\end{proof}

\begin{proof}[Proof of Theorem \ref{t.comparisonprinciple}]

Let us first introduce the strategy of the proof. The proof begins by regularizing the sub/supersolutions $u$ and $v$ via parabolic tangential sup/inf-convolutions to ensure Lipschitz regularity in time and tangential spatial variables. These regularized functions are replaced by their {caloric lifts} (heat equation solutions with the same boundary data), preserving sub/supersolution properties. Next we consider the maximum value \(M\) of the regularized $u-v$, with a penalizing term \(-2\mu+\mu x_1 - \frac{\mu}{T-t}\) to force strict ordering of derivatives and ensure the maximum point exists and only occurs on $B_1'\times(0,T)$. At potential maximum points, we show that the regularized $u$ and $v$ are differentiable and hence we can test the boundary condition of $u$ and $v$ in different cases (which were described above in the proof sketch) depending on whether the maximum point is in the contact set of $u$ and/or $v$. 

\vspace{0.2cm}

\noindent\emph{Step 1: Regularization of sub and supersolutions}

Since constants are solutions to \eqref{eq.pGDwithmotionlaw}, we may without loss assume that $u$ and $v$ are bounded by considering $\min\{v,K\}$ and $\max\{u,-K\}$ for a large $K>0$. For $\delta>0$ and $\overline{U}=\overline{D_{\infty}^+}$ we consider the parabolic tangential sup-convolution $\Ta^\delta u$ of $u$ and inf-convolution $\Ta_\delta v$ of $v$ as defined in Definition \ref{def.tptconvolution}. For any fixed end time $T>0$, and a small parameter $\theta>0$ we define 
$$
\Oa:=B_{1-\theta/2}^+ \times (\theta/2,T] \quad \textup{and} \quad \Oa':=B_{1-\theta/2}' \times (\theta/2,T].
$$
The functions $\Ta^\delta u$ and $\Ta_\delta v$ satisfy the following properties:
\begin{enumerate}[label=(\roman*)]
    \item \label{condition(i-1)} \emph{Viscosity solution conditions}: By Lemma \ref{l.touchingpropertyofconvolution} and Corollary \ref{cor.subsolutionsforregularizations}, for any small $\theta>0$ there is a small $\delta_0>0$ such that, for all $0<\delta<\delta_0$,  $\Ta^\delta u$ is a subsolution and $ \Ta_\delta v$ is a supersolution of \eqref{eq.pGDwithmotionlaw} on 
    $$
    \Oa\cup  \Oa'= \lb B_{1-\theta/2}^+\cup B_{1-\theta/2}'\rb \times (\theta/2,T].
    $$ 

    \item \label{condition(i)} \emph{Lipschitz regularity and semi-convexity in time and tangential variables}: By Lemma \ref{l.lipschitzofconvolution} and boundedness of $u$ and $v$, both $\Ta_\delta v$ and $\Ta^\delta u$ are Lipschitz in time $t$ and in the tangential variable $x'$ for all $(x,t)=(x_1,x',t)\in \overline{\Oa}$. The Lipschitz constant is independent of $(x,t)\in \overline{\Oa}$. Moreover, by the definition of sup/inf-convolutions, the function $\Ta_\delta v(x_1,x',t)$ is {$\frac{1}{\delta}$-}semi-convex in $(x',t)$ and $\Ta^\delta u$ is {$\frac{1}{\delta}$-}semi-concave in $(x',t)$ for any fixed $x_1\ge0$.

    \item \label{condition(ii)} \emph{Boundary ordering}: Define the fattened boundary
      $$
J_\theta:=\overline{B_1^+}\times[0,T]\setminus \lb B_{1-\theta}^+\cup B_{1-\theta}'\rb\times(\theta,T].
    $$ 
    By Lemma \ref{l.convergenceofsupconvolution} and the upper semicontinuity of $u-v$, we have
    $$
\limsup_{\theta\rta0,\delta\rta0}\max_{J_\theta}\lb \Ta^\delta u - \Ta_\delta v \rb\le\max_{\ppp D_T^+} (u - v).
    $$
    In particular, if $$\max_{\ppp D_T^+} (u - v)<0,$$ then for sufficiently small $\theta,\delta>0$ we have
  \be\label{eq.propertyboundaryorderinginproofofcomparisonprincipoke}
  \max_{J_\theta}\lb \Ta^\delta u - \Ta_\delta v \rb<0.
  \ee
  \item \label{condition(iii)} \emph{Caloric lifts}: By Lemma \ref{l.caloriclift} and Definition \ref{def.caloriclifts}, we consider on the domain $\Oa$ the caloric lifts of $\Ta^\delta u$ and $\Ta_\delta v$:
\[
\Tilde{u}^\delta := \Ha \Ta^\delta u \ \textup{ and } \   \Tilde{ v}_\delta := \Ha \Ta_\delta v, \textup{ on }{\Oa}={B}_{1-\theta/2}^+\times(\theta/2,T].
\]
By Lemma \ref{l.preserveviscocaloriclift}, both $\Tilde{u}$ and $\Tilde{v}$ are also sub and supersolutions to \eqref{eq.pGDwithmotionlaw} respectively.

By Lemma \ref{l.continuityuoflift} the lifts $\Tilde{u}^\delta$ and $\Tilde{v}_\delta$ are continuous on $\Oa\cup \Oa'$. Moreover, because the restrictions of $\tilde{u}$ and $\tilde{v}$ on $\Oa'$ are Lipschitz, by Lemma \ref{l.boundarylipschitzheatequation}, both $\tilde{u}$ and $\tilde{v}$ are Lipschitz near $\Oa'$.

In general $\tilde{u}$ and $\tilde{v}$ may not be continuous on the parabolic boundary $\ppp \Oa$. However, by Lemma \ref{l.caloriclift}, the upper semicontinuous envelope of $\tilde{u}^\delta$ on $\overline{\Oa}$ coincides with $\Ta^\delta u$ on $\ppp \Oa$. Similar holds for $\tilde{v}$. This implies that
\be\label{eq.ordingofliftedcaloricfunction}
\max_{\ppp \Oa}(\tilde{u}^\delta-\tilde{v}_\delta) = \max_{\ppp \Oa} (\Ta^\delta u -\Ta_\delta v )\le  \max_{J_\theta}\lb \Ta^\delta u - \Ta_\delta v \rb.
\ee

\item \label{condition(iv)} \emph{Contact sets}: Denote the contact sets for the subsolution $\tilde{u}^\delta$ and for the supersolution $\tilde{v}_\delta$, respectively,
    \be
    \Ca_-:= \Ca_-(\tilde{u}^\delta)\subset \Oa' \quad \hbox{and} \quad \Ca_+:= \Ca_+(\tilde{v}_\delta)\subset \Oa'.
    \ee
    By property \ref{condition(i)}, \ref{condition(iii)} and Lemma \ref{l.opennessofcontactsetwhentimelipschitzx}, both $\Ca_-$ and $\Ca_+$ are open relative to $\Oa'$. Similar to above in \eref{non-contact-fb-defn}, we also consider the non-contact sets and free boundaries: define $\Na_\pm:= \mathcal{O}' \setminus \overline{\Ca_\pm}$ and $\Gamma_\pm := \mathcal{O}' \setminus (\Ca_\pm \cup \Na_\pm)$ so that $\mathcal{O}'$ is decomposed as a disjoint union in two ways
    \[
   \Oa'=\Ca_\pm \sqcup \Na_\pm \sqcup\Gamma_\pm.
    \]
\end{enumerate}

\vspace{0.2cm}

\noindent\emph{Step 2: Comparison with auxiliary perturbations}

In this step we make perturbations to allow strictness of some of the inequalities arising from derivative tests. It suffices to show, for parameters satisfying $0<\delta\ll\theta\ll \mu \ll1\ll T$, that
\be\label{eq.centrialinequality}
M:=\max_{(x,t)\in \overline{\Oa}} \lb\Tilde{u}^\delta(x,t) -\Tilde{v}_\delta(x,t)  -2\mu+ \mu x_1 -\frac{\mu}{T-t}\rb\le 0,
\ee
Indeed, if the above inequality holds then because 
$$
\Tilde{u}^\delta -\Tilde{v}_\delta \ge \Ta^\delta u -\Ta_\delta v\ge u-v \textup{  on }\overline{\Oa},
$$
we also have
\be\label{eq.intermediateinequalitythatfinishestheproof1}
\max_{(x,t)\in \overline{\Oa}}\lb {u}(x,t) -{v}(x,t)  -2\mu+ \mu x_1 -\frac{\mu}{T-t}\rb \le 0.
\ee
On the other hand, by property \ref{condition(ii)} we have for sufficiently small $\theta>0$
\[
\sup_{(x,t)\in \overline{D_T^+}\setminus \overline{\Oa}}\lb {u}(x,t) -{v}(x,t)  -2\mu+ \mu x_1 -\frac{\mu}{T-t}\rb \le 0.
\]
Combining \eqref{eq.intermediateinequalitythatfinishestheproof1}, we obtain
\[
\max_{(x,t)\in \overline{D_T^+}}\lb {u}(x,t) -{v}(x,t)  -2\mu+ \mu x_1 -\frac{\mu}{T-t}\rb \le 0.
\]
Sending $\mu \to 0$ implies that $u\le v$ for $t<T$ . Then sending $T \to \infty$ we conclude that $u\le v$ on the whole $\overline{D_{\infty}^+}$.

To prove \eqref{eq.centrialinequality}, we argue by contradiction, assume that $M>0$.  The maximum is achieved at some point $(x_0,t_0) \in \overline{\Oa}=\overline{B_{1-\theta/2}^+}\times[\theta/2,T]$. By applying \eqref{eq.propertyboundaryorderinginproofofcomparisonprincipoke} and \eqref{eq.ordingofliftedcaloricfunction}, we obtain that for any small $\mu>0$ there are small $\theta_0,\delta_0>0$ such that for all $0<\theta<\theta_0$ and $0<\delta<\delta_0$
\[
\tilde{u}^\delta -\tilde{v}_\delta < 0 \textup{ on } \ppp \Oa.
\]
On the other hand, we have
$$
\Tilde{u}^\delta(x,t) -\Tilde{v}_\delta(x,t)  -2\mu+ \mu x_1 -\frac{\mu}{T-t}\rta -\infty
$$ 
as $t\rta T^-$, and therefore the maximum point
\[
(x_0,t_0)\in \lb{B_{1-\theta/2}^+\cup B_{1-\theta/2}'}\rb\times(\theta/2,T).
\]
Since $\Tilde{u}^\delta-2\mu+\mu x_1-\frac{\mu}{T-t}$ is a strict subsolution to the heat equation and $\Tilde{v}_\delta$ is a supersolution to the heat equation in the interior $\Oa$ then
\[(x_0,t_0)\in  B_{1-\theta/2}'\times(\theta/2,T).\]

Define
\[
U(x, t) := \Tilde{u}^\delta(x,t) - 2\mu +  \mu x_1 - \frac{\mu}{T - t} - M
\quad \text{and} \quad 
V(x, t) := \Tilde{v}_\delta(x,t) .
\]
By previous arguments we have established that \( U \) touches \( V \) from below at 
\[
(x_0, t_0) \in B_{1-\theta/2}' \times (\theta/2, T) .
\]

\vspace{0.2cm}

\noindent \emph{Step 3: Differentiability at touching (maximum) points}

In the previous steps we have regularized $u$ and $v$ to obtain $U$ and $V$, however, the regularization is only in the tangential directions and we need some additional argument to establish differentiability in the normal direction. To address this, we first observe that by the construction, $\restr{U}{\Oa'}$ and $\restr{V}{\Oa'}$ are respectively semi-concave and semi-convex on $\Oa'$, and therefore both of them are $C^{1,1}$ in $(x',t)$ variables at the touching point $(x_0,t_0)$. As $U$ and $V$ differ from $\tilde{u}$ and $\tilde{v}$ by addition of a smooth function near $(x_0,t_0)$, we have
\[
\textup{both }\restr{\tilde{u}}{\Oa'}\ \textup{ and } \  \restr{\tilde{v}}{\Oa'} \textup{ are }C^{1,1}\textup{ at }(x_0,t_0).
\]
We now apply Lemma \ref{l.differentiabilityatc11point} to $\tilde{u}$ and $\tilde{v}$ to obtain that both $\tilde{u}$ and $\tilde{v}$ are differentiable both in space and time at $(x_0,t_0)$. By conditions \ref{condition(a)} and \ref{condition(b)} in \dref{definevissoltopGD}, the derivatives satisfy
\be\label{eq.localstabilityatx0t0bydifferentiability}
\pt_1 \tilde{u}(x_0,t_0) \ge L_*(\gd' \tilde{u}(x_0,t_0)) ,
\ee
and
\be\label{eq.dynamicslopebatx0t0bydifferentiability}
\textup{If }\pt_t \tilde{u}(x_0,t_0)>0 \textup{ then }\pt_1 \tilde{u}(x_0,t_0)\ge0.
\ee
A symmetric result holds for $V=\tilde{v}$. 

As $U$ is a smooth perturbation of $\tilde{u}$  we know that $U$ also differentiable at $(x_0,t_0)$, and by \eqref{eq.localstabilityatx0t0bydifferentiability} and \eqref{eq.dynamicslopebatx0t0bydifferentiability}, we have
\be\label{eq.localstabilityatx0t0bydifferentiabilityforU}
\pt_1 U(x_0,t_0) \ge L_*(\gd' U(x_0,t_0)) +\mu ,
\ee
and
\be\label{eq.dynamicslopebatx0t0bydifferentiabilityforU}
\textup{If }\ \pt_t U(x_0,t_0)\ge -\frac{\mu}{(T-t_0)^2} \ \textup{ then } \ \pt_1 U(x_0,t_0)\ge \mu.
\ee
On the other hand, because $U$ touches $V$ from below at $(x_0,t_0)$, we have the following formulae
\be\label{eq.formulaefortouchingUV}
\pt_1 (U-V)(x_0,t_0)\le0, \ \gd'(U-V)(x_0,t_0)=0\ \textup{ and } \  \pt_t (U-V)(x_0,t_0)=0. 
\ee

\vspace{0.2cm}

\noindent\emph{Step 4: Case analysis on the location of the touching points}

By the decompositions of $\mathcal{O}'$ described Step 1 item \ref{condition(iv)}, one of the following holds:
\begin{center}
\begin{varwidth}{\linewidth}
\begin{enumerate}
    \item[Case 1.] \( (x_0, t_0) \not \in \Ca_-\cup \Ca_+\).
    \item[Case 2.] \( (x_0, t_0) \in \Ca_- \cap \Ca_+ \).
    \item[Case 3.] \((x_0, t_0) \in \Ca_-\Delta \Ca_+\).
\end{enumerate}
\end{varwidth}
\end{center}
We rule out each possibility case by case, finally obtaining a contradiction of the existence of an interior maximum point and establishing \eqref{eq.centrialinequality}. The arguments will follow the sketch presented earlier in the section, now filling in the missing technical details.

Case 1. In this case \( (x_0, t_0) \not \in \Ca_-\cup \Ca_+\). By definition of the contact sets $\Ca_-$ and $\Ca_+$, the functions $\tilde{u}$ and $\tilde{v}$ satisfy Neumann sub and supersolution conditions, respectively, at $(x_0,t_0) \not\in \Ca_-\cup\Ca_+$. By the differentiability of $\tilde{u}$ and $\tilde{v}$ at $(x_0,t_0)$, as discussed in Step 3,
\[
\pt_1 \tilde{u}(x_0,t_0) \ge 0 \ge \pt_1 \tilde{v}(x_0,t_0).
\]
This implies, by \eqref{eq.localstabilityatx0t0bydifferentiabilityforU}, that
\[
\pt_1 U(x_0,t_0) \ge \mu >0,
\]
while, on the other hand, by \eqref{eq.formulaefortouchingUV},
\[
\pt_1 U(x_0,t_0) \le \pt_1 V(x_0,t_0)=\pt_1 \tilde{v}(x_0,t_0) \le 0,
\]
giving a contradiction.

Case 2. Now $(x_0,t_0)\in \Ca_-\cap \Ca_+$. Call
\[
b:=\partial_t U(x_0,t_0) = \partial_tV(x_0, t_0).
\]
Either $b \geq 0$ or $b< 0$. If $b\ge 0$, then by \eqref{eq.dynamicslopebatx0t0bydifferentiabilityforU},
\[
\pt_1 U(x_0,t_0) \ge \mu, \textup{ or equivalently }\pt_1\tilde{u}(x_0,t_0) \ge 0,
\]
contradicting the fact that $(x_0,t_0)\in \Ca_-$. Indeed, for any smooth function $\phi$ crossing $\tilde{u}$ from above at $(x_0,t_0)$, we know that $\phi(\cdot,t_0)$ touches $\tilde{u}(\cdot,t_0)$ from above at $(x_0,t_0)$ and therefore
\[
\pt_1\phi(x_0,t_0) \ge \pt_1 \tilde{u}(x_0,t_0) \ge 0,
\]
which contradicts the definition of $\Ca_-$ in \eqref{eq.definitionforCa}. If $b<0$ then by \eqref{eq.dynamicslopebatx0t0bydifferentiability}, or rather the symmetric supersolution statement for $V=\tilde{v}$,
\[
\pt_1 V(x_0,t_0)=\pt_1 \tilde{v}(x_0,t_0) \le 0,
\]
which contradicts $(x_0,t_0)\in \Ca_+$.

Case 3. In the following we finish the proof by showing that Case 3 can be reduced to Case 1 and Case 2. To see this we focus on the case that the touching point 
$$
(x_0, t_0) \in \Ca_- \cap \left( \Na_+ \cup \Gamma_+ \right)  
$$
as the other case is symmetrical. It suffices to prove that there exists another touching point 
\be\label{eq.claiminproofofcpthatcase3}
(\Tilde{x}_0, t_0) \in \partial \Ca_- \cap \left( B_{1-\theta/2}' \times \{ t_0\} \right) \subset  \Gamma_- ,
\ee
at which \( U \) touches \( V \) from below. Let us first show that the existence of the point in \eqref{eq.claiminproofofcpthatcase3} implies a contradiction. The new touching point $(\Tilde{x}_0, t_0)$ either belongs to \(\Na_+\cup\Gamma_+\) or $\Ca_+$. By Case 1 above $(\Tilde{x}_0, t_0)$ does not belong to \(\Na_+\cup\Gamma_+\). Next suppose that $(\Tilde{x}_0, t_0)\in\Ca_+$. By property \ref{condition(i)} and Lemma \ref{l.opennessofcontactsetwhentimelipschitzx}, $\tilde{u}(\cdot,t_0)$, $\tilde{v}(\cdot,t_0)$ and therefore $U(\cdot,t_0)$, $V(\cdot,t_0)$ are constant on each component of $\Ca_-\cap\{t=t_0\}$ and $\Ca_+\cap\{t=t_0\}$ respectively. Therefore $V(\cdot,t_0)$ is constant in a (tangential) neighborhood of $\Tilde{x}_0$, while in any small (tangential) neighborhood of $(\Tilde{x}_0,t_0)$ there is a $(\hat{x}_0,t_0)\in \Ca_-$ such that 
\[
U(\hat{x}_0,t_0)=U(\Tilde{x}_0,t_0).
\]
This implies that $U$ also touches $V$ from below at $(\hat{x}_0,t_0)\in \Ca_-\cap\Ca_+$. This is Case 2 which we have already shown that it cannot occur.

Now we return to prove the existence of the touching point in \eqref{eq.claiminproofofcpthatcase3}.  This is where  \dref{definevissoltopGD} \ref{condition(c)} comes into play. In fact, we show a slightly stronger result that, if we denote \( \Omega_{t_0} \subset \Ca_- \cap \{t=t_0\} \) as the component (which by Lemma \ref{l.opennessofcontactsetwhentimelipschitzx} is relatively open in $B_{1-\theta/2}' \times \{t_0\}$) of the latter set that contains \( (x_0,t_0) \), then there must be a touching point 
\be\label{eq.claiminproofofcpthatcase3slightlystronger}
(\tilde{x}_0,t_0)\in \pt'\Omega_{t_0} \cap (B_{1-\theta/2}' \times \{t_0\}) \subset \Gamma_-.
\ee
To see this, we argue by contradiction and assume that such a point does not exist. Notice that the relative boundary satisfies the containment
\[
\partial' \Omega_{t_0} \subset \lb\Gamma_- \cap \{t = t_0\}\rb \cup \pt'B_{1-\theta/2}' \times \{t_0\}.
\]
By property \ref{condition(ii)}, \ref{condition(iii)} in Step 1 and the strict perturbations in Step 2, we know that $$
U < V \textup{ on }\pt'B_{1-\theta/2}' \times \{t_0\}. 
$$ 
Therefore, if \( (\Tilde{x}_0, t_0) \) does not exist as in \eqref{eq.claiminproofofcpthatcase3slightlystronger}, then we reduce to the following condition
$$
U < V \textup{ on }\partial' \Omega_{t_0} . 
$$ 
As we have pointed out in the sketch, the main difficulty here is to view $U$ as a spatially 1D function, which is definitely not one in general. To avoid this issue we need to construct a 1D function by using the fact that $\gd' U=0$ on $\Ca_-$.

We first observe that since \( \Ca_- \) is open in \( \Oa'\) and $t_0<T$, there exists a small \( r_0 > 0 \) and a modulus of continuity \( \omega(r)\downarrow 0 \) such that for any \( r_0 > r > 0 \),
\[
 \left\{ x \ ; \ (x, t_0) \in \Omega_{t_0}, \ \dist((x, t_0), \partial' \Omega_{t_0}) > r \right\} \times (t_0 - \omega(r), t_0 + \omega(r)) \csubset \Ca_-.
\]
We write for $r,h>0$ the following spatial domains
\[
\Omega^r:=\left\{ x \ : \ (x, t_0) \in \Omega_{t_0}, \ \dist((x, t_0), \partial' \Omega_{t_0}) > r \right\} \ \textup{ and } \   \Omega_h^r:=\{0<x_1<h\}\times \Omega^r
\]
When \( r_0 > 0 \) is sufficiently small, we still have \( U < V \) on \( \partial' \left( \Omega^r \times\{t_0\} \right) \) by the continuity of \( U- V \). By the strong comparison principle {for the heat equation}, \( U < V \) in  \(\Oa\) and then, {in particular}, we have reduced \eqref{eq.claiminproofofcpthatcase3slightlystronger} to
\be\label{eq.crucialstrictnessforcomparison1}
U < V \textup{ on } \pd \Omega_h^r \times \{t_0\}.
\ee

Second, we show that \( U \) is continuously differentiable on
\[
\Omega^r\times\{t_0\} \subset \Omega_{t_0},
\]
for all small $r>0$. Indeed, for a fixed \( y \in \R^{d-1} \) such that
\[
 (x_0 + y, t_0) \in \Omega^r\times\{t_0\}, 
\]
we have
\[
U(x, t) - U(x + y, t) \equiv 0 \quad \text{for } x \in B_{r}'(x_0) \times (t_0 - \omega(r), t_0 + \omega(r)).
\]
Moreover, \( U(x, t) - U(x + y, t) \) solves the heat equation in the interior 
$$
B_{r}^+(x_0) \times (t_0 - \omega(r), t_0 + \omega(r)). 
$$
If we denote
\[
F_r:=B_{r}^+(x_0) \times (t_0 - \omega(r), t_0 + \omega(r)),
\]
we have by the standard Dirichlet boundary regularity estimate of heat equation (or also interior estimate such as Lemma \ref{l.interiorregularityofheatequation}), that for some $C>0$ independent of $y$
\[
\norm{U(x,t)-U(x+y,t)}_{C_{x,t}^{1,\alpha}(\overline{F_{r/2}})} \le C \norm{U(x,t)-U(x+y,t)}_{L_{x,t}^\infty(F_r)}.
\]
This implies that $U$ is differentiable at $(x_0+y,t_0)$ if and only if it is differentiable at $(x_0,t_0)$ and the latter was already proved in Step 3. Moreover, the derivatives of $U$ at $(x_0+y,t_0)$ is continuous with respect to $y$ because $U$ is continuous on $\Oa'$ by Lemma \ref{l.continuityuoflift}.

Third, we notice that because on \( \Omega_{t_0} \) we have $\gd' U=0$, by \eqref{eq.localstabilityatx0t0bydifferentiabilityforU}
\[
\partial_1 U \ge L_*(\gd' U)+\mu = m +  \mu .
\]
On the other hand, let $\ep>0$ small and consider the domain
\[
K_\ep:=\Omega_\ep^r\times(t_0-\ep,t_0+\ep).
\]
By the continuous differentiability of $U$ on $\Omega^r\times\{t_0\}$ and that $\gd'\restr{U}{\Ca_-} =0$, we have 
\be\label{eq.importantlowerflatnessincpccase3}
V\ge U \ge U(x_0,t_0) + \lb m + \mu\rb x_1 + b (t-t_0) - o(\ep), \textup{ on }\overline{K_\ep},
\ee
where 
$$
b=\pt_tU(x_0,t_0)=\pt_tV(x_0,t_0).
$$ 

Now, we give the construction of the test functions in different cases of the time derivative $b$.  If \( b \ge 0 \), then we obtain a contradiction to the assumption that $(x_0,t_0)\in \Ca_-$ by the same argument in Case 2. If \(  b < 0 \), we claim that the following function
\[
H(x, t) := U(x_0, t_0) + b(t - t_0) +  (m + \mu/2) x_1 - \frac{|b|}{100} |t - t_0|
\]
touches \( V \) from below at \( (x_0, t_0) \) in \( \overline{K_\ep} \) for sufficiently small \( \ep > 0 \). By comparison principle of heat equations, it suffices to show that \( H \le V \) on the parabolic boundary $\pp K_\ep$, as \( H \) is a subsolution to the heat equation. Indeed:
\begin{itemize}
    \item On \( \pt K_\ep \cap \{x_1 = 0\} \), by $\frac{1}{\delta}$-semi-concavity of $U$ in the time variable,
    \[
    H = U(x_0, t_0) + b(t - t_0) - \frac{|b|}{100} |t - t_0| \le U(x_0, t_0) + b(t - t_0) - \frac{1}{2\delta} |t - t_0|^2 \le U(x,t)
    \]
    for \( |t - t_0| \le \ep \), and $\ep>0$ smaller, if necessary, depending on $|b|>0$ and on $\delta>0$. 

    \item On \( K_\ep \cap \{x_1 \ge \ep/2\} \), by \eqref{eq.importantlowerflatnessincpccase3},
    \[
    H \le U(x_0, t_0) + b(t - t_0) + \left( m +  \mu \right) x_1 - \frac{1}{4} \mu \ep \le U(x,t) \le V(x,t)
    \]
    when \( \ep>0 \) is small enough.

    \item When \( t = t_0 - \ep \), by \eqref{eq.importantlowerflatnessincpccase3} again,
    \[
    H = U(x_0, t_0) + b(t - t_0) +  (m +\mu/2) x_1 - \frac{|b|}{100} \ep \le U(x,t) \le V(x,t)
    \]
    when \( \ep>0 \) is small enough. Again we are using $|b|>0$.

    \item On $\pd \Omega_\ep^r\times(t_0-\ep,t_0+\ep)$, by \eqref{eq.crucialstrictnessforcomparison1} and the continuity of $U-V$,
\[
V(x,t)>   U(x_0, t_0)+ \left( m + \mu\right) x_1 + b(t-t_0) \ge H,
\]
for \( \ep > 0 \) sufficiently small.
\end{itemize}
Combining the above discussions, we conclude that the one-dimensional test function \( H \) touches \( V \) from below at \( (x_0, t_0) \) in \( K_\ep \). By the laminar dynamic slope condition \dref{definevissoltopGD} \ref{condition(c)} of $V=\tilde{v}$, 
\[
m \geq \partial_1 H(x_0, t_0)=m +\mu/2,
\]
which is a contradiction.

This concludes the proof of \eqref{eq.claiminproofofcpthatcase3}, which, also, concludes the proof that Case 3 cannot occur, showing \eqref{eq.centrialinequality} and concluding the proof of comparison.

\end{proof}

\section{Special cases exhibiting facets}\label{section.specialcases}

In this section we study special solutions to \eqref{eq.pGDwithmotionlaw} and its steady states that will certainly exhibit facets (or contact set). This justifies that the homogenized equation \eqref{eq.pGDwithmotionlaw} does not reduce to the (trivial) standard Neumann problems. 

First we show, in \sref{strong-subs-bdry-max}, that any minimal supersolution to the elliptic problem \eqref{eq.generalhomogenizedequidef} satisfies a ``boundary maximum principle" on an open subset of $B_1'$. Solutions of Neumann problems
\[ \Delta u = 0 \ \hbox{ in } \ B_1^+ \ \hbox{ with } \ \partial_1 u = 0 \ \hbox{ on } \ B_1', \ \hbox{ and } \ u = g \ \hbox{ on } \ \pd B_1^+\]
do not, in general, have this property. {Specifically the set of $g \in C(\partial B_1^+)$ for which the Neumann solution fails the boundary maximum principle is non-empty and open in the uniform topology.}

In \sref{monotone-shift}, we study the viscosity solutions to \eqref{eq.pGDwithmotionlaw} that satisfy a specific type of time-monotone Dirichlet boundary condition. We show that they converge to an extremal steady state to the elliptic problem \eqref{eq.generalhomogenizedequidef} as time goes to infinity. By the discussions in Section \ref{s.strong-subs-bdry-max}, these steady states are generally not Neumann steady states. This indicates the general existence of facets/contact sets in the parabolic problem \eqref{eq.pGDwithmotionlaw}.

\subsection{Strong subsolutions and boundary maximum principle}\label{s.strong-subs-bdry-max}

In this subsection, we study the strong subsolutions arising in condition \ref{condition(3)} of Theorem \ref{t.characterizeminimalsuperslutiontohomoextremalsolutions}. We show that if $\max f$ is sufficiently large, then the corresponding minimal supersolutions to \eqref{eq.generalhomogenizedequidef} satisfies a \emph{boundary maximum principle} on an open subset of $B_1'$. This property is not generally satisfied by Neumann solutions. This phenomenon indicates that facets exist generally instead of merely in some isolated examples.

\begin{definition}
    For $K\in \R\cup \{+\infty\}$, say that $u$ is a \emph{$K$-strong subsolution} to the gradient degenerate problem \eqref{eq.generalhomogenizedequidef} if $u$ is a subsolution of
    \begin{equation}\label{e.neumann-strongsubprop}
        \Delta u \geq 0 \ \hbox{ in } \ B_1^+ \ \hbox{ with } \ \partial_1u \geq 0 \ \hbox{ on } \ B_1',
    \end{equation}
   and for any smooth function \( \eta(x) = \eta(x_1) \) (depending only on \( x_1 \)) that touches \( u \) from above at \( z \in B_1' \), the following holds:  If there exists a neighborhood \( \Omega \subset \mathbb{R}^d \) containing \( z \), with \( \Omega \cap \overline{B_1^+} \csubset B_1^+ \cup B_1' \), such that  
              \begin{itemize}
                  \item \( \eta \geq u \) in \( \Omega \cap \overline{B_1^+} \),
                  \item \( \eta > u \) on \( \partial \Omega \cap \overline{B_1^+} \),
              \end{itemize}
              then the derivative satisfies \( \partial_1 \eta(z) = \partial_1 \eta(0) \geq K \). We simply call a $+\infty$-strong subsolution a strong subsolution. Define $K$-strong supersolutions symmetrically.
\end{definition}

\begin{definition}
    Say that $u \in \textup{USC}(\overline{B_1^+})$ satisfies the \emph{boundary maximum principle} if for any $U\csubset B_1'$
    \[
    \max_{\overline{U}} u = \max_{\pt' U} u.
    \]
    Define, similarly, the boundary minimum principle for $u \in \textup{LSC}(\overline{B_1^+})$.
\end{definition}

\begin{remark}
    
For a generic boundary data $g\in C(\pt B_1\cap \{x_1\ge0\})$ we can find a continuum of solutions to \eqref{eq.generalhomogenizedequidefequiform} with only the gradient degenerate boundary condition $$\pt_1 u |\gd' u|=0$$ that are homogenization limits. Indeed, we consider the homogenization limits of the semi-linear problem \eqref{eq.homogenizationNeumann} with 
\[
f(u):=|K|\sin(2\pi u).
\]
Then we define for each $K\in \R$ the solution $u_K$ to be the maximal subsolution to \eqref{eq.generalhomogenizedequidefequiform} if $K>0$, or the minimal supersolution if $K<0$. Notice that as $\avg{f}=0$, $u_K$ is exactly the solution to the Neumann boundary condition $\pt_1 u_K=0$ when $K=0$.

\end{remark}

\begin{lemma}[\cite{feldman2024regularitytheorygradientdegenerate}*{Section 8.1}]\label{l.strongsubsolutionequaltomaxp}
    A subsolution to \eref{neumann-strongsubprop} is further a $+\infty$-strong subsolution if and only if it satisfies the boundary maximum principle on $B_1'$.
\end{lemma}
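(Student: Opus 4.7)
The plan is to prove the two implications of the equivalence separately, each by contradiction, using the correspondence between one-variable touching test functions on cylindrical domains and maxima of $u$ on slabs $U \times [0,h)$.

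For the direction ``strong subsolution $\Rightarrow$ boundary maximum principle'', suppose the latter fails on some $U \csubset B_1'$, so there is $z_0 \in U$ with $u(z_0) = M := \max_{\overline{U}} u > m := \max_{\partial' U} u$ (the maximum is attained by upper semicontinuity and compactness). I would exhibit a finite-derivative 1-variable touching test function in order to contradict $u$ being a $+\infty$-strong subsolution: take $\Omega := V \times (-h, h)$ where $V$ is a small neighborhood of $\overline{U}$ in $B_1'$ and $\eta(x_1) := M + C x_1$, with $h > 0$ small and $C > 0$ large to be chosen. The triple $(z_0, \eta, \Omega)$ will then satisfy $\eta(0) = u(z_0)$, $\eta > u$ on $\partial\Omega \cap \overline{B_1^+}$, $\eta \ge u$ on $\Omega \cap \overline{B_1^+}$, and $\partial_1\eta(0) = C < +\infty$, giving the contradiction.

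For the reverse direction, assume the boundary max principle holds and suppose a 1-variable touching triple $(z,\eta,\Omega)$ existed. Set $U := \Omega \cap B_1'$: this is open in $B_1'$, contains $z$, and is compactly contained in $B_1'$ because $\Omega \cap \overline{B_1^+} \csubset B_1^+ \cup B_1'$. On $\overline{U}$ we have $u \le \eta(0) = u(z)$ with equality at $z \in U$, while on $\partial'U \subset \partial\Omega \cap B_1'$ the strict inequality $\eta > u$ gives $u < u(z)$. This directly contradicts the boundary max principle on $U$, so no such touching function can exist, which is exactly the $+\infty$-strong subsolution property.

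The substantive step is verifying the four inequalities in the first direction; I expect this to be the main obstacle. Since $\eta$ is affine hence harmonic, and $u$ satisfies $\Delta u \ge 0$, the difference $\eta - u$ is superharmonic on $V \times (0,h)$, so the pointwise ordering $\eta \ge u$ on $\Omega \cap \overline{B_1^+}$ reduces to a check on the boundary of $\overline V \times [0,h]$: on $\overline V \times \{0\}$ from the choice of $M$ (after possibly shrinking $V$ so that $u \le M$ on $\overline V$), on the top face $\overline V \times \{h\}$ by enlarging $C$ using boundedness of $u$, and on the lateral face $\partial V \times [0,h]$ using upper semicontinuity of $u$ around $\partial' U \times \{0\}$ where $u \le m < M$, which lets us shrink $h$ to get a uniform gap. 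This USC-plus-subharmonicity step is where care is required; no other nontrivial difficulties are foreseen.
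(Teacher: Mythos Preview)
Your proof is correct; both directions are handled by the natural contradiction arguments. The paper does not give its own proof of this lemma but simply cites \cite{feldman2024regularitytheorygradientdegenerate}*{Section 8.1}, so there is nothing to compare against here.

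A couple of small clean-ups: in the first direction your description of $V$ as ``a small neighborhood of $\overline{U}$'' followed by ``possibly shrinking $V$ so that $u\le M$ on $\overline V$'' is inconsistent, since enlarging beyond $U$ may raise the maximum of $u$. Simply take $V=U$: on the bottom $U\times\{0\}$ you have $u\le M=\eta(0)$ by definition of $M$; on the lateral piece $\partial'U\times[0,h]$ upper semicontinuity and compactness of $\partial'U$ give $u<M\le\eta$ for $h$ small; on the top $\overline U\times\{h\}$ boundedness from above (USC on a compact set) lets you choose $C$ large. The interior ordering then follows from the viscosity comparison principle between the subharmonic $u$ and the harmonic $\eta$.

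In the second direction you should also note that $\partial'U\neq\emptyset$ (since $U$ is a nonempty proper open subset of the connected set $B_1'$), so the boundary maximum principle is genuinely violated. Otherwise the argument is complete.
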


Let us discuss the implications of the $K$-strong subsolution property for $K<+\infty$.

\begin{lemma}\label{l.lstrongandthelipschitzconstant}
    Suppose a $K$-strong subsolution $u$ is Lipschitz on $B_1^+$ with Lipschitz constant $0<S< K$. Then $u$ satisfies the boundary maximum principle on $B_1'$. 
\end{lemma}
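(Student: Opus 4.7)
The plan is to invoke \lref{strongsubsolutionequaltomaxp}, which reduces the boundary maximum principle to showing that $u$ is actually a $+\infty$-strong subsolution. I would argue this by contradiction: suppose the boundary maximum principle fails, so that there exists $U \csubset B_1'$ with $\max_{\overline{U}} u > \max_{\partial' U} u$. Pick an interior maximizer $\tilde z \in U$ (necessarily off $\partial' U$) and set $\mu := u(\tilde z) - \max_{\partial' U} u > 0$. The goal is to produce an explicit one-variable test function whose slope lies strictly between $S$ and $K$, violating the $K$-strong property.

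The construction: choose any $c \in (S, K)$, which is possible by the hypothesis $S < K$, and define
\[
\eta(x_1) := u(\tilde z) + c x_1,
\]
together with the open set $\Omega := U \times (-1, h) \subset \R^d$ for $h > 0$ chosen small enough that $\Omega \cap \overline{B_1^+} = U \times [0, h)$ is compactly contained in $B_1^+ \cup B_1'$.

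The verification proceeds via the $S$-Lipschitz inequality $u(x_1, x') \leq u(0, x') + S x_1$, used in two places. On the top face $\overline{U} \times \{h\}$, it yields $u(h,x') \leq u(\tilde z) + Sh < u(\tilde z) + ch = \eta(h)$, strict because $c > S$. On the lateral face $\partial' U \times [0, h]$, it yields $u(x_1, x') \leq \max_{\partial' U} u + S x_1 = u(\tilde z) - \mu + S x_1$, so that $\eta - u \geq (c - S)x_1 + \mu \geq \mu > 0$. The same bound also shows $\eta \geq u$ throughout $\Omega \cap \overline{B_1^+}$ with equality precisely at $(0, \tilde z')$. Hence $\eta$ satisfies the touching hypotheses of the $K$-strong subsolution property at $\tilde z$, which forces $\partial_1 \eta(0) \geq K$. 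But $\partial_1 \eta(0) = c < K$, the desired contradiction.

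There is no real obstacle here once the right object is identified: the role of the gap $S < K$ is exactly to provide slack for a linear test function that is steep enough to dominate $u$ on the lateral and top boundaries of the slab $\Omega$ (using the $\mu$-gap and the Lipschitz bound) yet too flat to satisfy the $K$-strong slope condition. The only mild subtlety is ensuring that the interior maximizer $\tilde z$ does not lie on $\partial' U$, which is immediate from the failure of the boundary maximum principle.
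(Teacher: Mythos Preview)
Your proof is correct and essentially the same as the paper's: both construct a linear one-variable test function with slope in $(S,K)$ and use the Lipschitz bound to verify the touching conditions, contradicting the $K$-strong property. Your invocation of \lref{strongsubsolutionequaltomaxp} is superfluous, since your contradiction argument establishes the boundary maximum principle directly; the paper instead routes through the $+\infty$-strong characterization (starting from an arbitrary touching $\eta$ to extract the point $z$ and the strict boundary ordering), but the core construction is identical.
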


\begin{proof}
It suffices to show that $u$ is a $+\infty$-strong subsolution by Lemma \ref{l.strongsubsolutionequaltomaxp}.

Suppose there is a smooth function of the form $\eta(x)\equiv \eta(x_1)$ that touches $u$ from above at $z\in B_1'$ and for some open domain $\Omega\csubset \R^d$ containing $z$ such that $\Omega\cap \overline{B_1^+} \csubset B_1^+\cup B_1'$, $\eta \ge u$ in $\Omega\cap \overline{B_1^+}$, $\eta>u$ on $\pt \Omega \cap \overline{B_1^+}$.

On the other hand, by the Lipschitz continuity, we have
\[
 u(x_1,x')\le S x_1 + u(0,x') , \ \textup{ for }(x_1,x')\in \overline{\Omega'}\times [0,\delta],
\]
for some small $\delta>0$. Because $\eta>u$ on $\pt \Omega \cap \overline{B_1^+}$, we know that 
\[
u(0,x')<   \eta(0) = u(z) , \ \textup{ for }(0,x')\in \pt'{\Omega'}.
\]
This implies 
\[
u(x_1,x')< (S+\ep)x_1 + u(z) , \ \textup{ for }(x_1,x')\in \pt\lb{\Omega'}\times (-1,\delta)\rb\cap \overline{B_1^+}, 
\]
for any small $\ep>0$ such that $S+\ep<K$, contradicting the $K$-strong subsolution property of $u$.

\end{proof}

\begin{corollary}\label{cor.boudnarymaximumprincipleasmaxfgrows}
There is a finite nondecreasing function $\ell:[0,1)\rta [0,+\infty)$ with $\ell(1^-)=+\infty$ so that for any $r\in (0,1)$ if $\max f > \ell(r)$ then any minimal supersolution $u$ to \eqref{eq.generalhomogenizedequidef} with $\osc_{B_1^+} u \le 1$ satisfies the boundary maximum principle on $B_r'$.
\end{corollary}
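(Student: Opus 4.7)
The plan is to combine Theorem \ref{t.characterizeminimalsuperslutiontohomoextremalsolutions}, which identifies minimal supersolutions as $(\max f)$-strong subsolutions, with the criterion in Lemma \ref{l.lstrongandthelipschitzconstant}, applied after establishing a boundary Lipschitz estimate. First I would invoke condition \ref{condition(3)} of Theorem \ref{t.characterizeminimalsuperslutiontohomoextremalsolutions}: it states precisely that any minimal supersolution $u$ to \eqref{eq.generalhomogenizedequidef} is a $(\max f)$-strong subsolution in the sense of Section \ref{s.strong-subs-bdry-max}. Moreover conditions \ref{condition(1)}--\ref{condition(2)} together with the elliptic analogue of Lemma \ref{l.extremesteadystateslimitcondition1} (minimal supersolutions satisfy $\pt_1 u\ge \avg{f}=0$ in the viscosity sense) yield $0\le \pt_1 u\le \max f$ on $B_1'$.

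Next, for a fixed $r\in (0,1)$, I would derive a boundary Lipschitz estimate on $u|_{B_r^+}$. Splitting $u=u_D+u_N$, where $u_D$ is the Neumann-harmonic extension of the Dirichlet trace on $\pt B_1\cap \{x_1\ge 0\}$ (with zero Neumann data on $B_1'$) and $u_N=u-u_D$ carries the Neumann contribution (with zero Dirichlet data), the component $u_D$ satisfies $\|\nabla u_D\|_{L^\infty(B_r^+)}\le C(r)\osc_{B_1^+}u$ by Lemma \ref{l.usefulC1alphaestimate}, while $\pt_1 u_N$ is harmonic in $B_1^+$ with boundary bound $\pt_1 u_N\le \max f$ on $B_1'$ and $|\pt_1 u_N|\le C(r)\osc u$ on $\pt B_{(1+r)/2}\cap \{x_1>0\}$, so that the maximum principle gives $\|\pt_1 u\|_{L^\infty(B_r^+)}\le \max f+C(r)\osc u$. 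Tangential derivatives are controlled analogously, producing an \emph{additive} (not multiplicative) Lipschitz bound in $\max f$.

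Finally, I would choose $\ell(r)$ so that this Lipschitz bound becomes strictly smaller than $\max f$ when $\max f>\ell(r)$, and then apply a local version of Lemma \ref{l.lstrongandthelipschitzconstant} (with $B_r^+$ in place of $B_1^+$; the proof adapts verbatim since the $K$-strong subsolution property is a local testing condition) to conclude the boundary maximum principle on $B_r'$. The monotonicity of $\ell$ and the divergence $\ell(1^-)=+\infty$ follow from the behavior of the constant $C(r)$ in the boundary regularity estimate. The hardest step is ensuring the bound is \emph{strictly} less than $\max f$: the additive estimate above gives $\max f+C(r)$, which is not $<\max f$ by itself, so a sharper argument is required. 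Overcoming this calls for a Hopf-type strict improvement for $\pt_1 u$ in $B_r^+$, exploiting that $\pt_1 u=\max f$ can only be attained on a set of $B_1'$ whose size is controlled by the flux identity $\int_{B_1'}\pt_1 u=\int_{\pt B_1\cap \{x_1\ge 0\}}\pt_\nu u$ together with $\osc u\le 1$; the resulting quantitative improvement $\|\pt_1 u\|_{L^\infty(B_r^+)}\le (1-\eta(r,\max f))\max f + C(r)$ with $\eta(r,\max f)>0$ for $\max f$ large relative to $C(r)$ then yields a finite threshold $\ell(r)$.
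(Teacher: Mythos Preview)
Your overall strategy is the same as the paper's: identify the minimal supersolution as a $(\max f)$-strong subsolution via Theorem~\ref{t.characterizeminimalsuperslutiontohomoextremalsolutions}, and then invoke Lemma~\ref{l.lstrongandthelipschitzconstant} once you have a Lipschitz bound strictly smaller than $\max f$. The issue is entirely in the Lipschitz step, and you have correctly diagnosed it yourself: your splitting $u=u_D+u_N$ only yields a bound of the form $\max f + C(r)$, which can never be strictly below $\max f$. Your proposed repair---a Hopf-type strict improvement together with a flux identity forcing the coincidence set $\{\pt_1 u=\max f\}$ to be small---is speculative, and even if it could be made rigorous it would be far more delicate than what is actually needed.

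The paper avoids the whole difficulty by invoking an \emph{a priori} Lipschitz estimate for minimal supersolutions of \eqref{eq.generalhomogenizedequidef} that is \emph{independent of} $\max f$: there is $C=C(d,r)$ such that any minimal supersolution with $\osc_{B_1^+}u\le 1$ has Lipschitz constant at most $C(d,r)$ on $B_r^+\cup B_r'$ (this is Lemma~3.1 in \cite{feldman2024regularitytheorygradientdegenerate}). With this in hand one simply sets $\ell(r):=C(d,r)+1$ and applies Lemma~\ref{l.lstrongandthelipschitzconstant} directly. The reason such an $f$-independent estimate is available is the gradient-degenerate structure of the boundary condition: on the contact set one has $\nabla'u=0$, while off the contact set one has $\pt_1 u=\avg{f}=0$, so the potentially large Neumann data never interacts with nonzero tangential slope. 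Your decomposition discards this structure by treating $\pt_1 u$ as a generic function bounded by $\max f$, which is why the resulting estimate is too weak.
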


\begin{proof}
    By Lemma 3.1 in \cite{feldman2024regularitytheorygradientdegenerate}, any minimal supersolution $u$ of \eqref{eq.generalhomogenizedequidef} with $\osc_{B_1^+} u \leq 1$ is Lipschitz in $B_r^+\cup B_r'$ with Lipschitz constant at most $C(d,r)$ independent of $\max f$. Define $\ell(r):= C(d,r) + 1$. Then, assuming $\max f > \ell(r)$ as in the statement, then $u$ is a $(\max f)$-strong subsolution in $\overline{B_r^+}$ having Lipschitz constant strictly smaller than $\ell(r)<\max f$. Then \lref{lstrongandthelipschitzconstant} implies that $u$ satisfies the boundary maximum principle.
\end{proof}

\begin{proof}[Proof of Proposition \ref{prop.generalexistenceoffacets}]

 By Remark \ref{r.normalizeaveragezero} we focus on the case $\avg{f}=0$. We argue with $\osc_{\pd B_1^+} g \le \ep$ for $\ep>0$ small and $\max f>0$ fixed. Notice that we can replace solutions $u$ by $u/\ep$ so that we can just focus on the case $\osc_{\pd B_1^+} g < 2$ and $\max f/\ep $ being large.

For each $g\in C(\pd B_1^+)$ define $v^g$ to be the unique solution of
    \[
    \bca
   \Delta v^g =0 & \textup{ in }B_1^+\\
   \pt_1 v^g = 0 & \textup{ on }B_1'\\
   v^g= g  & \textup{ on }\pd B_1^+.
    \eca
    \]
    We start with an example where the boundary maximum principle fails for $v^g$. Let $h(x) : = x_1$. Since $h(x)=x_1$ is a strict subsolution of the above Neumann problem $v^h(x) > x_1$ in $B_1^+$. In particular $v^h$ attains its positive maximum value on $B_1^+$ at some $x' \in B_1'$ with $|x'| < 1$. In particular $v^h$ fails to satisfy the boundary maximum principle in $B_r'$ for $|x'| < r_0 < 1$ and $r_0$ sufficiently close to $1$.
    
    Define $\mathcal{F}\subset C(\pd B_1^+)\cap\{\osc_{\pd B_1^+} g < 2\}$ to consist of all data $g$ such that $v^g$ does not satisfy the boundary maximum principle on $B_{r_0}'$ with $r_0$ defined as above. Notice that $\mathcal{F}$ is open, since failing the boundary maximum principle on some specific $U\csubset B_1'$ is an open condition in the uniform topology, and $g \mapsto v^g$ is continuous in uniform topology on $C(\pd B_1^+)$ by comparison principle. The set $\mathcal{F}$ is nonempty we we have already established that $v^h \in \mathcal{F}$, note that $0 \leq v^h \leq 1$ by comparison principle so that the oscillation condition is satisfied.

    By Corollary \ref{cor.boudnarymaximumprincipleasmaxfgrows} the minimal supersolutions $u_g$ of \eqref{eq.generalhomogenizedequidef} with boundary data $g\in \mathcal{F}$ satisfy the boundary maximum principle on $B_{r_0}'$ when $\max f/\ep$ is sufficiently large. If $\Ca_{u_g}=\emptyset$, then, by Lemma \ref{l.partitionofB1prime}, $\pt_1 u_g =0$ on $B_1'$ and hence by uniqueness $u_g=v^g$, violating the assumption that $v^g$ does not satisfy the boundary maximum principle on $B_{r_0}'$.
\end{proof}

\subsection{Extremal steady states and parabolic flows with monotone shift}\label{s.monotone-shift}

In this subsection, we discuss the steady states of the homogenized flow having a specific monotone Dirichlet boundary condition.

\begin{definition}\label{def.monotoneshift}
On the parabolic boundary $\ppp D_{\infty}^+$, we call a continuous function $g$ to be a \emph{monotone shift} if $g$ is monotone in time and
\[
\pi_g:=\lim_{T\rta\infty} g(\cdot,T) 
\]
exists in the sense of uniform convergence, and either
\be\label{eq.upshift}
\min_{x\in \pd B_1^+} \pi_g(x) - \max_{x\in \overline{B_1^+}} g(x,0)  >0,
\ee
or
\be\label{eq.downshift}
\min_{x\in \overline{B_1^+}} g(x,0)- \max_{x\in \pd B_1^+} \pi_g(x)   >0.
\ee
We call $g$ an up-shift if in the case \eqref{eq.upshift} and a down-shift if in the case \eqref{eq.downshift}.
\end{definition}

We show that the long-time limit of solutions to the parabolic flow \eqref{eq.pGDwithmotionlaw} under monotone shift boundary data is an extremal solution of the elliptic equation \eqref{eq.generalhomogenizedequidef}.

\begin{theorem}\label{t.relationbetweenextremsteadystatesandparabolicflow}
      Let $g \in C(\ppp D_{\infty}^+)$ a monotone up-shift (resp. down-shift) as in Definition \ref{def.monotoneshift}. Let $u$ be the solution to \eqref{eq.pGDwithmotionlaw} with boundary data $g$ on $\ppp D_{\infty}^+$. Then $u(\cdot,t)$ converges uniformly on $\overline{B_1^+}$ to $u_0$ the minimal supersolution of \eqref{eq.generalhomogenizedequidef} with boundary condition $u_0=\pi_g$ on $\pd B_1^+$ (resp. maximal subsolution).
\end{theorem}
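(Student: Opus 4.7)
The plan is to prove the up-shift case; the down-shift case follows by a symmetric argument, with $\min f$ replacing $\max f$ and maximal subsolution replacing minimal supersolution.  I would first show that $u(\cdot,t)$ is non-decreasing in $t$ by applying the comparison principle \tref{comparisonprinciple} to $u(\cdot,\cdot)$ and $u(\cdot,\cdot+h)$ for $h>0$: the lateral ordering is immediate since $g$ is an up-shift, while the initial-slice ordering $u(\cdot,h)\ge g(\cdot,0)$ uses the strict separation $\min\pi_g > \max g(\cdot,0)$ to compare with an auxiliary parabolic subsolution. Combining monotonicity with the uniform $L^\infty$ bound (\lref{uniformboundednessforuepparabolcproblem}), the pointwise limit $u_\infty(x):=\lim_{t\to\infty}u(x,t)$ exists; uniform H\"older regularity (\lref{usefulholderestimatepara}) and Arzel\`a--Ascoli upgrade this to uniform convergence, so $u_\infty\in C(\overline{B_1^+})$ is harmonic in $B_1^+$ with $u_\infty=\pi_g$ on $\pd B_1^+$.

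Next I would apply standard half-relaxed limit arguments to $v_n(x,t):=u(x,t+n)$: by monotonicity both half-relaxed limits coincide with the time-independent function $u_\infty$, which is therefore a viscosity solution of the parabolic problem \eqref{eq.pGDwithmotionlaw}.  For a time-independent function, condition \ref{condition(a)} of \dref{definevissoltopGD} reduces to the elliptic sub/super-solution condition, so by \rref{equivalenceofthreeviscosityinequalities} conditions \ref{condition(1)} and \ref{condition(2)} of \tref{characterizeminimalsuperslutiontohomoextremalsolutions} are satisfied.  It then remains to verify the strong subsolution condition \ref{condition(3)} in order to identify $u_\infty=u_0$.

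The core step is to establish \ref{condition(3)} by exploiting the dynamic slope condition \ref{condition(c)} of \dref{definevissoltopGD}.  Arguing by contradiction, let $\eta(x_1)$ be a one-variable smooth function touching $u_\infty$ from above at $z\in B_1'$ with $\eta>u_\infty$ on $\pt\Omega\cap\overline{B_1^+}$ and $\pt_1\eta(0)<\max f-\alpha$ for some $\alpha>0$.  For $T_n\to\infty$, set $\delta_n:=\min_{\overline{\Omega}}(\eta-u(\cdot,T_n))\ge 0$, attained at some $y_n\to z$ with $\delta_n\to 0$, and construct the parabolic test function
\[
\phi_n(x_1,t)\;=\;\eta(x_1)-\delta_n+c(t-T_n)+A(T_n-t)^2+\xi_n,
\]
with $c,A>0$ and $\xi_n\ge 0$ chosen so that $\phi_n$ crosses $u$ from above at some $(y_n^*,s_n^*)$ in the cylinder $\Omega\times(T_n-c/A,T_n]$.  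The monotonicity of $u$ in $t$ is what keeps $\phi_n\ge u$ throughout the cylinder despite the positive temporal slope $c$, while the strict lateral separation gives $\phi_n>u$ on $\pt^+\Omega\times\{s_n^*\}$.  Since $\phi_n$ depends only on $(x_1,t)$, $\gd'\phi_n\equiv 0$.  Granted the positivity $\pt_t\phi_n(y_n^*,s_n^*)=c-2A(T_n-s_n^*)>0$, condition \ref{condition(c)} at $(y_n^*,s_n^*)$ yields $\pt_1\eta(y_n^*)\ge\max f$, and passing $n\to\infty$ gives $\pt_1\eta(0)\ge\max f$, the desired contradiction.  The characterization \tref{characterizeminimalsuperslutiontohomoextremalsolutions} then identifies $u_\infty=u_0$.

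The hardest part is verifying the positivity $\pt_t\phi_n(y_n^*,s_n^*)>0$.  The monotone-in-time dynamics of $u$ tend to place the touching at $s_n^*\approx T_n-c/(2A)$, where $\pt_t\phi_n$ vanishes.  To force $s_n^*$ closer to $T_n$, one selects $T_n$ along a subsequence where $u$ has sufficient local time-growth near $z$ -- such a subsequence must exist because the total increment $u_\infty-u(\cdot,0)$ is finite yet positive, so the set of times at which $\pt_t u$ is nonnegligible near $z$ cannot be empty.  A more refined time-profile (for example a concave modulus in place of the quadratic in $(T_n-t)$) may give a cleaner construction.  This interplay between the monotone, rate-independent evolution of the boundary and the dynamic slope condition \ref{condition(c)} is the essential technical ingredient, and it is this step that demands the most care.
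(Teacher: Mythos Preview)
Your monotonicity claim has a genuine gap. To apply \tref{comparisonprinciple} between $u(\cdot,\cdot)$ and $u(\cdot,\cdot+h)$ you need $u(\cdot,h)\ge g(\cdot,0)$ on $\overline{B_1^+}$, but the up-shift condition $\min\pi_g>\max g(\cdot,0)$ concerns only the \emph{asymptotic} lateral data versus the initial slice; it says nothing about $u$ at small times. For generic $g(\cdot,0)\in C(\overline{B_1^+})$ (e.g.\ one with an interior maximum, so that $\Delta g(\cdot,0)<0$ there) the heat flow makes $u$ \emph{decrease} at such points for small $t$, and monotonicity of $u$ in $t$ simply fails. Your ``auxiliary parabolic subsolution'' is not specified, and I do not see a candidate: the constant $\max g(\cdot,0)$ is not ordered with $g$ on the lateral boundary, and $g(\cdot,0)$ extended constantly in $t$ is not a subsolution of the heat equation.

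The second gap is the one you flag yourself. Even granting monotonicity of $u$, your quadratic-in-time test function $\phi_n$ tends to cross $u$ near the vertex $t\approx T_n-c/(2A)$ where $\pt_t\phi_n=0$, so condition \ref{condition(c)} is not triggered. The proposed fix (select $T_n$ along a ``time-growth'' subsequence, or change the time profile) is a sketch, not an argument.

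The paper's route avoids both problems by never invoking monotonicity or the pointwise condition \ref{condition(c)}; instead it uses \emph{stationary} barriers together with the full comparison principle \tref{comparisonprinciple}. First, the elliptic minimal supersolution $\tilde v$ with lateral data $\pi_g$ is itself a stationary solution of \eqref{eq.pGDwithmotionlaw} (conditions \ref{condition(b)}, \ref{condition(c)} are vacuous for time-independent functions). The up-shift hypothesis gives $\tilde v\ge\min\pi_g>\max g(\cdot,0)\ge g(\cdot,0)$ on the initial slice, and $\pi_g\ge g$ laterally by monotonicity of $g$, so \tref{comparisonprinciple} yields $\tilde v\ge u$ for \emph{all} $t$. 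Hence $v^\ast\le\tilde v\le v_\ast$, forcing $v^\ast=v_\ast=\tilde v$ and uniform convergence. For condition \ref{condition(3)} the paper again builds a stationary barrier $p(x)=(\max f)x_1+c$ and applies \tref{comparisonprinciple} on a thin cylinder $\Omega_\eta\times(0,T+\eta)$ reaching back to $t=0$; the up-shift condition is precisely what supplies the initial ordering $p>g(\cdot,0)$. The lesson is that in this rate-independent setting the dynamic slope condition \ref{condition(c)} is difficult to trigger with an explicit time-dependent test function; the workable device is to compare with time-independent barriers on long cylinders, which is exactly the situation the up-shift hypothesis is tailored for.
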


\begin{proof}[Proof of Theorem \ref{t.relationbetweenextremsteadystatesandparabolicflow}]
The case for down-shift is symmetrical, so it suffices to consider the case that $g$ is an up-shift.

By comparison principle, we know that as $g$ is a bounded function on $\ppp D_{\infty}^+$, the functions $u(\cdot,t)$ are uniformly bounded as $t\rta\infty$. We define for $(x,t)\in \overline{B_1^+}\times[-1,1]$ 
\[
v^\ast(x,t)\equiv v^\ast(x):= {\limsup_{T\rta\infty}}^\ast u(x,t+T)\quad \textup{and} \quad
v_\ast(x,t)\equiv v_\ast(x):=\underset{T\rta\infty}{{\liminf}_\ast} u(x,t+T).
\]
Then $v^\ast$ is upper semicontinuous and $v_\ast$ is lower semicontinuous with $v^\ast \ge v_\ast$ in $\overline{B_1^+}$. Furthermore, by Lemma \ref{l.boundaryorderingg}, $v^\ast =v_\ast=\pi_g$ on $\pd B_1^+$. 

Let $\psi(x,t)$ be a smooth function that touches $v^\ast$ from above at $t=0$ and $x_0\in B_1^+\cup B_1'$, then because $v^\ast$ is constant in time, $\pt_t \psi(x_0,0)=0$. If $\Delta\psi(x_0,0)<0$, then in a small neighborhood of $(x_0,0)$, we have $\Delta\psi < \pt_t \psi$ making $\psi$ a strict supersolution of the heat equation. By Lemma \ref{l.stabilityoftouching}, we can find a sequence of large $T\rta\infty$ and $C_T=o_T(1)$ such that $\psi(\cdot,t-T)+C_T$ touches $u$ from above at $t_T\in[T-1,T+1]$ and some $x_T\in B_1^+\times B_1'$ such that $|x_T-x_0|+|t_T-T|=o_T(1)$. By the strict supersolution property of $\psi$, we know that $x_T\in B_1'$, and hence we have the condition for all large $T$,
\[
\pt_1 \psi(x_T,t_T-T) \ge L_\ast(\gd' \psi(x_T,t_T-T)),
\]
which implies that $v^\ast$ is a subsolution to the equation \eqref{eq.generalhomogenizedequidef}. Similarly, we can show that $v_\ast$ is a supersolution to \eqref{eq.generalhomogenizedequidef}. 

To show that $v_\ast=v^\ast$ is a minimal supersolution to \eqref{eq.generalhomogenizedequidef}, it suffices to show the condition \ref{condition(3)} in Theorem \ref{t.characterizeminimalsuperslutiontohomoextremalsolutions}. By Theorem \ref{t.characterizeminimalsuperslutiontohomoextremalsolutions}, we know that 
\[
v_\ast \ge \Tilde{v} \ge \min_{x\in \pd B_1^+} \pi_g(x)>\max_{x\in \overline{B_1^+}} g(x,0),
\]
where $\Tilde{v}$ is the minimal supersolution to \eqref{eq.generalhomogenizedequidef} with boundary data $\pi_g$. Moreover, for every fixed $x\in \pd B_1^+$ we have by monotonicity in time
\[
\pi_g(x) \ge g(x,t), t\ge0.
\]
It is not difficult to check that $\Tilde{v}$ is itself a stationary solution to the parabolic flow \eqref{eq.pGDwithmotionlaw}. According to the comparison principle \ref{t.comparisonprinciple}, this shows that $\Tilde{v} \ge u$ on the whole space-time domain $\overline{D_{\infty}^+}$. In particular, $$v^\ast \ge v_\ast\ge \Tilde{v} \ge u.$$ 
Now let $\phi(x_1,t)$ be a spatially 1-variable smooth function that touches $v^\ast$ from above at $(x_0,0)\in  B_1'\times\{0\}$ in $\overline{B_1^+}\times[-1,1]$ that satisfies for some open spatial domain $\Omega\csubset \R^d$ containing $x_0$ such that $\Omega\cap \overline{B_1^+}\csubset B_1^+\cup B_1'$ we have $\phi \ge v^\ast$ in ${\Omega}\cap \overline{B_1^+}$ and $\phi> v^\ast +\delta$ on $\lb\pt\Omega\rb\cap \overline{B_1^+}$ for some small $\delta>0$. In fact, by the inequality $v^\ast \ge u$ we know that 
\[
\min_{t\in[-1,1]}\phi(\cdot,t)> u +\delta, \ \textup{on} \lb\pt\Omega\cap \overline{B_1^+}\rb\times[0,\infty).
\]
We finish the proof by showing that $\pt_1 \phi(x_0,0)\ge \max f$. We argue by contradiction and assume that $\pt_1 \phi(x_0,0)< \max f$. To that end, we replace $\phi$ and $\Omega$ by 
$$\phi(x_1,t)+\mu x_1 +\mu t^2 -\frac{1}{\mu} x_1^2 \quad\textup{and}\quad\Omega\cap\{x_1\le \mu^2/4\}$$
for some small $\mu>0$, which ensures that $\phi$ is a supersolution of the heat equation in $\Omega\times[-t_\mu,t_\mu]$ for some small $t_\mu>0$ depending on $\mu$ and all the previous properties are preserved. Again by Lemma \ref{l.stabilityoftouching}, we can find a sequence of large $T\rta\infty$ and constants $C_T=o_T(1)$ such that $\phi_T:=\phi(\cdot,t-T)+C_T$ touches $u$ from above at $$(x_T,s_T)\in B_1'\times(T-t_\mu/2,T+t_\mu/2)\textup{ in }\lb\overline{\Omega}\cap \overline{B_1^+}\rb\times [T-t_\mu/2,T+t_\mu/2]$$
with $|x_T-x_0|+|s_T-T|=o_T(1)$. Moreover, we have when $T>0$ is large, 
\[
\phi_T(\cdot,T)>u+\delta/2, \ \textup{ on }\lb\pt\Omega\cap \overline{B_1^+}\rb\times[0,\infty),
\]
and by \eqref{eq.upshift} 
\[
\phi_T(\cdot,T)>u, \ \textup{ on }\lb\overline{\Omega}\cap\overline{B_1^+}\rb\times\{0\}.
\]
For $\eta>0$ small, we know that 
\[
p(x):=(\max f) x_1 + \phi_T(0,T)
\]
is a stationary solution to \eqref{eq.pGDwithmotionlaw}, $p(x) \ge \phi_T(x,T)$ for $x\in \overline{\Omega}\cap \{x_1\le \eta
\}=:\overline{\Omega}_\eta$ with strict inequality when $x_1>0$, and $p(x)$ also touches $u(\cdot,T)$ from above at $x_T$. However, this is a contradiction to the comparison principle \ref{t.comparisonprinciple} because by the construction $p>u$ on the parabolic boundary $\ppp \lb \Omega_\eta\cap B_1^+ \rb\times(0,T+\eta)$ for some sufficiently small $\eta>0$.

\end{proof}

\appendix

\section{Some regularity estimates}\label{appendix.someestimates}

Here we present several lemmas that are frequently used. 
\begin{lemma}\label{l.usefulholderestimate}
If a bounded function $u$ is harmonic in $B_1^+$, and in the sense of viscosity 
\[
\pt_1 u^\ast \le M\quad \textup{and} \quad  \pt_1 u_\ast \ge -M, \ \textup{ on }B_1'
\]
for some constant $M>0$, then \(u\in C_{\textup{loc}}^\alpha(B_1^+\cup B_1')\), and there is a constant $C>0$ independent of $u$ such that
\[
\norm{u}_{C^\alpha(B_{1/2}^+)} \le C \norm{u}_{L^\infty(B_1^+)}.
\]
Furthermore, if $u^*=u_*=0$ on $\pt B_1\cap\{x_1\ge0\}$ then $u\in C^\alpha(B_1^+)$ and
\[
\norm{u}_{C^\alpha(B_{1}^+)} \le C \norm{u}_{L^\infty(B_1^+)}.
\]
\end{lemma}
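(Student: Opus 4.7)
The plan is to extend $u$ to all of $B_1$ by even reflection across $B_1'$ and convert the one-sided viscosity bounds on $\pt_1 u$ into a bounded measure-valued distributional Laplacian for the extension, to which classical regularity theory applies.

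Define $\tilde u(x_1, x') := u(|x_1|, x')$ on $B_1$, together with the shifted reflections $\tilde u_+ := \tilde u + M|x_1|$ and $\tilde u_- := \tilde u - M|x_1|$. The key step is to verify that $\tilde u_+$ is viscosity subharmonic and $\tilde u_-$ viscosity superharmonic on all of $B_1$. Off $B_1'$ this is immediate from harmonicity of $u$; the content is at a point $(0, x_0') \in B_1'$. Given a smooth $\phi$ touching $\tilde u_+$ from above there, I would first replace $\phi$ by its even-in-$x_1$ part $\phi^e(x) := \tfrac{1}{2}(\phi(x_1, x') + \phi(-x_1, x'))$, which still touches from above by evenness of $\tilde u_+$, satisfies $\Delta \phi^e(0, x_0') = \Delta \phi(0, x_0')$, and has $\pt_1 \phi^e(0, x_0') = 0$. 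For any $\ep > 0$ the perturbed function $\psi(x) := \phi^e(x) + (M + \ep) x_1$ touches $u$ from above at $(0, x_0')$ in $\overline{B_1^+}$: for $x_1 \ge 0$ near $(0, x_0')$ one computes $\psi - u = (\phi^e - \tilde u_+) + (2M + \ep) x_1 \ge 0$, with equality at $(0, x_0')$. Its inner-normal derivative $\pt_1 \psi(0, x_0') = M + \ep$ exceeds $M$ and thus violates the viscosity subsolution condition $\pt_1 u^* \le M$, so the alternative interior condition must hold, forcing $\Delta \phi(0, x_0') = \Delta \psi(0, x_0') \ge 0$. A symmetric argument using $\pt_1 u_* \ge -M$ shows $\tilde u_-$ is superharmonic.

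From these two facts the distributional Laplacian $\Delta \tilde u$ is a signed Radon measure concentrated on $B_1'$ with density pointwise bounded by $2M$, since $\Delta \tilde u = \Delta \tilde u_+ - 2M\delta_{B_1'}$ with $\Delta \tilde u_+ \ge 0$, and symmetrically $\Delta \tilde u = \Delta \tilde u_- + 2M\delta_{B_1'}$ with $\Delta \tilde u_- \le 0$. Then $\tilde u$ differs from a bounded harmonic function on $B_1$ by the Newtonian potential of a bounded surface density, and integrating the Newtonian kernel over the hyperplane $B_1'$ shows directly that this potential is locally H\"older continuous for every exponent $\alpha \in (0,1)$; alternatively, De Giorgi--Moser iteration applied to $\tilde u_\pm$ gives the same conclusion. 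This yields $\tilde u \in C^\alpha_{\loc}(B_1)$ with $\|\tilde u\|_{C^\alpha(B_{1/2})} \le C(d, M)\|u\|_{L^\infty(B_1^+)}$, and restriction to $B_1^+$ establishes the first claim. For the second claim, when $u$ vanishes on $\pt B_1 \cap \{x_1 \ge 0\}$ the extension $\tilde u$ vanishes on all of $\pt B_1$, and standard boundary barriers at $\pt B_1$ for equations with bounded measure-valued right-hand side and zero Dirichlet data upgrade the local estimate to the global bound $u \in C^\alpha(\overline{B_1^+})$.

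The principal obstacle is the viscosity argument in the second paragraph: naively, an even test function $\phi$ has $\pt_1 \phi(0, x_0') = 0$, which automatically satisfies the Neumann alternative $\pt_1 \phi \le M$ of the subsolution condition, so no information about $\Delta \phi$ is extracted. The $(M + \ep) x_1$ perturbation is designed precisely to break this triviality by pushing the inner-normal derivative strictly above $M$, thereby forcing the interior subharmonic alternative to be the operative condition.
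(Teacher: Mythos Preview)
Your even-reflection strategy is a natural alternative to the paper's route, but the viscosity step in your second paragraph has a genuine gap. Your reading of ``$\partial_1 u^\ast \le M$'' --- that a test function $\psi$ touching $u^\ast$ from above with $\partial_1\psi(0,x_0') > M$ must satisfy the interior alternative $\Delta\psi \ge 0$ --- is not a viscosity condition that classical solutions verify. Take $u \equiv 0$ (so $\partial_1 u = 0 \le M$ classically) and $\phi(x) = x_1 - x_1^2$: this touches $u$ from above at the origin in $\overline{B_1^+}$ with $\partial_1\phi(0) = 1$ and $\Delta\phi(0) = -2$, so for any $M<1$ neither of your two alternatives holds. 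The subsolution condition tested from above in the paper's own convention (Definition~4.3, and compare the proof of Theorem~4.10 where the roles of $u^\ast$ and $u_\ast$ appear swapped relative to this lemma's statement) reads $\partial_1\phi \ge -M$, not $\partial_1\phi \le M$. Under that reading your perturbation $\psi = \phi^e + (M+\ep)x_1$ has $\partial_1\psi = M+\ep \ge -M$, so the boundary alternative is already satisfied and you learn nothing about $\Delta\psi$. The natural fix --- push $\partial_1\psi$ strictly below $-M$ --- fails too: the correct test function is $\psi = \phi^e - Mx_1$, which lands exactly at the threshold $\partial_1\psi = -M$, and subtracting a further $\ep x_1$ destroys the touching since $\phi^e - \tilde u_+$ is even in $x_1$ and need not dominate a linear term.

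The paper avoids this by never claiming the reflection of $u$ itself is sub/superharmonic. Instead it proves an oscillation decay $\osc_{B_{r/2}^+} u \le \mu \osc_{B_r^+} u + Mr$ by comparison: it sandwiches $u$ between two \emph{classical} Neumann solutions on $B_r^+$ (with data coming from $u^\ast$, $u_\ast$ on the curved part and the extremal Neumann data on $B_r'$), subtracts the explicit solution $h$ of \eqref{eq.classicalsolutionofholderharmonic}, and then evenly reflects those classical barriers --- which are smooth up to $B_r'$ --- to harmonic functions on the full ball. The point is that the reflection step is applied to smooth auxiliary functions, not to $u$.
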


\begin{remark}
    The same result fails if one relaxes to the case that $u$ is merely bounded and $u^\ast$ is subharmonic and $u_\ast$ is superharmonic. This is because it is unlikely that $u^\ast=u_\ast$ in the measure-theoretic sense in the interior $B_1^+$.
\end{remark}

\begin{lemma}\label{l.usefulC1alphaestimate}
    If a continuous function $u$ is harmonic in $B_1^+$, and in the sense of viscosity 
\[
\pt_1 u = h, \ \textup{ on }B_1'
\]
for some $h\in C_{\loc}^\alpha({B_1'})$ and $\alpha>0$, then \(u\in C_{\textup{loc}}^{1,\alpha}(B_1^+\cup B_1')\), and there is a constant $C>0$ independent of $u$ such that
\[
\norm{u}_{C^{1,\alpha}(\overline{B_{1/2}^+})} \le C \norm{u}_{L^\infty(B_1^+)}.
\]
\end{lemma}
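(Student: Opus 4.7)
The plan is to prove boundary $C^{1,\alpha}$ regularity at points $x_0\in B_{1/2}'$ via a classical Campanato-type decay argument, since interior $C^{1,\alpha}$ regularity for harmonic functions is standard and the two combine through a covering/interpolation argument. The key base case is the homogeneous Neumann problem: if $\Delta v=0$ in $B_r^+$ with $\partial_1 v=0$ on $B_r'$ (interpreted in the viscosity sense, hence classically by Lemma~\ref{l.usefulholderestimate}), then the even reflection $\tilde v(x_1,x'):=v(|x_1|,x')$ is harmonic on $B_r$, and interior estimates yield $\|v\|_{C^{1,\alpha}(\overline{B_{r/2}^+})}\le Cr^{-1}\|v\|_{L^\infty(B_r^+)}$ for any $\alpha\in(0,1)$.

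Fixing a boundary point $x_0\in B_{1/2}'$, I would inductively construct, at dyadic scales $r_k=\theta^k$ with $\theta\in(0,1)$ small, affine functions with the normal slope frozen at $h(x_0)$,
\[
L_k(x)=a_k+b_k\cdot(x'-x_0')+h(x_0)\,x_1,
\]
satisfying $\|u-L_k\|_{L^\infty(B_{r_k}^+(x_0))}\le M\,r_k^{1+\alpha}$ with geometrically summable increments $|a_{k+1}-a_k|+r_k|b_{k+1}-b_k|\le CM r_k^{1+\alpha}$. For the inductive step, rescale $w_k(y):=r_k^{-(1+\alpha)}(u-L_k)(x_0+r_k y)$, which is harmonic on $B_1^+$ with Neumann data $\partial_1 w_k(y')=r_k^{-\alpha}\bigl(h(x_0+r_k y')-h(x_0)\bigr)$ uniformly bounded by $[h]_{C^\alpha(B_1')}$. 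Let $\tilde w_k$ solve the same interior equation with \emph{zero} Neumann data and the same Dirichlet data as $w_k$ on $\partial^+ B_1$; comparison against the explicit barrier $\pm[h]_{C^\alpha}\bigl(x_1-x_1^2/2\bigr)$ (which is super/subharmonic with the correct normal slope at $x_1=0$) yields $\|w_k-\tilde w_k\|_{L^\infty(B_{1/2}^+)}\le C[h]_{C^\alpha}$. The reflection base case then provides an affine $\tilde L$ approximating $\tilde w_k$ on $B_\theta^+$ with error $\le C\theta^{1+\alpha}\|\tilde w_k\|_{L^\infty(B_{1/2}^+)}$; translating this back defines $L_{k+1}$, and choosing $\theta$ small enough so that $C\theta^{1+\alpha}\le\tfrac12\theta^{1+\alpha}$ absorbs the perturbation error closes the induction.

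Summing the telescoping increments produces a limit affine $L_\infty$ with $|u(x)-L_\infty(x)|\le C|x-x_0|^{1+\alpha}$ in a neighborhood of $x_0$, so $u$ is differentiable at $x_0$ with $\nabla u(x_0)=\nabla L_\infty$, and the resulting map $x_0\mapsto\nabla u(x_0)$ is Hölder along $B_{1/2}'$ with modulus controlled by $[h]_{C^\alpha}+\|u\|_{L^\infty}$. Combined with interior $C^{1,\alpha}$ estimates for harmonic functions and a standard covering argument (as in Gilbarg–Trudinger), this upgrades to $u\in C^{1,\alpha}(\overline{B_{1/2}^+})$ with the stated norm bound. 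The main obstacle is the scale-invariant comparison $\|w_k-\tilde w_k\|_{L^\infty(B_{1/2}^+)}\le C[h]_{C^\alpha}$ controlling the effect of the Hölder oscillation of the Neumann data at each scale; this is precisely where the $C^\alpha$ hypothesis on $h$ is essential, and it is the only step requiring more than the reflection trick together with classical harmonic regularity.
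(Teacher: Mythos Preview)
The paper does not actually prove this lemma: it simply cites \cite{milakis2006regularity} (Milakis--Silvestre), where boundary $C^{1,\alpha}$ regularity is established for viscosity solutions of fully nonlinear equations with oblique boundary data. Your proposal is therefore not competing against any argument in the paper, and in fact what you outline is precisely the Campanato/improvement-of-flatness scheme underlying that reference, specialized to the Laplacian where the even reflection trick makes the homogeneous base case immediate. So your approach is correct and is essentially the same as the cited proof, just streamlined for the linear setting.

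One small slip worth fixing: the absorption condition ``$C\theta^{1+\alpha}\le\tfrac12\theta^{1+\alpha}$'' is vacuous as written. What you need is that the reflected $\tilde w_k$ is in fact smooth, so its affine approximation on $B_\theta^+$ has error $\le C_0\theta^{2}\|\tilde w_k\|_{L^\infty(B_{1/2}^+)}$; you then choose $\theta$ small so that $C_0\theta^{1-\alpha}\le\tfrac12$. Also note that the barrier bound improves to $\|w_k-\tilde w_k\|_{L^\infty(B_\theta^+)}\le C[h]_{C^\alpha}\theta$ (since the barrier is $O(x_1)$), which is what lets the $[h]_{C^\alpha}$ term be absorbed by taking $M$ large depending on $\theta$. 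With these two corrections the induction closes cleanly.
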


We have a similar parabolic version for the above estimates.

\begin{lemma}\label{l.usefulholderestimatepara}
If a bounded function $u$ satisfies $ \pt_t u = \Delta u$ in $D_{T}^+=B_1^+\times (0,T]$ for some end time $T>0$, and in the sense of viscosity 
\[
\pt_1 u^\ast \le M, \ \pt_1 u_\ast \ge -M, \ \textup{ on }D_{T}'
\]
for some constant $M>0$, then there is some $\alpha>0$, \(u\in C_{\textup{loc}}^{\alpha,\alpha/2}(D_{T}^+\cup D_{T}')\), and for any $0<h<T$ there is a constant $C>0$ independent of $u$ such that
\[
\norm{u}_{C^{\alpha,\alpha/2}(B_{1/2}^+\times (h,T))} \le C \norm{u}_{L^\infty(B_1^+\times (0,T))}.
\]
\end{lemma}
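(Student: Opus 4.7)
This is the parabolic analogue of Lemma~\ref{l.usefulholderestimate}, and my plan mirrors the elliptic strategy: combine interior parabolic regularity away from $D_T'$ with a boundary argument that uses the one-sided viscosity Neumann bounds to control oscillation near $D_T'$. Interior parabolic Schauder regularity for the heat equation already gives $u\in C^{\infty}_{\loc}(D_T^+)$, so the substantive content of the lemma is H\"older control up to the flat boundary $D_T'$.

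The approach is to extend $u$ by even reflection, $\tilde u(x_1,x',t):=u(|x_1|,x',t)$, to $B_1\times(0,T]$ and then invoke interior regularity for the heat equation on the full cylinder. Away from the reflection hyperplane $\{x_1=0\}$, $\tilde u$ is a classical solution of the heat equation by evenness. At a point $(0,x_0',t_0)$, given an upper test $\phi$ touching $\tilde u^\ast$ from above, I plan to symmetrize in $x_1$ via
\[
\bar\phi(x_1,x',t):=\tfrac12\bigl(\phi(x_1,x',t)+\phi(-x_1,x',t)\bigr),
\]
which still touches $\tilde u^\ast$ from above by evenness and satisfies $\partial_1\bar\phi(0,x_0',t_0)=0$ together with $\partial_t\bar\phi(0,x_0',t_0)=\partial_t\phi(0,x_0',t_0)$ and $\Delta\bar\phi(0,x_0',t_0)=\Delta\phi(0,x_0',t_0)$. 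Restricting $\bar\phi+\epsilon x_1$ to $\{x_1\ge0\}$ for $\epsilon>M$ produces an admissible upper test for $u^\ast$ at $(0,x_0',t_0)$ with inward normal derivative $\epsilon>M$; the one-sided viscosity Neumann inequality $\partial_1 u^\ast\le M$, interpreted in the relaxed escape form, then forces the heat inequality for $\bar\phi+\epsilon x_1$, which reduces to $\partial_t\phi\le\Delta\phi$ at $(0,x_0',t_0)$ after the linear perturbation cancels. A symmetric argument using $-\epsilon x_1$ together with $\partial_1 u_\ast\ge -M$ establishes the supersolution property. Once $\tilde u$ is identified as a bounded viscosity solution of the heat equation on $B_1\times(0,T]$, classical interior parabolic regularity yields $\tilde u\in C^{\alpha,\alpha/2}_{\loc}(B_1\times(0,T])$ for some $\alpha=\alpha(d)\in(0,1)$, with norm bounded by $\|u\|_{L^\infty}$. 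Restricting back to $\{x_1\ge0\}$ gives the first assertion of the lemma; the global statement under $u^\ast=u_\ast=0$ on $\partial B_1\cap\{x_1\ge 0\}$ then follows by combining this boundary estimate along $D_T'$ with standard parabolic Dirichlet H\"older regularity along the smooth spherical portion of the exterior boundary, the corner $\partial B_1\cap\{x_1=0\}$ being consistent with the zero data.

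The main obstacle is the symmetrization-plus-perturbation argument at the reflection hyperplane, which is what activates the escape option in the relaxed viscosity Neumann formulation. Without the perturbation, the symmetrized test $\bar\phi$ has $\partial_1\bar\phi(0,x_0',t_0)=0$, which trivially satisfies the one-sided boundary inequality and yields no interior PDE information; the strict perturbation with $\epsilon>M$ is essential to break this degeneracy. Care is required to check that the perturbed test remains an admissible upper test on $\{x_1\ge0\}$ (immediate since $\epsilon x_1\ge0$ there) and that the PDE inequality survives the reduction back to $\phi$ once the perturbation is removed. Should the reflection step prove delicate for borderline test functions where the normal derivative is saturated, the backup is a direct barrier-based oscillation decay argument: for $(x_0,t_0)\in D_T'$ and small parabolic cylinders $\mathcal{Q}_r$, establish $\osc_{\mathcal{Q}_{r/2}} u \le \theta\osc_{\mathcal{Q}_r} u + CMr$ by combining an interior weak Harnack inequality for the heat equation applied in subcylinders away from $D_T'$ with explicit barriers of the form $\psi(x,t)=A(1-e^{-\lambda x_1})$ plus caloric corrections, designed to have a definite sign of $\partial_1\psi$ on $D_T'$ so that the one-sided viscosity condition allows the pointwise interior bound to propagate all the way to the boundary.
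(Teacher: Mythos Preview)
Your primary approach---directly even-reflecting $u$ across $\{x_1=0\}$ and arguing that $\tilde u(x_1,x',t):=u(|x_1|,x',t)$ is a viscosity solution of the heat equation---does not work. When $\partial_1 u\not\equiv 0$ on $D_T'$ the reflection acquires a kink at the hyperplane and fails to be a solution there. In the one-dimensional stationary model, take $u(x_1)=-Mx_1$ on $\{x_1\ge 0\}$ (so $\partial_1 u(0)=-M\in[-M,M]$). The reflection is $\tilde u(x_1)=-M|x_1|$, and $\phi(x_1)=-\epsilon x_1^2$ touches $\tilde u$ from above at $0$ with $\phi''(0)=-2\epsilon<0$, so $\tilde u$ is not a subsolution of $u''=0$. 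Symmetrically, $u(x_1)=Mx_1$ reflects to $M|x_1|$, which is not a supersolution.

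The perturbation step does not rescue this, because the viscosity Neumann inequality points the wrong way. Under any interpretation of ``$\partial_1 u^\ast\le M$, $\partial_1 u_\ast\ge -M$'' that is consistent with classical solutions, the relaxed subsolution alternative for an upper test $\phi$ at a boundary point is ``either $\partial_t\phi\le\Delta\phi$ or $\partial_1\phi\ge -M$'' (the \emph{lower} Neumann bound). Adding $+\epsilon x_1$ with $\epsilon>M$ makes $\partial_1(\bar\phi+\epsilon x_1)=\epsilon$, which trivially \emph{satisfies} the boundary alternative rather than violating it, so no interior inequality is forced. If instead you read ``$\partial_1 u^\ast\le M$'' as ``upper tests with $\partial_1\phi>M$ must satisfy the heat inequality,'' then the classical solution $u=-Mx_1$ already fails that hypothesis (test with $-\epsilon x_1^2+\epsilon' x_1$, $\epsilon'>M$), so that reading is inconsistent with the intended applications.

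The paper's route---which coincides with your backup plan---avoids reflecting $u$ itself. One first constructs caloric upper and lower barriers with exact Neumann data $\pm M$, subtracts a fixed auxiliary solution $h$ of the heat equation with $\partial_1 h=M$ on $D_T'$, and only then applies the even reflection to the resulting Perron envelopes, which now have one-signed Neumann data and therefore reflect to genuine super- and subsolutions across $\{x_1=0\}$. These agree with the caloric extension $v$ of (the even reflection of) $u|_{\partial B_r^+}$; interior parabolic estimates give oscillation decay for $v$, and $|u-v|\le|h|=O(Mr)$ yields
\[
\osc_{\mathcal Q_{r/2}} u \le \mu\,\osc_{\mathcal Q_r} u + CMr,
\]
from which the $C^{\alpha,\alpha/2}$ bound follows. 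Your backup sketch is essentially this and is the correct argument; the direct reflection should be abandoned.
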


\begin{lemma}\label{l.usefulC1alphaestimatepara}
If a bounded continuous function $u$ satisfies $ \pt_t u =\Delta u$ in $B_1^+\times (0,T]$ for some end time $T>0$, and in the sense of viscosity
\[
\pt_1 u = h, \ \textup{ on }D_{T}'
\]
for some $h\in C_{\loc}^\alpha({D_{T}'})$ and $\alpha>0$, then \(u\in C_{\textup{loc}}^{1+\alpha,1/2+\alpha/2}(D_{T}^+\cup D_{T}')\), and for any $0<\tau<T$ there is a constant $C>0$ independent of $u$ such that
\[
\norm{u}_{C^{1+\alpha,1/2+\alpha/2}\lb B_{1/2}^+\times (\tau,T]\cup B_{1/2}'\times (\tau,T]\rb} \le C \norm{u}_{L^\infty(B_1^+\times (0,T))}.
\]
\end{lemma}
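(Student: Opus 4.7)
My plan is to combine the Hölder regularity supplied by Lemma \ref{l.usefulholderestimatepara} with a Campanato-type iteration that exploits the Hölder regularity of the Neumann data $h$. After standard scaling and covering arguments, it suffices to prove an $O(r^{1+\alpha})$ approximation of $u$ by first-order parabolic Taylor polynomials at each point of $B_{1/2}' \times (\tau, T]$; Campanato's characterization then converts this into the claimed norm bound.

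Fix a boundary point $(x_0, t_0) \in B_{1/2}' \times (\tau, T]$ and, for $0<r$ small, set
\[
Q_r^+(x_0,t_0) := (B_r(x_0) \cap \{x_1 > 0\}) \times (t_0 - r^2, t_0].
\]
Freeze the Neumann data by letting $v_r$ solve $\partial_t v_r = \Delta v_r$ in $Q_r^+(x_0,t_0)$ with $\partial_1 v_r = h(x_0', t_0)$ (a constant) on $B_r(x_0) \cap \{x_1=0\}$ and $v_r = u$ on the rest of the parabolic boundary. The function $v_r - h(x_0',t_0) x_1$ has homogeneous Neumann data on the flat portion of the boundary and so extends by even reflection across $\{x_1=0\}$ to a classical solution of the heat equation on an interior cylinder; standard interior parabolic estimates give smoothness of $v_r$ up to the flat boundary together with quantitative control of its first-order parabolic Taylor polynomial at $(x_0, t_0)$, in terms of $\osc u$ and $\|h\|_{L^\infty}$.

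The difference $w_r := u - v_r$ satisfies the heat equation in $Q_r^+$, vanishes on the lateral/initial parabolic boundary, and has Neumann data $\partial_1 w_r = h - h(x_0', t_0)$ on $B_r(x_0) \cap \{x_1=0\}$. Since $h \in C^\alpha_{\loc}$, this Neumann datum is bounded in $L^\infty$ by $[h]_{C^\alpha} r^\alpha$. A barrier of the form $C [h]_{C^\alpha} r^\alpha (1 + x_1/r)$ (scaled to $Q_r^+$) together with Lemma \ref{l.usefulholderestimatepara} applied to $w_r$ yields $\|w_r\|_{L^\infty(Q_r^+)} \lesssim r^{1+\alpha}[h]_{C^\alpha}$. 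Combining this with the regularity of $v_r$ produces a first-order parabolic Taylor polynomial $P_r$ at $(x_0,t_0)$ such that $\|u - P_r\|_{L^\infty(Q_r^+)} \lesssim r^{1+\alpha}$. Iterating the comparison on a geometric sequence of radii $r_k = \lambda^k r_0$ shows that the polynomials $P_{r_k}$ form a Cauchy sequence with controlled rates and yields the desired Campanato-type decay of $\osc (u - P_r)$ over $Q_r^+$.

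The main obstacle is the interior/boundary transition: when the reference point is not on $\{x_1 = 0\}$ (e.g.\ for points $(x,t)$ with $x_1 > 0$ small), the iteration must transfer from the boundary-type estimate above to a purely interior Schauder estimate on shrinking balls of radius $\sim x_1/2$. This is handled by a two-scale splitting — above the scale $x_1$ one uses the boundary iteration, below it one uses the standard interior parabolic Schauder theorem applied to $\partial_t u = \Delta u$ — and the two regimes are matched at the overlap $r \sim x_1$ using the already-established boundary estimate as input. This standard but careful bookkeeping closes the estimate in a full neighborhood of $B_{1/2}'$ and yields the stated $C^{1+\alpha, 1/2+\alpha/2}$ bound.
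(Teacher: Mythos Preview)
The paper does not actually prove this lemma: immediately after stating Lemmas \ref{l.usefulholderestimate}--\ref{l.usefulC1alphaestimatepara}, the authors write that the proof of Lemma \ref{l.usefulC1alphaestimatepara} ``can be found in \cite{ladyzhenskaja1968linear}'' and give no further argument. So your proposal is not competing with a proof in the paper but with a citation to classical parabolic Schauder theory.

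Your freeze-the-Neumann-data / Campanato iteration is a correct and standard route to the same estimate and is essentially one way the result in \cite{ladyzhenskaja1968linear} can be re-derived. A couple of small points to tighten: the barrier step for $w_r$ does not need Lemma \ref{l.usefulholderestimatepara} at all --- subtracting $[h]_{C^\alpha} r^\alpha x_1$ from $w_r$ gives a caloric function with one-sided Neumann data and $L^\infty$ boundary data of size $O(r^{1+\alpha})$, and the maximum principle (via even reflection) yields $\|w_r\|_{L^\infty(Q_r^+)} \lesssim r^{1+\alpha}$ directly. Also, when you solve for $v_r$ with lateral/initial data equal to $u$, you should note that $v_r$ is smooth only on compact subsets away from that data; this is harmless since you only need its Taylor polynomial at the center $(x_0,t_0)$, which after reflection is an interior point of $Q_{r/2}$. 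With those clarifications your argument is complete, and it has the mild advantage over a bare citation of being self-contained within the paper's toolkit.
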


Lemma \ref{l.usefulC1alphaestimate} and the continuous version of Lemma \ref{l.usefulholderestimate} can be found in \cite{milakis2006regularity}. Here we present a proof for Lemma \ref{l.usefulholderestimate}. The proof of Lemma \ref{l.usefulholderestimatepara} is similar to that of Lemma \ref{l.usefulholderestimate}, and the proof of Lemma \ref{l.usefulC1alphaestimatepara} can be found in \cite{ladyzhenskaja1968linear}. 

\begin{proof}[Proof of Lemma \ref{l.usefulholderestimate}]

The proof is essentially similar to the continuous case as in \cite{milakis2006regularity}, and here we mainly explain the issue of discontinuity near $B_1'$. It suffices to show that for some $0<\mu<1$, we always have for small $r>0$
\[
\osc_{B_{r/2}^+} u \le \mu \osc_{B_r^+} u + Mr.
\]
To show this inequality, we consider the infimum $\Tilde{w}$ of all continuous solutions $w$ to the equation
\[
\bca
\Delta w =0 & \textup{ in }B_r^+\\
w\ge u^\ast & \textup{ on }\pt B_r \cap \{x_1\ge0\}\\
\pt_1 w \le -M &\textup{ on }B_r'.
\eca
\]
Notice that $\Tilde{u}:=\Tilde{w}-h$ with $h$ the classical solution to
\be\label{eq.classicalsolutionofholderharmonic}
\bca
\Delta h =0 & \textup{ in }B_r^+\\
h= 0,& \textup{ on }\pd B_r\\
\pt_1 h = M &\textup{ on }B_r',
\eca
\ee
gives the infimum of continuous solutions $p$ to
\[
\bca
\Delta p =0 & \textup{ in }B_r^+\\
p\ge u^\ast & \textup{ on }\pd B_r\\
\pt_1 p \le 0 &\textup{ on }B_r',
\eca
\]
which, after doing even reflection, by standard Perron's method gives a harmonic function $\Tilde{u}$ in $B_r$ such that $\Tilde{u}=u^\ast$ on $\pt B_r$. Similarly, we can define $\Bar{u}$ to be the harmonic function in $B_r$ such that $\Bar{u}=u_\ast$ on $\pt B_r$.

By $L^\infty$ theory of harmonic functions and the continuity of $u$ on $\pt B_r \cap \{x_1>0\}$, we know that
\[
\Tilde{u}=\Bar{u}=:v, \ \textup{ on }B_r^+\cup B_1'.
\]
This shows that
\[
|u-v| \le |h|, \ \textup{ on }B_r^+,
\]
which proves the claim.

In the case that $u^*=u_*=0$ on $\pt B_1\cap\{x_1\ge0\}$, we observe that by comparison principle, $h \le u_*\le u^* \le -h$ globally on $\overline{ B_1^+}$. It then suffices to show that $h$ is in $C^\alpha(B_1^+)$ for some $\alpha>0$. Observe that $h=\Tilde{h}-Mx_1$, where $\Tilde{h}$ can be evenly reflected to be a harmonic function on $B_1$ that satisfies
\[
\Tilde{h}= M |x_1| \ \textup{ on }\pt B_1.
\]
By \cite{Pavlovic2007}, we know that $\Tilde{h}$ is in $C^\alpha(B_1)$ and hence $h\in C^\alpha(B_1^+)$.
\end{proof}

\section{Tangential regularization}\label{appendix.tangentialregularization}
In this section, we introduce the {tangential regularization} procedure used in the parabolic comparison principle. A tangential regularization is a inf/sup-convolution involving only the tangential $x'$ (and time $t$ if in the parabolic case) variables with an additional harmonic (or caloric) lift. A tangential regularization is preferred in the analysis of nonlinear Neumann problems like \eqref{eq.pGDwithmotionlaw}, \eqref{eq.generalhomogenizedequidef} and \eqref{eq.generalhomogenized2}, because the commonly used doubling variable method does not apply due to the lack of a uniform obliqueness condition \cites{BARLES1999191,UsersGuide}, and the also commonly used standard inf/sup-convolutions does not work either due to similar issues.

\subsection{Tangential regularization for the elliptic case}
In the elliptic case for an upper semicontinuous function $u$ on $\overline{B_1^+}$, we extend $u=-\infty$ outside $\overline{B_1^+}$ and define for $\ep>0$ the following tangential sup-convolution
\[
\Ta^\ep u(x):=\sup\lma u(y) - \frac{1}{2\ep}|x-y|^2 \ ; \ x_1=y_1\rma.
\]
For a lower semicontinuous function $v$ on $\overline{B_1^+}$, we define similarly the tangential inf-convolution
\[
\Ta_\ep v(x):=\inf\lma v(y) + \frac{1}{2\ep}|x-y|^2 \ ; \ x_1=y_1\rma.
\]
The \emph{elliptic tangential regularization} $\Ha^\ep u$ and $\Ha_\ep v$ of $u$ and $v$ are respectively the harmonic lifts of $\Ta^\ep u$ and $\Ta_\ep v$, that is, we define $\Ha^\ep u$ as the infimum of all continuous harmonic functions on $\overline{B_1^+}$ that is above $\Ta^\ep u$ and $\Ha_\ep v$ the supremum of all continuous harmonic functions below $\Ta_\ep v$. The properties of the elliptic tangential regularization can be found in \cite{feldman2024regularitytheorygradientdegenerate}.

\subsection{Tangential regularization for the parabolic case}
Let us now focus on the tangential regularization for the parabolic case.
\subsubsection{Parabolic tangential inf-/sup-convolution}
\begin{definition}\label{def.tptconvolution}
Let $U\subset\R^d\times\R$ be a space-time domain, $u \in USC(\overline{U})$ and is finite on $\overline{U}$. {Usually we can extend by $u = -\infty$ outside $\overline{U}$ and this extension is still upper semicontinuous.} Define for $\ep>0$ {and all $(x,t) \in \R^d \times \R$} the parabolic tangential sup-convolution
\[
\Ta^\ep u( x, t):=\sup \lma u(y,s) - \frac{1}{2\ep}|x-y|^2 -\frac{1}{2\ep}(t-s)^2 \ ; \ x_1=y_1, \ (y,s)\in U  \rma.
\]
Similarly, for $v \in LSC(\overline{U})$ bounded from below and not identically $+\infty$, we define the parabolic tangential inf-convolution $\Ta_\ep v = -\Ta^\ep(-v)$.

\end{definition}

\begin{lemma}\label{l.lipschitzofconvolution}
    Let $u$ be upper semicontinuous and bounded in $\overline{U}$. For every $\ep>0$, the function $\Ta^\ep u$ is finite everywhere on $\R^d\times\R$, upper semicontinuous and $\frac{1}{2\ep}$-semi-convex in $(x_1,x',t)$ for any fixed $x_1$. Moreover, for any $R>0, 0<\ep<1$ we have the following Lipschitz estimate: if $x_1=y_1$ and $|x|^2+|y|^2+|t|^2+|s|^2\le R^2$ then
    \[
    |\Ta^\ep u(x,t)-\Ta^\ep u(y,s)| \le C_\ep \lb R + \norm{u}_{L^\infty(U)}\rb\lb|x-y|+|t-s|\rb.
    \]
\end{lemma}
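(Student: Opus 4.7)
The key observation is that the constraint $y_1 = x_1$ inside the definition of $\Ta^\ep u$ eliminates the normal variable entirely. That is, for every fixed $x_1$,
\[
\Ta^\ep u(x_1, x', t) = \sup_{(y',s)\in\R^{d-1}\times\R}\Big[\,u(x_1, y', s) - \tfrac{1}{2\ep}\big(|x'-y'|^2 + (t-s)^2\big)\Big],
\]
where we use the convention $u \equiv -\infty$ outside $\overline{U}$. So on each horizontal slice $\{x_1 = \text{const}\}$ the operator $\Ta^\ep$ reduces to the classical parabolic-style sup-convolution (in the variables $(x',t)$) of the bounded, upper semicontinuous slice function $u_{x_1}(y',s) := u(x_1, y', s)$. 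Finiteness is then immediate from $-\|u\|_{L^\infty(U)} \le \Ta^\ep u \le \|u\|_{L^\infty(U)}$ at any $(x,t)$ whose slice meets $U$, and semi-convexity on each slice is the standard fact: the integrand plus $\tfrac{1}{2\ep}(|x'|^2+t^2)$ is affine in $(x',t)$, and a supremum of affine functions is convex.

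For global upper semicontinuity on $\R^d\times\R$, take any sequence $(x^{(n)},t^{(n)})\to(x,t)$ and pick near-maximizers $(x_1^{(n)}, y'^{(n)}_\ast, s^{(n)}_\ast) \in \overline{U}$ with
\[
\Ta^\ep u(x^{(n)},t^{(n)}) \le u(x_1^{(n)}, y'^{(n)}_\ast, s^{(n)}_\ast) - \tfrac{1}{2\ep}\big(|x'^{(n)}-y'^{(n)}_\ast|^2 + (t^{(n)}-s^{(n)}_\ast)^2\big) + \tfrac{1}{n}.
\]
Since $\Ta^\ep u(x^{(n)},t^{(n)}) \ge -\|u\|_{L^\infty(U)}$, the quadratic penalty gives $|y'^{(n)}_\ast - x'^{(n)}| + |s^{(n)}_\ast - t^{(n)}| \le C(\ep,\|u\|_{L^\infty})$, so after extracting a subsequence $(x_1^{(n)}, y'^{(n)}_\ast, s^{(n)}_\ast) \to (x_1, y'_\ast, s_\ast) \in \overline{U}$. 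Upper semicontinuity of $u$ then yields $\limsup_n \Ta^\ep u(x^{(n)},t^{(n)}) \le \Ta^\ep u(x,t)$, using $(y'_\ast,s_\ast)$ as a test point on the right.

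For the Lipschitz estimate on a slice $\{x_1 = y_1\}$ with $|x|^2+|y|^2+|t|^2+|s|^2 \le R^2$, let $(y'_\ast, s_\ast)$ approximately realize the sup defining $\Ta^\ep u(x,t)$. Using it as a test point in the definition of $\Ta^\ep u(y,s)$ (where also $y_1 = x_1$) gives
\[
\Ta^\ep u(x,t) - \Ta^\ep u(y,s) \le \tfrac{1}{2\ep}\Big[\big(|y'-y'_\ast|^2 - |x'-y'_\ast|^2\big) + \big((s-s_\ast)^2 - (t-s_\ast)^2\big)\Big] + o(1).
\]
Expanding $|y'-y'_\ast|^2 - |x'-y'_\ast|^2 = (y'-x')\cdot(y'+x'-2y'_\ast)$ and the analogous identity in time, and using that $|x'-y'_\ast| \le 2\sqrt{\ep\|u\|_{L^\infty}}$, $|t-s_\ast|\le 2\sqrt{\ep\|u\|_{L^\infty}}$ from the same coercivity argument as above, gives a bound
\[
\Ta^\ep u(x,t) - \Ta^\ep u(y,s) \le \tfrac{C}{\ep}\big(R + \sqrt{\ep\|u\|_{L^\infty}}\big)\big(|x'-y'|+|t-s|\big).
\]
Swapping the roles of $(x,t)$ and $(y,s)$ and absorbing the $\ep$-dependent factors into $C_\ep$ while using $\sqrt{\ep\|u\|_{L^\infty}} \lesssim 1 + \|u\|_{L^\infty}$ produces the stated Lipschitz estimate $C_\ep(R+\|u\|_{L^\infty})(|x-y|+|t-s|)$.

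The only mildly delicate point is the global upper semicontinuity, since the slice on which the supremum is computed changes with $x_1$; this is where one genuinely uses upper semicontinuity of $u$ on $\overline U$ (rather than merely on each slice) together with closedness of $\overline U$ to pass the maximizers to the limit. Everything else follows essentially from reading off the slice reduction and the standard sup-convolution estimates.
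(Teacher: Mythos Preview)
Your proof is correct and follows essentially the same approach as the paper's: standard sup-convolution arguments for finiteness and slice-wise semi-convexity, a maximizer-extraction argument for upper semicontinuity, and for the Lipschitz bound you test the (near-)maximizer realizing $\Ta^\ep u(x,t)$ in the definition of $\Ta^\ep u(y,s)$ and expand the quadratic difference. The paper does exactly this, using actual maximizers rather than near-maximizers and leaving the quadratic expansion implicit; your explicit handling of $\sqrt{\ep\|u\|_{L^\infty}} \lesssim 1 + \|u\|_{L^\infty}$ is a detail the paper omits but is needed to reach the stated form of the constant.
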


\begin{proof}
The proof of the pointwise finiteness and semi-convexity in $(x_1,x',t)$ for fixed $x_1$ is the same as the standard sup-convolutions. 

Similar to the standard sup-convolution, by boundedness of $u$, for sufficiently small $\ep>0$ there is a $r_\ep>0$ such that at any $(x_0,t_0)=(x_{0,1},x_0',t_0)\in \overline{U}$ there will be a $(x_\ep,t_\ep)=(x_{0,1},x_\ep',t_\ep)\in \overline{U}\cap B_{r_\ep}(x_0,t_0)$ such that
\[
\Ta^\ep u(x_0,t_0) = u(x_\ep, t_\ep) - \frac{1}{2\ep}|x_0'-x_\ep'|^2 - \frac{1}{2\ep }(t_0-t_\ep)^2.
\]
Notice that if $|u|\le M$ then $r_\ep = (100M\ep)^{1/2}$ will suffice because outside the ball $B_{r_\ep}(x_0,t_0)$ we have $|x_0'-x_\ep'|^2+(t_0-t_\ep)^2 > 100M \ep$ and hence $\Ta^\ep u(x_0,t_0)<-M\le u(x_0,t_0)$, which is impossible.

To show the upper semicontinuity we consider any converging sequence of points $(x_n,t_n)=(x_{n,1},x_n',t_n)\rta (x_0,t_0)=(x_{0,1},x_0',t_0)$ in $\overline{U}$. By the above discussion, there will be a sequence $(x_{n,\ep},t_{n,\ep})= (x_{n,1},x_{n,\ep}',t_{n,\ep})\in \overline{U}\cap B_{r_\ep}(x_n,t_n)$ such that
\[
\Ta^\ep u(x_n,t_n) = u(x_{n,\ep},t_{n,\ep}) - \frac{1}{2\ep}|x_n'-x_{n,\ep}'|^2 - \frac{1}{2\ep}(t_{n}-t_{n,\ep})^2.
\]
By compactness of the sequence $(x_{n,\ep},t_{n,\ep})$ for each fixed $\ep$, we know that in a convergent subsequence $n_k$ with $\lim_{k\rta\infty}(x_{n_k,\ep},t_{n_k,\ep}) = (\Tilde{x}_{\ep},\Tilde{t}_\ep)$
\[
\limsup_{k\rta\infty}\Ta^\ep u(x_{n_k},t_{n_k}) \le u(\Tilde{x}_{\ep},\Tilde{t}_\ep) - \frac{1}{2\ep} |\Tilde{x}_{\ep}'-x_0'|^2 -\frac{1}{2\ep}(\Tilde{t}_\ep-t_0)^2\le \Ta^\ep u(x_0,t_0),
\]
which implies that $\Ta^\ep u$ is upper semicontinuous at $(x_0,t_0)$.

To show the Lipschitz estimate, we write 
\[
\Ta^\ep u(x,t) = u(x_\ep, t_\ep)-\frac{1}{2\ep}|x'-x_\ep'|^2-\frac{1}{2\ep}(t-t_\ep)^2,
\]
and because for $(y,s)$ such that $y_1=x_1$
\[
\Ta^\ep u(y,s) \ge u(x_\ep, t_\ep)-\frac{1}{2\ep}|y'-x_\ep'|^2-\frac{1}{2\ep}(s-t_\ep)^2,
\]
we have
\[
\begin{split}
  \Ta^\ep u(x,t)-  \Ta^\ep u(y,s) &\le \frac{1}{2\ep}\lb|y'-x_\ep'|^2+(s-t_\ep)^2-|x'-x_\ep'|^2 -(t-t_\ep)^2 \rb\\
  &\le C_\ep \lb R+\norm{u}_{L^\infty(U)}\rb\lb |x-y|+|t-s|\rb.
\end{split}
\]

\end{proof}

\begin{lemma}\label{l.touchingpropertyofconvolution}
     Let $u$ be upper semicontinuous and bounded in $\overline{U}$.  Let $\phi$ be a smooth function crossing $\Ta^\ep u$ from above (strictly) at $(x_0,t_0)=(x_{0,1},x_0',t_0)\in \overline{U}$, and let $(x_\ep,t_\ep)=(x_{0,1},x_\ep',t_\ep)\in \overline{U}$ be a point close to $(x_0,t_0)$ such that
    \[
    \Ta^\ep u(x_0,t_0) = u(x_\ep,t_\ep) -\frac{1}{2\ep}|x_\ep'-x_0'|^2 -\frac{1}{2\ep}(t_\ep-t_0)^2.
    \]
    Then $\phi^\ep(x,t):=\phi(x+x_0-x_\ep,t+t_0-t_\ep) -\frac{1}{2\ep}|x_\ep'-x_0'|^2 -\frac{1}{2\ep}(t_\ep-t_0)^2$ crosses $u$ from above (strictly) at $(x_\ep,t_\ep)$ with 
    \[
   \lim_{\ep\rta0} \frac{1}{2\ep}|x_\ep'-x_0'|^2 +\frac{1}{2\ep}(t_\ep-t_0)^2=0.
    \]
  Moreover, we have
    \[
\gd \phi^\ep(x_\ep,t_\ep)=\gd \phi(x_0,t_0)=\frac{1}{\ep}(x_\ep'-x_0')\quad \textup{and} \quad 
 \pt_t \phi^\ep(x_\ep,t_\ep)= \pt_t \phi(x_0,t_0)\le\frac{1}{\ep}(t_\ep-t_0).
\]

\end{lemma}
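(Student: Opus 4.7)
The plan is to trace everything back to the elementary lower bound
\[
\Ta^\ep u(x, t) \ge u(y, s) - \tfrac{1}{2\ep}|x' - y'|^2 - \tfrac{1}{2\ep}(t - s)^2, \qquad y_1 = x_1, \ (y,s) \in U,
\]
which is immediate from the definition of the sup-convolution. The key observation is that this inequality is \emph{sharp} when one takes $(y, s) = (x_\ep, t_\ep)$ and $(x, t) = (x_0, t_0)$; this is exactly the identity assumed in the lemma. The proof would then be organized in three steps.

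\textbf{Step 1: transfer of crossing.} The function $\phi^\ep$ is, up to the additive constant chosen so the graphs meet at $(x_\ep, t_\ep)$, just the spacetime translate of $\phi$ by the vector $(x_0 - x_\ep, t_0 - t_\ep)$. I would apply the displayed lower bound with a generic $(y, s)$ near $(x_\ep, t_\ep)$ and $(x, t) := (y + x_0 - x_\ep, s + t_0 - t_\ep)$ near $(x_0, t_0)$; since the tangential displacement $|x' - y'|$ and the time displacement $|t - s|$ collapse to the constants $|x_0' - x_\ep'|$ and $|t_0 - t_\ep|$, the crossing inequality $\Ta^\ep u(x, t) \le \phi(x, t)$ on the parabolic cylinder around $(x_0, t_0)$ reproduces exactly the relation $u(y, s) \le \phi^\ep(y, s)$ on the translated cylinder around $(x_\ep, t_\ep)$. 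The time shift $t_0 - t_\ep$ is constant, so $s \le t_\ep$ corresponds to $t \le t_0$ and parabolic (as opposed to elliptic) touching is preserved. Strictness is inherited: an equality $u(y, s) = \phi^\ep(y, s)$ at $(y, s) \ne (x_\ep, t_\ep)$ would force $\Ta^\ep u(x, t) = \phi(x, t)$ at the translate $(x, t) \ne (x_0, t_0)$, contradicting the strict crossing hypothesis at the original point.

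\textbf{Step 2: derivatives at $(x_0, t_0)$ and the chain rule at $(x_\ep, t_\ep)$.} Plugging the fixed test pair $(y, s) = (x_{0,1}, x_\ep', t_\ep)$ into the lower-bound inequality produces the smooth paraboloid
\[
L(x, t) := u(x_\ep, t_\ep) - \tfrac{1}{2\ep}|x' - x_\ep'|^2 - \tfrac{1}{2\ep}(t - t_\ep)^2
\]
sandwiched beneath $\Ta^\ep u$, and therefore beneath $\phi$, in an open tangential neighborhood of $(x_0, t_0)$, with equality at $(x_0, t_0)$ by the assumed optimality. The two-sided comparison in the $x'$-variable forces $\gd' \phi(x_0, t_0) = \gd' L(x_0, t_0) = \tfrac{1}{\ep}(x_\ep' - x_0')$. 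In contrast, because $\phi$ only crosses $\Ta^\ep u$ for $t \le t_0$, only the one-sided estimate $\pt_t \phi(x_0, t_0) \le \pt_t L(x_0, t_0) = \tfrac{1}{\ep}(t_\ep - t_0)$ survives. The identities $\gd \phi^\ep(x_\ep, t_\ep) = \gd \phi(x_0, t_0)$ and $\pt_t \phi^\ep(x_\ep, t_\ep) = \pt_t \phi(x_0, t_0)$ follow by differentiating the defining formula for $\phi^\ep$, since the quadratic penalty in that formula is a constant in $(x, t)$.

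\textbf{Step 3: vanishing of the penalty, and main obstacle.} Testing the sup-convolution with $(y, s) = (x_0, t_0)$ gives $\Ta^\ep u(x_0, t_0) \ge u(x_0, t_0)$, which combined with the realized value at the maximizer bounds the penalty by $u(x_\ep, t_\ep) - u(x_0, t_0) \le 2\|u\|_{L^\infty(U)}$. Hence $(x_\ep, t_\ep) \to (x_0, t_0)$ along $\ep \to 0$, and the upper semicontinuity of $u$ then pinches $u(x_\ep, t_\ep)$ back down to $u(x_0, t_0)$, forcing the penalty to vanish. The genuine subtlety is the one-sided time derivative in Step 2: the parabolic version of the crossing hypothesis only permits a one-sided comparison in $t$, which is precisely why the conclusion of the lemma is an inequality for $\pt_t \phi$ rather than an equality. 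Everything else is essentially bookkeeping around a shift identity, using the standard sup-convolution technology recorded in \lref{lipschitzofconvolution}; a minor technical point is that $\ep$ must be small enough that the supremum defining $\Ta^\ep u(x_0, t_0)$ is realized by some $(x_\ep, t_\ep)$ inside $U$, which follows from the $2\|u\|_{L^\infty}$ bound on the penalty confining $(x_\ep, t_\ep)$ to an $O(\sqrt{\ep})$-neighborhood of $(x_0, t_0)$.
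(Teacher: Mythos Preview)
Your proof is correct and is precisely the standard sup-convolution transfer argument that the paper defers to by citing \cite{calder2018lecture}*{Proposition 8.6}; you have simply spelled out the adaptation to the parabolic tangential setting rather than pointing to the reference. In particular, your observation that the parabolic (one-sided in $t$) crossing hypothesis is what downgrades the time-derivative conclusion from an equality to the inequality $\pt_t\phi(x_0,t_0)\le \tfrac{1}{\ep}(t_\ep-t_0)$ is exactly the point of the lemma.
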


\begin{proof}
    The proof is done by a simple adaptation of the arguments in \cite{calder2018lecture}*{Proposition 8.6}.
\end{proof}

\begin{corollary}\label{cor.subsolutionsforregularizations}
Let $U=( B_1^+\cup B_1') \times [0,\infty)$, and $u$ be a subsolution to \eqref{eq.pGDwithmotionlaw} on $U$, then for every $\theta>0$ there is small $\delta_0>0$ so that for all $0<\delta<\delta_0$, the parabolic tangential regularization $\Ta^\delta u$ is a subsolution to \eqref{eq.pGDwithmotionlaw} on 
\[
(B_{1-\theta}^+\cup B_{1-\theta}') \times [\theta,\infty).
\]
    
\end{corollary}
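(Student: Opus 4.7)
The plan is to transfer a crossing test function for $\Ta^\delta u$ at an interior point $(x_0,t_0)\in (B_{1-\theta/2}^+\cup B_{1-\theta/2}')\times(\theta/2,\infty)$ into a crossing test function for $u$ at the nearby optimizing point $(x_\delta,t_\delta)$, and then verify each of the conditions in Definition \ref{d.definevissoltopGD} at $(x_\delta,t_\delta)$ pulls back to the same condition at $(x_0,t_0)$.  The pullback is essentially automatic for the interior heat inequality and for conditions \ref{condition(a)}--\ref{condition(b)}, since these depend only on the derivatives of the test function and Lemma \ref{l.touchingpropertyofconvolution} guarantees that $\grad\phi^\delta(x_\delta,t_\delta)=\grad\phi(x_0,t_0)$, $\pt_t\phi^\delta=\pt_t\phi$, and $\Delta\phi^\delta=\Delta\phi$.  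Moreover, since the tangential sup-convolution preserves the normal coordinate, $(x_\delta)_1=(x_0)_1$, so ``interior'' vs.\ ``boundary'' test points are preserved.  The smallness of $\delta_0=\delta_0(\theta)$ is chosen so that the concentration estimate $|x_\delta'-x_0'|+|t_\delta-t_0|=O(\sqrt{\delta})$ (which follows from boundedness of $u$ via Lemma \ref{l.lipschitzofconvolution}) places $(x_\delta,t_\delta)$ inside $U=(B_1^+\cup B_1')\times[0,\infty)$.

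To start, assume $\phi$ crosses $\Ta^\delta u$ from above at $(x_0,t_0)$ in some cylinder $V\times(t_0-r,t_0+r)$, with $V\csubset B_1^+\cup B_1'$ a relatively open neighborhood of $x_0$.  By Lemma \ref{l.touchingpropertyofconvolution}, the shifted test function $\phi^\delta(y,s):=\phi(y+x_0-x_\delta,\,s+t_0-t_\delta)+Q_0$, with $Q_0:=\tfrac{1}{2\delta}|x_0'-x_\delta'|^2+\tfrac{1}{2\delta}(t_0-t_\delta)^2$, crosses $u$ from above at $(x_\delta,t_\delta)$ in the translated cylinder $\tilde V\times(t_\delta-r/2,t_\delta+r/2)$ where $\tilde V:=V+(x_\delta-x_0)=V+(0,x_\delta'-x_0')$.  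If the interior heat inequality $\pt_t\phi^\delta\le\Delta\phi^\delta$ holds at $(x_\delta,t_\delta)$, then it holds for $\phi$ at $(x_0,t_0)$ as well and we are done; otherwise $(x_\delta)_1=(x_0)_1=0$ and we invoke the boundary conditions for $u$.  For \ref{condition(a)}, the identity $\pt_1\phi^\delta(x_\delta,t_\delta)\ge L_\ast(\grad'\phi^\delta(x_\delta,t_\delta))$ transfers verbatim.  For \ref{condition(b)}, $\pt_t\phi(x_0,t_0)>0$ is equivalent to $\pt_t\phi^\delta(x_\delta,t_\delta)>0$, which gives $\pt_1\phi^\delta\ge 0$, hence $\pt_1\phi(x_0,t_0)\ge 0$.

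The main obstacle is condition \ref{condition(c)}, because it requires the auxiliary strict inequality $\phi>\Ta^\delta u$ on $\pt^+V\times\{t_0\}$ to survive after the transfer.  This is where the tangential invariance of $\phi$ (implied by $\grad'\phi\equiv 0$) combined with the structure of the sup-convolution penalty plays the key role.  Take any $(y,t_\delta)\in\pt^+\tilde V\times\{t_\delta\}$ and set $(x,t_0):=(y+x_0-x_\delta,\,t_0)$, so that $x_1=y_1$ (since the shift is purely tangential) and $(x,t_0)\in\pt^+V\times\{t_0\}$.  By the very definition of $\Ta^\delta u$ applied at $(x,t_0)$ with the admissible competitor $(y,t_\delta)$,
\[
\Ta^\delta u(x,t_0)\;\ge\;u(y,t_\delta)-\tfrac{1}{2\delta}|x'-y'|^2-\tfrac{1}{2\delta}(t_0-t_\delta)^2\;=\;u(y,t_\delta)-Q_0.
\]
On the other hand $\phi^\delta(y,t_\delta)=\phi(x,t_0)+Q_0$ by construction, and $\phi(x,t_0)>\Ta^\delta u(x,t_0)$ by hypothesis.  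Adding $Q_0$ to both sides of the strict inequality yields $\phi^\delta(y,t_\delta)>u(y,t_\delta)$, so the auxiliary strictness condition for \ref{condition(c)} holds with $\tilde V$ in place of $V$.  Since $\grad'\phi^\delta\equiv 0$ (the tangential shift leaves a tangentially constant function tangentially constant) and $\pt_t\phi^\delta(x_\delta,t_\delta)>0$, condition \ref{condition(c)} for $u$ gives $\pt_1\phi^\delta(x_\delta,t_\delta)\ge M$, hence $\pt_1\phi(x_0,t_0)\ge M$.

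In summary, the only nontrivial bookkeeping is in \ref{condition(c)}: one must notice that the additive constant $Q_0$ appears symmetrically on the two sides of the estimate and hence cancels in the strict inequality.  Everything else is a routine transfer of viscosity information via Lemma \ref{l.touchingpropertyofconvolution}.  The choice of $\delta_0$ depends on $\theta$ only through the requirement that $C\sqrt{\delta_0}<\theta/2$, which ensures $(x_\delta,t_\delta)\in U$ whenever $(x_0,t_0)\in(B_{1-\theta}^+\cup B_{1-\theta}')\times[\theta,\infty)$.
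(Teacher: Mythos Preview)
Your proof is correct and follows the same approach as the paper's one-line argument via Lemma \ref{l.touchingpropertyofconvolution}; you have usefully expanded the details, particularly the verification of condition \ref{condition(c)}, where the strict separation on $\pt^+V$ must be transferred to $\pt^+\tilde V$ and the additive constant $Q_0$ cancels exactly as you observe. (One minor remark: your $+Q_0$ sign in defining $\phi^\delta$ is the correct one for the touching $\phi^\delta(x_\delta,t_\delta)=u(x_\delta,t_\delta)$ to hold, since $\Ta^\delta u(x_0,t_0)=u(x_\delta,t_\delta)-Q_0$; the $-Q_0$ written in the statement of Lemma \ref{l.touchingpropertyofconvolution} appears to be a typo in the paper.)
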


\begin{proof}
    By Lemma \ref{l.touchingpropertyofconvolution}, any smooth function crossing $\Ta^\delta u$ from above will also cross $u$ from above with a small additive constant and small-distance translations in time and tangential directions.
\end{proof}
 
\begin{lemma}\label{l.convergenceofsupconvolution}
    Let $u$ be upper semicontinuous in $\overline{U}$, then the upper half relaxed limit
    \[
     {\limsup_{\ep\rta0}}^\ast \Ta^\ep u = \bca
u,& \textup{ in }\overline{U}\\
-\infty,& \textup{ elsewhere.}
     \eca
    \]
In particular, for any compact subset $K\csubset \R^d\times \R$ we have
    \[
    \lim_{\ep\rta 0} \max_{(x,t)\in K} \Ta^\ep u(x,t) = \max_{(x,t)\in K} u(x,t).
    \]
\end{lemma}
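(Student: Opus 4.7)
My plan is to prove the pointwise identity first; the compact-set statement will then follow by a short compactness argument. The overall strategy mirrors what is done for classical sup-convolutions: boundedness of $u$ together with coercivity of the quadratic penalty pins near-maximizers very close to the base point (at distance $O(\sqrt{\ep})$), and upper semicontinuity then controls the limit.

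For the upper bound ${\limsup_{\ep\to 0}}^\ast \Ta^\ep u(x,t) \leq u(x,t)$ at $(x,t)\in \overline U$, I will take arbitrary sequences $\ep_n\to 0$ and $(y_n,s_n)\to (x,t)$ and pick near-maximizers $(y_n^\ast,s_n^\ast)\in \overline U$ with $y_{n,1}^\ast = y_{n,1}$ that attain the defining supremum for $\Ta^{\ep_n}u(y_n,s_n)$ up to $1/n$; existence of such near-maximizers uses only $u\in L^\infty(\overline U)$ (the argument already appears in the proof of \lref{lipschitzofconvolution}). From
\[
\Ta^{\ep_n} u(y_n,s_n) \leq u(y_n^\ast,s_n^\ast) - \tfrac{1}{2\ep_n}\bigl(|y_n' - (y_n^\ast)'|^2 + (s_n - s_n^\ast)^2\bigr) + \tfrac{1}{n}
\]
and $|\Ta^{\ep_n}u(y_n,s_n)|\le \|u\|_{L^\infty(\overline U)}$, I read off $|y_n'-(y_n^\ast)'|+|s_n-s_n^\ast|=O(\sqrt{\ep_n})$, so $(y_n^\ast,s_n^\ast)\to (x,t)$ in $\overline U$. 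Upper semicontinuity of $u$ at $(x,t)$ then gives $\limsup_n u(y_n^\ast,s_n^\ast)\le u(x,t)$, completing the upper bound.

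For the lower bound on $\overline U$, I simply use the competitor $(y,s)=(x,t)$ in the supremum: this yields $\Ta^\ep u(x,t)\ge u(x,t)$ for every $\ep>0$, whence ${\limsup_{\ep\to 0}}^\ast\Ta^\ep u(x,t)\ge u(x,t)$ by choosing the constant spatial sequence. For $(x,t)\notin \overline U$ I will fix $\rho>0$ with $B_{2\rho}(x,t)\cap \overline U=\emptyset$; for any $(y,s)\in B_\rho(x,t)$ and any admissible $(z,r)\in \overline U$ with $z_1=y_1$, the triangle inequality combined with $z_1=y_1$ forces $|y'-z'|^2+(s-r)^2\ge \rho^2$, so $\Ta^\ep u(y,s)\le \|u\|_{L^\infty(\overline U)}-\rho^2/(2\ep)\to -\infty$, uniformly on $B_\rho(x,t)$. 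This gives the $-\infty$ behavior of the upper half-relaxed limit off $\overline U$.

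For the compact-set statement, I pick $(x_\ep,t_\ep)\in K$ attaining $\max_K \Ta^\ep u$, which exists by the upper semicontinuity of $\Ta^\ep u$ established in \lref{lipschitzofconvolution}, and extract a subsequential limit $(x_\ast,t_\ast)\in K$. If $K\cap \overline U\neq \emptyset$, the pointwise lower bound at a fixed point of $K\cap \overline U$ gives a uniform-in-$\ep$ finite lower bound on $\max_K\Ta^\ep u$, which combined with the uniform $-\infty$ behavior off $\overline U$ forces $(x_\ast,t_\ast)\in \overline U$; the upper bound from the first step then yields $\limsup_{\ep\to 0}\max_K\Ta^\ep u\le u(x_\ast,t_\ast)\le \max_{K\cap \overline U}u$, while the competitor argument gives $\max_K\Ta^\ep u\ge \max_{K\cap\overline U}u$. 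The case $K\cap \overline U=\emptyset$ is immediate from the third step. I do not anticipate a real obstacle here: the only subtlety is careful bookkeeping at $\partial U$ where $u$ may jump, and that is handled precisely by the hypothesis $u\in \mathrm{USC}(\overline U)$, which is exactly what one needs to pass to $\limsup$ at boundary points.
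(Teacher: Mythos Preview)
Your argument is correct and is essentially the standard proof; the paper itself does not give any argument at all, simply citing Proposition~3.7 of the Crandall--Ishii--Lions \emph{User's Guide}. So you have written out in detail what the cited reference contains: near-maximizers of the sup-convolution localize at scale $O(\sqrt{\ep})$, upper semicontinuity controls the $\limsup$, and the trivial competitor gives the lower bound.

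One small point worth flagging: you use $\|u\|_{L^\infty(\overline U)}$ to obtain the $O(\sqrt{\ep})$ localization, but the lemma as stated only assumes $u$ upper semicontinuous (and, via \dref{tptconvolution}, finite) on $\overline U$. This is harmless in context, since boundedness is explicitly imposed in the only application (Step~1 of the proof of \tref{comparisonprinciple}) and also in the preceding \lref{lipschitzofconvolution}; but if you want to match the lemma statement verbatim you can avoid the global $L^\infty$ bound by arguing instead that it suffices to treat sequences with $\Ta^{\ep_n}u(y_n,s_n)\ge u(x,t)-1$, and use only a \emph{local} upper bound on $u$ near $(x,t)$ (which follows from upper semicontinuity) to control $u(y_n^\ast,s_n^\ast)$ once you know $(y_n^\ast,s_n^\ast)$ stays in a fixed neighborhood. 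Either way the conclusion is unaffected.
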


\begin{proof}
   This convergence property is a corollary of Proposition 3.7 in \cite{UsersGuide}.
\end{proof}

\subsubsection{Caloric lift}

Let $D_{T}=B_1\times(0,T)$ be a space-time cylinder and $D_{T}^+:= B_1^+\times (0,T]$ the positive interior. For any $g\in C\lb \pp D_{T}^+\rb$ there is a solution $u \in C(\overline{D_T^+}) \cap C^\infty(D_T^+)$ of
\[
\bca
\pt_t u = \Delta u   & \textup{ in }D_{T}^+\\
u=g &\textup{ on }\pp D_{T}^+.
\eca
\]
We show a similar result above a subsolution to $\Delta u \ge \pt_t u$.
\begin{lemma}\label{l.caloriclift}
Let $v$ be a bounded upper semicontinuous function on $\overline{D_{T}^+}$ and a subsolution to $\pt_t v \leq \Delta v $ in $D_{T}^+$, then there is a unique $u\ge v$ on $\overline{D_{T}^+}$ such that $u$ solves $ \pt_t u = \Delta u$ in the classical sense in $D_{T}^+$ and 
\[
\limsup_{D_T^+\ni (y,s)\rta (x,t)} \ u(y,s) = v(x,t)
\]
for any $(x,t)\in \pp D_T^+$.

\end{lemma}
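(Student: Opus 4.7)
\smallskip

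My plan is a Perron-type construction combined with standard boundary regularity for the heat equation. Define the class
\[
\mathcal{F} := \bigl\{ w \in C(\overline{D_T^+}) \ ; \ w \text{ is a viscosity supersolution of } \pt_t w \ge \Delta w \text{ in } D_T^+,\ w \ge v \text{ on } \overline{D_T^+} \bigr\},
\]
and set
\[
u(x,t) := \inf_{w \in \mathcal{F}} w(x,t).
\]
The class $\mathcal{F}$ is nonempty because $v$ is bounded, so the constant function $\sup_{\overline{D_T^+}} v$ lies in $\mathcal{F}$. By construction $u \ge v$ on $\overline{D_T^+}$, and $u$ is bounded.

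\smallskip

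Next I would show that $u$ is a classical solution of the heat equation in $D_T^+$. Standard Perron theory gives that the infimum of viscosity supersolutions remains a viscosity supersolution. For the subsolution property, I argue by contradiction using local replacement: if $u$ were not a viscosity subsolution at some $(x_0,t_0) \in D_T^+$, choose a small parabolic sub-cylinder $Q \csubset D_T^+$ around $(x_0,t_0)$ and replace $u$ inside $Q$ by the classical caloric function $\tilde u$ with Dirichlet data $u|_{\pp Q}$. Parabolic comparison — applied to $\tilde u$ against the subsolution $v$ on $Q$, using $\tilde u = u \ge v$ on $\pp Q$ — ensures $\tilde u \ge v$ on $\overline{Q}$, so that $\min\{u, \tilde u\}$ extended by $u$ outside $Q$ lies in $\mathcal{F}$. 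Near $(x_0,t_0)$ this function is strictly below $u$, contradicting minimality. Hence $u$ is a viscosity solution of the heat equation in $D_T^+$, and by interior parabolic regularity $u \in C^\infty(D_T^+)$ is classical.

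\smallskip

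For the boundary behavior at $(x_0,t_0) \in \pp D_T^+$, the inequality
\[
\limsup_{D_T^+ \ni (y,s) \to (x_0,t_0)} u(y,s) \le v(x_0,t_0)
\]
is obtained by constructing upper barriers: given $\ep > 0$, upper semicontinuity of $v$ and compactness yield a continuous function $\phi$ on $\overline{D_T^+}$ with $\phi \ge v$ everywhere and $\phi(x_0,t_0) < v(x_0,t_0) + \ep$. Solving the classical Dirichlet problem for the heat equation in $D_T^+$ with parabolic boundary data $\phi|_{\pp D_T^+}$ produces $w^\ep \in \mathcal{F}$, continuous up to $(x_0,t_0)$, with $w^\ep(x_0,t_0) = \phi(x_0,t_0) < v(x_0,t_0) + \ep$; then $u \le w^\ep$ yields the bound after sending $\ep \to 0$. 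The reverse inequality is the main obstacle: it amounts to boundary regularity of the upper Perron solution with usc boundary data. I would handle it by verifying that every point of $\pp D_T^+$ is a regular boundary point for the parabolic Dirichlet problem — the lateral boundary $\partial B_1^+ \times [0,T]$ is Lipschitz, the initial face $\overline{B_1^+}\times\{0\}$ is automatically regular, and the edge where $\partial B_1$ meets $\{x_1=0\}$ admits explicit quadratic exterior barriers. With regularity in hand, at each $(x_0,t_0)\in\pp D_T^+$ and each $\eta>0$ I would find $(y_k,s_k)\in D_T^+$ with $(y_k,s_k)\to (x_0,t_0)$ and a sequence $w_k \in \mathcal{F}$ whose boundary trace is pinched between $v$ and $v + 1/k$ near $(x_0,t_0)$, so that $u(y_k,s_k) \ge w_k(y_k,s_k) - 1/k \to v(x_0,t_0)$. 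Uniqueness of $u$ then follows from parabolic comparison applied to the difference of two candidates, whose $\limsup$ on $\pp D_T^+$ is nonpositive.
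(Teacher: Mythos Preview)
Your approach is exactly the paper's: define $u$ as the Perron infimum over continuous supersolutions dominating $v$. The paper's proof is a single sentence invoking ``standard Perron's method''; you have expanded the argument, and the interior replacement step and the upper barrier giving $\limsup_{D_T^+} u \le v(x_0,t_0)$ are correct.

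The gap is in your lower bound. You write $u(y_k,s_k) \ge w_k(y_k,s_k) - 1/k$, but this goes the wrong way: since $u = \inf_{\mathcal{F}} w$, one has $u \le w_k$, never $\ge$. Barriers at regular boundary points control the Perron infimum from \emph{above}, not from below; they cannot produce the inequality you want. The lower bound instead follows directly from $u \ge v$ on $\overline{D_T^+}$, which is immediate from the definition of $\mathcal{F}$: then $\limsup_{D_T^+} u \ge \limsup_{D_T^+} v$, and what remains is to check that $\limsup_{D_T^+} v(y,s) = v(x_0,t_0)$ on $\pp D_T^+$. Upper semicontinuity alone does not force this (an isolated upward spike on the parabolic boundary would violate it), and the paper's one-line proof is equally silent here; in the paper's applications $v$ is a tangential sup-convolution with enough additional regularity that detached boundary values do not arise.
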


\begin{proof}
We define
   \[
   u(x):=\inf\lma p(x) \ ; \ p\in C\lb\overline{D_{T}^+}\rb, \ \pt_t p \ge \Delta p, \quad \textup{and} \quad p\ge v \rma.
   \]
By standard Perron's method we know that $u$ satisfies the required properties.
\end{proof}

\begin{definition}\label{def.caloriclifts}
We call $u$ the caloric lift of $v$ on $\overline{D_{T}^+}$, and denote
\[
\Ha v := u.
\]
For a supersolution $w$ to $\pt_t w \geq \Delta w $ we can define similarly $\Ha w:= - \Ha(-w)$.
\end{definition}

An important property of caloric lift is that it preserves the sub/supersolution conditions of \eqref{eq.pGDwithmotionlaw}.

\begin{lemma}\label{l.preserveviscocaloriclift}
    Suppose $u$ is a (upper semicontinuous) subsolution to \eqref{eq.pGDwithmotionlaw} on $\overline{D_T^+}$ in the sense of \dref{definevissoltopGD}, then its caloric lift $\Ha u$ is also a subsolution to \eqref{eq.pGDwithmotionlaw}.
\end{lemma}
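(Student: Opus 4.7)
My plan is to verify the three conditions in Definition \ref{d.definevissoltopGD} for $\Ha u$ via two case studies according to the location of a touching point. In the interior $D_T^+$, by construction $\Ha u$ is a classical solution of the heat equation, so if $\phi$ crosses $\Ha u$ from above at $(x_0,t_0) \in D_T^+$ we automatically have $\pt_t \phi(x_0,t_0) = \pt_t \Ha u (x_0,t_0) \le \Delta \Ha u(x_0, t_0)= \Delta \phi(x_0,t_0)$ and there is nothing further to check. Hence, fix $(x_0, t_0) \in D_T'$ and let $\phi$ be a smooth function crossing $\Ha u$ strictly from above at $(x_0, t_0)$ in some cylindrical neighborhood $U \times (t_0 - r, t_0]$. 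If $\pt_t\phi(x_0,t_0) \le \Delta \phi(x_0, t_0)$, the first alternative of Definition~\ref{d.definevissoltopGD} holds, so it suffices to treat the complementary case $\pt_t\phi(x_0,t_0) > \Delta \phi(x_0,t_0)$.

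The pivotal step is the following claim: \emph{if $\phi$ is a strict supersolution of the heat equation at $(x_0,t_0)$ and crosses $\Ha u$ from above there, then $\Ha u(x_0,t_0) = u(x_0,t_0)$}. Indeed, on the open set $\{\Ha u > u\}$ (which is parabolically open since $u$ is upper semicontinuous and $\Ha u$ is continuous by its definition as the infimum of continuous caloric majorants, compare with Lemma~\ref{l.continuityuoflift}) a standard Perron-type argument shows that $\Ha u$ is a viscosity solution of the heat equation. For otherwise the infimum in the definition of $\Ha u$ could be lowered on a small neighborhood while preserving both $\Ha u \ge u$ and the supersolution condition. Consequently, if we had $\Ha u(x_0,t_0) > u(x_0,t_0)$, then by continuity $\phi$ would cross a viscosity subsolution of the heat equation from above at $(x_0,t_0)$, contradicting $\pt_t \phi(x_0,t_0) > \Delta\phi(x_0,t_0)$.

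With the equality $\Ha u(x_0,t_0) = u(x_0,t_0)$ in hand, the sandwich $\phi \ge \Ha u \ge u$ on the cylinder, together with $\phi(x_0,t_0) = u(x_0,t_0)$, shows that $\phi$ also crosses $u$ from above at $(x_0,t_0)$. Conditions \ref{condition(a)} and \ref{condition(b)} for $\Ha u$ then follow from the corresponding conditions for $u$. For condition \ref{condition(c)}, we additionally need $\phi > u$ on $\pd U \times \{t_0\}$, which we obtain from the strict crossing $\phi > \Ha u$ on $\pd U \times \{t_0\}$ together with $\Ha u \ge u$. The supersolution case follows symmetrically (with caloric lift understood as $\Ha w := -\Ha(-w)$). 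The main technical hurdle will be justifying rigorously that $\Ha u$ solves the heat equation in the open set $\{\Ha u > u\}$. I expect this to be a standard parabolic Perron-type argument (closely analogous to the obstacle problem characterization $\min(\pt_t \Ha u - \Delta \Ha u,\, \Ha u - u) = 0$), but some care is needed to ensure the construction of the minimizing caloric majorants in the definition of $\Ha u$ genuinely produces a function smooth in $\{\Ha u > u\}$; continuity of $\Ha u$ up to $D_T'$ from Lemma~\ref{l.continuityuoflift} is a key ingredient.
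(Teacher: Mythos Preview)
Your overall strategy matches the paper's: show that any test function $\phi$ crossing $\Ha u$ from above at a point of $D_T'$ must also cross $u$ from above there, so that the boundary conditions \ref{condition(a)}--\ref{condition(c)} are inherited. That is the right picture.

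Where you diverge from the paper is in the justification of the key step $\Ha u(x_0,t_0)=u(x_0,t_0)$. The paper does not need any Perron/obstacle argument here. The point is simply that $D_T'$ is part of the parabolic boundary $\pp D_T^+$ on which the caloric lift is defined: Lemma~\ref{l.caloriclift} already asserts that the upper semicontinuous envelope of $\Ha u$ agrees with $u$ on all of $\pp D_T^+\supset D_T'$. Combined with $\Ha u\ge u$ in the interior, this gives immediately (and unconditionally, with no need to assume $\pt_t\phi>\Delta\phi$) that a test function crossing $(\Ha u)^*$ from above at $(x_0,t_0)\in D_T'$ crosses $u$ from above there.

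Your proposed argument also has a genuine gap. You try to show that on $\{\Ha u>u\}$ the caloric lift is a viscosity solution of the heat equation, and then derive a contradiction at $(x_0,t_0)$ if $\Ha u(x_0,t_0)>u(x_0,t_0)$. But $(x_0,t_0)\in D_T'$ is a \emph{boundary} point of the domain $D_T^+$ where the heat equation is posed; a Perron replacement requires an interior cylinder and says nothing about boundary points. In the interior $D_T^+$, $\Ha u$ is already a \emph{classical} solution of the heat equation, so there is nothing further to extract from Perron. Thus your claim that ``$\phi$ would cross a viscosity subsolution of the heat equation from above at $(x_0,t_0)$'' is not well-posed at a boundary point and cannot produce the desired contradiction. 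Replace this step with the direct appeal to Lemma~\ref{l.caloriclift}; the rest of your argument (including the check of \ref{condition(c)} via $\phi>\Ha u\ge u$ on $\pd U\times\{t_0\}$) then goes through as written.
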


\begin{proof}
    In the interior $\Ha u$ satisfies the heat equation in the classical sense, so we only have to worry about the boundary condition. By Lemma \ref{l.caloriclift}, we know that $\Ha u =u$ on the boundary $\pp D_T^+$ in the sense of semicontinuity, and because $\Ha u \ge u$ in the interior, any smooth function crossing $\Ha u$ from above at the boundary point will also cross $u$ from above, and therefore $\Ha u$ inherits all the viscosity subsolution conditions of $u$.
\end{proof}

Let us now discuss some initial regularity of caloric lifts.

\begin{lemma}\label{l.continuityuoflift}
    Let $v$ be as described in the previous lemma. If $\restr{v}{\pp D_{T}^+}$ is continuous at $(x_0,t_0)\in \pp D_{T}^+$, then the corresponding caloric lift $\Ha v$ is continuous at $(x_0,t_0)$.
\end{lemma}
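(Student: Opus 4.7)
The plan is to prove continuity at $(x_0,t_0)$ by sandwiching $\Ha v$ between a continuous supersolution barrier from above and a continuous subsolution barrier from below, each taking value within $\varepsilon$ of $v(x_0,t_0)$ at the point $(x_0,t_0)$. Since $\Ha v$ is defined as the pointwise infimum of continuous supersolutions to the heat equation, it is automatically upper semicontinuous on $\overline{D_T^+}$. Combined with Lemma \ref{l.caloriclift}, this already yields $\limsup_{(y,s)\to(x_0,t_0)}\Ha v(y,s)=v(x_0,t_0)$, and in particular $\Ha v(x_0,t_0)=v(x_0,t_0)$. It therefore suffices to establish the lower bound $\liminf_{(y,s)\to(x_0,t_0)}\Ha v(y,s)\ge v(x_0,t_0)$.

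The key step is constructing a continuous subsolution barrier from below. Writing $M:=\sup_{\overline{D_T^+}}|v|<\infty$, for arbitrary $\varepsilon>0$ I would use the continuity of $v|_{\pp D_T^+}$ at $(x_0,t_0)$ to pick $\delta>0$ with $v\ge v(x_0,t_0)-\varepsilon$ on $\pp D_T^+\cap \overline{B_\delta(x_0,t_0)}$, and select a continuous function $\psi:\pp D_T^+\to[0,1]$ with $\psi(x_0,t_0)=0$ and $\psi\equiv 1$ outside $B_\delta(x_0,t_0)$. Setting
\[
g(y,s):=v(x_0,t_0)-\varepsilon-\bigl(M+v(x_0,t_0)-\varepsilon\bigr)\psi(y,s)
\]
produces a continuous function on $\pp D_T^+$ with $g\le v$ pointwise and $g(x_0,t_0)=v(x_0,t_0)-\varepsilon$. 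I would then let $q$ be the classical solution to the heat equation in $D_T^+$ with Dirichlet data $g$ on $\pp D_T^+$; this solution exists and is continuous on $\overline{D_T^+}$ because every point of $\pp D_T^+$ is regular for the parabolic Dirichlet problem on the Lipschitz domain $B_1^+$ (the exterior cone condition holds even at the codimension-two ridge $\partial B_1\cap\{x_1=0\}$).

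Given this barrier $q$, for any continuous supersolution $p$ of the heat equation appearing in the defining infimum of $\Ha v$, one has $p\ge v\ge g=q$ on $\pp D_T^+$, so the standard comparison principle for the heat equation forces $p\ge q$ throughout $\overline{D_T^+}$. Taking the infimum over such $p$ gives $\Ha v\ge q$ on $\overline{D_T^+}$, and continuity of $q$ at $(x_0,t_0)$ yields
\[
\liminf_{(y,s)\to(x_0,t_0)}\Ha v(y,s)\ge q(x_0,t_0)=v(x_0,t_0)-\varepsilon.
\]
Since $\varepsilon>0$ is arbitrary, this completes the lower bound, and together with upper semicontinuity $\Ha v$ is continuous at $(x_0,t_0)$ with value $v(x_0,t_0)$.

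The main obstacle is invoking the classical boundary regularity for the heat Dirichlet problem on $D_T^+$ at all points of the parabolic boundary, which is needed to produce the continuous barrier $q$. The only nontrivial case is the ridge $\partial B_1\cap\{x_1=0\}$, but this admits a uniform exterior cone, so the Wiener/barrier criterion gives regularity there as well. Once this standard ingredient is in hand, the rest of the argument reduces to the explicit barrier construction and a routine application of the comparison principle.
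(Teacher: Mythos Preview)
Your proposal is correct and follows essentially the same barrier approach as the paper: the paper constructs $h_\pm(y,s):=v(x_0,t_0)\pm\omega(|y-x_0|+|s-t_0|)$ on $\pp D_T^+$ using a modulus of continuity $\omega$, extends $h_\pm$ into $D_T^+$ by solving the heat equation, and sandwiches $\Ha v$ between the resulting continuous barriers. Your cutoff construction of $g$ plays the same role as the paper's $h_-$, and your use of upper semicontinuity of $\Ha v$ together with Lemma~\ref{l.caloriclift} replaces the paper's explicit upper barrier $h_+$; both routes rely on the same boundary regularity for the heat Dirichlet problem on $D_T^+$, which you spell out more carefully than the paper does.
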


\begin{proof}
     Let $\omega(s)$ be a modulus of continuity of $\restr{v}{\pp D_{T}^+}$ at $(x_0,t_0)$ and define, for $(y,s) \in \pp D_{T}^+$,
    \[h_\pm(y,s):=v(x_0,t_0) \pm \omega(|y-x_0|+|s-t_0|).\]
    {By definition $h_+ \geq v \ge h_-$ on $\pp D_{T}^+$.}
    Extend $h_\pm$ to the parabolic interior $D_{T}^+$ by solving $\Delta h_\pm = \pt_t h_\pm$. This gives us a continuous upper barrier $h_+$ and lower barrier $h_-$ of $u$ at $(x_0,t_0)$, making $u$ continuous at $(x_0,t_0)$.
\end{proof}

We use the following interior estimate.

\begin{lemma}\label{l.interiorregularityofheatequation}
    Let $u$ be a bounded solution to the heat equation $\pt_t u = \Delta u +f$ in $B_1\times(0,T]$ for some $f\in C^{\alpha,\alpha/2}\lb\overline{B_1}\times[0,T]\rb$, then there exists a constant $C>0$ depending only on $\alpha$, $0<h<T$ and dimension $d$ such that
    \[
    \norm{u}_{C^{2+\alpha,1+\alpha/2}\lb B_{1/2}\times(h,T]\rb} \le C \lb \norm{u}_{L^\infty(B_1\times(0,T])}+ \norm{f}_{C^{\alpha,\alpha/2}({B_1\times(0,T]})}\rb.
    \]
\end{lemma}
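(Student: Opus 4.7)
The plan is to invoke the classical interior parabolic Schauder estimate, which is a standard result found in Ladyzhenskaya--Solonnikov--Ural'tseva's monograph. Since the statement has no nonstandard ingredients, I would most likely simply cite that reference. However, for a self-contained proof, my strategy would be a cutoff-plus-heat-potential argument followed by standard estimates on the Newtonian and thermal potentials associated with the heat kernel.

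First I would reduce to a problem on the whole space-time $\R^d \times \R$ by localization. Pick a smooth cutoff $\chi(x,t)$ with $\chi \equiv 1$ on $B_{1/2} \times (h,T]$ and $\chi$ compactly supported in $B_1 \times (h/2, T+1]$, vanishing near $\partial B_1$ and $\{t=h/2\}$. Let $v := \chi u$, extended by zero outside its support. Then $v$ solves
\[
\partial_t v - \Delta v \;=\; \chi f + g \;=:\; F, \qquad g := -(\Delta \chi) u - 2 \nabla \chi \cdot \nabla u + (\partial_t \chi) u,
\]
on $\R^d \times \R$, with $v \equiv 0$ for $t \le h/2$. The source $F$ is compactly supported, and on the overlap region where $\nabla \chi \neq 0$ one has by standard interior $C^{1,\alpha/2}$ estimates for bounded solutions of the heat equation (applied on a slightly larger cylinder $B_{3/4} \times (h/4,T]$) that $u$ has a priori Hölder gradient bounds. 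These give $g \in C^{\alpha,\alpha/2}$ with norm controlled by $\|u\|_{L^\infty} + \|f\|_{C^{\alpha,\alpha/2}}$.

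Next I would represent $v$ by convolution with the heat kernel,
\[
v(x,t) = \int_{-\infty}^{t} \int_{\R^d} \Gamma(x-y, t-s) F(y,s)\,dy\,ds,
\]
and apply the classical parabolic Schauder estimates for the thermal potential: if $F \in C^{\alpha, \alpha/2}(\R^d \times \R)$ is compactly supported in time, then the convolution lies in $C^{2+\alpha, 1+\alpha/2}$ with
\[
\|v\|_{C^{2+\alpha,1+\alpha/2}} \;\le\; C(d,\alpha)\, \|F\|_{C^{\alpha,\alpha/2}}.
\]
Since $\chi \equiv 1$ on $B_{1/2} \times (h,T]$, this yields the desired estimate on $u$ there.

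The only genuinely nontrivial step is the bootstrap needed to put $g$ in $C^{\alpha,\alpha/2}$; that requires an interior $C^{1,\alpha/2}$ bound on $u$, which follows from applying a weaker (known) interior Hölder-gradient estimate on a cylinder strictly larger than $B_{1/2} \times (h,T]$ but strictly inside $B_1 \times (0,T]$. All constants depend only on $d$, $\alpha$, and the geometric gap measured by $h$ and $\mathrm{dist}(B_{1/2}, \partial B_1)$. Because every ingredient is standard, the main ``obstacle'' is really just bookkeeping of the cutoff radii and time slabs; no new ideas are required.
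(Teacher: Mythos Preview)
Your proposal is correct and matches the paper's approach exactly: the paper's entire proof is the single line ``See \cite{ladyzhenskaja1968linear}'', i.e.\ a citation to Ladyzhenskaya--Solonnikov--Ural'tseva, which is precisely the reference you identify. Your additional self-contained sketch via cutoff and heat potential is standard and more than the paper provides.
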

\begin{proof}
    See \cite{ladyzhenskaja1968linear}.
\end{proof}

The following lemmas are useful in the proof of Theorem \ref{t.comparisonprinciple}. This first result allows us to show that caloric lifts of Lipschitz (in time) boundary data are Lipschitz (in time).

\begin{lemma}
    \label{l.boundarylipschitzheatequation}
    Let $u$ be a bounded solution to the heat equation $\pt_t u = \Delta u $ in $U\times(t_1,t_2)$ for some relative open domain $U\csubset B_1^+\cup B_1'$ and $\restr{u}{\overline{U'}\times[t_1,t_2]}$ is Lipschitz, then for any relative open subdomain $V\csubset U$ and small $r>0$ there is
    \[
    \norm{\pt_t u}_{L^\infty(V\times(t_1+r,t_2-r))}<\infty.
    \]
\end{lemma}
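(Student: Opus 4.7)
The plan is to combine interior parabolic regularity away from $B_1'$ with a local boundary regularity argument near $B_1'$ based on odd reflection and the half-space Poisson kernel for the heat equation.

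First, by \lref{interiorregularityofheatequation} applied to $u$ on small parabolic cylinders compactly contained in $U^+\times(t_1,t_2)$, $\pt_t u$ is bounded on every compact $K\subset U^+\times(t_1,t_2)$. Hence it suffices to bound $\pt_t u$ in a relative neighborhood of $V'\times(t_1+r,t_2-r)$ within $V\times(t_1+r,t_2-r)$. Since $V\csubset U$, I would cover the compact set $\overline{V'\times[t_1+r,t_2-r]}$ by finitely many half-cylinders $Q_i := B^+_{2\rho_i}(y_i)\times(s_i-4\rho_i^2,s_i+4\rho_i^2)$ with $y_i\in U'$, $B^+_{2\rho_i}(y_i)\cup B'_{2\rho_i}(y_i)\csubset U$ in the relative topology of $B_1^+\cup B_1'$, and time intervals inside $(t_1+r/2,t_2-r/2)$. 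It then suffices to bound $\pt_t u$ on the smaller half-cylinder $B^+_{\rho_i}(y_i)\times(s_i-\rho_i^2,s_i+\rho_i^2)$ for each $i$.

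On each $Q_i$ I decompose $u=u_1+u_2$, where $u_1$ solves the heat equation on $Q_i$ with the Lipschitz Dirichlet data $g := u|_{B'_{2\rho_i}(y_i)\times(s_i-4\rho_i^2,s_i+4\rho_i^2)}$ on the flat boundary and zero Dirichlet data on the lateral and initial portions, and $u_2 := u-u_1$ has zero flat Dirichlet data while inheriting the bounded $u$ data on the lateral and initial portions. For $u_2$, the zero flat Dirichlet data permits an odd reflection across $\{x_1=0\}$ extending it to a bounded caloric function on the full cylinder $B_{2\rho_i}(y_i)\times(s_i-4\rho_i^2,s_i+4\rho_i^2)$; \lref{interiorregularityofheatequation} then bounds $\pt_t u_2$ on the smaller cylinder. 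For $u_1$, I would extend $g$ to a bounded Lipschitz function $\tilde g$ on $\pt\R_+^d\times\R$ (extending constantly in time outside the original interval if needed) and form the half-space solution $\tilde u_1(x,t) := \int_{-\infty}^t\int_{\pt\R_+^d} P(x,y',t-s)\tilde g(y',s)\,dy'\,ds$, where $P(x,y',\tau) = \frac{x_1}{\tau(4\pi\tau)^{d/2}} e^{-(x_1^2+|x'-y'|^2)/(4\tau)}$ is the half-space heat Poisson kernel. Integration by parts in $s$ together with $|\pt_s\tilde g|\le L$ and the uniform bound $\int_0^\infty\int_{\pt\R_+^d} P(x,y',\tau)\,dy'\,d\tau \le 1$ shows $\tilde u_1$ is Lipschitz in time. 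The correction $u_1-\tilde u_1|_{Q_i}$ has zero flat Dirichlet data and bounded data on the remaining parabolic boundary, so the odd-reflection argument applied to it also bounds its $\pt_t$. Combining shows $\pt_t u_1$ is bounded, yielding the local claim.

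The main obstacle is the Poisson-kernel analysis for $\tilde u_1$: the time extension of $g$ must be chosen so that the half-space convolution is well-defined with uniform Lipschitz-in-time estimates, and one must verify that these estimates localize to bounds on the half-ball $Q_i$ rather than merely on the whole half-space. The odd-reflection arguments for $u_2$ and for the correction $u_1-\tilde u_1|_{Q_i}$, as well as the final cover-and-paste combining interior and boundary pieces, are routine applications of parabolic regularity.
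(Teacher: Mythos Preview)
Your proposal is correct, but the paper takes a shorter and less computational route. Both arguments share the same skeleton: split $u$ into a piece carrying the Lipschitz Dirichlet data on $U'$ and a piece with zero flat Dirichlet data, then handle the latter by odd reflection and interior parabolic regularity. The difference is in how the Lipschitz-in-time bound for the first piece is obtained. You use the explicit half-space Poisson kernel, an extension of $g$ in time, and an integration-by-parts in $s$ to transfer $\partial_t$ onto $\tilde g$; this works but, as you note, requires care with the time extension and with checking that the kernel estimate localizes. The paper instead constructs an auxiliary caloric function $h$ on all of $U^+\times(t_1,t_2]$ with boundary data $g$ (a global Lipschitz extension of $u|_{U'}$) and \emph{harmonic} initial data $\Delta h(\cdot,t_1)=0$. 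The point of the harmonic initial data is that $h(x,t_1)\pm L(t-t_1)$ are then caloric barriers dominating $h$ on the parabolic boundary, giving $|h(x,t)-h(x,t_1)|\le L(t-t_1)$; a second application of the maximum principle to $h(\cdot,t+s)-h(\cdot,t)$ then yields $\|\partial_t h\|_{L^\infty}\le L$ directly. This avoids any kernel computation, any localization to half-cylinders, and any issue with extending $g$ in time. Your approach has the merit of giving an explicit representation for the boundary contribution, but the paper's barrier argument is the cleaner way to reach the stated conclusion.
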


\begin{proof}
   Let $g$ be a Lipschitz extension of $\restr{u}{\overline{U'}\times[t_1,t_2]}$ to the whole $\R^d\times\R$ with the same Lipschitz constant. We solve for $h$ an auxiliary function satisfying
    \[
    \bca
\pt_t h = \Delta h & \textup{ in }U^+\times(t_1,t_2]\\
h = g & \textup{ on } \pt U\times[t_1,t_2]\\
\Delta h(\cdot,t_1) = 0 & \textup{ in }U^+.
    \eca
    \]
Suppose $L>0$ is the Lipschitz constant for $g$, then by comparison principle with the testing function $h(x,t_1)\pm L (t-t_1)$, we know that
\be\label{eq.lemmac9h}
|h(x,t)-h(x,t_1)| \le L(t-t_1).
\ee
On the other hand, by applying the maximum principle on $h(x,t+s)-h(x,t)$ for $s>0$, combining \eqref{eq.lemmac9h} and the Lipschitz regularity of $g$  we know that 
\[
\norm{\pt_t h}_{L^\infty(U\times(t_1,t_2))}<\infty.
\]
Now the proof follows by considering the decomposition
\[
u=(u-h)+ h,
\]
where $u-h$ is smooth near $U'$ as it satisfies the zero Dirichlet boundary condition on $U'$, while $h$ is Lipschitz in time.
    
\end{proof}

This next result shows that a solution of the heat equation is (quantitatively) differentiable at a point where the boundary data is $C^{1,1}$.

\begin{lemma}\label{l.differentiabilityatc11point}
    Let $U\csubset B_1^+\cup B_1'$ be a relatively open domain. Suppose $v$ is a lower semicontinuous supersolution to the heat equation on $U^+\times(0,T)$ and $u\in C(U\times(0,T))$ is a subsolution to the heat equation. If $v$ touches $u$ from above at $(x_0,t_0)\in U'\times(0,T)$ and both $\restr{u}{U'\times(0,T)}$,$\restr{v}{U'\times(0,T)}$ are $C^{1,1}$ at $(x_0,t_0)$, then there is a sequence of radius $r_k\rta0^+$ and smooth functions $\psi_k,\xi_k$ on 
    on
\[
\left( B_{r_k}^+(x_0) \cup B_{r_k}'(x_0) \right) \times (t_0 - r_k^2, t_0 + r_k^2),
\]
such that:
\begin{center}
\begin{varwidth}{\linewidth}
\begin{enumerate}
  \item[(A)] \( \xi_k \) touches \( U \) from above at \( (x_0, t_0) \),
    \item[(B)] \( \psi_k \) touches \( V \) from below at \( (x_0, t_0) \),
    \item[(C)] \( \partial_t (\xi_k - \psi_k)(x_0, t_0) = 0 \),
    \item[(D)] and
    \[
    \lim_{k \to \infty} \left| \nabla_x (\xi_k - \psi_k)(x_0, t_0) \right| \to 0.
    \]
\end{enumerate}
\end{varwidth}
\end{center}
In particular, if $u=v\in C(U\times(0,T))$ satisfies the heat equation in $U^+\times(0,T)$ then $u$ is differentiable at $(x_0,t_0)$.
\end{lemma}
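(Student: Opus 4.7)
The plan is to construct smooth test functions of the form
\[
\xi_k(x, t) := L(x', t) + 2K r^2 + \alpha_k^\xi x_1 - D x_1^2, \quad \psi_k(x, t) := L(x', t) - 2K r^2 + \alpha_k^\psi x_1 + D x_1^2,
\]
on shrinking parabolic cylinders $Q_k := (B_{r_k}^+(x_0) \cup B_{r_k}'(x_0)) \times (t_0 - r_k^2, t_0 + r_k^2)$ with $r_k = 1/k$. Here $L$ is the common first-order Taylor expansion of $u|_{U'}$ and $v|_{U'}$ at $(x_0, t_0)$, $r^2 := |x' - x_0'|^2 + (t - t_0)^2$, and $D = D(K, |\tau|, d)$ is fixed large enough so that $\xi_k$ is a classical supersolution and $\psi_k$ a classical subsolution of the heat equation on $Q_k$. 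The normal slopes $\alpha_k^\xi, \alpha_k^\psi$ will be chosen so that $\xi_k \geq u$ and $\psi_k \leq v$ on $Q_k$, with touching at $(x_0, t_0)$.

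First I would extract the common linearization. Since $v - u \geq 0$ near $(x_0, t_0)$ with $(v - u)(x_0, t_0) = 0$, its restriction to $U'$ has minimum $0$ at $(x_0, t_0)$; combined with the $C^{1,1}$ regularity at $(x_0, t_0)$ of both $u|_{U'}$ and $v|_{U'}$, this forces them to share the same first-order Taylor polynomial $L(x', t) := u(x_0, t_0) + p' \cdot (x' - x_0') + \tau (t - t_0)$, with quadratic remainders bounded by $K r^2$. Moreover any smooth $\xi$ touching $u$ from above at $(x_0, t_0)$ must have $\nabla' \xi(x_0, t_0) = p'$ and $\partial_t \xi(x_0, t_0) = \tau$ (otherwise the touching inequality would fail on $U'$ to first order along some tangential direction), and symmetrically for $\psi$; so only the normal slope is free. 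Define the extremal slopes
\[
\alpha_k^+ := \inf\{\alpha \in \R : \Xi^\alpha \geq u \text{ on } Q_k\}, \qquad \alpha_k^- := \sup\{\alpha \in \R : \Psi^\alpha \leq v \text{ on } Q_k\},
\]
where $\Xi^\alpha, \Psi^\alpha$ are the test functions above with slope $\alpha$. Parabolic comparison plus the $C^{1,1}$ bound on $\{x_1 = 0\}$ reduces each condition to the upper parabolic boundary, and standard barrier arguments give that $\alpha_k^\pm$ are finite and (being monotone in $k$) converge to limits $A^* := \lim_k \alpha_k^+$ and $B^* := \lim_k \alpha_k^-$. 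Taking $\alpha_k^\xi := \alpha_k^+$ and $\alpha_k^\psi := \min(\alpha_k^+, \alpha_k^-)$ makes both slopes valid, and yields $\partial_1(\xi_k - \psi_k)(x_0, t_0) = \max(\alpha_k^+ - \alpha_k^-, 0)$ while $\nabla'(\xi_k - \psi_k)(x_0, t_0) = 0$ and $\partial_t(\xi_k - \psi_k)(x_0, t_0) = 0$ automatically.

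The hard part will be establishing $A^* \leq B^*$, which forces $\max(\alpha_k^+ - \alpha_k^-, 0) \to 0$ (either $\alpha_k^+ \leq \alpha_k^-$ eventually, or both sequences converge to the same limit). My approach is to probe along the inward normal ray $\{(x_0', x_1, t_0) : x_1 > 0\}$: on this ray $r = 0$, so the envelopes become $\Xi^{\alpha_k^+} = a + \alpha_k^+ x_1 - D x_1^2$ and $\Psi^{\alpha_k^-} = a + \alpha_k^- x_1 + D x_1^2$ (with $a := u(x_0, t_0)$), yielding $\limsup_{x_1 \to 0^+}(u(x_0', x_1, t_0) - a)/x_1 \leq A^*$ and $\liminf_{x_1 \to 0^+}(v(x_0', x_1, t_0) - a)/x_1 \geq B^*$. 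Combining these with $v \geq u$ along the ray and exploiting that the extremal slopes $\alpha_k^\pm$ are actually attained with touching on the upper parabolic boundary (so the envelopes are asymptotically tight and the one-sided limits above become equalities) should then give $A^* \leq B^*$. For the ``in particular'' case $u = v$ solving the heat equation classically, the two-sided sandwich $\Psi^{\alpha_k^-} \leq u \leq \Xi^{\alpha_k^+}$ forces $A^* = B^*$, producing a first-order Taylor expansion $u(x, t) = L(x', t) + A^* x_1 + o(|x - x_0| + |t - t_0|)$ at $(x_0, t_0)$ and hence differentiability with $\nabla u(x_0, t_0) = (A^*, p')$ and $\partial_t u(x_0, t_0) = \tau$.
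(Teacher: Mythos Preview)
Your construction of explicit polynomial barriers is a reasonable starting point and correctly handles conditions (A)--(C), but the argument for (D) has a genuine gap. Probing along the normal ray gives only
\[
B^* \leq \liminf_{x_1 \to 0^+} \frac{v(x_0', x_1, t_0) - a}{x_1} \quad \text{and} \quad \limsup_{x_1 \to 0^+} \frac{u(x_0', x_1, t_0) - a}{x_1} \leq A^*,
\]
and combining these with $u \leq v$ yields only $B^* \leq A^*$, the trivial direction, not $A^* \leq B^*$. Your hope that extremality of $\alpha_k^\pm$ forces the one-sided limits to become equalities is not justified: the touching point where $\Xi^{\alpha_k^+}$ meets $u$ lies \emph{somewhere} on the upper parabolic boundary of $Q_k$, not on the normal ray in general, so it gives no asymptotic tightness along that specific ray. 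Even if those limits did exist, you would need $\limsup_{x_1\to 0}(u-a)/x_1 \leq \liminf_{x_1\to 0}(v-a)/x_1$, which does not follow from $u \leq v$ pointwise without already knowing the normal derivatives exist---precisely what is at stake.

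The paper takes a different and cleaner route that avoids this difficulty entirely. It defines $\xi_k$ and $\psi_k$ as solutions of the heat equation in $B_{r_k}^+(x_0) \times (t_0 - r_k^2, t_0 + r_k^2)$ sharing the \emph{same} boundary data $u$ on the upper parabolic boundary, but with $\xi_k = Q$ and $\psi_k = P$ on the flat part $\{x_1=0\}$, where $P,Q$ are the lower and upper $C^{1,1}$ paraboloids at $(x_0,t_0)$. Comparison then gives $\psi_k \leq v$ (using $u \leq v$ on the upper boundary and $P \leq v$ on the flat part) and $\xi_k \geq u$, both with equality at $(x_0,t_0)$. The crucial gain is that the difference $\xi_k - \psi_k$ solves the heat equation with \emph{zero} data on the upper boundary and $Q - P = 2C\big(|x'-x_0'|^2 + (t-t_0)^2\big)$ on the flat part; after parabolic rescaling by $r_k$ this flat boundary data is $O(r_k)$, and interior-type regularity for the heat equation gives $|\nabla(\xi_k - \psi_k)(x_0,t_0)| \to 0$ directly. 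The idea you are missing is to \emph{couple} the two test functions through a common upper boundary value so that their difference is explicitly controllable, rather than choosing two independent extremal slopes and then trying to compare them after the fact.
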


\begin{proof}
 
To define \( r_k, \xi_k, \psi_k \), we first choose a small number \( r_1 = r >0\) such that
\[
u \le v \quad \text{in } \overline{B_r^+(x_0)} \times [t_0 - r^2, t_0 + r^2].
\]
By the $C^{1,1}$ regularity of $\restr{u}{U'\times(0,T)}$ and $\restr{v}{U'\times(0,T)}$ at $(x_0,t_0)$, we can find two polynomials on \(\pt\R_+^d\times\R \) of the form
\[
P(x', t) = u(x_0,t_0) + p \cdot (x' - x_0') + b(t - t_0) - C \left( |x' - x_0'|^2 + (t - t_0)^2 \right),
\]
and
\[
Q(x', t) = u(x_0,t_0) +  p \cdot (x' - x_0') + b(t - t_0) + C \left( |x' - x_0'|^2 + (t - t_0)^2 \right),
\]
where $p\in \pt\R_+^d$, $b\in \R$ and $C>0$ such that 
$$Q \ge \restr{v}{U'\times(0,T)}\ge \restr{u}{U'\times(0,T)} \ge P$$
and since $u$ touches $v$ from below, we have
\begin{itemize}
    \item \( P \) touches \( \restr{v}{U'\times(0,T)} \) from below at \( (x_0, t_0) \),
    \item \( Q \) touches \( \restr{u}{U'\times(0,T)} \) from above at \( (x_0, t_0) \)
\end{itemize} 
respectively in the domain \( \overline{B_r^+(x_0)} \times [t_0 - r^2, t_0 + r^2] \).

We define \( \xi_1 \) and \( \psi_1 \) to be the caloric lifts (See Definition \ref{def.caloriclifts}) in \( B_r^+(x_0) \times (t_0 - r^2, t_0 + r^2) \) that share the same boundary data \( u \) on
\[
\overline{B_r^+(x_0)} \times [t_0 - r^2, t_0 + r^2] \setminus \left( B_r^+(x_0) \cup B_1'(x_0) \right) \times (t_0 - r^2, t_0 + r^2],
\]
with
\[
\restr{\xi_1}{\{x_1 = 0\}} = Q \quad \text{and} \quad \restr{\psi_1}{\{x_1 = 0\}} = P.
\]
By the comparison principle for heat equations, \( \psi_1 \) and \( \xi_1 \) satisfy the conditions (A) and (B). The condition (C) is satisfied because of boundary regularity of heat equations near $(x_0,t_0)$. Moreover, this construction can be repeated for any shrinking radius \( r_k \to 0^+ \) with \( r_k < r_1 = r \). To complete the construction and check condition (D), we verify the convergence of the gradients of \( \psi_k, \xi_k \). This is achieved by observing that the function
\[
H_k(y, s) := \frac{(\psi_k - \xi_k)(r_k y + x_0, r_k^2 s + t_0)}{r_k}
\]
satisfies the following heat equation:
\[
\begin{cases}
\partial_s H_k = \Delta_y H_k & \text{in } B_1^+ \times (-1, 1), \\
H_k = 2C \left( r_k |y'|^2 + r_k^3 s^2 \right) & \text{on } B_1' \times (-1, 1], \\
H_k \equiv 0 & \text{on } \overline{B_1^+} \times [-1, 1] \setminus \left( B_1^+ \cup B_1' \right) \times (-1, 1].
\end{cases}
\]
By Lemma \ref{l.interiorregularityofheatequation}, we have
\[
\left| \nabla_x (\xi_k - \psi_k)(x_0, t_0) \right| = \left| \nabla_y H_k(0, 0) \right| = o_k(1),
\]
which completes the construction and condition (D) is checked.

\end{proof}

\bibliographystyle{plainnat}
\bibliography{ref}

\end{document}